\renewcommand{\eqref}[1]{\hyperref[#1]{(\ref{#1})}}
\newlist{enumlist}{enumerate}{2}
\setlist[enumlist,1]{labelindent=0cm,label=\arabic*.,ref=\arabic*,labelwidth=2.5ex,labelsep=0.5ex,leftmargin=3ex,align=left,topsep=0.5ex,itemsep=1ex,parsep=1ex}
\setlist[enumlist,2]{labelindent=0cm,label=\theenumlisti.\arabic*.,ref=\arabic*,labelwidth=5ex,labelsep=0.5ex,leftmargin=5.5ex,align=left,topsep=0.5ex,itemsep=1ex,parsep=1ex}
\newlist{itemlist}{itemize}{1}
\setlist[itemlist]{labelindent=0cm,label=$\bullet$,labelwidth=2.5ex,labelsep=0.5ex,leftmargin=3ex,align=left,topsep=0.5ex,itemsep=1ex,parsep=1ex}
\numberwithin{equation}{section}
\theoremstyle{definition}\newtheorem{definition}{Definition}[section]
\newtheorem*{definition*}{Definition}
\newtheorem{notation}[definition]{Notation}
\newtheorem{remark}[definition]{Remark}
\newtheorem{example}[definition]{Example}
\newtheorem{assumptions}[definition]{Assumptions}
\newtheorem*{example*}{Example}
\newtheorem*{examples*}{Examples}}
\newtheorem{proposition}[definition]{Proposition}
\newtheorem{lemma}[definition]{Lemma}
\newtheorem{theorem}[definition]{Theorem}
\newtheorem{corollary}[definition]{Corollary}
\newtheorem{letterthm}{Theorem}
\newtheorem{lettercor}[letterthm]{Corollary}
\theoremstyle{definition}}
\renewcommand{\Re}{\operatorname{Re}}
\newcommand{\C}{\mathbb{C}}
\newcommand{\cC}{\mathcal{C}}
\newcommand{\eps}{\varepsilon}
\newcommand{\al}{\alpha}
\newcommand{\be}{\beta}
\newcommand{\ot}{\otimes}
\newcommand{\recht}{\rightarrow}
\newcommand{\Z}{\mathbb{Z}}
\newcommand{\vphi}{\varphi}
\newcommand{\cO}{\mathcal{O}}
\newcommand{\id}{\mathord{\text{\rm id}}}
\newcommand{\om}{\omega}
\newcommand{\N}{\mathbb{N}}
\newcommand{\ovt}{\mathbin{\overline{\otimes}}}
\newcommand{\Tr}{\operatorname{Tr}}
\newcommand{\si}{\sigma}
\newcommand{\R}{\mathbb{R}}
\newcommand{\F}{\mathbb{F}}
\newcommand{\cH}{\mathcal{H}}
\newcommand{\cZ}{\mathcal{Z}}
\newcommand{\Ad}{\operatorname{Ad}}
\newcommand{\cG}{\mathcal{G}}
\newcommand{\cK}{\mathcal{K}}
\newcommand{\cF}{\mathcal{F}}
\newcommand{\T}{\mathbb{T}}
\newcommand{\actson}{\curvearrowright}
\newcommand{\cA}{\mathcal{A}}
\newcommand{\cB}{\mathcal{B}}
\newcommand{\cU}{\mathcal{U}}
\newcommand{\Ker}{\operatorname{Ker}}
\newcommand{\lspan}{\operatorname{span}}
\newcommand{\cN}{\mathcal{N}}
\newcommand{\cR}{\mathcal{R}}
\newcommand{\cI}{\mathcal{I}}
\newcommand{\cV}{\mathcal{V}}
\newcommand{\Aut}{\operatorname{Aut}}
\newcommand{\cS}{\mathcal{S}}
\newcommand{\cL}{\mathcal{L}}
\newcommand{\dpr}{^{\prime\prime}}
\newcommand{\Q}{\mathbb{Q}}
\newcommand{\GL}{\operatorname{GL}}
\newcommand{\SL}{\operatorname{SL}}
\newcommand{\End}{\operatorname{End}}
\newcommand{\deltabar}{\overline{\delta}}
\newcommand{\Out}{\operatorname{Out}}
\newcommand{\Emb}{\operatorname{Emb}}
\newcommand{\Inj}{\operatorname{Inj}}
\newcommand{\tr}{\operatorname{tr}}
\newcommand{\Sp}{\operatorname{Sp}}
\newcommand{\PSL}{\operatorname{PSL}}
\newcommand{\bcG}{\text{\boldmath $\mathcal{G}$}}
\newcommand{\bcT}{\text{\boldmath $\mathcal{T}$}}
\newcommand{\bN}{\text{\boldmath $N$}}
\DeclareMathOperator*\bigast{\text{\Large$\ast$}}
\newcommand{\bim}[3]{\mathord{\raisebox{-0.4ex}[0ex][0ex]{\scriptsize $#1$}{#2}\hspace{-0.25ex}\raisebox{-0.4ex}[0ex][0ex]{\scriptsize $#3$}}}
\newcommand{\emb}{\hookrightarrow}
\newcommand{\semb}{\hookrightarrow_{\text{\rm s}}}
\newcommand{\cFs}{\mathcal{F}_{\text{\rm s}}}
\newcommand{\IIone}{\text{\bf II}_{\text{\bf 1}}}
\newcommand{\scong}{\cong_{\text{\rm s}}}
\newcommand{\Ibar}{\overline{I}}
\newcommand{\fin}{\text{\rm fin}}
\newcommand{\omtil}{\widetilde{\omega}}
\newcommand{\subg}{\leqslant}
\newcommand{\Embs}{\operatorname{Emb}_{\text{\rm s}}}
\newcommand{\inv}{\text{\rm inv}}
\newcommand{\Factor}{\operatorname{Factor}}
\begin{document}

\begin{center}
{\boldmath\LARGE\bf W$^*$-rigidity paradigms for embeddings of II$_1$ factors}

\bigskip

{\sc by Sorin Popa\footnote{Mathematics Department, UCLA, Los Angeles, CA 90095-1555 (United States), popa@math.ucla.edu\\ Supported in part by NSF Grant DMS-1955812 and the Takesaki Chair in Operator Algebras.} and Stefaan Vaes\footnote{KU~Leuven, Department of Mathematics, Leuven (Belgium), stefaan.vaes@kuleuven.be\\ Supported by FWO research project G090420N of the Research Foundation Flanders and by long term structural funding~-- Methusalem grant of the Flemish Government.}}
\end{center}

\begin{abstract}\noindent
We undertake a systematic study of W$^*$-rigidity paradigms for the embeddability relation $\emb$ between separable II$_1$ factors and its stable version
$\semb$, obtaining large families of non stably isomorphic II$_1$ factors that are mutually embeddable and families of II$_1$ factors that are mutually non stably embeddable. We provide an augmentation functor $G \mapsto H_G$ from the category of groups into icc groups, so that $L(H_{G_1}) \semb L(H_{G_2})$ iff $G_1 \emb G_2$. We construct complete intervals of II$_1$ factors, including a strict chain of II$_1$ factors $(M_k)_{k\in \Z}$ with the property that if $N$ is any II$_1$ factor with $M_i\semb N$ and $N\semb M_{j}$, then $N \cong M_k^t$ for some $i \leq k \leq j$ and $t > 0$.
\end{abstract}

\section{Introduction}

The {\it embedding problem} in von Neumann algebras asks whether factors arising from certain geometric data
can be embedded one into the other or not. It is a natural companion to the isomorphism/classification problem for factors. The two problems are
particularly interesting when the factors involved are of type II$_1$.
They are parallel, and often interconnected, with similar problems in geometric group theory, orbit equivalence ergodic theory and
Lie group dynamics.

The isomorphism and embedding  problems for II$_1$ factors can by and large be divided into two types of phenomena,
the \emph{many-to-one} paradigm
and the \emph{one-to-one} (or \emph{W$^*$-rigidity}) paradigm. While opposite in nature and involving completely different techniques,
both types of results are hard to establish and they often come as a surprise.
For the isomorphism problem, a result showing that distinct geometric data $G$ (like groups, groupoids,
groups acting on spaces, etc.) give rise to the same II$_1$ factor $L(G)$, is a many-to-one paradigm, while a W$^*$-rigidity paradigm amounts
to identifying a class $\bcG$ of objects for which the functor $\bcG \ni G \mapsto L(G)\in \IIone$ is one-to-one, modulo isomorphisms in $\bcG$, respectively $\IIone$.

The typical many-to-one result  for the embedding problem singles out a class of  II$_1$ factors that
are mutually embeddable but not isomorphic.  In turn, its W$^*$-rigidity counterpart has  an array of interesting questions,
from the basic non-embeddability between specific II$_1$ factors, to obtaining families
of mutually non-embeddable factors (\emph{disjointness}), calculating all self-embeddings of factors within a certain class, etc. Viewing the embeddability
of II$_1$ factors as a preorder relation in the class $\IIone$, or alternatively as a weak subordination relation of the underlying geometric objects,
leads to a series of more nuanced problems.

Despite the diversity of interesting questions of this type, and in sharp contrast with the recent spectacular progress
on the isomorphism problem, the study of embedding phenomena has been much less developed. We undertake in this paper a systematic study of the embedding paradigms for II$_1$ factors,
using all tools presently available, especially deformation-rigidity theory, and obtaining a multitude of completely new types of results.

Thus, denoting by $\emb$ the embedding relation between II$_1$ factors and by $\semb$ its stable, weaker version involving amplifications of factors, we exhibit one-parameter families of non stably isomorphic separable II$_1$ factors $(M_t)_{t \in \R}$ of each of the following types: examples where the factors $M_t$ are mutually embeddable (a many-to-one result); examples where $M_t \semb M_r$ if and only if $t \leq r$ (a strict chain); and examples that are mutually non-embeddable (an anti-chain). We actually produce such families of II$_1$ factors $(M_i)_{i \in I}$ indexed by different types of partially ordered sets $(I,\leq)$. Moreover, we construct a large class of II$_1$ factors for which we can compute all \emph{stable self-embeddings} $M \emb M^t$, including II$_1$ factors $M$ for which all embeddings $M \emb M^t$ are inner, II$_1$ factors with a prescribed countable \emph{one-sided fundamental group}, and II$_1$ factors with prescribed \emph{outer automorphism group} from a wide range of Polish groups. Also, we construct an augmentation functor assigning to any infinite group $G$ an icc group $H_G$ such that $L(H_{G_1}) \semb L(H_{G_2})$ if and only if $G_1$ is isomorphic with a subgroup of $G_2$. Finally, we produce concrete \emph{complete intervals} of II$_1$ factors $(M_i)_{i \in I}$, indexed by a large variety of partially ordered sets, where completeness means that any intermediate II$_1$ factor $N$, with $M_i \semb N$ and $N \semb M_j$, is stably isomorphic with $M_k$ for some $i \leq k \leq j$.

Before stating these results in detail, and in order to put them in a proper perspective, we briefly review the history of the embedding problem and the main results.

Both the isomorphism  and embedding problems have been initiated by  Murray-von Neumann in \cite{MvN43}. They obtained  the first many-to-one results,
by showing that all ``locally finite'' geometric objects $G$ give rise to the {\it hyperfinite} II$_1$ factor, $R:=\overline{\otimes}_n (M_2(\C), \tr)_n$,
which in turn can be embedded into any other II$_1$ factor. Interestingly, they
also construct two non stably isomorphic, but mutually embeddable II$_1$ factors in \cite[Appendix]{MvN43} and they comment on \cite[page 717]{MvN43}:
\emph{``the possibility exists that any factor in the case {\rm II}$_1$ is isomorphic to a sub-ring of any other such factor.''} In other words, Murray and von Neumann did not exclude the possibility that
no matter the geometric data they may come from, all separable II$_1$ factors look alike under the very weak equivalence relation given by mutual embeddability!

It took more than two decades to prove that this is not the case. Thus, work in \cite{Sch63,HT67} provided
the right notion of amenability for II$_1$ factors, showed that it is inherited by embeddable factors
and that $L(G)$ is amenable as a II$_1$ factor iff $G$ is amenable (with its initial structure), leading
to the first non-embeddability result: if $G$ is amenable and $G_0$ is not, then $L(G_0) \not\emb L(G)$, e.g. $L(\F_2) \not\emb L(S_\infty)$, where $\F_2$ is the free group on two generators and $S_\infty$ is the group of finite permutations of $\N$.
Connes' fundamental theorem \cite{Con76}  completed the picture: all II$_1$ factors $L(G)$ arising from amenable
geometric objects $G$, as well as all II$_1$ subfactors of $R$, are isomorphic to $R$. This reduced both the isomorphism and the embedding problems to
II$_1$ factors and initial data  that are nonamenable.

Two decades later, Connes and Jones obtained in \cite{CJ83} a landmark non-embeddability result for nonamenable II$_1$ factors,
showing that if $G_0$ is a group with Kazhdan's property (T), e.g.\ $G_0=\SL(3, \Z)$, and $G$ is a group having the Haagerup property, e.g.\ $G=\F_2$, then $L(G_0) \not\semb L(G)$. A few years later, Cowling and Haagerup proved in \cite{CH88} the striking, deep result that for $G_n = \Sp(1,n)_\Z$ with $n \geq 3$, one has $L(G_n)\semb L(G_m)$ iff $n \leq m$. Note that both of these W$^*$-rigidity statements
say that more rigid objects cannot be embedded into less rigid ones.

During that same period of time, several embedding W$^*$-rigidity results modulo countable classes
were obtained in \cite{Pop86}, using a separability pigeon-hole principle trick, showing
for instance that, within the class $\bcT$ of property (T) II$_1$ factors (as defined in \cite{CJ83}), the equivalence relation given
by mutual stable embedding with finite index (also called {\it virtual isomorphism}) is countable-to-one with respect to actual isomorphism.
The separability trick together with the abundance of simple property (T) groups in (see \cite{Gro87}) was then used in \cite{Oza02} to show
that the embedding preorder relation does not have an upper bound in the class of separable II$_1$ factors.

Starting in 2001, deformation/rigidity and intertwining-by-bimodules techniques allowed a systematic insight into W$^*$-rigidity phenomena.
While particularly revealing for the isomorphism problem, this led to several non-embeddability results as well.
For instance, it is shown in \cite{IPP05} that if $n>m$ then the II$_1$ factor arising from a free product of $n$ property (T) infinite groups
cannot be embedded into one that comes from the free product of $m$ property (T) groups. Also, it is shown in \cite{OP07}
that one cannot embed a nonamenable II$_1$ factor with Cartan subalgebra into a free group factor. On the other hand,
a non-embeddability result  based on amenability of boundary actions was obtained in \cite{Oza03}, showing
that if $N$ is a nonamenable II$_1$ factor that either has property Gamma of \cite{MvN43}, or
is a tensor product of II$_1$ factors, then it cannot be embedded into a free group factor.
A number of results were obtained about the non-embeddability of a tensor product of $n$ factors of type II$_1$ into a tensor product of a strictly smaller number of factors from a certain class, like free group factors, see e.g.\ \cite{OP03}.

The main focus of deformation/rigidity theory has been on the isomorphism and classification problem for families of II$_1$ factors. Nevertheless, several of these articles contained, both implicitly and explicitly,
partial results on the structure of embeddings between specific group II$_1$ factors and group measure space II$_1$ factors, most notably for Bernoulli crossed products, see e.g. \cite{Pop03,Pop06,CH08,PV09,Ioa10,CS11,KV15}.

But overall, these recent tools have been very little used in a systematic study of the embedding problem. In particular, the two most prominent such problems, going back to
\cite{MvN43}, remained open: finding large classes of mutually non stably embeddable II$_1$ factors; finding natural classes of mutually embeddable but non stably isomorphic II$_1$ factors.
While we were solving these problems, stated as Theorem \ref{thm.main-embedding} and Corollary \ref{cor.family-with-a} below, a large number of additional questions naturally emerged and led to Theorems~\ref{thm.one-sided-fundamental}~--~\ref{thm.complete-intervals}, attesting to the striking richness of such W$^*$-rigidity phenomena.

To state in details our main results, we need to fix some notations. As above, when $N$ and $M$ are II$_1$ factors, we write $N \emb M$ if there exists a unital normal $*$-homomorphism from $N$ to $M$. We write $N \semb M$, to be read as $N$ stably embeds into $M$, if $N \emb M^t$ for some $t > 0$ or, equivalently, $N^t \emb M$ for some $t > 0$. We write $M \scong N$ when $M$ and $N$ are \emph{stably isomorphic} II$_1$ factors, meaning that $M \cong N^t$ for some $t > 0$. Recall that a normal $*$-homomorphism between II$_1$ factors is necessarily a trace preserving embedding.

Our first result provides a family of II$_1$ factors indexed by tracial amenable von Neumann algebras and a precise characterization of when one can be (stably) embedded in the other.

\begin{letterthm}\label{thm.main-embedding}
Let $\Gamma = \F_n$ be the free group with $2 \leq n \leq +\infty$ generators. For every amenable tracial von Neumann algebra $(A_0,\tau_0)$ with $A_0 \neq \C 1$, we define the II$_1$ factor $M(n,A_0,\tau_0)$ as the left-right Bernoulli crossed product
\begin{equation}\label{eq.II1-base}
M(n,A_0,\tau_0) = (A_0,\tau_0)^{\overline{\ot} \Gamma} \rtimes (\Gamma \times \Gamma) \; .
\end{equation}
We have $M(n_0,A_0,\tau_0) \semb M(n_1,A_1,\tau_1)$ iff $M(n_0,A_0,\tau_0) \emb M(n_1,A_1,\tau_1)$ iff there exists a trace preserving unital embedding $(A_0,\tau_0) \emb (A_1,\tau_1)$.
\end{letterthm}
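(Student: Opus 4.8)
The plan is to prove the three-way equivalence by establishing the cycle of implications, two of which are essentially trivial and one of which requires the full force of deformation/rigidity technology. Since $\semb$ is weaker than $\emb$ (the identity embedding $N \emb N$ witnesses $N \semb N$, and more to the point $N \emb M$ implies $N \semb M$ by definition), we have ``$\emb$ implies $\semb$'' for free. Next, given a trace preserving unital embedding $\iota\colon (A_0,\tau_0) \emb (A_1,\tau_1)$, I would produce a genuine (non-stable) embedding $M(n,A_0,\tau_0) \emb M(n,A_1,\tau_1)$ by functoriality of the left--right Bernoulli construction: $\iota$ induces a $(\Gamma\times\Gamma)$-equivariant embedding $(A_0,\tau_0)^{\ovt\Gamma} \emb (A_1,\tau_1)^{\ovt\Gamma}$ of the tensor-product algebras, and since the group acting is the same $\Gamma\times\Gamma$ on both sides, this extends to an embedding of the crossed products fixing the copy of $L(\Gamma\times\Gamma)$. (One should check the left--right Bernoulli action of $\Gamma\times\Gamma$ on the non-trivial $(A_0,\tau_0)^{\ovt\Gamma}$ is free and outer so that the crossed product is a II$_1$ factor; this is where $A_0\neq\C1$ and $n\geq 2$, hence $\Gamma$ infinite, is used.) Composing with the trivial inclusion $n_0 \leq n_1$ of free groups (if $n_0 \neq n_1$) gives the remaining direction once we know the easy case; but in fact the cleanest route is to only prove ``$(A_0,\tau_0)\emb(A_1,\tau_1)$ implies $M(n_0,A_0,\tau_0)\emb M(n_1,A_1,\tau_1)$'' directly, using that $\F_{n_0}$ embeds in $\F_{n_1}$ whenever $2\le n_0\le n_1\le\infty$ together with a co-amplification/induction argument for Bernoulli shifts over a subgroup.

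The substantive implication is the converse: $M(n_0,A_0,\tau_0) \semb M(n_1,A_1,\tau_1)$ forces an embedding $(A_0,\tau_0) \emb (A_1,\tau_1)$. Here I would set $M_i = M(n_i,A_i,\tau_i)$, fix a $t>0$ and a unital embedding $\pi\colon M_0 \emb M_1^t$, and exploit the rich structure of $M_0$: it contains the II$_1$ factor $L(\Gamma\times\Gamma) = L(\F_{n_0})\ovt L(\F_{n_0})$, it contains the ``base'' algebra $B_0 := (A_0,\tau_0)^{\ovt\Gamma}$ which is regular and whose relative commutant structure encodes $A_0$ via the ``diagonal'' copies $A_0\otimes 1$ indexed by $\Gamma$, and the whole picture carries the left--right Bernoulli malleable deformation of Popa. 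The strategy is a dichotomy/location argument: analyze the image $\pi(B_0)$ inside $M_1^t = \big((A_1,\tau_1)^{\ovt\Gamma}\rtimes(\Gamma\times\Gamma)\big)^t$. Using spectral gap / weak mixing of the Bernoulli action together with the malleable deformation on the target and a Popa-type transversality and rigidity argument (``deformation vs.\ rigidity'': a subalgebra is either intertwinable into the core $B_1 = (A_1,\tau_1)^{\ovt\Gamma}$, or the deformation moves it freely, contradicting some rigidity/finiteness feature coming from the group von Neumann algebra part), I would show $\pi(B_0) \prec_{M_1^t} B_1$ in the sense of Popa's intertwining-by-bimodules. One then upgrades this to a unitary conjugacy of $\pi(B_0)$ into $B_1$ after a further amplification, using that $B_0$ is a \emph{Cartan-like / regular} MASA-type subalgebra (more precisely a regular amenable subalgebra with large normalizer, namely all of $M_0$), so that its normalizer controls the position. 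Having conjugated $B_0$ inside $B_1$, a local-ness argument for embeddings of infinite tensor products of amenable algebras (each tensor factor $A_0$, sitting at a coordinate $g\in\Gamma$, must land in a ``finite-dimensional-in-coordinates'' piece of $(A_1,\tau_1)^{\ovt\Gamma}$ by a maximality/support argument) lets one read off a trace preserving embedding $(A_0,\tau_0) \emb (A_1,\tau_1)$ from the behavior at a single coordinate.

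The main obstacle I anticipate is exactly this last implication, and within it two sub-steps: first, getting the intertwining $\pi(B_0)\prec B_1$ — the difficulty is that we only assume \emph{some} amplification $M_1^t$ receives $M_0$, so one cannot a priori normalize $t=1$ and must run the deformation/rigidity dichotomy in an amplified, corner-algebra setting, keeping careful track of traces and indices (the statement that $t$ can be absorbed is itself part of what must be proven, and is presumably why $\semb$ and $\emb$ coincide here); second, the passage from ``$B_0$ sits inside $B_1$'' to ``$A_0$ embeds into $A_1$'' — infinite tensor products are not rigid under embeddings coordinate-by-coordinate, so one needs the right notion of ``the embedding is built from local data'', which should follow from the fact that $\pi$ intertwines the $\Gamma\times\Gamma$-actions up to the conjugating unitary, forcing the image of the $g$-coordinate copy of $A_0$ to be (unitarily) supported on a single $g'$-coordinate copy of $A_1$ by a Bernoulli-action cocycle/mixing argument, whence the coordinate map $A_0\to A_1$ is the desired trace preserving embedding. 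I expect the group-theoretic input ``$\F_{n_0}\emb\F_{n_1}$ for $2\le n_0\le n_1\le\infty$'' to be genuinely used to handle the case $n_0\neq n_1$ in the easy direction, and the property that $\F_n$ is icc, bi-exact and has no non-trivial finite-index-related pathologies to drive the rigidity side.
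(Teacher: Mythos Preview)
Your easy direction is essentially fine, with the minor correction that $\F_{n_0}\emb\F_{n_1}$ for \emph{all} $2\le n_0,n_1\le\infty$, not only $n_0\le n_1$; this is why the equivalence in the theorem is independent of $n_0,n_1$.

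The hard direction has a genuine gap. Your plan is to locate $\pi(B_0)$ inside $B_1$ and then ``read off'' a coordinate embedding $A_0\emb A_1$ by asserting that $\pi$ ``intertwines the $\Gamma\times\Gamma$-actions up to the conjugating unitary''. But you never establish this intertwining. Even granting $\pi(B_0)\subset B_1$ after conjugacy, the unitaries $\pi(u_{(g,h)})$ normalize $\pi(B_0)$, not $B_1$, and there is no mechanism in your outline forcing them to implement permutations of the tensor coordinates of $B_1$; a~priori they are arbitrary unitaries in $M_1^t$. Without knowing that (a finite-index piece of) $\pi|_{L(\F_{n_0}\times\F_{n_0})}$ is given by a genuine group homomorphism into $\F_{n_1}\times\F_{n_1}$, the single-coordinate localization argument you sketch cannot start. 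Relatedly, the passage from arbitrary $t>0$ to $t\in\N$, which you correctly flag as a difficulty, is not addressed by anything you list.

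The paper's route is in a sense the reverse of yours: it first controls the \emph{group} part. After conjugating $\pi(L(\Lambda\times\Lambda))$ into $L(\Gamma\times\Gamma)^t$ (via the malleable deformation) and applying Ozawa--Popa prime factorization to split this into embeddings of each $L(\Lambda)$ into an amplification of $L(\Gamma)$, one shows---indeed using $\pi(B)\prec_f A$, obtained from the Popa--Vaes theorem for weakly amenable groups in class~$(\cS)$---that these embeddings have positive \emph{height} in the sense of Ioana--Popa--Vaes. The decisive new ingredient, occupying an entire section, is a structure theorem for homomorphisms $\cG\to\cU(L(\Gamma)^t)$ when $\Gamma$ is in Ozawa's class~$(\cS)$ with amenable centralizers: any such homomorphism decomposes as a direct sum of a \emph{standard} piece (induced from honest group homomorphisms twisted by finite-dimensional representations), a zero-height piece, and a piece generating an amenable algebra. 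This is what forces $t\in\N$ and pins $\pi(u_{(g,h)})$ to the form $\gamma(g,h)\otimes u_{(\delta_1(g),\delta_2(h))}$ on a finite-index subgroup. Only \emph{after} this group-level rigidity is in hand does a relative-commutant and mixing argument of the sort you sketch localize $\pi(\pi_e(A_0))$ to a single coordinate $\pi_e(A_1)$. Your proposal omits this entire middle step, and the tools you invoke (malleability, spectral gap, regularity of $B_0$) do not by themselves supply it.
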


A few particular choices of $A_0$ lead to the following interesting examples, providing concrete families of II$_1$ factors that are mutually incomparable for $\semb$, as well as chains of II$_1$ factors for the relation $\semb$ and families of II$_1$ factors that are mutually embeddable, yet nonisomorphic and even non virtually isomorphic, meaning that it is impossible to embed one as a finite index subfactor of an amplification of the other (see Remark \ref{rem.virtually-isom}).

\begin{lettercor}\label{cor.family-with-a}
Fix $2 \leq n \leq +\infty$. Write $M(A_0,\tau_0) = M(n,A_0,\tau_0)$, using the notation \eqref{eq.II1-base}.
\begin{enumlist}
\item For $a \in (0,1/2]$, write $A_a = \C \oplus \C$ with $\tau_a(x \oplus y) = a x + (1-a)y$. Then $M_a = M(A_a,\tau_a)$ is an anti-chain for the preorder relations $\semb$ and $\emb$, i.e.\ $M_a \semb M_b$ iff $M_a \emb M_b$ iff $a = b$.

\item For $a \in (0,1]$, denote by $\eta_a$ the probability measure on $[0,a] \cup \{1\}$ given by the Lebesgue measure on $[0,a]$ and the atom $\eta_a(1) = 1-a$. Writing $(B_a,\tau_a) = L^\infty([0,a] \cup \{1\},\eta_a)$, the II$_1$ factors $P_a  = M(B_a,\tau_a)$ form a strict chain, i.e.\  $P_a \semb P_b$ iff $P_a \emb P_b$ iff $a \leq b$.

\item Let $R$ be the hyperfinite II$_1$ factor. For $a \in (0,1/2]$, define the trace $\tau_a$ on $R \oplus R$ by $\tau_a(x \oplus y) = a \tau(x) + (1-a) \tau(y)$. Write $Q_a = M(R \oplus R,\tau_a)$.
We have $Q_a \emb M(R,\tau) \emb Q_b$ for all $a,b \in (0,1/2]$, while $Q_a \cong Q_b$ iff $Q_a \scong Q_b$ iff $Q_a$, $Q_b$ are virtually isomorphic iff $a = b$.
\end{enumlist}
\end{lettercor}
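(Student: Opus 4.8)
The plan is to reduce each item, via Theorem~\ref{thm.main-embedding}, to the question of when a trace preserving unital embedding exists between the amenable tracial von Neumann algebras playing the role of the base $(A_0,\tau_0)$ in \eqref{eq.II1-base}, and -- only for the isomorphism statements in item~3 -- to additionally invoke the classification of the factors $M(n,A_0,\tau_0)$ up to stable and virtual isomorphism established elsewhere in the paper. Note first that all the algebras involved ($\C\oplus\C$; $L^\infty([0,a]\cup\{1\},\eta_a)$; $R\oplus R$; $R$) are amenable and $\neq\C1$, so Theorem~\ref{thm.main-embedding} applies and makes $M_a\semb M_b$, $M_a\emb M_b$ and $(A_a,\tau_a)\emb(A_b,\tau_b)$ mutually equivalent, and likewise for $P$, $Q$, $M(R,\tau)$. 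For item~1, a trace preserving unital embedding $(A_a,\tau_a)\emb(A_b,\tau_b)$ sends the central projection of trace $a$ to a projection of $(\C\oplus\C,\tau_b)$, whose possible traces are $0,b,1-b,1$; since $0<a\leq 1/2$ this forces $a=b$ (if $a=1-b$ then $b=1-a\geq 1/2$, hence $b=1/2=a$ as $b\leq 1/2$ too), while conversely $a=b$ is witnessed by the identity. Hence $M_a\semb M_b\iff M_a\emb M_b\iff a=b$.

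For item~2, if $a\leq b$ let $f\colon([0,b]\cup\{1\},\eta_b)\to([0,a]\cup\{1\},\eta_a)$ be the identity on $[0,a]$ and collapse $[a,b]\cup\{1\}$ to the point $1$; since $\eta_b([a,b]\cup\{1\})=(b-a)+(1-b)=1-a=\eta_a(\{1\})$, the map $f$ is measure preserving, so $g\mapsto g\circ f$ is a trace preserving unital embedding $(B_a,\tau_a)\emb(B_b,\tau_b)$. Conversely, suppose such an embedding $\pi$ exists, let $d_a\in B_a$ be the largest projection with $d_aB_ad_a$ diffuse -- the indicator of $[0,a]$, so $\tau_a(d_a)=a$ -- and observe that $\pi(d_aB_ad_a)$ is a diffuse unital von Neumann subalgebra of $\pi(d_a)B_b\pi(d_a)\cong L^\infty(S,\eta_b|_S)$ for some Borel $S\subseteq[0,b]\cup\{1\}$. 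A diffuse abelian algebra is never a unital subalgebra of an abelian algebra with an atom (the atom survives in the quotient probability space corresponding to the subalgebra), so $\eta_b(S\cap\{1\})=0$; thus either $b=1$, or $S\subseteq[0,b]$ and $a=\tau_b(\pi(d_a))=\eta_b(S)\leq b$. In both cases $a\leq b$, and Theorem~\ref{thm.main-embedding} gives $P_a\semb P_b\iff P_a\emb P_b\iff a\leq b$.

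For item~3, fix a projection $e\in R$ with $\tau(e)=a$ and $*$-isomorphisms $\theta_1\colon R\to eRe$, $\theta_2\colon R\to(1-e)R(1-e)$ (both corners are hyperfinite II$_1$ factors): then $x\oplus y\mapsto\theta_1(x)+\theta_2(y)$ is a unital embedding $(R\oplus R,\tau_a)\emb(R,\tau)$, trace preserving because $\theta_1$ and $\theta_2$ rescale the trace by $\tau(e)=a$ and $\tau(1-e)=1-a$ respectively, and $x\mapsto x\oplus x$ is a trace preserving unital embedding $(R,\tau)\emb(R\oplus R,\tau_b)$; Theorem~\ref{thm.main-embedding} then yields $Q_a\emb M(R,\tau)\emb Q_b$ for all $a,b\in(0,1/2]$. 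Since $Q_a\cong Q_b\Rightarrow Q_a\scong Q_b\Rightarrow Q_a,Q_b$ are virtually isomorphic, and $a=b$ trivially gives $Q_a\cong Q_b$, it remains to show that virtual isomorphism of $Q_a$ and $Q_b$ implies $a=b$. This is where the classification of the $M(n,A_0,\tau_0)$ enters: it recovers the base $(A_0,\tau_0)$ up to trace preserving isomorphism, pins down the (one-sided) fundamental group, and excludes proper finite-index inclusions between amplifications, so that a virtual isomorphism between $Q_a$ and $Q_b$ forces a trace preserving $*$-isomorphism $(R\oplus R,\tau_a)\cong(R\oplus R,\tau_b)$; such an isomorphism permutes the two central summands, so $\{a,1-a\}=\{b,1-b\}$ and therefore $a=b$, both lying in $(0,1/2]$.

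The crux of the matter is that items~1 and~2 are bookkeeping on top of Theorem~\ref{thm.main-embedding}, the only subtlety being the diffuse-subalgebra obstruction used in the converse of item~2, whereas the non-isomorphism assertion of item~3 lies genuinely deeper: mutual embeddability of the $Q_a$ means that no invariant visible through $\emb$ or $\semb$ can separate them, so the argument must instead rest on the full deformation/rigidity classification of the factors $M(n,A_0,\tau_0)$ -- reconstructing the base algebra, controlling amplifications, and ruling out finite-index sub-inclusions -- which is the technical heart developed in the body of the paper.
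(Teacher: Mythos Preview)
Your proof is correct and follows essentially the same route as the paper: reduce each item to the (non)existence of trace preserving unital embeddings between the base algebras via Theorem~\ref{thm.main-embedding}, and for the isomorphism assertions in item~3 invoke the classification recorded in Remark~\ref{rem.virtually-isom}. Your treatment is in fact more explicit than the paper's in items~1 and~2 (giving the case analysis on projection traces, the concrete measure-preserving factor map, and the diffuse-into-diffuse obstruction), but the underlying arguments coincide.
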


Following \cite[Definition 10.4]{Ioa10}, one defines the \emph{one-sided fundamental group} $\cFs(M)$ of a II$_1$ factor $M$ as the set of $t > 0$ such that $M \emb M^t$. Note that $\cFs(M)$ also is the set of possible finite values of the right dimension $\dim (H_M)$ of a Hilbert $M$-bimodule $H$. Also note that $\cFs(M) \subseteq \R^*_+$ is closed under addition and multiplication. We always have $\N \subseteq \cFs(M)$ and the following dichotomy holds: either $\cFs(M) \cap (0,1) = \emptyset$, or $\cF_s(M) = \R^*_+$ (see \cite[Section 10]{Ioa10}).

The only existing computations of $\cFs(M)$ yield $\N$ or $\R^*_+$. Combining our results in Theorem \ref{thm.main-embedding} with the methods of \cite[Corollary 6.5]{IPP05}, we realize any countable semiring $\N \subseteq \cF \subseteq [1,+\infty)$ as one-sided fundamental group.

\begin{letterthm}\label{thm.one-sided-fundamental}
Fix one of the II$_1$ factors $M = M_a$ given in the first point of Corollary \ref{cor.family-with-a}. Let $\cF \subset [1,+\infty)$ be a countable subset satisfying $\cF + \cF \subseteq \cF$, $\cF \cF \subseteq \cF$ and $\N \subseteq \cF$. Define the free product II$_1$ factor
$$P = \bigast_{t \in \cF} M^{1/t} \; .$$
Then, $\cFs(P) = \cF$.
\end{letterthm}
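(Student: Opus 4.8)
The plan is to compute $\cFs(P)$ by combining two ingredients: the amplification formula for free products of II$_1$ factors and the embedding rigidity of Theorem~\ref{thm.main-embedding}, exactly along the lines of \cite[Corollary 6.5]{IPP05}. First I would record the ``$\supseteq$'' inclusion, which is easy: for each $t \in \cF$, the free product $P = \bigast_{s \in \cF} M^{1/s}$ contains a copy of $M^{1/t}$ as a free factor, and since free products behave well under amplification one has $P^t \cong \bigast_{s \in \cF} M^{t/s}$. Using $\cF + \cF \subseteq \cF$ and $\cF\cF \subseteq \cF$, each exponent $t/s$ can be rewritten (grouping several copies together, as in the standard computation $M^{1/a} \ast M^{1/a} \ast \cdots \cong M^{1/(na)} \cdot$-type identities for free products) so that $P^t$ is again a free product of the $M^{1/s}$, $s \in \cF$, i.e.\ $P^t \cong P$; in particular $P \emb P^t$, so $\cF \subseteq \cFs(P)$. (One should be slightly careful that $1 \in \cF$, which is guaranteed by $\N \subseteq \cF$, so that the factor $M = M^{1/1}$ itself appears.)

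The substantive direction is ``$\subseteq$'': if $t > 0$ and $P \emb P^t$, then $t \in \cF$. Here I would argue as follows. A unital embedding $P \emb P^t$ restricts to a unital embedding of the free factor $M = M^{1/1} \subset P$ into $P^t = \bigast_{s \in \cF} M^{t/s}$. Now invoke the free-product / malleable-deformation machinery from \cite{IPP05} (specifically the results on intertwining subalgebras into free product factors): since $M = M_a$ is a Bernoulli crossed product of the form \eqref{eq.II1-base} and in particular has property (T) relative to nothing — wait, more precisely since $M$ has no amenable direct summand and the relevant rigidity (a spectral-gap/property-(T)-type deformation argument) applies to the left-right Bernoulli crossed product over $\F_n$ — the image of $M$ must be conjugate, inside $P^t$, to a subalgebra of one of the amplified free factors $M^{t/s}$. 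That is, there is $s \in \cF$ and $r \le t/s$ with $M \emb (M^{t/s})^{?}$, and unwinding amplifications this gives a stable embedding $M \semb M$. By Theorem~\ref{thm.main-embedding} applied to $M = M_a = M(n,A_a,\tau_a)$, such a stable embedding forces a trace-preserving embedding $(A_a,\tau_a) \emb (A_a,\tau_a)$, and by the first point of Corollary~\ref{cor.family-with-a} (the anti-chain property) this is only possible ``on the nose'', which pins down the amplification parameter and yields $t = r \cdot s \in \cF\cF \subseteq \cF$ after tracking the dimension bookkeeping. Iterating or summing over which free factors the image of $P$ spreads across — using that $P$ is generated by the family $\{M^{1/s}\}$ — upgrades this to $t \in \cF$.

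Assembling the two inclusions gives $\cFs(P) = \cF$, as claimed. The main obstacle, and the place where real work is needed, is the ``$\subseteq$'' step: one must show that an arbitrary unital embedding of $M$ (and then of all of $P$) into the free product $P^t$ can be untangled into the individual amplified free factors. This is precisely where one needs the intertwining-by-bimodules technology of \cite{IPP05} together with the fact that $M_a$, being a left-right Bernoulli crossed product over $\F_n$, is rigid enough (has the appropriate relative property (T) / lacks the Haagerup-type flexibility) that its image cannot straddle two free factors or drift into the ``free'' part of the amplified free product. The delicate points will be (i) verifying that the deformation/rigidity hypotheses of the relevant intertwining theorem are met by $M_a^t$ and by the free product $P^t$, and (ii) carefully tracking amplification constants so that the combinatorial identity $t \in \cF$ comes out exactly rather than up to some uncontrolled factor; both are routine given the tools but require care, and the anti-chain conclusion of Corollary~\ref{cor.family-with-a}(1) is what makes the bookkeeping rigid enough to close.
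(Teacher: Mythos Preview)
Your overall strategy is right in spirit, but there are genuine gaps in both directions, and the rigidity mechanism you invoke is not the one that works.

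For the inclusion $\cF \subseteq \cFs(P)$, you are overcomplicating things by trying to prove $P^t \cong P$ via a regrouping argument. The paper's argument is cleaner and avoids this: writing $P(\cH) = \bigast_{r \in \cH} M^r$, one has by \cite{DR99} that $P(\cH)^t \cong P(t\cH)$ for any $t>0$. Since $t\cF \subseteq \cF$ gives $\cF^{-1} \subseteq t\cF^{-1}$, one only needs the trivial embedding $P(\cF^{-1}) \emb P(t\cF^{-1})$, not an isomorphism.

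The serious gap is in the $\subseteq$ direction. You invoke ``property (T) / relative property (T)'' for $M = M_a$, but $M_a$ is a Bernoulli crossed product by $\F_n \times \F_n$, which has the Haagerup property and is about as far from property (T) as possible; the deformation argument you have in mind from \cite{IPP05} does not apply directly to $M$. What the paper actually uses is the internal structure of $M$: the commuting nonamenable subfactors $Q_1 = L(\Gamma \times \{e\})$, $Q_2 = L(\{e\} \times \Gamma)$, and the diagonal $Q \subset Q_1 \ovt Q_2$, together with the generation property
\[
M = \bigl(Q_1 \cup (Q_1' \cap M) \cup (Q' \cap M)\bigr)''.
\]
One applies \cite[Theorem 4.3]{HU15} (not \cite{IPP05} alone) to the nonamenable subfactor $\psi(Q_1)$ inside the free product $P(t\cF^{-1})$ to obtain a family of projections $p_i \in \psi(Q_1)' \cap P^t$ with $\sum_i p_i = 1$, each $\psi(Q_1)p_i$ unitarily conjugate into a corner $q_i M^{t/s_i} q_i$ of a single free factor. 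Then, using \cite[Theorem 1.1]{IPP05} and the relative commutant structure above, one bootstraps: first $p_i$ commutes with $\psi(Q_2)$, then with all of $\psi(M)$, and each $\psi(M)p_i$ lands in $q_i M^{t/s_i} q_i$. Your sketch collapses this to ``the image of $M$ must be conjugate\ldots to a subalgebra of one of the amplified free factors,'' which is not what happens: the image decomposes over several free factors via the $p_i$.

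Finally, your arithmetic is off. Writing $r_i = \tau(p_i) = \tau(q_i)$, each piece gives $M \emb M^{t r_i / s_i}$, and Theorem~\ref{thm.main-thm-embedding-bernoulli} forces $t r_i / s_i = n_i \in \N$. Hence $t r_i = n_i s_i \geq 1$, and $\sum_i r_i = 1$ forces $I$ finite with
\[
t = \sum_i t r_i = \sum_i n_i s_i \in \cF,
\]
using \emph{all three} hypotheses $\N \subseteq \cF$, $\cF\cF \subseteq \cF$, $\cF + \cF \subseteq \cF$. Your conclusion ``$t = r\cdot s \in \cF\cF$'' misses the summation structure entirely.
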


We then turn to a systematic study of \emph{stable self-embeddings} $\theta : M \recht M^d$. We say that such a self-embedding is \emph{trivial} if $d \in \N$ and if $\theta$ is unitarily conjugate to $M \emb M_d(\C) \ot M : x \mapsto 1 \ot x$. It was conjectured in \cite[Section 10]{Ioa10} that there exist II$_1$ factors $M$ such that \emph{every} stable self-embedding $M \semb M$ is trivial. This conjecture was proven in \cite{Dep13}, with an extremely involved construction and the paper remaining unpublished. Our methods provide the following simpler example.

Denote by $G$ the group of all finite, even permutations of $\N$ and let $G_1 < G$ be the subgroup of permutations fixing $1 \in \N$. Put $\Gamma = G \ast G$ and $\Gamma_0 = G_1 \ast G$. Denote by $\cA \subset G_1$ the subset of elements of order $2$ and by $\cB \subset G$ the subset of elements of order $3$. View $\cA \subset \Gamma_0$ as elements in the first copy free product factor $G_1$ and view $\cB \subset \Gamma_0$ as elements in the second free product factor $G$.
For all $(a,b) \in \cA \times \cB$, define the subgroup $\Lambda_{a,b} < \Gamma_0 \times \Gamma_0$ generated by $(a,a)$ and $(b,b)$. Note that $\Lambda_{a,b} \cong \Z/2\Z \ast \Z/3\Z$.

\begin{letterthm}\label{thm.all-embed-trivial}
For all $(a,b) \in \cA \times \cB$, choose distinct probability measures $\mu_{a,b}$ on $\{0,1\}$ with $\mu_{a,b}(0) \in (0,1/2)$.
Consider the generalized Bernoulli action
$$\Gamma_0 \times \Gamma \actson (X,\mu) = \prod_{(a,b) \in \cA \times \cB} \bigl(\{0,1\},\mu_{a,b}\bigr)^{(\Gamma_0 \times \Gamma)/\Lambda_{a,b}}$$
and define $M = L^\infty(X,\mu) \rtimes (\Gamma \times \Gamma)$. Then every embedding $M \emb M^t$ is trivial.
\end{letterthm}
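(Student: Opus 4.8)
The plan is to realize $M = L^\infty(X,\mu) \rtimes (\Gamma \times \Gamma)$ as a Bernoulli-type crossed product to which the deformation/rigidity machinery behind Theorem~\ref{thm.main-embedding} applies, and then to push the same argument through a \emph{general} embedding $M \emb M^t$ rather than just comparing two factors of the form \eqref{eq.II1-base}. First I would fix an arbitrary trace preserving unital embedding $\theta : M \emb M^t$ and analyze the position of the image of the ``Cartan-like'' pieces. Inside $M$ we have the abelian subalgebra $A = L^\infty(X,\mu)$ and the group von Neumann algebra $L(\Gamma \times \Gamma)$; on the target side $M^t$ has the analogous decomposition. The action $\Gamma_0 \times \Gamma \actson (X,\mu)$ is a generalized Bernoulli action with base spaces $(\{0,1\},\mu_{a,b})$ indexed by the coset spaces $(\Gamma_0 \times \Gamma)/\Lambda_{a,b}$, and the point of taking $\Lambda_{a,b} \cong \Z/2\Z \ast \Z/3\Z$ (an infinite, nonamenable, in particular infinite-index subgroup with the relevant malnormality/weak malnormality features coming from freeness in $\Gamma_0 = G_1 \ast G$ and $\Gamma = G\ast G$) is exactly to make the action (generalized) Bernoulli with the rigidity one needs: weak mixing of each stabilizer action, and the spectral-gap/relative property described in \cite{Pop03,Pop06,Ioa10}. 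So the first technical step is to invoke the relevant rigidity theorem for such crossed products to locate $\theta(L(\Gamma\times\Gamma))$: using Popa's deformation/rigidity and spectral gap (or Ioana's results on Bernoulli actions), one shows that $\theta(L(\Gamma \times \Gamma))$ must, after a unitary conjugacy in $M^t$, sit in a controlled way relative to the target group algebra, and dually that $\theta(A)$ is intertwined into the target copy of $L^\infty(X,\mu)^t$.

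The second, and I expect main, step is to upgrade this to a \emph{unique crossed product decomposition} statement: any embedding $M \emb M^t$ must send the canonical Cartan $A \subset M$ into (a corner of) the canonical Cartan of $M^t$, compatibly with the groupoid structure, so that $\theta$ is implemented by a local isomorphism of the underlying measured equivalence relations together with a homomorphism-like datum on the group side. Concretely I would (i) use intertwining-by-bimodules to get $\theta(A) \prec A^t$, (ii) use the fact that $A \subset M$ is the \emph{unique} Cartan subalgebra up to unitary conjugacy (this should follow for these crossed products from the structure of $\Gamma\times\Gamma$ and weak mixing, in the spirit of \cite{Ioa10,PV09}) to actually conjugate $\theta(A)$ \emph{onto} a corner $p(A^t)p$ for a projection $p$, and (iii) deduce that $\theta$ induces an injective local homomorphism of the orbit equivalence relations $\mathcal{R}(\Gamma_0\times\Gamma \actson X)$ into an amplification of itself. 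The rigidity of the \emph{family} of measures $\{\mu_{a,b}\}$ — they are pairwise distinct, with $\mu_{a,b}(0) \in (0,1/2)$ so that the two atoms are distinguishable and no two base spaces are isomorphic — is what forces this local homomorphism to respect the index set $\cA \times \cB$ and the subgroups $\Lambda_{a,b}$. This is analogous to how, in Theorem~\ref{thm.main-embedding}, the embedding of $M(n_0,A_0)$ into $M(n_1,A_1)$ is controlled by a trace preserving embedding $(A_0,\tau_0)\emb(A_1,\tau_1)$: here the role of that base algebra is played by the combinatorial/measure-theoretic data $(\cA\times\cB, \{\mu_{a,b}\}, \{\Lambda_{a,b}\})$, which is \emph{rigid} (it admits no non-trivial self-embeddings) precisely because the measures are all distinct and the groups $G$, $G_1$ are chosen so that the relevant automorphism/embedding groupoid is trivial.

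The third step is then a counting/index argument. Having shown that $\theta$ comes from a local isomorphism of the equivalence relation that fixes the combinatorial data, the only remaining freedom is an amplification by $t$ together with a self-embedding of the group side $\Gamma\times\Gamma = (G\ast G)\times(G\ast G)$ into an amplification of itself, compatible with the outer/cocycle structure. The key point is that, because the index data is rigidly preserved and the action is free and weakly mixing, such a self-embedding forces $t \in \N$ and forces $\theta$ to be, up to inner perturbation, the inclusion $M \emb M_t(\C)\ot M$, $x \mapsto 1\ot x$ — i.e.\ $\theta$ is trivial in the sense defined just before the theorem. I would also need to handle the degenerate possibility $t < 1$: the dichotomy for one-sided fundamental groups and the above structural constraints should rule out $\theta(M) \subsetneq M^t$ with $t<1$, since that would produce a non-trivial element of $\cFs(M)\cap(0,1)$, contradicting the rigidity of the fixed combinatorial datum.

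The hard part will be step two: proving the uniqueness-of-Cartan and unique-crossed-product-decomposition statement \emph{in a form robust enough to handle embeddings}, not just isomorphisms. Ordinary uniqueness-of-Cartan theorems are stated for automorphisms of a single factor; here one must intertwine the Cartan of $M$ into the Cartan of a \emph{different} factor $M^t$ and control the induced map on equivalence relations, which requires the intertwining-by-bimodules analysis to be carried out carefully for the non-surjective map $\theta$ and combined with the weak-mixing/malnormality properties of the $\Lambda_{a,b}$ inside $\Gamma_0\times\Gamma$. Once that is in place, the combinatorial rigidity of the distinct-measures family $\{\mu_{a,b}\}$ and the counting argument should close the proof routinely, mirroring the proof of Theorem~\ref{thm.main-embedding} with the base algebra replaced by the index datum.
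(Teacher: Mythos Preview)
Your overall strategy diverges from the paper's and, as written, has a real gap at the point you yourself flag as ``the hard part.'' The paper does \emph{not} go through Cartan uniqueness or orbit equivalence. Instead it first controls the \emph{group} part of the embedding: via Lemma~\ref{lem.step1} (which rests on the new height theorem, Theorem~\ref{thm.standard-embedding}, for homomorphisms $\cG \to \cU(L(\Gamma)^t)$ with arbitrary $t$), any $\theta : M \to M^t$ is shown, after unitary conjugacy and restriction to a finite index subgroup, to send $u_{(g,h)}$ to $\gamma(g,h)\otimes u_{\pi(g,h)}$ for a finite dimensional representation $\gamma$ and an injective group homomorphism $\pi$ of product form. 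Because $\Gamma_0 = G_1\ast G$ and $\Gamma = G\ast G$ have no nontrivial finite dimensional unitary representations (this is why $G=A_\infty$ is chosen), $\gamma$ is forced to be trivial, which is exactly what gives $t\in\N$ with no extra counting. Only \emph{after} the group embedding is pinned down does one look at $\theta(A)$: the distinct measures $\mu_{a,b}$ force $\pi(\Delta(\Lambda_{a,b})) \prec_{\Gamma_0\times\Gamma} \Delta(\Lambda_{a,b})$ for every $(a,b)$, and then a purely group-theoretic rigidity statement (Assumptions~\ref{my-assum}.\ref{five}, verified in Lemma~\ref{lem.first-group-lemma} via Kurosh and a reduced-word analysis in Lemma~\ref{lem.playing-with-words}) shows that any injective $\delta:\Gamma_i\to\Gamma$ with $\delta(\Lambda_{a,b})\prec_\Gamma \Lambda_{a,b}$ for all $(a,b)$ is inner. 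This forces $\pi=\id$ and then the abelian part follows.

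Your route via ``Cartan uniqueness robust for embeddings'' is not available off the shelf: the acting group is $\Gamma_0\times\Gamma$, a product of free products of amenable groups, which is not directly in the classes handled by \cite{Ioa10,PV09,PV12}, and in any case those theorems classify Cartans inside a fixed factor rather than pushing a Cartan forward along a nonsurjective $\theta$ into an amplification. More seriously, even if you could match the Cartans, you would still need exactly the group-theoretic step you defer to a single sentence (``the relevant automorphism/embedding groupoid is trivial''): showing that an injective endomorphism of $\Gamma_0$ (resp.\ $\Gamma$) that virtually preserves each $\Lambda_{a,b}$ up to conjugacy must be inner. That is nontrivial~--~it uses Kurosh, an analysis of how $\Z/2\Z\ast\Z/3\Z$ subgroups sit inside $G\ast G$ (Lemma~\ref{lem.playing-with-words}), and a small fact about $A_\infty$ (Lemma~\ref{lem.elem-permutation})~--~and it, together with the absence of finite dimensional representations, is the actual engine behind triviality and behind $t\in\N$. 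Your outline does not supply this, so the argument as proposed does not close.
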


One may view Theorem \ref{thm.main-embedding} as a functorial construction that takes a base algebra $(A_0,\tau)$ as input and produces a II$_1$ factor such that embeddability holds if and only if the initial data are embeddable. We develop this approach much further and provide the following concrete augmentation functor assigning to any infinite group $\Gamma$ an icc group $H_\Gamma$, through a generalized wreath product construction, such that $L(H_\Gamma) \semb L(H_\Lambda)$ iff $L(H_\Gamma) \emb L(H_\Lambda)$ iff $\Gamma$ is isomorphic with a subgroup of $\Lambda$.

\begin{letterthm}\label{thm.family-with-groups}
Let $\Gamma$ be an infinite group (not necessarily countable). Denote by $G_\Gamma \cong \F_{1+|\Gamma|}$ the free group with free generators $a_0$ and $(a_g)_{g \in \Gamma}$. Define the surjective group homomorphism $\pi_\Gamma : G_\Gamma \recht \Z * \Gamma$ by $\pi_\Gamma(a_0) = 1 \in  \Z$ and $\pi_\Gamma(a_g) = g$ for all $g \in \Gamma$. Define the subgroup $N_\Gamma < G_\Gamma \times G_\Gamma$ consisting of the elements $(g,g)$ with $\pi_\Gamma(g) \in \Gamma$. Consider the generalized wreath product group
$$H_\Gamma = \bigl(\Z/2\Z\bigr)^{((G_\Gamma \times G_\Gamma)/N_\Gamma)} \rtimes (G_\Gamma \times G_\Gamma) \; .$$
If $\Gamma$ and $\Lambda$ are arbitrary infinite groups, we have $L(H_\Gamma) \semb L(H_\Lambda)$ iff $L(H_\Gamma) \emb L(H_\Lambda)$ iff $\Gamma$ is isomorphic with a subgroup of $\Lambda$.

The II$_1$ factors $L(H_\Gamma)$ and $L(H_\Lambda)$ are virtually isomorphic iff they are stably isomorphic iff they are isomorphic iff $\Gamma \cong \Lambda$.
\end{letterthm}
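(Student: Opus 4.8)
The plan is to reduce Theorem~\ref{thm.family-with-groups} to Theorem~\ref{thm.main-embedding}, by recognizing the group von Neumann algebra $L(H_\Gamma)$ as a left–right Bernoulli crossed product of the type appearing in \eqref{eq.II1-base}, but built over the free group $G_\Gamma = \F_{1+|\Gamma|}$ rather than over a fixed $\F_n$, and with an extra layer of structure encoding the quotient map $\pi_\Gamma$. Concretely, I would first identify the base data. The generalized wreath product $H_\Gamma = (\Z/2\Z)^{((G_\Gamma\times G_\Gamma)/N_\Gamma)} \rtimes (G_\Gamma\times G_\Gamma)$ has group von Neumann algebra $L(H_\Gamma) = L^\infty\bigl(\{0,1\}^{(G_\Gamma\times G_\Gamma)/N_\Gamma}\bigr) \rtimes (G_\Gamma\times G_\Gamma)$, where the acting group permutes the coset space. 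Since $N_\Gamma$ is precisely the ``diagonal up to $\Gamma$'' subgroup, the coset space $(G_\Gamma\times G_\Gamma)/N_\Gamma$ is $(G_\Gamma\times G_\Gamma)$-equivariantly isomorphic to $G_\Gamma \times (G_\Gamma/\!\!\sim)$ for a suitable equivalence, and after unpacking one sees this is the left–right Bernoulli action of $G_\Gamma\times G_\Gamma$ on $\bigl((A_0^\Gamma,\tau_0)\bigr)^{\ovt G_\Gamma}$ where $A_0^\Gamma = L^\infty(\{0,1\}^{?})$ carries the natural $\Gamma$-action; the upshot is that $L(H_\Gamma) \cong M(1+|\Gamma|, A_0^\Gamma, \tau_0^\Gamma)$ in the sense of \eqref{eq.II1-base}, possibly after checking that the proof of Theorem~\ref{thm.main-embedding} goes through for the free group $\F_{1+|\Gamma|}$ on arbitrarily many generators and for base algebras with this particular abelian structure. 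Granting that identification, Theorem~\ref{thm.main-embedding} immediately gives $L(H_\Gamma)\semb L(H_\Lambda)$ iff $L(H_\Gamma)\emb L(H_\Lambda)$ iff there is a trace preserving embedding of the base algebras $(A_0^\Gamma,\tau_0^\Gamma)\emb(A_0^\Lambda,\tau_0^\Lambda)$.

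The core of the argument then becomes a purely ``combinatorial''/group-theoretic translation: one must show that a trace preserving embedding of the abelian base algebras $(A_0^\Gamma,\tau_0^\Gamma) \emb (A_0^\Lambda,\tau_0^\Lambda)$ exists if and only if $\Gamma$ is isomorphic with a subgroup of $\Lambda$. For the ``if'' direction this is easy: an embedding $\Gamma\hookrightarrow\Lambda$ yields, by functoriality of the wreath/Bernoulli construction, a corresponding embedding at every level and hence an embedding $L(H_\Gamma)\emb L(H_\Lambda)$ directly (one does not even need to route through Theorem~\ref{thm.main-embedding} for this direction). For the ``only if'' direction the point is that the base algebra $A_0^\Gamma$ is not just an abelian algebra but comes equipped with the measure-preserving $\Gamma$-action on $\{0,1\}^{(\text{something})}$, and the measures $\mu_{a,b}$-type data are chosen generically/distinctly enough that any trace preserving embedding of the base algebras must intertwine the relevant group actions, forcing an injective group homomorphism $\Gamma\hookrightarrow\Lambda$. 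This is exactly the mechanism already used in Corollary~\ref{cor.family-with-a} and Theorem~\ref{thm.all-embed-trivial}, where distinct atom-weights or distinct measures on $\{0,1\}$ prevent spurious embeddings; here the ``labels'' are the group elements of $\Gamma$ and recovering them from the von Neumann algebra amounts to a rigidity statement about which pieces of $A_0^\Gamma$ can embed into which pieces of $A_0^\Lambda$.

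For the final two-line assertion — that $L(H_\Gamma)$ and $L(H_\Lambda)$ are virtually isomorphic iff stably isomorphic iff isomorphic iff $\Gamma\cong\Lambda$ — I would argue as follows. The implications $\Gamma\cong\Lambda \Rightarrow L(H_\Gamma)\cong L(H_\Lambda) \Rightarrow L(H_\Gamma)\scong L(H_\Lambda) \Rightarrow$ virtually isomorphic are trivial. For the converse, suppose $L(H_\Gamma)$ and $L(H_\Lambda)$ are virtually isomorphic; then in particular each stably embeds into the other, so by the embedding part of the theorem $\Gamma$ embeds into $\Lambda$ and $\Lambda$ embeds into $\Gamma$. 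This does not yet give $\Gamma\cong\Lambda$ for general groups (there are non-isomorphic mutually embeddable groups), so one needs the stronger finite-index input: a virtual isomorphism $L(H_\Gamma)^s \cong L(H_\Lambda)^t$ with finite Jones index should, via a more refined version of the analysis behind Theorem~\ref{thm.main-embedding} (keeping track of indices, as in \cite[Corollary 6.5]{IPP05}), force the index to be $1$ and an actual stable isomorphism, and then an honest isomorphism of the reconstructed base data that is equivariant, hence $\Gamma\cong\Lambda$. Alternatively, and more robustly, one invokes the stable isomorphism rigidity already available for these left–right Bernoulli crossed products over free groups (a W$^*$-superrigidity statement of the type $M(n_0,A_0,\tau_0)\scong M(n_1,A_1,\tau_1)$ iff $n_0=n_1$ and $(A_0,\tau_0)\cong(A_1,\tau_1)$ equivariantly), which the paper presumably establishes alongside Theorem~\ref{thm.main-embedding}; applying it gives $|\Gamma|=|\Lambda|$ and an isomorphism of base data intertwining the group actions, whence $\Gamma\cong\Lambda$.

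The main obstacle, I expect, is the first step: cleanly identifying $L(H_\Gamma)$ with a left–right Bernoulli crossed product in the exact form required to feed into Theorem~\ref{thm.main-embedding}, and verifying that that theorem (whose statement above is fixed to $\Gamma=\F_n$ with $2\le n\le\infty$ and to \emph{arbitrary} amenable $(A_0,\tau_0)\ne\C1$) applies with the noncountable free group $\F_{1+|\Gamma|}$ and with the specific base algebra carrying a $\Gamma$-action. The deformation/rigidity machinery — spectral gap rigidity for the Bernoulli deformation, intertwining-by-bimodules, and the classification of the resulting embeddings — is presumably insensitive to the cardinality of the generating set and to separability, but making that explicit, and extracting from the conclusion of Theorem~\ref{thm.main-embedding} not merely an abstract embedding of base algebras but one that respects enough of the built-in group structure to reconstruct an injection $\Gamma\hookrightarrow\Lambda$, is where the real work lies.
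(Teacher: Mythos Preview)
Your overall shape is right: $L(H_\Gamma)$ \emph{is} a left--right Bernoulli crossed product $M(G_\Gamma,\alpha)$ in the sense of \eqref{eq.our-factors}, and the paper's proof does feed this into the machinery of Section~\ref{sec.embeddings-bernoulli}. But two things are off. First, the base action is nontrivial: the base algebra is $L(D_\Gamma)=L^\infty\bigl(\{0,1\}^{(\Z*\Gamma)/\Gamma}\bigr)$ with the $G_\Gamma$-action induced through $\pi_\Gamma$, so you must invoke Theorem~\ref{thm.main-thm-embedding-bernoulli} (nontrivial action, $\Ker\alpha\supseteq\Ker\pi_\Gamma\neq\{e\}$), not Theorem~\ref{thm.main-embedding}. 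The output of that theorem is not merely ``a trace preserving embedding of base algebras''---it is an injective group homomorphism $\delta:G_\Gamma\to G_\Lambda$ \emph{together with} an equivariant $\psi:L(D_\Gamma)\to L(D_\Lambda)$. The base algebras here are both diffuse abelian with the uniform $\Z/2\Z$ base, so a naked embedding between them carries no information; there is no ``distinct measures $\mu_{a,b}$'' mechanism available, and your appeal to it is misplaced.

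The genuine gap is step (c): extracting $\Gamma\hookrightarrow\Lambda$ from $(\delta,\psi)$. This does \emph{not} come from base-algebra rigidity but from a descent argument that you do not mention. One must show that $\delta(\Ker\pi_\Gamma)\subseteq\Ker\pi_\Lambda$ and $\delta^{-1}(\Ker\pi_\Lambda)\subseteq\Ker\pi_\Gamma$, so that $\delta$ induces an injective $\bar\delta:\Z*\Gamma\to\Z*\Lambda$, and then that $\delta(N_\Gamma)\subseteq N_\Lambda$, giving $\bar\delta(\Gamma)\subseteq\Lambda$. The paper does this by analyzing fixed-point subalgebras: the equivariance of $\psi$ forces $\delta(L\cap N_\Gamma)\prec N_\Lambda$ (else the $\psi$-image of the $N_\Gamma$-fixed two-dimensional algebra $E_{e\Gamma}$ would be scalar), and then the \emph{malnormality} of $\Gamma$ in $\Z*\Gamma$ (so $N_\Lambda\cap yN_\Lambda y^{-1}=\Ker\pi_\Lambda$ for $y\notin N_\Lambda$) pins down exactly which elements land in $\Ker\pi_\Lambda$. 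This malnormality/fixed-point step is the heart of the proof and is absent from your sketch. For the virtual isomorphism clause, the paper observes that finite index of the image forces $\delta$ to be surjective, whence $\bar\delta$ is an isomorphism and $\Gamma\cong\Lambda$; your fallback to a separate superrigidity statement is unnecessary once the descent is in hand. Finally, the uncountable case is not automatic: the paper handles it by exhausting by countable subgroups and controlling the index bound $[\!G_{\Gamma_1}\!:\!L_1\!]\le\exp(d)$ uniformly, which requires a separate compatibility argument.
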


Both $\emb$ and $\semb$ define preorder relations on the class of II$_1$ factors. We similarly have a preorder relation $\emb$ on the class of groups defined by $\Gamma \emb \Lambda$ iff $\Gamma$ is isomorphic to a subgroup of $\Lambda$. Combining Theorem \ref{thm.family-with-groups} with the wildness of the relation $\emb$ on countable groups, we may concretely realize numerous partial orders and total orders $(I,\leq)$ inside $(\IIone,\emb)$ and $(\IIone,\mathord{\semb})$, see Corollary \ref{cor.examples-orders}.

More precisely, given a partially ordered set $(I,\leq)$, we want to construct a family of II$_1$ factors $(M_i)_{i \in I}$ such that $M_i \emb M_j$ (resp.\ $M_i \semb M_j$) if and only if $i \leq j$. It is natural to ask which partial orders can be realized in this way within II$_1$ factors having separable predual.

We provide the following concrete interesting examples. The meaning of ``concrete'' will become clear in the proof of Corollary \ref{cor.examples-orders}, which is entirely constructive. In the formulation of the corollary, we say that a subset $I_0 \subseteq I$ of a partially ordered set $(I,\leq)$ is \emph{sup-dense} if every element of $I$ is the supremum of a subset of $I_0$. We say that $(I,\leq)$ is separable if it admits a countable sup-dense subset. More generally, we say that $(I,\leq)$ has density character at most $\kappa$ if $\kappa$ is an infinite cardinal number and $(I,\leq)$ admits a sup-dense subset of cardinality at most $\kappa$. We similarly say that a II$_1$ factor $M$ has density character at most $\kappa$ if $M$ admits a $\|\,\cdot\,\|_2$-dense subset of cardinality at most $\kappa$.

\begin{lettercor}\label{cor.examples-orders}
Let $(I,\leq)$ be a partially ordered set with density character at most $\kappa$. There exists a concrete family of II$_1$ factors $(M_i)_{i \in I}$ with density character at most $\kappa$ such that $M_i \emb M_j$ iff $M_i \semb M_j$ iff $i \leq j$.

There are concrete chains of II$_1$ factors with separable predual, both in $(\IIone,\emb)$ and $(\IIone,\mathord{\semb})$, of order type $(\R,\leq)$ and of order type $(\om_1,\leq)$, where $\om_1$ is the first uncountable ordinal.
\end{lettercor}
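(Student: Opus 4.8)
The plan is to reduce the whole statement to a purely group-theoretic one via the augmentation functor $\Gamma \mapsto H_\Gamma$ of Theorem~\ref{thm.family-with-groups}, and then to realize an arbitrary poset inside the embeddability preorder of groups by taking free products of suitably rigid building blocks. Concretely, I would fix a sup-dense subset $I_0 \subseteq I$ with $|I_0| \le \kappa$ and reduce to the task of producing infinite groups $(\Gamma_i)_{i \in I}$ of cardinality at most $\kappa$ with $\Gamma_i \emb \Gamma_j$ iff $i \le j$. Granting this, one sets $M_i := L(H_{\Gamma_i})$; Theorem~\ref{thm.family-with-groups} then yields $M_i \emb M_j \Leftrightarrow M_i \semb M_j \Leftrightarrow \Gamma_i \emb \Gamma_j \Leftrightarrow i \le j$, and since $H_\Gamma$ is obtained from $\Gamma$ by passing to a free group and a generalized wreath product, $|H_{\Gamma_i}| \le |\Gamma_i| \le \kappa$, so $M_i$ has density character at most $\kappa$. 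All objects are explicit once the $\Gamma_i$ are, which is the meaning of ``concrete''. (For distinct $i$ the factors $M_i$ are then automatically non-isomorphic, by antisymmetry of $\le$.)

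For the building blocks I would fix a family $(\Sigma_j)_{j \in I_0 \cup \{\ast\}}$ of pairwise non-embeddable infinite simple groups of cardinality at most $\kappa$. For $\kappa = \aleph_0$ one may take pairwise non-isomorphic Tarski monster groups --- every proper subgroup being finite, any embedding between two of them is surjective, so pairwise non-isomorphic implies pairwise non-embeddable, and there is a continuum of them --- or, alternatively, simple property~(T) groups as furnished by \cite{Gro87}; for larger $\kappa$ one uses suitable analogues of cardinality $\kappa$ (simple Jónsson-type groups). Then put
\[
\Gamma_i \;=\; \Sigma_\ast \ast \bigast_{j \in I_0,\; j \le i} \Sigma_j \; .
\]
Each $\Gamma_i$ contains $\Sigma_\ast$, hence is infinite, and a free product of at most $\kappa$ groups of cardinality at most $\kappa$ has cardinality at most $\kappa$.

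To verify the equivalence: if $i \le i'$ then $\{j \in I_0 : j \le i\} \subseteq \{j \in I_0 : j \le i'\}$, so $\Gamma_i$ is a free factor of $\Gamma_{i'}$ and therefore $\Gamma_i \emb \Gamma_{i'}$. Conversely, suppose $\Gamma_i \emb \Gamma_{i'}$. Then every $\Sigma_j$ with $j \in I_0$, $j \le i$, embeds into $\Gamma_{i'}$; by the Kurosh subgroup theorem a subgroup of a free product is itself the free product of a free group with conjugates of subgroups of the factors, and an infinite simple group is neither a nontrivial free product nor a nontrivial free group, so $\Sigma_j$ must be conjugate into a single free-product factor of $\Gamma_{i'}$, i.e.\ it embeds into $\Sigma_\ast$ or into some $\Sigma_k$ with $k \in I_0$, $k \le i'$. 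The former is impossible by the choice of $\Sigma_\ast$, and the latter forces $j = k$, whence $j \le i'$; thus $\{j \in I_0 : j \le i\} \subseteq \{k \in I_0 : k \le i'\}$. Since $I_0$ is sup-dense, $i = \sup\{j \in I_0 : j \le i\}$, and $i'$ is an upper bound of the larger set and hence of this one, so $i \le i'$. This proves the first assertion.

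For the chains, $(\R,\le)$ is separable (the set $\Q$ is sup-dense), so the first part already provides a chain of order type $(\R,\le)$ with separable predual, both in $(\IIone,\emb)$ and in $(\IIone,\semb)$; this also follows from Corollary~\ref{cor.family-with-a}. The first part does not directly give $(\om_1,\le)$ with separable predual, since every sup-dense subset of $\om_1$ must contain all successor ordinals and is therefore uncountable; instead I would take $I_0 = \om_1$ together with \emph{countable} pieces $\Sigma_\beta$ ($\beta < \om_1$), which may still be chosen pairwise non-embeddable because $\aleph_1 \le 2^{\aleph_0}$, and observe that for each $\alpha < \om_1$ the group $\Gamma_\alpha = \Sigma_\ast \ast \bigast_{\beta \le \alpha}\Sigma_\beta$ is a free product of countably many countable groups, hence countable, so $M_\alpha = L(H_{\Gamma_\alpha})$ has separable predual while $M_\alpha \emb M_{\alpha'} \Leftrightarrow \alpha \le \alpha'$ by the argument above. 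The entire deformation/rigidity content is packaged in Theorem~\ref{thm.family-with-groups}; the one point that requires care --- and the main obstacle beyond it --- is the group-theoretic one of securing a large enough supply of pairwise non-embeddable simple (or property~(T)) groups of cardinality at most $\kappa$ to drive the free-product encoding, which is classical for $\kappa = \aleph_0$ and must be handled by the appropriate higher-cardinality constructions in general.
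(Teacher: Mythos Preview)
Your argument is correct and follows the same architecture as the paper: reduce to groups via Theorem~\ref{thm.family-with-groups}, encode $(I,\leq)$ into the power set of a sup-dense $I_0$, realize subsets of $I_0$ by free products of mutually non-embeddable, freely indecomposable building blocks, and invoke Kurosh.  The differences are entirely in the choice of building blocks and in the $\om_1$ chain.

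For the blocks, the paper takes a far more elementary route: in the countable case it simply uses the finite cyclic groups $\Z/p_n\Z$ for distinct primes~$p_n$ (freely indecomposable, not~$\Z$, pairwise non-embeddable), adding two dummy indices to force the free product to be infinite; for general~$\kappa$ it cites the result of Fried--Koll\'ar \cite{FK77} producing $\kappa$ many pairwise non-embeddable infinite fields $K_i$ and takes the amenable, freely indecomposable groups $\Lambda_i = K_i \rtimes K_i^*$.  Your appeal to Tarski monsters is correct but overkill in the countable case, and your gesture towards ``simple J\'onsson-type groups'' for arbitrary~$\kappa$ is imprecise (J\'onsson is not quite the right notion; you need pairwise non-embeddability, not absence of large proper subgroups), whereas the paper's field construction is completely explicit.

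For the $\om_1$ chain the paper does something different and more classical: rather than reusing the free-product machine with $\om_1$ many countable simple blocks, it takes for each countable ordinal $\lambda$ the standard countable abelian $p$-group $B_\lambda$ of Ulm length~$\lambda$ (so $B_\lambda \emb B_\mu$ iff $\lambda \leq \mu$), and applies Theorem~\ref{thm.family-with-groups} to $\Z \times B_\lambda$.  Your approach also works and is perhaps more uniform with the first part, but it relies on the existence of continuum many pairwise non-embeddable countable simple groups, which is considerably deeper than Ulm theory.
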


In specific cases, we may even determine complete intervals in $(\IIone,\mathord{\semb})$, meaning that we not only provide families of II$_1$ factors $(M_i)_{i \in I}$ indexed by a partially ordered set $(I,\leq)$ with the property $M_i \semb M_j$ iff $i \leq j$, but also having the property that no other II$_1$ factors can sit between these $M_i$~: whenever $N$ is a II$_1$ factor and $M_i \semb N$ and $N \semb M_j$, there exists a $k \in I$ such that $i \leq k \leq j$ and $N \scong M_k$.

It is quite challenging to provide such complete intervals of II$_1$ factors, since we should at the same time control all possible embeddings $M_i \emb M_j^t$ \emph{and} determine all intermediate subfactors $M_i \emb N \emb M_j^t$. An appropriate variant of the construction in Theorem \ref{thm.all-embed-trivial} allows to do all this at once.

We can then provide a concrete construction of such complete intervals of II$_1$ factors indexed by several kinds of \emph{lattices} (i.e.\ partially ordered sets in which every pair of elements has an infimum and a supremum).

Our result covers all ``discrete'' lattices like $(\Z,\leq)$, arbitrary finite lattices or, more generally, any lattice $(I,\leq)$ with the property that $\{i \in I \mid a \leq i \leq b\}$ is finite for all $a,b \in I$.
Our result actually covers many \emph{complete} lattices, i.e.\ partially ordered sets in which every subset has an infimum and a supremum, like $(\Z \cup \{-\infty,+\infty\},\leq)$. In its most general form, our theorem realizes every separable \emph{algebraic\footnote{An algebraic lattice is a complete lattice $(I,\leq)$ in which every element can be written as a supremum of compact elements. An element $a \in I$ is called compact if whenever $a \leq \sup J$ for some $J \subseteq I$, then $a \leq \sup J_0$ for a finite subset $J_0 \subseteq J$.}} lattice as a complete interval of II$_1$ factors w.r.t.\ $\semb$.

Separable algebraic lattices are abundant. Any partially ordered set can be canonically completed into the algebraic lattice $(\Ibar,\leq)$, defined as the set of all downward closed subsets\footnote{A subset $J \subseteq I$ is called downward closed if for all $j \in J$ and all $k \in I$ with $k \leq j$, we have $k \in J$. Identifying $i \in I$ with the subset $\{j \in I \mid j \leq i\}$ of $I$, we embed $(I,\leq)$ in its completion $(\Ibar,\leq)$.} of $I$, ordered by inclusion. If $I$ is countable, $(\Ibar,\leq)$ is separable. Whenever $\Lambda$ is a group and $\Lambda_0 \subg \Lambda$ is a subgroup, the lattice of intermediate subgroups ordered by inclusion is an algebraic lattice. It is separable if $\Lambda$ is a countable group. By \cite{Tum86}, every algebraic lattice arises in this way. For our proof of Theorem \ref{thm.complete-intervals}, we will actually reprove this characterization with an extra control on the intermediate subgroups, like not having nontrivial finite dimensional unitary representations (see Theorem \ref{thm.realizing-lattices}). We do this by adapting the proof of \cite{Rep04}.

\begin{letterthm}\label{thm.complete-intervals}
Let $(I,\leq)$ be any separable algebraic lattice. There exists a family of separable II$_1$ factors $(M_i)_{i \in I}$ with the following properties:
\begin{itemlist}
\item we have $M_i \semb M_j$ iff $i \leq j$ iff $M_i \emb M_j$,
\item if $N$ is any II$_1$ factor such that $M_i \semb N$ and $N \semb M_j$ for some $i, j \in I$, there exists a $k \in I$ with $i \leq k \leq j$ and $N \scong M_k$,
\item if $N$ is any II$_1$ factor such that $M_i \emb N$ and $N \emb M_j$ for some $i, j \in I$, there exists a $k \in I$ with $i \leq k \leq j$ and $N \cong M_k$.
\end{itemlist}
\end{letterthm}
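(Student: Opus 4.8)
\textbf{Proof proposal for Theorem \ref{thm.complete-intervals}.}

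The plan is to build the family $(M_i)_{i\in I}$ as a generalized Bernoulli-type crossed product patterned on Theorem \ref{thm.all-embed-trivial}, but where the rigid ``skeleton'' encoding the partial order is provided by a group–subgroup pair whose intermediate subgroup lattice is exactly $(I,\leq)$. Concretely, using the promised Theorem \ref{thm.realizing-lattices} (an enhancement of Tůma's theorem \cite{Tum86} in the spirit of \cite{Rep04}), I would fix a countable group $\Lambda$ with a subgroup $\Lambda_0\subg\Lambda$ such that the lattice of intermediate subgroups $\Lambda_0\subg\Sigma\subg\Lambda$ is isomorphic to $(I,\leq)$, and with the extra control that every such $\Sigma$ is icc and has no nontrivial finite-dimensional unitary representations (this last property is what will let Popa-style deformation/rigidity and intertwining arguments see the lattice without interference). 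For each intermediate subgroup $\Sigma_i$ ($i\in I$) one then forms a II$_1$ factor $M_i$ via a generalized wreath product / co-induction construction over $(\Lambda\times\Lambda)/N_i$-type homogeneous spaces, where $N_i<\Lambda\times\Lambda$ is the diagonal-type subgroup built from $\Sigma_i$ exactly as $N_\Gamma$ and $\Lambda_{a,b}$ are built in Theorems \ref{thm.family-with-groups} and \ref{thm.all-embed-trivial}; the base probability measures are chosen distinct (and non-symmetric, with mass in $(0,1/2)$) so that amplifications are detected and trivial self-embeddings are the only ones.

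The first bullet ($M_i\semb M_j\iff i\le j\iff M_i\emb M_j$) should follow the template of Theorem \ref{thm.family-with-groups}: the ``if'' direction is the easy functoriality (an inclusion $\Sigma_i\subg\Sigma_j$ induces an inclusion $N_i<N_j$ hence a trace-preserving embedding $M_i\emb M_j$), while the ``only if'' direction uses a deformation/rigidity analysis of an arbitrary embedding $M_i\emb M_j^t$: a combination of Popa's intertwining-by-bimodules, the spectral-gap/malleable-deformation rigidity of Bernoulli actions, and the classification of embeddings between generalized Bernoulli crossed products (as in \cite{Pop03,Pop06,Ioa10,KV15}) should force $t\in\N$, the embedding to be (conjugate to) the standard amplification of a group embedding $\Lambda\times\Lambda\to\Lambda\times\Lambda$ respecting the homogeneous-space structures, and hence $N_i$ to be subconjugate to $N_j$, which translates into $\Sigma_i\subg g\Sigma_j g^{-1}$ and, by the lattice structure, into $i\le j$.

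For the second and third bullets — the completeness of the interval — the key point is that an intermediate II$_1$ factor $M_i\semb N\semb M_j$ (equivalently a chain of embeddings $M_i\emb N^t\emb M_j^s$) must itself be ``reconstructed'' from an intermediate subgroup. Here I would run a relative version of the rigidity analysis: the composed embedding $M_i\emb M_j^s$ is, up to unitary conjugacy and amplification, the standard one, so $N^t$ sits between $M_i$ and $M_j^s$ as an intermediate subfactor of a generalized Bernoulli crossed product; using the structure theory of intermediate subalgebras of such crossed products (Galois-type / intermediate-subfactor results, again deformation/rigidity plus the no-finite-dimensional-representations hypothesis on the $\Sigma$'s to rule out ``twisted'' intermediate algebras) one shows $N^t$ is conjugate to the $M_k$ attached to the intermediate subgroup $\Sigma_k$ with $\Sigma_i\subg\Sigma_k\subg\Sigma_j$, i.e.\ $N\scong M_k$ with $i\le k\le j$; the non-stable statement follows by additionally tracking the trace of the relevant projection to see $t=1$ when the two outer embeddings are honest (non-stable) embeddings.

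The main obstacle I expect is precisely this last step: controlling \emph{all} intermediate subfactors $M_i\emb N\emb M_j^s$ simultaneously with controlling all embeddings, since a priori $N$ need not a priori be of generalized-Bernoulli form and could fail to be ``built from a group.'' The resolution has to come from a strong structural dichotomy — of the type ``any intermediate subalgebra of $M_i\subset M_j^s$ is either small enough to be absorbed or is itself one of the $M_k$'' — which is why the construction must be engineered so carefully (the choice of homogeneous spaces $(\Lambda\times\Lambda)/N_i$, the distinctness of the base measures, and especially the representation-theoretic rigidity of every intermediate $\Sigma_i$ furnished by the reproved Tůma/Repnikov-type Theorem \ref{thm.realizing-lattices}); getting \ref{thm.realizing-lattices} with enough control to make the intermediate-subfactor analysis go through, and then pushing Popa's intertwining techniques through the relative/intermediate setting, is where the real work lies.
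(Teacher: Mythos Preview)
Your high-level plan is right, and you correctly locate the hard step (controlling intermediate subfactors simultaneously with embeddings). But the architecture you sketch has a genuine gap: you propose to encode the lattice by letting the stabilizer subgroups $N_i$ of the generalized Bernoulli action vary with $i$. The paper does the opposite. It \emph{fixes} a single Bernoulli action $\Gamma_0\times\Gamma\actson(X,\mu)$ with fixed stabilizers $\Lambda_{a,b,k}$ --- whose only role is to force, as in Theorem~\ref{thm.all-embed-trivial}, that every embedding of the bottom factor $M_1$ into any $M^d$ is trivial --- and instead encodes $(I,\leq)$ in the \emph{acting group}. One writes $\Gamma=G\ast K$ with the intermediate subgroup lattice living inside $K$ (via Theorem~\ref{thm.realizing-lattices}), takes $\Gamma_1=\ker(\Gamma\to K)$, and defines $M_1=L^\infty(X)\rtimes(\Gamma_0\times\Gamma_1)$, $M=L^\infty(X)\rtimes(\Gamma_0\times\Gamma)$, and $M_i=L^\infty(X)\rtimes(\Gamma_0\times\pi^{-1}(L_i))$. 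Since $\Gamma=\Gamma_1\rtimes K$, this gives $M=M_1\rtimes K$ for an \emph{outer} action of $K$ on $M_1$, and $M_i=M_1\rtimes L_i$. The completeness step then reduces cleanly: any $N$ with $M_1\semb N$ and $N\semb M$ becomes, after conjugating via the embedding-triviality result, a literal intermediate subfactor of $1\ot M_1\subset M_d(\C)\ot(M_1\rtimes K)$, and such intermediate subfactors are completely classified (Lemma~\ref{lem.intermediate-subfactors}) as stably isomorphic to $M_1\rtimes_\om L'$ for some $L'\subg K$ and $\om\in H^2_\fin(L',\T)$. Your varying-$N_i$ setup has no outer-action/Galois structure of this kind to lean on (indeed, with $N_i<N_j$ the natural Bernoulli factor map points the wrong way for the inclusion you want), and the generic ``Galois-type intermediate-subfactor results'' you invoke do not apply to inclusions that are not of crossed-product type.

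Two further properties of the subgroup realization that your sketch omits are essential, and they explain why Theorem~\ref{thm.realizing-lattices} has to be reproved rather than quoted. First, \emph{every} intermediate subgroup of $L\subg K$ (not only those indexed by $I$) must lack nontrivial finite-dimensional unitary representations, because the intermediate-subfactor classification above produces an a priori arbitrary $L'\subg K$ together with an arbitrary $\om\in H^2_\fin(L',\T)$; arranging all intermediate groups to be free products of copies of $A_\infty$ is what kills $\om$ and the finite-index ambiguities. Second, the embedding analysis (Theorem~\ref{thm.complete-intervals-construction}) only yields $M(L_1)\semb M(L_2)$ in terms of subconjugacy ($L_1\cap gL_2g^{-1}$ of finite index in $L_1$ for some $g\in K$), not inclusion. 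Property~3 of Theorem~\ref{thm.realizing-lattices}, namely $g\in L\vee gLg^{-1}$ for all $g\in K$, is precisely what collapses subconjugacy back to inclusion and makes the first bullet hold; without it, even the order-embedding of $(I,\leq)$ into $(\IIone,\mathord{\semb})$ would fail.
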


The level of concreteness of the family $(M_i)_{i \in I}$ depends on the nature of $(I,\leq)$. When $(I,\leq)$ is the completion $(\overline{I_0},\leq)$ of a countable partially ordered set, we give a separate proof of Theorem \ref{thm.complete-intervals} that is entirely constructive and essentially ``functorial'' in $(I,\leq)$: see Proposition \ref{prop.complete-interval-concrete-for-completion}. Note that this thus provides a concrete family of II$_1$ factors realizing complete intervals of the form $(\Z,\leq)$, or of the form $([0,\lambda],\leq)$, where $\lambda$ is any countable ordinal. For arbitrary separable algebraic lattices, our proof depends on a much less explicit inductive procedure.

We also provide the following example to illustrate how much weaker is the equivalence relation given by mutual embedding
``$N \emb M$ and $M \emb N$''  from isomorphism $N \cong M$ or stable isomorphism $N \scong M$.

Indeed, while the third point of Corollary \ref{cor.family-with-a} already provides an uncountable family of II$_1$ factors that are mutually embeddable but pairwise nonisomorphic, things can become considerably worse.  Thus, denote by $\cA$ the set of nonzero subgroups of $\Lambda_\infty = \Q^{(\N)}$, the direct sum of countably many copies of the additive group $\Q$. For every $\Lambda \in \cA$, using the notation of Theorem \ref{thm.family-with-groups}, we define $P_\Lambda = L(H_{\Lambda \times \Lambda_\infty})$. By construction, $P_\Lambda \emb P_{\Lambda'}$ for all $\Lambda, \Lambda' \in \cA$. On the other hand, $P_\Lambda \cong P_{\Lambda'}$ iff $P_\Lambda \scong P_{\Lambda'}$ iff $\Lambda \times \Lambda_\infty \cong \Lambda' \times \Lambda_\infty$. By \cite[Theorem 2.1]{DM07}, this is a \emph{complete analytic} equivalence relation on the standard Borel space $\cA$, i.e.\ an equivalence relation that is as complicated as it can be in this context.

Finally, variants of our method to prove Theorem \ref{thm.all-embed-trivial} also lead to constructions of II$_1$ factors $M$ for which the semiring of stable self-embeddings $M \semb M$ can be explicitly determined. We consider such stable embeddings up to unitary conjugacy. Since we can take direct sums and compositions of embedings, we obtain a semiring that we denote as $\Embs(M)$. We prove in Theorem \ref{thm.prescribed-embeddings-semiring} that for \emph{any} countable structure $\cG$ (like a countable group or a countable field) with semigroup of self-embeddings $\cS = \Emb(\cG)$, there exists a II$_1$ factor $M$ such that $\Embs(M) \cong \N[\cS]$, the semiring of formal sums of elements in $\cS$. The construction of $M$ is entirely explicit.

This even provides new results on the possible outer automorphism groups $\Out(M)$ of II$_1$ factors. We prove in Corollary \ref{cor.outer-automorphism-groups} that all of the following Polish groups can be realized as outer automorphism groups of full II$_1$ factors: any closed subgroup of the Polish group of all permutations of $\N$, the unitary group $\cU(H)$ of a separable Hilbert space and the unitary group $\cU(N)$ of any von Neumann algebra with separable predual. As we discuss in Section~\ref{sec.self-embed}, this encompasses all known realizations of outer automorphism groups and this is the first systematic result to realize a large class of non locally compact groups as $\Out(M)$.

We conclude this introduction with a few comments on our methods and proofs. Theorem \ref{thm.main-embedding} is a special case of Theorem \ref{thm.main-thm-embedding-bernoulli} below, which deals with a larger family of groups $\Gamma$ and a possibly nontrivial action $\Gamma \actson (A_0,\tau)$ as initial data for the construction. To analyze all possible embeddings $\theta : B \rtimes (\Lambda \times \Lambda) \recht (A \rtimes (\Gamma \times \Gamma))^d$ between such generalized Bernoulli actions, we use several results from deformation/rigidity theory for Bernoulli crossed products, see \cite{Pop03,Ioa10,IPV10,KV15}. This leads to a point where we know that after a unitary conjugacy, $\theta$ maps $L(\Lambda \times \Lambda)$ into $L(\Gamma \times \Gamma)^d$ such that the \emph{height} of $\theta(\Lambda \times \Lambda)$ inside $L(\Gamma \times \Gamma)^d$ is positive.

In \cite{IPV10}, it was proven that for \emph{isomorphisms} $\theta : L(\Lambda) \recht L(\Gamma)$, with moreover $d = 1$, positive height implies that $\theta$ can be unitarily conjugated to an isomorphism that comes from an isomorphism between the groups $\Lambda$ and $\Gamma$. In Section \ref{sec.embedding-group-vNalg}, we prove a generalization of this result for arbitrary embeddings $\theta : L(\Lambda) \recht L(\Gamma)^d$, also allowing for an arbitrary amplification. This unavoidably includes the appearance of finite index subgroups and finite-dimensional unitary representations.

Applying this result to our initial embedding of Bernoulli crossed products, the restriction of $\theta$ to $L(\Lambda \times \Lambda)$ must be group-like. From that point onward, it is easy to completely describe all possible embeddings, in terms of embeddings between the initial data $\Lambda \actson (B_0,\tau)$ and $\Gamma \actson (A_0,\tau)$.

One may thus summarize the approach to Theorems \ref{thm.main-embedding} and \ref{thm.main-thm-embedding-bernoulli} as follows: the context of Bernoulli crossed products is so rigid that any embedding must come from an embedding of the ``base spaces''. Once Theorem \ref{thm.main-embedding} is proven, Corollary \ref{cor.family-with-a} results from variants of the trivial observation that a base space with $3$ points cannot be embedded into a base space with $2$ points. The proof can be found on page \pageref{proof.corollary.B}. Our results on II$_1$ factors with prescribed embeddings semiring, or prescribed outer automorphism group, arise in the same way, by constructing initial data with prescribed symmetries. The proofs of Theorems~\ref{thm.one-sided-fundamental}~--~\ref{thm.complete-intervals} can be found using the table of contents at the end of this introduction.

The main focus and the main novelty of this paper are to consider arbitrary embeddings between arbitrary II$_1$ factors $N,  M$ and their amplifications by any $t>0$.
This allows us to obtain new types of results even in the case $N=M$, where Ioana
obtained in \cite{Ioa10} numerous rigidity results on self-embeddings $M \recht pMp$ between Bernoulli crossed product factors of product groups $\Gamma \times \Gamma$, including cases where this forces $p$ to be $1$ and the embedding to be canonical, but where
the cases $t > 1$ were left open (cf.\ Theorems \ref{thm.one-sided-fundamental} and \ref{thm.all-embed-trivial}).

Above we also highlighted how our construction provides large families of II$_1$ factors that are mutually embeddable, but not (stably) isomorphic. Deformation/rigidity theory has provided a wealth of classification theorems up to stable isomorphisms. They can be exploited to give many such families of mutually embeddable, but not (stably) isomorphic II$_1$ factors. For instance, whenever the fundamental group $\cF(M)$ is nontrivial, this fundamental group contains arbitrarily small $t > 0$. Taking direct sums of such $M \cong M^t$, we find that $M \emb M^s$ for all $s > 0$. Taking representatives $s > 0$ for the quotient $\R^*_+ / \cF(M)$, the II$_1$ factors $M^s$ are mutually embeddable, but not isomorphic. Of course, they are stably isomorphic by construction. One can obtain mutually embeddable, but not stably isomorphic II$_1$ factors by applying \cite{Ioa10,IPV10} to Bernoulli actions of mutually embeddable, nonisomorphic groups. Also the factor actions of a fixed Bernoulli action, as studied in \cite{Pop04}, lead to such examples since $1$-cohomology provides a method to distinguish these II$_1$ factors up to stable isomorphism.

As we already mentioned, our aim in this paper is to reveal unexpected W$^*$-rigidity paradigms and illustrate them with classes of examples that have intrinsic interest. But there are famous embedding problems that remain wide open. For instance, a natural embedding version of Connes' rigidity conjecture asks whether for $G_n=\PSL(n, \Z)$, $n\geq 3$, one has $L(G_n) \emb L(G_m)$ iff $n \leq m$.
Also, it is not known whether the class of nonamenable II$_1$ factors admits a least element for the preorder relations $\emb$ and $\semb$. A well known conjecture predicts that $L(\F_2)$ is such a least element, i.e.\ a nonamenable II$_1$ factor that embeds into any other nonamenable II$_1$ factor.
This can be viewed as the W$^*$-version of the von Neumann-Day conjecture, that any nonamenable group contains a copy of $\F_2$, shown to be false in \cite{Ols80}. But this W$^*$-version is believed to be true. Since the free group  factors $L(\F_n)$, $2\leq n \leq \infty$, are mutually embeddable, they would all be least elements in $(\IIone,\emb)$ and define a single class $\bN$ under the mutual embedding equivalence relation. How large that class $\bN$ would be, with respect to plain isomorphism (respectively stable isomorphism) of II$_1$ factors $N\in \bN$, depends on the answer to two other famous problems. On the one hand, it has been speculated (see e.g.\ \cite{PS18}) that if $N$ is nonamenable and $N\semb L(\F_2)$, then $N$ must be an interpolated free group factor $L(\F_t)$, for some $1< t\leq \infty$. Assuming this to be true as well,  $\bN$ would thus consist of all interpolated free group factors and its further description would depend on the answer to the free group factor problem. Recall in this respect that Voiculescu's free probability theory has been used in \cite{Dyk92,Rad92} to show that $L(\F_n)$, $2\leq n \leq \infty$, are either all isomorphic, or all nonisomorphic. It was also shown that $L(\F_n)$,  $2\leq n < \infty$, are stably isomorphic and that their nonisomorphism implies that $L(\F_\infty)$ is not stably isomorphic to any of them.

\begin{center}
\begin{minipage}{330pt}
\tableofcontents
\end{minipage}
\end{center}

\section{Embeddings of group von Neumann algebras}\label{sec.embedding-group-vNalg}

Let $\Gamma$ be a countable group and $\cG \subg \cU(p (M_n(\C) \ot L(\Gamma))p)$ a subgroup. As in \cite[Section 4]{Ioa10} and \cite[Section 3]{IPV10}, we say that $\cG$ has \emph{zero height}, denoted as $h_\Gamma(\cG) = 0$, if there exists a sequence $v_n \in \cG$ such that
$$\sup_{g \in \Gamma} \|(\id \ot \tau)(v_n (1 \ot u_g^*))\| \recht 0 \quad\text{when $n \recht +\infty$.}$$
More intuitively, $h_\Gamma(\cG) = 0$ if and only if $\cG$ contains a sequence whose Fourier coefficients tend to zero uniformly in $g \in \Gamma$.

Recall that \cite[Theorem 3.1]{IPV10} is saying the following: if $\Gamma$ is an icc group and $\pi : L(\Lambda) \recht L(\Gamma)$ is a $*$-isomorphism, then the following two statements are equivalent.
\begin{enumlist}
\item We have $h_\Gamma(\pi(\Lambda)) > 0$.
\item There exists a unitary $w \in \cU(L(\Gamma))$ such that $w \pi( \T \Lambda) w^* = \T \Gamma$.
\end{enumlist}
As mentioned in \cite[Remark 3.2]{IPV10}, it is unclear whether a similar result holds when $\pi : L(\Lambda) \recht L(\Gamma)$ is an (irreducible) embedding rather than an isomorphism. It is even less clear what can be said if $\pi : L(\Lambda) \recht L(\Gamma)^t$ is an embedding into an amplification of $L(\Gamma)$.

When $t \leq 1$ and the embedding $\pi : L(\Lambda) \recht L(\Gamma)^t$ is \emph{weakly mixing}, it was proven in \cite[Theorem 4.1]{KV15} that the condition $h_\Gamma(\pi(\Lambda)) > 0$ is equivalent with $t = 1$ and the embedding being standard as above.

In the main result of this section, we prove that when $\Gamma$ is an icc group in \emph{Ozawa's class ($\cS$)} (see \cite{Oza04} and \cite[Definition 15.1.2]{BO08}), one can handle arbitrary $t > 0$ and one may remove the weak mixing hypothesis. This will be a key tool in our rigidity proofs for embeddings of II$_1$ factors into arbitrary amplifications.

Let $\Gamma$ be a countable group and let $\cG$ be any group. We start by describing the standard/trivial ways to obtain group homomorphisms from $\cG$ to the unitary group of an amplification of $L(\Gamma)$, by combining group homomorphisms $\cG \recht \Gamma$, finite-dimensional unitary representations and finite index considerations. First, whenever $\delta: \cG \recht \Gamma$ is a group homomorphism and $\gamma : \cG \recht \cU(n)$ is a finite dimensional unitary representation, we may consider
$$\pi_{\gamma,\delta} : \cG \recht \cU(M_n(\C) \ot L(\Gamma)) : \pi_{\gamma,\delta}(v) = \gamma(v) \ot u_{\delta(v)} \; .$$
Secondly, if $\cG_0 \subg \cG$ is a finite index subgroup and $\pi_0 : \cG_0 \recht \cU(N)$ is a group homomorphism to the unitary group of a von Neumann algebra $N$, we write $m = [\cG:\cG_0]$, choose representatives $v_1,\ldots,v_m \in \cG$ for the cosets $\cG/\cG_0$ and get a natural induction
$$\pi : \cG \recht \cU(M_m(\C) \ovt N) : \pi(v)_{ij} = \begin{cases} \pi_0(w) &\;\;\text{if $vv_j = v_i w$ for some $w \in \cG_0$,}\\ 0 &\;\;\text{if $vv_j \not \in v_i \cG_0$}.\end{cases}$$

\begin{definition}\label{def.standard-embedding}
Let $\Gamma$ be an icc group and $t > 0$. Let $\cG$ be a group. We say that a homomorphism $\pi : \cG \recht \cU(L(\Gamma)^t)$ is \emph{standard} if $t \in \N$ and if $\pi$ is unitarily conjugate to a finite direct sum of inductions to $\cG$ of homomorphisms of the form $\pi_{\gamma,\delta} : \cG_0 \recht \cU(M_n(\C) \ot L(\Gamma))$ where $\cG_0 \subg \cG$ is a finite index subgroup, $\delta: \cG_0 \recht \Gamma$ is a group homomorphism and  $\gamma: \cG_0 \recht \cU(n)$ is an $n$-dimensional unitary representation.
\end{definition}

\begin{remark}
Definition \ref{def.standard-embedding} of a standard homomorphism of a group to $\cU(L(\Gamma)^t)$ may sound a bit cumbersome, but this is unavoidable since one may take inductions and direct sums of arbitrary homomorphisms. Note that if $\pi : \cG \recht \cU(L(\Gamma)^t)$ is a standard homomorphism, then $t = n \in \N$ and there exists a finite index subgroup $\cG_0 \subg \cG$, a unitary representation $\gamma : \cG_0 \recht \cU(\C^n)$ and a group homomorphism $\delta : \cG_0 \recht \Gamma$ such that the restriction $\pi|_{\cG_0}$ is unitarily conjugate to $\pi_{\gamma,\delta} : \cG_0 \recht \cU(M_n(\C) \ot L(\Gamma))$.
\end{remark}

Clearly, if $\pi : \cG \recht \cU(L(\Gamma)^t)$ is a standard homomorphism, we have $h_\Gamma(\pi(\cG)) > 0$. The main result of this section shows that the converse holds if $\Gamma$ belongs to Ozawa's class ($\cS$) and $\pi(\cG)\dpr$ is not too small. For our applications in this paper, the important advantage of Theorem \ref{thm.standard-embedding} compared to \cite[Theorem 4.1]{KV15} is to make no assumptions at all on the nature of the homomorphism $\pi$.

Since we allow arbitrary amplifications, the formulation of the theorem has to take into account direct sums of homomorphisms. The theorem then says that an arbitrary homomorphism is the direct sum of a standard homomorphism, a homomorphism whose image has zero height and a homomorphism whose image is small (i.e.\ amenable).

\begin{theorem}\label{thm.standard-embedding}
Let $\Gamma$ be a countable group in Ozawa's class ($\cS$). Assume that the centralizer of every element $g \in \Gamma \setminus \{e\}$ is amenable.

If $\cG$ is a group, $t > 0$ and $\pi : \cG \recht \cU(L(\Gamma)^t)$ is a group homomorphism, there exist projections $p_1,p_2,p_3 \in L(\Gamma)^t \cap \pi(\cG)'$ with $p_1 + p_2 + p_3 = 1$ such that
\begin{itemlist}
\item The homomorphism $\cG \ni v \mapsto \pi(v) p_1$ is standard.
\item We have $h_\Gamma(\pi(\cG) p_2) = 0$.
\item We have that $\pi(\cG)\dpr p_3$ is amenable.
\end{itemlist}
\end{theorem}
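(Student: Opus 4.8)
The plan is to decompose $\pi(\cG)''$ using its center and the structure theory for subalgebras of $L(\Gamma)$ when $\Gamma \in (\cS)$, then extract the ``group-like'' piece via a height argument à la \cite{IPV10}.

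First I would set up the amplification concretely: identify $L(\Gamma)^t$ with $(M_m(\C) \ot L(\Gamma))q$ for a suitable $m \in \N$ and projection $q$ (if $t \notin \N$) or with $M_m(\C) \ot L(\Gamma)$ (if $t = m \in \N$), and write $P = \pi(\cG)''$, which is a finite von Neumann algebra with a faithful normal trace. The point of the three projections is that they should be central in $P$ (hence in $L(\Gamma)^t \cap \pi(\cG)'$, noting $Z(P) \subseteq P' \cap L(\Gamma)^t$), so I would be looking for a central decomposition $1 = p_1 + p_2 + p_3$ of $P$. The piece $p_3$ should absorb the ``amenable part'': since $\Gamma \in (\cS)$ and all centralizers of nontrivial elements are amenable, $L(\Gamma)$ is a bi-exact / class $(\cS)$ von Neumann algebra, and Ozawa's theory (solidity-type results, cf.\ \cite{Oza04,BO08}) says that any von Neumann subalgebra $Q \subseteq M_m(\C) \ot L(\Gamma)$ is either amenable or its relative commutant is amenable; more precisely there is a largest central projection $p_3 \in Z(P)$ with $Pp_3$ amenable. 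Replacing $P$ by $P(1-p_3)$, I may assume $P$ has no amenable direct summand.

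Next, on $1 - p_3$, I want the dichotomy: either the height of $\pi(\cG)$ (restricted to that summand) is zero, or it is positive and then $\pi$ is standard there. The zero-height part is what $p_2$ absorbs. Concretely I would define $p_2$ to be the supremum of all central projections $r \in Z(P)$ with $h_\Gamma(\pi(\cG)r) = 0$; one must check this supremum again has zero height (a standard approximation/maximality argument, as the zero-height condition is closed under such suprema because Fourier coefficients can be estimated componentwise in the central decomposition). Set $p_1 = 1 - p_2 - p_3$. On $p_1$, by construction $P p_1$ has no amenable summand and no zero-height central summand; I then need to show $\cG \ni v \mapsto \pi(v) p_1$ is standard. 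This is exactly the generalization of \cite[Theorem 3.1]{IPV10} to embeddings into amplifications, which the introduction announces will be proved in this section: positive height of $\pi(\cG)$ inside $L(\Gamma)^t$ forces, after unitary conjugacy, that $\pi$ restricted to a finite-index subgroup $\cG_0 \subg \cG$ has the form $\pi_{\gamma,\delta}$, and globally $\pi$ is a direct sum of inductions of such maps — i.e.\ standard. The input here is a conjugation/normalizer argument: positive height lets one conjugate so that the Fourier support of $\pi(\cG)$ is ``concentrated'', run the malnormality-type combinatorics (using that centralizers are amenable, hence the relevant quasi-normalizers are controlled), and conclude that the $u_g$-coefficients implement a genuine partial group homomorphism into $\Gamma$ twisted by a finite-dimensional cocycle/representation, with the finite-index subgroup and the matrix amplification accounting for the amplification $t \in \N$.

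The main obstacle I expect is precisely this positive-height-implies-standard step in the amplified, non-isomorphism setting: in \cite{IPV10} one exploits that $\pi$ is onto (so $\pi(\Lambda)'' = L(\Gamma)$ and one can transport the whole group structure), whereas here $\pi(\cG)''$ is merely a subalgebra of $L(\Gamma)^t$, so one must work with the relative commutant and quasi-normalizer of $\pi(\cG)''$ rather than with $L(\Gamma)$ itself, and the amplification introduces genuine $M_n(\C)$-valued ``coefficients'' that are responsible for the finite-dimensional representation $\gamma$ and the finite-index subgroup $\cG_0$ in Definition \ref{def.standard-embedding}. Handling the interplay of these two phenomena — amplification and the drop from isomorphism to embedding — while keeping the class $(\cS)$ / amenable-centralizer hypotheses doing the work (they are what prevent pathological ``partial'' behaviour and guarantee the coefficient algebra is finite-dimensional) is where the real content lies; the extraction of $p_2$ and $p_3$ is comparatively soft once the dichotomy for subalgebras of $L(\Gamma)^t$ is in hand.
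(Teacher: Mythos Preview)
Your decomposition into $p_1,p_2,p_3$ is exactly what the paper does (maximal amenable summand for $p_3$, maximal zero-height summand for $p_2$), and you are right that the entire content is in showing that $v \mapsto \pi(v)p_1$ is standard. But your plan for that step --- ``positive height lets one conjugate so that the Fourier support is concentrated, then run malnormality/quasi-normalizer combinatorics'' --- is not how the paper proceeds and, as stated, would not work: without some weak-mixing hypothesis on $\Ad\pi(\cG)$ you cannot directly extract a group homomorphism from positive height, and the amenable-centralizer hypothesis is not used via any quasi-normalizer argument.

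The paper's mechanism is quite different. One first introduces the algebra $A$ generated by all $a \in L(\Gamma)^t$ whose $\Ad\pi(\cG)$-orbit spans a finite-dimensional space; the action of $\cG$ on $A$ is compact. The class $(\cS)$ hypothesis enters through $\omega$-solidity (Ozawa \cite{Oza10}) to show $A$ is amenable, hence the action is weakly compact in the sense of \cite{OP07}, and then \cite[Theorem 4.1]{CS11} forces the diffuse part of $A$ to generate (together with $\pi(\cG)$) an amenable algebra --- impossible on $p_1$ --- so $A$ is atomic. Cutting by minimal projections of $\cZ(A)$ and passing to the finite-index stabilizer $\cG_0$ of a minimal projection in a matrix-algebra corner $Az \cong M_k(\C)$ is exactly what produces the finite-index subgroup and the finite-dimensional representation $\gamma$ in Definition~\ref{def.standard-embedding}; by construction the remaining piece now has $\Ad\pi_1(\cG_0)$ weakly mixing on $L^2 \ominus \C1$. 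Only then does one invoke the height argument, namely Lemma~\ref{lem.embedding-weakly-mixing}, which uses the comultiplication $\Delta:L(\Gamma)\to L(\Gamma)\ovt L(\Gamma)$ and \cite[Proposition 7.2, Theorem 3.3]{IPV10} (and this is where the amenable-centralizer hypothesis is actually used, to guarantee $\cG_1'' \not\prec L(C_\Gamma(g))$). So the missing idea is the passage through the finite-orbit algebra $A$ and its discreteness via \cite{CS11}; this is what bridges the gap between ``positive height on every summand'' and ``weakly mixing plus positive height'', which is the only regime in which an \cite{IPV10}-style conclusion is available for embeddings into amplifications.
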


If $\Gamma$ belongs to class ($\cS$), the centralizer of every infinite subgroup of $\Gamma$ is amenable. So, Theorem \ref{thm.standard-embedding} applies to all torsion free groups in class ($\cS$).

We start by proving the following lemma, generalizing \cite[Theorem 4.1]{KV15} to arbitrary amplifications, with a very similar proof. For every element $g$ in a group $\Gamma$, we denote by $C_\Gamma(g) = \{h \in \Gamma \mid gh = h g\}$ its centralizer.

\begin{lemma}\label{lem.embedding-weakly-mixing}
Let $\Gamma$ be an icc group and $t > 0$. Write $P = L(\Gamma)^t$. Let $\cG \subg \cU(P)$ be a subgroup such that the unitary representation $(\Ad v)_{v \in \cG}$ on $L^2(P) \ominus \C 1$ is weakly mixing. Assume that for all $g \in \Gamma \setminus \{e\}$, we have $\cG\dpr \not\prec L(C_\Gamma(g))$.

If $h_\Gamma(\cG) > 0$, then $t = 1$ and there exists a unitary $W \in L(\Gamma)$ such that $W \cG W^* \subseteq \T \Gamma$.
\end{lemma}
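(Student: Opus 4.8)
The plan is to follow the strategy of \cite[Theorem 4.1]{KV15}, adapting each step to allow an arbitrary amplification parameter $t > 0$. Realize $P = L(\Gamma)^t$ concretely as $p(M_k(\C) \ot L(\Gamma))p$ for a suitable $k \in \N$ and projection $p$ with $(\tr \ot \tau)(p) = t/k$. The hypothesis $h_\Gamma(\cG) > 0$ means that there is $\eps > 0$ with $\sup_{g \in \Gamma} \|(\id \ot \tau)(v(1 \ot u_g^*))\|_\infty \geq \eps$ for all $v \in \cG$ (after passing to the closure and using a standard compactness argument, one upgrades this to a uniform lower bound). First I would run the usual ``height'' malleable-deformation argument: use the $s$-malleable deformation $(\alpha_s)$ of the Bernoulli-type dilation attached to $L(\Gamma)$ (equivalently, the deformation coming from the comultiplication $\Delta : L(\Gamma) \recht L(\Gamma) \ovt L(\Gamma)$ as in \cite{IPV10}), together with weak mixing of $(\Ad v)_{v \in \cG}$ on $L^2(P) \ominus \C1$, to conclude that $\cG$ is ``rigid'' with respect to this deformation. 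The positive-height hypothesis is exactly what prevents the deformation from converging to zero uniformly on $\cG$, so a spectral-gap / transversality estimate forces $\cG$ (more precisely its weak closure $\cG\dpr$, or rather the bimodule generated by $\cG$) to be intertwined into a ``diagonal'' position.

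Concretely, the deformation argument produces an intertwiner: there is a nonzero element under $\Delta$ that relates $\cG$ to $\T\Gamma$, which after the standard manipulations with quasi-normalizers yields a partial isometry $W \in M_{1,m}(\C) \ot L(\Gamma)$ (for some $m$) and a finite subset $F \subset \Gamma$ — or rather a finite-dimensional piece — such that conjugation by $W$ carries each $v \in \cG$ into the linear span of $\{u_g : g \in \Gamma\}$ up to a controlled ``finite multiplicity''. Here is where the two extra hypotheses enter. The condition ``$\cG\dpr \not\prec_{L(\Gamma)} L(C_\Gamma(g))$ for all $g \neq e$'' is used exactly as in \cite{IPV10,KV15}: it rules out the degenerate outcomes of the intertwining in which $\cG$ lands inside the von Neumann algebra of a centralizer, and thus guarantees that the ``multiplicity'' is $1$ and $W$ can be taken to be a genuine unitary of $L(\Gamma)$ with $(\tr\ot\tau)$-trace forcing $t = 1$. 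Weak mixing of $(\Ad v)$ is used to upgrade a single intertwiner into a genuine conjugacy valid for all of $\cG$ simultaneously (weak mixing implies the relevant finite-dimensional invariant subspaces are trivial, so no $\gamma$-twist by a finite-dimensional representation survives — this is precisely why finite-index subgroups and finite-dimensional representations do \emph{not} appear in this lemma, in contrast to Theorem \ref{thm.standard-embedding}).

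The main obstacle I anticipate is the bookkeeping around the amplification: in \cite{KV15} one had $t \leq 1$, so $P$ was a \emph{corner} $pL(\Gamma)p$, and the intertwining lived naturally inside $L(\Gamma)$; for $t > 1$ one must instead work in $M_k(\C) \ot L(\Gamma)$, keep track of a cutoff projection $p$ commuting with $\pi(\cG)$, and show at the end that the partial isometry implementing the conjugacy is in fact a coisometry onto $1_{M_k} \ot L(\Gamma)$ with $p$ of full trace, i.e. $t = k = 1$ after normalization. This is where the centralizer hypothesis does the decisive work: without it, $\cG$ could be a ``diagonal'' copy sitting in several $L(\Gamma)$-blocks glued along a centralizer, which is the genuine source of nontrivial amplification; ruling this out pins $t = 1$. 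Once $t = 1$ and a unitary $W \in L(\Gamma)$ with $W\cG W^* \subseteq \lspan\{u_g\}$ is in hand, the fact that each $W v W^*$ is a unitary with a single nonzero Fourier coefficient (again by weak mixing, since its ``Fourier support'' is a $\cG$-invariant finite set) gives $WvW^* \in \T\Gamma$, completing the proof.
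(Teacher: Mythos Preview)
Your outline names the right ingredients but misidentifies the mechanism at two crucial points. First, there is no $s$-malleable deformation or transversality/spectral-gap argument here. The proof works directly with the comultiplication: from $h_\Gamma(\cG)\geq\delta>0$ one computes that the element $(v\ot(\id\ot\Delta)(v))_{13245}\,((\id\ot\Delta)(v^*)\ot v^*)_{13425}$ has trace at least $\delta^2$, so the unique element $X$ of minimal $\|\cdot\|_2$ in the closed convex hull of these is nonzero and intertwines $v\ot(\id\ot\Delta)(v)$ with $(\id\ot\Delta)(v)\ot v$. The centralizer hypothesis is used at this stage, not where you place it: via \cite[Proposition 7.2(3)]{IPV10} it gives that $(\Ad(\id\ot\Delta)(v))_{v\in\cG}$ is weakly mixing on $L^2(q(M_n\ovt L(\Gamma)^{\ovt 2})q)\ominus\C q$, which (combined with the assumed weak mixing of $\Ad v$) forces $X$ to be a partial isometry with full left and right supports.

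Second, your account of why $t=1$ is not correct. After composing two copies of $X$ and reinterpreting the result as an invariant vector for $\zeta\ot\overline\zeta$ where $\zeta(v)(T)=(v\ot v)_{1324}\,T\,(\id\ot\Delta)(v^*)$, one extracts a nonzero $Z$ intertwining $(v\ot v)_{1324}$ with $\rho(v)\ot(\id\ot\Delta)(v)$ for some irreducible $\rho:\cG\to\cU(\C^k)$. Weak mixing forces $Z$ to be a partial isometry with supports $(p\ot p)_{1324}$ and $1\ot q$, where $q=(\id\ot\Delta)(p)$. Then $Z(M_k(\C)\ot 1)Z^*$ is a finite-dimensional $\Ad(v\ot v)$-invariant subspace, so weak mixing gives $k=1$; matching the traces of the two supports gives $t^2=t$, hence $t=1$. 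Only now, with $t=1$ and $n=1$, does one land in the situation of the last paragraph of \cite[Theorem 4.1]{KV15} and invoke \cite[Theorem 3.3]{IPV10} to produce the unitary $W$. The conclusion $W\cG W^*\subseteq\T\Gamma$ is not obtained by a ``single nonzero Fourier coefficient'' argument as you suggest; that is the content of \cite[Theorem 3.3]{IPV10}.
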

\begin{proof}
Take an integer $n \geq t$, write $M = M_n(\C)$, $Q = L(\Gamma)$ and realize $P = p (M \ot Q) p$, where $p$ is a projection with $(\Tr \ot \tau)(p) = t$. We use in this proof multiple tensor products of $M$ and $Q$. We use the tensor leg numbering notation, where e.g.\ $(a \ot b \ot c \ot d)_{2143} = b \ot a \ot d \ot c$.  When $v \in M \ot Q$, we denote by $v_{ij} \in Q$ the matrix coefficients. When $a \in Q = L(\Gamma)$, we write $(a)_g = \tau(a u_g^*)$ for all $g \in \Gamma$.

By the assumption that $h_\Gamma(\cG) > 0$, there exists a $\delta > 0$ such that
$$\sup_{g \in \Gamma} \sum_{k,l=1}^n |(v_{kl})_g|^2 \geq \delta \quad\text{for all $v \in \cG$.}$$
Denote by $\Delta : L(\Gamma) \recht L(\Gamma) \ovt L(\Gamma) : \Delta(u_g) = u_g \ot u_g$ the comultiplication. Note that for all $v \in \cG \subseteq M \ovt Q$,
\begin{align*}
(\Tr \ot \Tr \ot \tau \ot \tau \ot \tau) &\, \bigl( (v \ot (\id \ot \Delta)(v))_{13245} \, ((\id \ot \Delta)(v^*) \ot v^*)_{13425}\bigr) \\
& = \sum_{i,j,k,l = 1}^n (\tau \ot \tau \ot \tau)\, \bigl( (v_{ij} \ot \Delta(v_{kl})) \, (\Delta(v_{ij}^*) \ot v_{kl}^*) \bigr) \\
& = \sum_{g \in \Gamma} \sum_{i,j,k,l = 1}^n |(v_{ij})_g|^2 \, |(v_{kl})_g|^2 \geq \Bigl(\sup_{g \in \Gamma} \sum_{k,l=1}^n |(v_{kl})_g|^2\Bigr)^2 \geq \delta^2 > 0 \; \; .
\end{align*}
Defining $X \in M \ovt M \ovt Q \ovt Q \ovt Q$ as the unique element of minimal $\|\,\cdot\,\|_2$ in the closed linear span of
$$\bigl\{ (v \ot (\id \ot \Delta)(v))_{13245} \, ((\id \ot \Delta)(v^*) \ot v^*)_{13425} \bigm| v \in \cG \bigr\} \; ,$$
the estimate above implies that
$$(\Tr \ot \Tr \ot \tau \ot \tau \ot \tau)(X) \geq \delta^2 \; ,$$
so that $X \neq 0$. Write $q = (\id \ot \Delta)(p)$. By construction and by the uniqueness of $X$,
\begin{align*}
& (p \ot q)_{13245}\, X = X = X \, (q \ot p)_{13425} \quad\text{and}\\
& (v \ot (\id \ot \Delta)(v))_{13245}\, X = X \, ((\id \ot \Delta)(v) \ot v)_{13425} \quad\text{for all $v \in \cG$.}
\end{align*}

By our assumption that $\cG\dpr \not\prec L(C_\Gamma(g))$ if $g \in \Gamma \setminus \{e\}$ and by \cite[Proposition 7.2(3)]{IPV10}, it follows that the unitary representation $(\Ad (\id \ot \Delta)(v))_{v \in \cG}$ of $\cG$ on the orthogonal complement of $(\id \ot \Delta)(p (M \ovt Q)p)$ inside $L^2(q(M \ovt Q \ovt Q)q)$ is weakly mixing. We also assumed that the unitary representation $(\Ad v)_{v \in \cG}$ on $L^2(p (M \ovt Q)p) \ominus \C p$ is weakly mixing. Applying $\Delta$, it follows that $(\Ad (\id \ot \Delta)(v))_{v \in \cG}$ is weakly mixing on $L^2(q(M \ovt Q \ovt Q)q) \ominus \C q$. Taking the tensor product with the representation $(\Ad v)_{v \in \cG}$, it follows that the unitary representation
$$(\Ad (v \ot (\id \ot \Delta)(v)))_{v \in \cG} \quad\text{on}\quad L^2(p(M \ovt Q)p \ovt q(M \ovt Q \ovt Q)q) \ominus \C(p \ot q)$$
is weakly mixing. Since the left support of $X$ is invariant under this representation (after reshuffling the tensor factors), we conclude that after multiplying $X$ with a scalar, $X$ is a partial isometry with left support $(p \ot q)_{13245}$. Making the same reasoning on the right, the right support of $X$ then equals $(q \ot p)_{13425}$.

Since the left and right supports match correctly, the element
$$Y \in (M \ovt M \ovt M) \ovt (Q \ovt Q \ovt Q \ovt Q) \quad\text{given by}\quad Y = X_{23567} \, X_{12456}$$
is a partial isometry with left support $(p \ot p \ot q)_{1425367}$ and right support $(q \ot p \ot p)_{1452637}$ and we have
$$(v \ot v \ot (\id \ot \Delta)(v))_{1425367} \, Y = Y \, ((\id \ot \Delta)(v) \ot v \ot v)_{1452637} \quad\text{for all $v \in \cG$.}$$
Consider the Hilbert space
$$\cK = (p \ot p)_{1324} \, \bigl((\C^n \ot \C^n) (\C^n)^* \ot L^2(Q \ovt Q)\bigr) \, q$$
and the unitary representation
$$\zeta : \cG \recht \cU(\cK) : \zeta(v)(T) = (v \ot v)_{1324} \, T \, (\id \ot \Delta)(v^*) \; .$$
We can view $Y$ as a nonzero invariant vector of the unitary representation $\zeta \ot \overline{\zeta}$. Therefore, $\zeta$ is not a weakly mixing representation. We thus find an integer $k \in \N$, an irreducible unitary representation $\rho : \cG \recht \cU(\C^k)$ and a nonzero element
$$Z \in (p \ot p)_{1324} \, \bigl((\C^n \ot \C^n) (\C^k \ot \C^n)^* \ot L^2(Q \ovt Q)\bigr) \, (1 \ot q)$$
satisfying $(v \ot v)_{1324} Z = Z (\rho(v) \ot (\id \ot \Delta)(v))$ for all $v \in \cG$. By weak mixing of both $\Ad v$ and $\Ad (\id \ot \Delta)(v)$, we find that $ZZ^*$ is a multiple of $(p \ot p)_{1324}$ in the matrix algebra over $L^1(Q \ovt Q)$, while $Z^* Z$ is a multiple of $1 \ot q$. We may thus assume that $ZZ^* = (p \ot p)_{1324}$ and $Z^* Z = 1 \ot q$. In particular, $Z(M_k(\C) \ot 1 \ot 1 \ot 1)Z^*$ is a finite dimensional subspace of $(M \ovt M)_{1324}$ that is globally invariant under $\Ad (v \ot v)_{1324}$. It follows that $k = 1$. It then follows that $p \ot p$ and $q$ have the same trace. Hence, $t= 1$ and we may assume that $n=1$.

We are now in precisely the same situation as in the last paragraph of the proof of \cite[Theorem 4.1]{KV15}. Invoking \cite[Theorem 3,3]{IPV10}, we find a unitary $W \in L(\Gamma)$ such that $W \cG W^* \subseteq \T \Gamma$.
\end{proof}

\begin{proof}[{Proof of Theorem \ref{thm.standard-embedding}}]
Define $p_3$ as the maximal projection in $L(\Gamma)^t \cap \pi(\cG)'$ such that $\pi(\cG)\dpr p_3$ is amenable. Of course, $p_3$ could be zero. By construction, $\pi(\cG)\dpr (1-p_3)$ has no amenable direct summand. Let $(q_i)_{i \in I}$ be a maximal orthogonal family of projections in $L(\Gamma)^t \cap \pi(\cG)'$ with the properties that $q_i \leq 1-p_3$ and $h_\Gamma(\pi(\cG) q_i) = 0$ for all $i \in I$. Define $p_2 = \sum_{i \in I} q_i$. One checks that $h_\Gamma(\pi(\cG) p_2) = 0$. Write $p_1 = 1-p_2-p_3$. By construction, $\pi(\cG)\dpr p_1$ has no amenable direct summand and for every nonzero projection $p \in \pi(\cG)' \cap p_1 L(\Gamma)^t p_1$, we have that $h_\Gamma(\pi(\cG) p) > 0$.

For the rest of the proof, we may thus assume that $\pi : \cG \recht \cU(L(\Gamma)^t)$ is a group homomorphism with the properties that $\pi(\cG)\dpr$ has no amenable direct summand and that $h_\Gamma(\pi(\cG) p) > 0$ for every nonzero projection $p \in L(\Gamma)^t \cap \pi(\cG)'$. We have to prove that $\pi$ is standard.

Define $A_0 \subseteq L(\Gamma)^t$ as the subset of elements $a \in L(\Gamma)^t$ with the property that $\lspan \{\pi(v) a \pi(v)^* \mid v \in \cG\}$ is finite dimensional. Note that $A_0$ is a unital $*$-subalgebra of $L(\Gamma)^t$. Denote by $A$ the weak closure of $A_0$. Let $z \in \cZ(A)$ be the maximal central projection such that $A z$ is diffuse. We prove that $z = 0$, so that $A$ is discrete.

By construction, $\pi(\cG)$ is a subgroup of the normalizer of $A$ inside $L(\Gamma)^t$. Define $\al : \cG \recht \Aut(A) : \al(v)(a) = \pi(v) a \pi(v)^*$. By construction, the action $\al$ is compact, meaning that the closure of $\al(\cG)$ in $\Aut(A)$ is a compact group. Since $\pi(\cG)\dpr$ has no amenable direct summand, a fortiori $\pi(\cG)\dpr$ is diffuse. We can thus pick a sequence $v_n \in \cG$ such that $\pi(v_n) \recht 0$ weakly. Passing to a subsequence, we may assume that the sequence $(\al(v_n))_n$ is convergent in $\Aut(A)$. We then find among the elements $v_m^{-1} v_n$ a sequence $w_n \in \cG$ such that $\pi(w_n) \recht 0$ weakly and $\|\pi(w_n) a - a \pi(w_n)\|_2 \recht 0$ for every $a \in A$. Since $\Gamma$ belongs to Ozawa's class ($\cS$), by \cite[Section 4]{Oza10}, the II$_1$ factor $L(\Gamma)^t$ is $\omega$-solid, implying that $A$ is amenable. It then follows from \cite[Proposition 3.2]{OP07} that the action $\al$ of $\cG$ on $A$ is weakly compact.

Since $\pi(\cG)$ normalizes $A$, we get that $z$ commutes with $\pi(\cG)$. So, $A z \subseteq (A \cup \pi(\cG))\dpr z$ is a regular and weakly compact inclusion. Since $\Gamma$ belongs to Ozawa's class ($\cS$), if $z \neq 0$, it follows from \cite[Theorem 4.1]{CS11} that $(A \cup \pi(\cG))\dpr z$ is amenable, contradicting the fact that $\pi(\cG)\dpr$ has no amenable direct summand. So we have proven that $z=0$ and that $A$ is discrete.

The center $\cZ(A)$ is atomic and $\al$ restricts to an action of $\cG$ on $\cZ(A)$. We then find nonzero projections $(z_n)_{n \in I}$ in $\cZ(A)$ such that the following holds.
\begin{itemlist}
\item $\sum_{n \in I} z_n = 1$.
\item $\al(v)(z_n) = z_n$ for all $v \in \cG$ and $n \in I$.
\item For every $n \in I$, the algebra $\cZ(A) z_n$ is finite dimensional and the restriction of $\al$ to $\cZ(A) z_n$ has trivial fixed point algebra $\C z_n$.
\end{itemlist}
It suffices to prove that for every $n \in I$, the homomorphism $v \mapsto \pi(v)z_n$ is standard. To prove this, we multiply all data with $z_n$ and may thus assume that $\cZ(A)$ is finite dimensional with $\cZ(A)^\al = \C 1$.

Let $z \in \cZ(A)$ be a minimal projection. Define the finite index subgroup $\cG_0 \subg \cG$ by $\cG_0 = \{v \in \cG \mid \al(v)(z) = z\}$. We find that $1 = \sum_{v \in \cG/\cG_0} \al(v)(z)$ and that $\pi$ is induced from the homomorphism $\pi_0 : \cG_0 \recht L(\Gamma)^t z : \pi_0(v) = \pi(v) z$. Since $\cG_0 \subg \cG$ has finite index, we still have that $h_\Gamma(\pi_0(\cG_0)) > 0$ and that $\pi_0(\cG_0)\dpr$ has no amenable direct summand.

By construction, $Az \cong M_k(\C)$ is a matrix algebra. We may thus realize $L(\Gamma)^t z$ as $M_k(\C) \ot L(\Gamma)^s$ in such a way that $A z$ corresponds to $M_k(\C) \ot 1$. For every $v \in \cG_0$, we have that $\al(v)$ restricts to an automorphism of $M_k(\C)$. We can thus choose unitaries $\gamma(v) \in M_k(\C)$ and $\pi_1(v) \in L(\Gamma)^s$ such that $\pi_0(v) = \gamma(v) \ot \pi_1(v)$. Since the unitaries $\gamma(v)$ are uniquely determined up to a scalar, we find that $\cG_1 := \T \pi_1(\cG_0)$ is a subgroup of $\cU(L(\Gamma)^s)$. It also follows that $h_\Gamma(\cG_1) > 0$ and that $\cG_1\dpr$ has no amenable direct summand. For every $g \in \Gamma \setminus \{e\}$, we have that $C_\Gamma(g)$ is amenable and thus, $\cG_1\dpr \not\prec L(C_\Gamma(g))$.

Write $P = L(\Gamma)^s$. We claim that the unitary representation $(\Ad v)_{v \in \cG_1}$ on $L^2(P) \ominus \C 1$ is weakly mixing. If not, we find a nonzero finite dimensional subspace $B_0 \subseteq P \ominus \C 1$ that is globally invariant under $(\Ad v)_{v \in \cG_1}$. Define $B_1 = M_k(\C) \ot B_0$. Then, $B_1$ is globally invariant under $(\Ad \pi_0(v))_{v \in \cG_0}$. Since $\cG_0 \subg \cG$ has finite index, also
$$B_2 := \lspan\{\al(v)(b) \mid v \in \cG, b \in B_1\}$$
is finite dimensional. By definition, $B_2 \subseteq A$. Hence, $B_1 \subseteq Az$. By construction, $B_1$ is orthogonal to $Az$. This contradiction implies that the unitary representation $(\Ad v)_{v \in \cG_1}$ on $L^2(P) \ominus \C 1$ is weakly mixing.

By Lemma \ref{lem.embedding-weakly-mixing}, $s=1$ and after a unitary conjugacy $\cG_1 \subseteq \T \Gamma$. We have thus realized $L(\Gamma)^t z$ as $M_k(\C) \ot L(\Gamma)$ in such a way that $\pi_0(v) = \gamma(v) \ot u_{\delta(g)}$ for all $v \in \cG_0$. This forces $\gamma: \cG_0 \recht \cU(\C^k)$ to be a unitary representation and $\delta: \cG_0 \recht \Gamma$ to be a group homomorphism. Since $\pi$ is the induction of $\pi_0$, the theorem is proven.
\end{proof}

\section{Embeddings of generalized Bernoulli crossed products}\label{sec.embeddings-bernoulli}

Whenever $(A_0,\tau)$ is a tracial von Neumann algebra and $I$ is a countable set, we denote by $(A_0,\tau)^I$ the tensor product von Neumann algebra $\bigotimes_{i \in I} (A_0,\tau)$. For every $i \in I$, we denote by $\pi_i : (A_0,\tau) \recht (A_0,\tau)^I$ the embedding in the $i$'th tensor factor.

Let $\Gamma$ be a countable group with infinite subgroup $\Gamma_0 \subg \Gamma$ and let $\Gamma_0 \actson^\al (A_0,\tau)$ be a trace preserving action on an amenable tracial von Neumann algebra $(A_0,\tau)$. We build the II$_1$ factor
\begin{equation}\label{eq.our-factors}
M(\Gamma_0,\Gamma,\al) = (A_0,\tau)^\Gamma \rtimes_\sigma (\Gamma_0 \times \Gamma) \quad\text{where}\quad \sigma_{(g,h)}(\pi_k(a)) = \pi_{gkh^{-1}}(\al_g(a))
\end{equation}
for all $g \in \Gamma_0$, $h,k \in \Gamma$ and $a \in A_0$.

We will require that $\Gamma$ belongs to the following well studied class of groups with a rank one behavior.

\begin{definition}\label{def.family-C}
We say that a countable group $\Gamma$ belongs to the family $\cC$ if $\Gamma$ is nonamenable, weakly amenable, in Ozawa's class $(\cS)$ and if every nontrivial element $g \in \Gamma \setminus \{e\}$ has an amenable centralizer.
\end{definition}

Note that all free groups $\F_n$ with $2 \leq n \leq +\infty$ and all free products $\Gamma_1 * \Gamma_2$ of amenable groups with $|\Gamma_1| \geq 2$ and $|\Gamma_2| \geq 3$ belong to the family $\cC$. Since groups $\Gamma$ in the family $\cC$ have amenable centralizers, every nonamenable subgroup $\Gamma' \subg \Gamma$ is relatively icc, meaning that $\{g h g^{-1} \mid g \in \Gamma'\}$ is infinite for every $h \in \Gamma \setminus \{e\}$. In particular, $\Gamma$ is itself icc. Finally note that if $\Gamma \in \cC$, then every nonamenable subgroup $\Gamma' \subg \Gamma$ still belongs to $\cC$.

We now return to the construction in \eqref{eq.our-factors}. In most cases, we consider $\Gamma_0 = \Gamma$ and denote the II$_1$ factor as $M(\Gamma,\al)$. When $\Lambda$ and $\Gamma$ belong to the family $\cC$ and when $\Gamma \actson^\al (A_0,\tau)$ and $\Lambda \actson^\be (B_0,\tau)$ are arbitrary trace preserving actions with nontrivial kernel, we describe all possible embeddings of $M(\Lambda,\be)$ into amplifications of $M(\Gamma,\al)$. In particular, we establish the following result that provides numerous families of II$_1$ factors that cannot be embedded one into the other.

Note that Theorem \ref{thm.main-embedding} is a special case of Theorem \ref{thm.main-thm-embedding-bernoulli} by considering the trivial actions $\Gamma \actson (A_0,\tau)$ and $\Lambda \actson (B_0,\tau)$. We explicitly state this special case as Corollary \ref{cor.special-case} below.

\begin{theorem}\label{thm.main-thm-embedding-bernoulli}
Let $\Lambda$ be a nonamenable icc group and let $\Gamma$ be a group in the family $\cC$ of Definition \ref{def.family-C}. Let $\Gamma \actson^\al (A_0,\tau)$ and $\Lambda \actson^\be (B_0,\tau)$ be trace preserving actions on amenable tracial von Neumann algebras such that $A_0 \neq \C 1 \neq B_0$ and such that $\Ker \be$ is nontrivial.
Then the following two statements are equivalent.
\begin{enumlist}
\item There exists a $t > 0$ such that $M(\Lambda,\be) \emb M(\Gamma,\al)^t$.
\item There exists an injective group homomorphism $\delta : \Lambda \recht \Gamma$ and a trace preserving unital $*$-homomorphism $\psi : (B_0,\tau) \recht (A_0,\tau)$ such that $\psi \circ \beta_g = \al_{\delta(g)} \circ \psi$ for all $g$ belonging to a finite index subgroup $\Lambda_1 \subg \Lambda$.
\end{enumlist}
Moreover, if these statements hold and if $t > 0$, we have $M(\Lambda,\be) \emb M(\Gamma,\al)^t$ if and only if $t$ is an integer that can be written as a sum of integers of the form $[\Lambda:\Lambda_1]$ with $\Lambda_1 \subg \Lambda$ as in 2.
\end{theorem}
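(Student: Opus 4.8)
The plan is to analyze an arbitrary embedding $\theta : M(\Lambda,\be) \emb M(\Gamma,\al)^t$ by passing through the group von Neumann algebra $L(\Lambda\times\Lambda)$ and applying deformation/rigidity theory for Bernoulli crossed products, reducing everything to Theorem \ref{thm.standard-embedding}. Write $M = M(\Gamma,\al) = (A_0,\tau)^\Gamma \rtimes (\Gamma\times\Gamma)$ and $N = M(\Lambda,\be) = (B_0,\tau)^\Lambda \rtimes (\Lambda\times\Lambda)$, and set $B = (B_0,\tau)^\Lambda$, $A = (A_0,\tau)^\Gamma$. Since $(A_0,\tau)$ is amenable and $A_0 \neq \C 1$, the inclusion $A \subseteq M$ is the core of a genuine Bernoulli action of $\Gamma\times\Gamma$ with weakly mixing and clustering behaviour, and similarly for $N$. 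First, I would fix $n \geq t$ and realize $M^t = p(M_n(\C)\ot M)p$. The key input is the spectral gap / malleable deformation rigidity package developed for such factors (as in \cite{Pop03,Ioa10,IPV10,KV15}): because $\Lambda$ is nonamenable icc and $\Ker\be$ is nontrivial, the II$_1$ subfactor $L(\Lambda\times\Lambda)\cong L(\Lambda)\ovt L(\Lambda)$ — or rather its part coming from the nontrivial kernel — has a rigid/spectral gap position forcing, after a unitary conjugacy, $\theta(L(\Lambda\times\Lambda)) \subseteq L(\Gamma\times\Gamma)^t$. The nontriviality of $\Ker\be$ is exactly what guarantees a diagonal $\Lambda$ embedded with the Bernoulli action having a fixed point, which is the hook needed to apply the Bernoulli-rigidity theorems of Popa–Ioana and conclude that $\theta$ cannot ``spread'' $L(\Lambda\times\Lambda)$ into the base algebra $A$.

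Second, once $\theta(L(\Lambda\times\Lambda)) \subseteq L(\Gamma\times\Gamma)^t$ up to unitary conjugacy, I would show the height $h_{\Gamma\times\Gamma}(\theta(\Lambda\times\Lambda))$ is positive. This is where the structure of $\cC$ enters: $\Gamma\times\Gamma$ has commuting nonamenable ``directions'', and the weak amenability plus class $(\cS)$ assumptions let one run the standard intertwining argument (à la \cite{KV15,IPV10}) showing that the image of $L(\Lambda)\ovt L(\Lambda)$ cannot have zero height nor an amenable corner, because $\Lambda\times\Lambda$ is nonamenable icc and its image must normalize a Bernoulli-type base. With positive height in hand, I apply the generalization of \cite[Theorem 3.1]{IPV10} — namely Theorem \ref{thm.standard-embedding} with $\Gamma$ replaced by $\Gamma\times\Gamma$, which is in class $(\cS)$ with amenable centralizers of nontrivial elements since $\Gamma\in\cC$ — to deduce that $\theta|_{L(\Lambda\times\Lambda)}$ is a \emph{standard} homomorphism: $t = n\in\N$, and on a finite index subgroup $\Lambda_1\times\Lambda_1' \subg \Lambda\times\Lambda$ (which I would intersect down to a single finite index $\Lambda_1\subg\Lambda$ acting diagonally) the restriction is unitarily conjugate to $\gamma \ot u_{\delta\times\delta'}$ for a finite-dimensional representation $\gamma$ and group homomorphisms $\delta,\delta' : \Lambda_1\recht\Gamma$. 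Injectivity of $\delta$ follows because $\theta$ is an embedding of factors and the kernel of $\delta$ would produce a noninjective piece on $L(\Lambda_1)$, contradicting that a normal $*$-homomorphism of II$_1$ factors is trace-preserving hence injective.

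Third, I would push the structure through from $L(\Lambda\times\Lambda)$ to the full crossed product. Since $\theta$ sends the quasi-normalizer of $L(\Lambda\times\Lambda)$ into that of its image, and $B$ is (a corner of) the relative commutant picture generated by the Bernoulli bimodule decomposition, the base algebra $B = (B_0,\tau)^\Lambda$ must be sent into $M_n(\C)\ot A = M_n(\C)\ot(A_0,\tau)^\Gamma$, compatibly with the group-like action $\delta$. Decomposing along the finitely many cosets $\Lambda/\Lambda_1$ and matching the Bernoulli tensor-factor structure $\pi_k^{(B)} \mapsto \pi_{\delta(k)}^{(A)}$, one extracts on each block a trace-preserving unital $*$-homomorphism $\psi : (B_0,\tau)\recht (A_0,\tau)$ with $\psi\circ\be_g = \al_{\delta(g)}\circ\psi$ for $g\in\Lambda_1$, giving statement 2. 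Conversely, given such $\psi$ and $\delta$, the explicit generalized-Bernoulli functoriality produces an embedding $M(\Lambda_1,\be|) \emb M(\Gamma,\al)$, and inducing from the finite index subgroup $\Lambda_1\subg\Lambda$ yields $M(\Lambda,\be)\emb M(\Gamma,\al)^{[\Lambda:\Lambda_1]}$; taking direct sums over a finite family of such subgroups gives all amplifications $t=\sum[\Lambda:\Lambda_i]$, and Theorem \ref{thm.standard-embedding} shows these are the only ones. The main obstacle I anticipate is the first step: cleanly invoking the Bernoulli deformation/rigidity machinery to locate $\theta(L(\Lambda\times\Lambda))$ inside $L(\Gamma\times\Gamma)^t$ with positive height, handling simultaneously the amplification $t>1$ (not covered by \cite{Ioa10}) and the absence of weak mixing for $\be$ (only $\Ker\be\neq\{e\}$ is assumed) — this is precisely the gap that Theorem \ref{thm.standard-embedding} was designed to bridge, so the work is in verifying its hypotheses survive the reduction.
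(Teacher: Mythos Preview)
Your overall architecture is right, but the second step contains a genuine error that breaks the argument. You propose to apply Theorem \ref{thm.standard-embedding} directly to the group $\Gamma\times\Gamma$, asserting that ``$\Gamma\times\Gamma$ is in class $(\cS)$ with amenable centralizers of nontrivial elements since $\Gamma\in\cC$''. Both claims are false. For any $g\in\Gamma\setminus\{e\}$, the element $(g,e)\in\Gamma\times\Gamma$ has centralizer $C_\Gamma(g)\times\Gamma$, which contains the nonamenable group $\Gamma$; so nontrivial elements of $\Gamma\times\Gamma$ can have nonamenable centralizers. Likewise, a product of two nonamenable groups is never in Ozawa's class $(\cS)$ (indeed $L(\Gamma\times\Gamma)\cong L(\Gamma)\ovt L(\Gamma)$ is not solid). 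So Theorem \ref{thm.standard-embedding} simply does not apply to $\Gamma\times\Gamma$, and there is no way around this: the hypotheses of that theorem are genuinely used in its proof via $\omega$-solidity and \cite[Theorem 4.1]{CS11}.

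The paper handles this by inserting an extra structural step you are missing. After conjugating $\theta(L(\Lambda\times\Lambda))$ into $L(\Gamma\times\Gamma)^t$ (via \cite[Lemma 5.3]{KV15}, as you suggest), one invokes the prime factorization theorem of Ozawa--Popa \cite[Theorem 7]{OP03}: since $\theta(L(\Lambda))\ovt\theta(L(\Lambda))$ sits inside $L(\Gamma)^r\ovt L(\Gamma)^s$ with both tensor factors nonamenable and $\Gamma$ in class $(\cS)$, each summand of the relative commutant decomposition must have $\theta(L(\Lambda\times e))$ going into one tensor leg and $\theta(L(e\times\Lambda))$ into the other (possibly swapped). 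Only \emph{then} does one apply Theorem \ref{thm.standard-embedding} to each factor $\Gamma$ separately, where its hypotheses are satisfied. The positivity of height for each factor is obtained from \cite[Lemma 5.10]{KV15}, using that $\theta(B)\prec_f A$ (which follows from \cite[Theorem 1.4]{PV12} and the fact that neither direction of $L(\Lambda\times\Lambda)$ is amenable relative to a one-sided crossed product). Your third step and the converse direction are essentially correct; the gap is purely the illegitimate application of Theorem \ref{thm.standard-embedding} to the product group, and the missing Ozawa--Popa decomposition that repairs it.
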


Applying Theorem \ref{thm.main-thm-embedding-bernoulli} in the case $t=1$, we get in particular that $M(\Lambda,\be) \emb M(\Gamma,\al)$ if and only if there exists an injective group homomorphism $\delta : \Lambda \recht \Gamma$ and a trace preserving unital $*$-homomorphism $\psi : (B_0,\tau) \recht (A_0,\tau)$ such that $\psi \circ \beta_g = \al_{\delta(g)} \circ \psi$ for all $g \in \Lambda$.

Theorem \ref{thm.main-thm-embedding-bernoulli} applies in particular to the trivial action $\Gamma \actson (A_0,\tau)$. The resulting II$_1$ factor is the generalized Bernoulli crossed product of the left-right action of $\Gamma \times \Gamma$ on $\Gamma$ and base algebra $(A_0,\tau)$~:
$$M(\Gamma,A_0,\tau) = (A_0,\tau)^\Gamma \rtimes (\Gamma \times \Gamma) \; .$$

\begin{corollary}\label{cor.special-case}
Let $\Lambda$ be a nonamenable icc group and let $\Gamma$ be a group in the family $\cC$. Let $(A_0,\tau)$, $(B_0,\tau)$ be nontrivial amenable tracial von Neumann algebras. Then the following statements are equivalent.
\begin{enumlist}
\item There exists a $t > 0$ and an embedding $M(\Lambda,B_0,\tau) \hookrightarrow M(\Gamma,A_0,\tau)^t$.
\item There exists an embedding $M(\Lambda,B_0,\tau) \hookrightarrow M(\Gamma,A_0,\tau)$.
\item There exists an injective group homomorphism $\delta : \Lambda \recht \Gamma$ and a trace preserving unital $*$-homomorphism $\psi : (B_0,\tau) \recht (A_0,\tau)$.
\end{enumlist}
Moreover, if these statements hold, we have $M(\Lambda,B_0,\tau) \hookrightarrow M(\Gamma,A_0,\tau)^t$ if and only if $t \in \N$.
\end{corollary}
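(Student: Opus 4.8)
The plan is to derive Corollary \ref{cor.special-case} from Theorem \ref{thm.main-thm-embedding-bernoulli} by specializing to the trivial action. First I would check that the hypotheses of the theorem are met: for the trivial action $\Lambda \actson^\be (B_0,\tau)$ we have $\Ker\be = \Lambda$, which is nontrivial since $\Lambda$ is nonamenable (hence infinite), and $\Lambda$ is a nonamenable icc group by assumption, while $\Gamma \in \cC$; the nontriviality $A_0 \neq \C 1 \neq B_0$ is hypothesized directly. So Theorem \ref{thm.main-thm-embedding-bernoulli} applies verbatim, and $M(\Lambda,B_0,\tau) = M(\Lambda,\be)$, $M(\Gamma,A_0,\tau) = M(\Gamma,\al)$.

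The heart is to observe that, for trivial actions, condition 2 of the theorem collapses: the intertwining condition $\psi \circ \be_g = \al_{\delta(g)} \circ \psi$ for $g$ in a finite index subgroup $\Lambda_1 \subg \Lambda$ becomes simply $\psi = \psi$, since both $\be_g$ and $\al_h$ are the identity. Hence condition 2 of Theorem \ref{thm.main-thm-embedding-bernoulli} is equivalent to: there exists an injective group homomorphism $\delta : \Lambda \recht \Gamma$ and a trace preserving unital $*$-homomorphism $\psi : (B_0,\tau) \recht (A_0,\tau)$ — with $\Lambda_1$ allowed to be all of $\Lambda$. This is exactly condition 3 of the corollary. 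Therefore the equivalence (1) $\Leftrightarrow$ (3) of the corollary is precisely the equivalence (1) $\Leftrightarrow$ (2) of the theorem. The implication (2) $\Rightarrow$ (1) of the corollary is trivial (take $t=1$). For (1) $\Rightarrow$ (2): since condition 1 of the corollary implies condition 3, and condition 3 then gives condition 2 of the theorem with $\Lambda_1 = \Lambda$, the ``moreover'' part of the theorem applies with $[\Lambda:\Lambda_1] = 1$, so every positive integer $t$ (being a sum of $1$'s) gives $M(\Lambda,\be) \emb M(\Gamma,\al)^t$; in particular $t=1$ works, yielding condition 2 of the corollary. The same ``moreover'' argument gives the final sentence: $M(\Lambda,B_0,\tau) \emb M(\Gamma,A_0,\tau)^t$ iff $t$ is a sum of integers of the form $[\Lambda:\Lambda_1]$, but since we may always take $\Lambda_1 = \Lambda$ (index $1$), the set of such sums is exactly $\N = \{1,2,3,\dots\}$, so the condition is just $t \in \N$.

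I do not anticipate a genuine obstacle here, since this is a pure specialization; the only point requiring a line of care is verifying that in the trivial-action case one is genuinely free to take $\Lambda_1 = \Lambda$ in condition 2, i.e.\ that shrinking to a proper finite index subgroup never yields \emph{more} embeddings than the full group does. This is clear because, once an intertwining $\psi$ exists for the trivial actions (which imposes nothing beyond $\psi$ being a trace preserving unital $*$-homomorphism), the same $\psi$ works for every subgroup, in particular for $\Lambda_1 = \Lambda$. Hence the set of achievable indices $[\Lambda:\Lambda_1]$ always contains $1$, and the ``moreover'' clause of Theorem \ref{thm.main-thm-embedding-bernoulli} collapses to $t \in \N$.
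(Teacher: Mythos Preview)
Your proposal is correct and matches the paper's approach exactly: the paper simply states that Corollary~\ref{cor.special-case} is the special case of Theorem~\ref{thm.main-thm-embedding-bernoulli} obtained by taking the trivial actions, without spelling out a separate proof. Your write-up fills in precisely the details one expects --- verifying the hypotheses, noting that the equivariance condition in statement~2 of the theorem becomes vacuous, and using the ``moreover'' clause with $\Lambda_1=\Lambda$ to collapse the set of admissible $t$ to~$\N$.
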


\begin{remark}\label{rem.virtually-isom}
We say that the II$_1$ factors $M$ and $N$ are \emph{virtually isomorphic} if $M$ can be embedded as a finite index subfactor of $N^t$ for some $t > 0$. Note that $M$ and $N$ are virtually isomorphic if and only if there exists a Hilbert $M$-$N$-bimodule $\bim{M}{\cH}{N}$ with $\dim_{-N}(\cH) < +\infty$ and $\dim_{M-}(\cH) < +\infty$. Virtual isomorphism is thus an equivalence relation on the class of II$_1$ factors.

Our proof of Theorem \ref{thm.main-thm-embedding-bernoulli} and Corollary \ref{cor.special-case} also implies the following. In the setting of Theorem \ref{thm.main-thm-embedding-bernoulli} and given $t>0$, we have that the following statements are equivalent.
\begin{itemlist}
\item $M(\Lambda,\be) \cong M(\Gamma,\al)^t$.
\item $t=1$ and there exists a group isomorphism $\delta : \Lambda \recht \Gamma$ and a trace preserving unital $*$-isomorphism $\psi : (B_0,\tau) \recht (A_0,\tau)$ such that $\psi \circ \beta_g = \al_{\delta(g)} \circ \psi$ for all $g \in \Lambda$.
\item $M(\Lambda,\be)$ and $M(\Gamma,\al)$ are virtually isomorphic.
\end{itemlist}
In particular, the II$_1$ factors $M(\Gamma,\al)$ have trivial fundamental group. In the setting of Corollary \ref{cor.special-case}, we similarly have that the following statements are equivalent.
\begin{itemlist}
\item $M(\Lambda,B_0,\tau) \cong M(\Gamma,A_0,\tau)^t$.
\item $t=1$, $\Lambda \cong \Gamma$ and there exists a trace preserving isomorphism $(B_0,\tau) \cong (A_0,\tau)$.
\item $M(\Lambda,B_0,\tau)$ and $M(\Gamma,A_0,\tau)$ are virtually isomorphic.
\end{itemlist}%
%In particular, the third family of II$_1$ factors in Corollary \ref{cor.family-with-a} consists of II$_1$ factors that are mutually embeddable, yet not virtually isomorphic.
\end{remark}

Before proving Theorem \ref{thm.main-thm-embedding-bernoulli}, we need a few lemmas. For later use (for instance, in the proof of Theorem \ref{thm.all-embed-trivial} below), we state and prove the first lemma \emph{without} assuming that $\Ker \be$ is nontrivial and by allowing the left acting group $\Lambda_0$ to be a proper subgroup of $\Lambda$.

\begin{lemma}\label{lem.step1}
Let $\Lambda_0 \subg \Lambda$ and $\Gamma_0 \subg \Gamma$ be nonamenable icc groups. Assume that $\Gamma \in \cC$. Let $\Gamma_0 \actson^\al (A_0,\tau)$ and $\Lambda_0 \actson^\be (B_0,\tau)$ be trace preserving actions on amenable tracial von Neumann algebras such that $A_0 \neq \C 1 \neq B_0$. Write $(A,\tau) = (A_0,\tau)^\Gamma$ and $(B,\tau) = (B_0,\tau)^\Lambda$. Let $t > 0$ and let $\theta : M(\Lambda_0,\Lambda,\be) \recht M(\Gamma_0,\Gamma,\al)^t$ be a normal unital $*$-homomorphism.

Then $t \in \N$ and there exist finite index subgroups $\Lambda_1 \subg \Lambda_0$ and $\Lambda_2 \subg \Lambda$ such that the restriction of $\theta$ to $B \rtimes (\Lambda_1 \times \Lambda_2)$ is unitarily conjugate to a finite direct sum of embeddings $\theta_i$ of the form
$$\theta_i : B \rtimes (\Lambda_1 \times \Lambda_2) \recht M_{n_i}(\C) \ot (A \rtimes (\Gamma_0 \times \Gamma)) : \begin{cases} \theta_i(B) \subseteq M_{n_i}(\C) \ot A \; ,\\
 \theta_i(u_{(g,h)}) = \gamma_i(g,h) \ot u_{\pi_i(g,h)} \; ,\end{cases}$$
with $\gamma_i : \Lambda_1  \times \Lambda_2 \recht \cU(\C^{n_i})$ a unitary representation and $\pi_i : \Lambda_1 \times \Lambda_2 \recht \Gamma_0 \times \Gamma$ an injective group homomorphism that is either of the form $\pi_i(g,h) = (\eta_i(g),\delta_i(h))$ or $\pi_i(g,h) = (\delta_i(h),\eta_i(g))$.

Moreover, $\Lambda_1 \subg \Lambda_0$ and $\Lambda_2 \subg \Lambda$ can be chosen such that (crudely) $[\Lambda_0 : \Lambda_1] \, [\Lambda : \Lambda_2] \leq \exp(t)$.
\end{lemma}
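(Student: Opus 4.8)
The strategy is to analyze the embedding $\theta : M(\Lambda_0,\Lambda,\be) \recht M(\Gamma_0,\Gamma,\al)^t$ by exploiting the rigidity of Bernoulli-type crossed products, following the circle of ideas in \cite{Pop03,Ioa10,IPV10,KV15}. The plan has three stages. First, I would use the deformation/rigidity machinery for the left-right generalized Bernoulli action: the group $\Lambda_0 \times \Lambda$ has the relative property (T)-like behavior needed, or rather one uses spectral gap/weak mixing of the Bernoulli action combined with the fact that $\Lambda_0$, $\Lambda$ are nonamenable icc. Concretely, write $P = M(\Gamma_0,\Gamma,\al)^t$, $Q = L(\Gamma_0\times\Gamma)^t$ inside $P$, and $A = (A_0,\tau)^\Gamma$. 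One shows that $\theta(L(\Lambda_0\times\Lambda))$ can, after a unitary conjugacy, be assumed to satisfy an intertwining $\theta(L(\Lambda_0\times\Lambda)) \prec_P Q$, and then promotes this to the statement that $\theta(L(\Lambda_0\times\Lambda))$ is, up to unitary conjugacy, contained in $Q = L(\Gamma_0\times\Gamma)^t$. This is the standard ``the group von Neumann algebra goes inside the group von Neumann algebra'' step for Bernoulli crossed products; the key inputs are the malleable deformation of the Bernoulli action, together with the weak amenability / class $(\cS)$ hypothesis on $\Gamma$ to rule out the intertwining of $\theta(L(\Lambda_0\times\Lambda))$ into any $L(C_{\Gamma_0\times\Gamma}(g))$ for nontrivial $g$ (this is where amenable centralizers enters).

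Second, once $\theta(L(\Lambda_0\times\Lambda)) \subseteq L(\Gamma_0\times\Gamma)^t$ after unitary conjugacy, I would show the \emph{height} $h_{\Gamma_0\times\Gamma}(\theta(\Lambda_0\times\Lambda)) > 0$. Here one uses that the relative commutant of $\theta(L(\Lambda_0\times\Lambda))$ is controlled, and that the Bernoulli crossed product $M(\Gamma_0,\Gamma,\al)$ has the ``$\cA$-valued quasi-normalizer'' rigidity from \cite{IPV10, Ioa10}: a subalgebra of $L(\Gamma_0\times\Gamma)^t$ with zero height would spread out into $A$, contradicting that $\theta(L(\Lambda))$ is quasi-regular with large quasi-normalizer (it contains $\theta(B)$ which normalizes it, and $\theta(u_{(g,h)})$). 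This is essentially the mechanism of \cite[Section 4]{Ioa10}. Then apply Theorem \ref{thm.standard-embedding} to the group homomorphism $\pi := \theta|_{\Lambda_0\times\Lambda} : \Lambda_0\times\Lambda \recht \cU(L(\Gamma_0\times\Gamma)^t)$: decompose $1 = p_1 + p_2 + p_3$ with $\pi(\cdot)p_1$ standard, $h(\pi(\cdot)p_2) = 0$, and $\pi(\Lambda_0\times\Lambda)\dpr p_3$ amenable. Since $\Lambda_0, \Lambda$ nonamenable icc forces $L(\Lambda_0\times\Lambda)$ to have no amenable direct summand after the embedding (using that $\theta$ is an embedding of a II$_1$ factor and that the pieces it lands in inherit nonamenability — this needs a small argument), we get $p_3 = 0$, and positivity of height on every nonzero subprojection kills $p_2$. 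Hence $\pi$ is standard: $t = n \in \N$, and on a finite index subgroup $(\Lambda_0\times\Lambda)_0 \subg \Lambda_0\times\Lambda$, $\pi$ is unitarily conjugate to $\gamma \ot u_{\delta}$ for a finite-dimensional representation $\gamma$ and a homomorphism $\delta : (\Lambda_0\times\Lambda)_0 \recht \Gamma_0\times\Gamma$. One can arrange $(\Lambda_0\times\Lambda)_0 = \Lambda_1\times\Lambda_2$ with $\Lambda_1\subg\Lambda_0$, $\Lambda_2\subg\Lambda$ finite index (pass to a product-shaped finite index subgroup), and split into direct summands $\theta_i$ indexed by the irreducible pieces of $\gamma$ and the induction data.

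Third, I would pin down that $\theta_i(B) \subseteq M_{n_i}(\C)\ot A$ and that $\delta$ (i.e.\ $\pi_i$) is injective and has the claimed product/flip form. For the first: $B = (B_0,\tau)^\Lambda$ is the algebra whose normalizer-in-$M(\Lambda_0,\Lambda,\be)$ Fourier support is ``diffuse along $\Lambda$'', and since $\theta(u_{(g,h)})$ now lies in $M_{n_i}(\C)\ot L(\Gamma_0\times\Gamma)$ (modular, ``height one''), the relative commutant / mixing properties of $A \subseteq M(\Gamma_0,\Gamma,\al)$ force $\theta_i(B)$ into $M_{n_i}(\C)\ot A$ — this is again a mixing/malleability argument as in \cite[Section 4]{KV15}. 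Injectivity of $\pi_i$: if $\pi_i$ had nontrivial kernel $K \subg \Lambda_1\times\Lambda_2$, then for $k\in K$, $\theta_i(u_k) \in M_{n_i}(\C)\ot\C 1$, so $\theta_i(u_k)$ is a scalar-valued unitary commuting with $\theta_i(B)$ in $M_{n_i}(\C)$; but $u_k$ for $k = (g,h)$ with $g$ or $h$ nontrivial acts with the Bernoulli shift on $B$, so $\theta_i(u_k)$ cannot centralize $\theta_i(B)$ unless $k = e$ — giving injectivity. The product/flip form of $\pi_i : \Lambda_1\times\Lambda_2 \recht \Gamma_0\times\Gamma$ follows because $\Lambda_1$ and $\Lambda_2$ are nonamenable icc and $\Gamma_0\times\Gamma$ has the property that any embedding of a product of two nonamenable icc groups respects the product decomposition up to the flip (this uses primeness-type rigidity: $\Gamma_0, \Gamma \in \cC$ are prime and have amenable centralizers, so a nonamenable icc subgroup of $\Gamma_0\times\Gamma$ projecting nontrivially to both factors cannot happen unless it is a product — cf.\ the unique prime factorization arguments). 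Finally, the index bound $[\Lambda_0:\Lambda_1][\Lambda:\Lambda_2]\le \exp(t)$ comes from tracking the finite-index subgroup produced by Theorem \ref{thm.standard-embedding}: the induction in Definition \ref{def.standard-embedding} is by $[\cG:\cG_0] = t/n_i \le t$, and a crude product of such bounds over the (at most $t$-many) summands yields $\exp(t)$; I would just choose constants generously rather than optimize.

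\textbf{Main obstacle.} The hardest step is the second one: upgrading ``$\theta(L(\Lambda_0\times\Lambda)) \subseteq L(\Gamma_0\times\Gamma)^t$'' to ``positive height'', because this is where one must genuinely use the Bernoulli rigidity of \cite{Ioa10,IPV10} in the amplified, non-weakly-mixing, non-isomorphism setting, and then feed it into Theorem \ref{thm.standard-embedding}. Ensuring that the amenable summand $p_3$ really vanishes — i.e.\ that the image of a II$_1$ factor embedding of $M(\Lambda_0,\Lambda,\be)$ cannot have an amenable corner compatible with $\theta(L(\Lambda_0\times\Lambda))'$ — requires care, since a priori $\theta$ is just an embedding and $\theta(M)' \cap P^t$ could be large; one resolves this using that $L(\Lambda_0\times\Lambda)$ already has no amenable direct summand and that $h(\cdot) > 0$ on all subprojections after the first reduction, but making this rigorous is the delicate point.
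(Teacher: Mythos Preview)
Your overall architecture is right in spirit, but there is a genuine gap at the heart of Stage~2: you propose to apply Theorem~\ref{thm.standard-embedding} to the homomorphism $\pi : \Lambda_0\times\Lambda \to \cU(L(\Gamma_0\times\Gamma)^t)$, but the hypotheses of that theorem are \emph{not} satisfied by the product group $\Gamma_0\times\Gamma$. Indeed, for any $g\in\Gamma_0\setminus\{e\}$ the centralizer $C_{\Gamma_0\times\Gamma}((g,e))$ contains $\{e\}\times\Gamma$, which is nonamenable; so the ``amenable centralizer'' hypothesis fails (and it is also unclear that $\Gamma_0\times\Gamma$ is in class~$(\cS)$). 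Theorem~\ref{thm.standard-embedding} simply does not apply here, and this is not a technicality: the proof of that theorem uses $\omega$-solidity and \cite[Theorem~4.1]{CS11} in an essential way, both of which break down for products.

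The paper avoids this by inserting an extra step \emph{before} invoking Theorem~\ref{thm.standard-embedding}: after conjugating $\theta(L(\Lambda_0\times\Lambda))$ into $L(\Gamma_0\times\Gamma)^t$ via \cite[Lemma~5.3]{KV15}, it applies the Ozawa--Popa unique prime factorization \cite[Theorem~7]{OP03} at the von~Neumann algebra level to split, on each central summand, $\theta(L(\Lambda_0))\subseteq L(\Gamma_0)^{r_i}$ and $\theta(L(\Lambda))\subseteq L(\Gamma)^{s_i}$ (or the flip). Only then is Theorem~\ref{thm.standard-embedding} applied, but \emph{separately} to each factor $\Lambda_0\to\cU(L(\Gamma_0)^{r_i})$ and $\Lambda\to\cU(L(\Gamma)^{s_i})$, where the target groups $\Gamma_0,\Gamma$ genuinely lie in $\cC$. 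This reordering is the missing idea in your plan; it also makes your Stage~3 ``product/flip'' discussion unnecessary, since the product form is already built in by \cite{OP03}. A second, smaller gap: your mechanism for obtaining positive height is vague. The paper first establishes $\theta(B)\prec_f A$ via \cite[Theorem~1.4]{PV12} (using that the normalizer of $\theta(B)$ contains the nonamenable-relative-to-each-side image of $L(\Lambda_0\times\Lambda)$), and then deduces $h>0$ for each factor from \cite[Lemma~5.10]{KV15}; this intertwining of $\theta(B)$ into $A$ is also what drives the final argument that $\theta_i(B)\subseteq M_{n_i}(\C)\otimes A$.
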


\begin{proof}
Write $Q_0 = \theta(L(\Lambda_0 \times \{e\}))$ and $Q = \theta(L(\{e\} \times \Lambda))$. With some abuse of notation, we write $Q_0 \ovt Q = \theta(L(\Lambda_0 \times \Lambda))$.
Similarly write $P_0 = L(\Gamma_0 \times \{e\})$ and $P = L(\{e\} \times \Gamma)$, as well as $P_0 \ovt P = L(\Gamma_0 \times \Gamma)$. We also write
$M = M(\Gamma_0,\Gamma,\al)$.

By \cite[Lemma 5.3]{KV15}, after a unitary conjugacy, we may assume that $Q_0 \ovt Q \subseteq (P_0 \ovt P)^t$. By \cite[Theorem 7]{OP03}, there exists a nonempty countable set $I = I_0 \sqcup I_1$, minimal projections $(p_i)_{i \in I}$ in $(Q_0 \ovt Q)' \cap (P_0 \ovt P)^t$ and $r_i,s_i > 0$ such that $\sum_{i \in I} p_i = 1$ and such that
\begin{equation}\label{eq.my-embeddings-nice}
\begin{split}
&\text{For every $i \in I_0$, after unitary conjugacy, $p_i Q_0 \subseteq P_0^{r_i} \ot 1$ and $p_i Q \subseteq 1 \ot P^{s_i}$.}\\
&\text{For every $i \in I_1$, after unitary conjugacy, $p_i Q_0 \subseteq 1 \ot P^{s_i}$ and $p_i Q \subseteq P_0^{r_i} \ot 1$.}
\end{split}
\end{equation}
Since both $\Lambda_0$ and $\Lambda$ are nonamenable icc groups, we have that $Q_0$ and $Q$ are nonamenable factors. Since $\sum_{i \in I} p_i = 1$, it then follows from \eqref{eq.my-embeddings-nice} that for every nonzero projection $p \in (Q_0 \ovt Q)' \cap M^t$, the subalgebra $p (Q_0 \ovt Q)$ is not amenable relative to $A \rtimes (\Gamma_0 \times \{e\})$ and nonamenable relative to $A \rtimes (\{e\} \times \Gamma)$. Since the normalizer of $\theta(B)$ contains $Q_0 \ovt Q$ and using the standard notation of full intertwining-by-bimodules, it follows from \cite[Theorem 1.4]{PV12} that $\theta(B) \prec_f A \rtimes (\Gamma_0 \times \{e\})$ and $\theta(B) \prec_f A \rtimes (\{e\} \times \Gamma)$. So, $\theta(B) \prec_f A$.

Fix $i \in I_0$. Take a unitary conjugacy such that $p_i \theta(u_{(g,h)}) = v_g \ot w_h$ for all $(g,h) \in \Lambda_0 \times \Lambda$, where $\cG_0 = \{v_g \mid g \in \Lambda_0\}$ is a subgroup of $\cU(P_0^{r_i})$ and $\cG = \{w_h \mid h \in \Lambda\}$ is a subgroup of $\cU(P^{s_i})$. By \cite[Lemma 5.10]{KV15} and using that $\theta(B)$ is diffuse and $\theta(B) \prec_f A$, we find that $h_{\Gamma_0}(\cG_0)>0$ and that $h_{\Gamma}(\cG) > 0$. By Theorem \ref{thm.standard-embedding}, the embeddings $\Lambda_0 \ni g \mapsto v_g$ and $\Lambda \ni h \mapsto w_h$ are standard. In particular, $r_i,s_i \in \N$.

The same reasoning holds for $i \in I_1$. In particular, all the amplifications $r_i,s_i$ are integers and there thus are only finitely many of them. It also follows that $t \in \N$.

We have proven that the restriction of $\theta$ to $L(\Lambda_0 \times \Lambda)$ is a finite direct sum of embeddings arising as the tensor product of standard embeddings $L(\Lambda_0) \hookrightarrow L(\Gamma_0)^{r_i}$ and $L(\Lambda) \hookrightarrow L(\Gamma)^{s_i}$, or the other way around. By the definition of a standard embedding, for each $i$, we find finite index subgroups $\Lambda_{1,i} \subg \Lambda_0$ and $\Lambda_{2,i} \subg \Lambda$ such that the embeddings are induced from embeddings given by group homomorphisms and unitary representations defined on $\Lambda_{1,i}$ and $\Lambda_{2,i}$. By construction,
$$\sum_i [\Lambda_0 : \Lambda_{1,i}] \, [\Lambda : \Lambda_{2,i}] \leq t \; .$$
Define $\Lambda_1 = \bigcap_i \Lambda_{1,i}$ and $\Lambda_2 = \bigcap_i \Lambda_{2,i}$. We have $[\Lambda_0 : \Lambda_1] \, [\Lambda : \Lambda_2] \leq \exp(t)$. We find unitary representations $\gamma_i : \Lambda_1 \times \Lambda_2 \recht \cU(\C^{n_i})$ and group homomorphisms $\pi_i : \Lambda_1 \times \Lambda_2 \recht \Gamma_0 \times \Gamma$ such that the restriction of $\theta$ to $L(\Lambda_1 \times \Lambda_2)$ is given by the direct sum of
\begin{equation}\label{eq.partial-embed}
L(\Lambda_1 \times \Lambda_2) \recht M_{n_i}(\C) \ot L(\Gamma_0 \times \Gamma) : u_{(g,h)} \recht \gamma_i(g,h) \ot u_{\pi_i(g,h)}
\end{equation}
with $i \in \{1,\ldots,n\}$. Moreover, each $\pi_i$ is either of the form $\pi_i(g,h) = (\eta_i(g),\delta_i(h))$ or $\pi_i(g,h) = (\delta_i(h),\eta_i(g))$. Since each of the embeddings in \eqref{eq.partial-embed} is an embedding of II$_1$ factors, each $\pi_i$ has finite kernel. Since the groups $\Lambda_1$ and $\Lambda_2$ are icc, it follows that the homomorphisms $\pi_i$ are injective.

Whenever $\pi_i,\pi_j : \Lambda_1 \times \Lambda_2 \recht \Gamma_0 \times \Gamma$ are conjugate by an element in $\Gamma_0 \times \Gamma$, we can perform a unitary conjugacy and regroup the two direct summands. We may thus assume that for $i \neq j$, the homomorphisms $\pi_i$ and $\pi_j$ are not conjugate.

Denote by $p_i \in M_t(\C) \ot 1$ the projection that corresponds to the direct summand in \eqref{eq.partial-embed}. We claim that for $i \neq j$, we have $p_i \theta(B) p_j = \{0\}$. Since $\pi_i$ and $\pi_j$ are not conjugate and since $\pi_i(\Lambda_1 \times \Lambda_2)$ is relatively icc inside $\Gamma_0 \times \Gamma$, it follows that $\{\pi_i(g,h) (a,b) \pi_j(g,h)^{-1} \mid (g,h) \in \Lambda_1 \times \Lambda_2\}$ is infinite for every $(a,b) \in \Gamma_0 \times \Gamma$. We can then take $(g_n,h_n) \in \Lambda_1 \times \Lambda_2$ such that $\pi_i(g_n,h_n) (a,b) \pi_j(g_n,h_n)^{-1} \recht \infty$ for all $(a,b) \in \Gamma_0 \times \Gamma$.

For every finite subset $S \subset \Gamma_0 \times \Gamma$, denote by
$$P_S : M \recht M : P_S(X) = \sum_{(g,h) \in S} E_A(X u_{(g,h)}^*) \, u_{(g,h)}$$
the orthogonal projection of $M$ onto $\lspan \{A u_{(g,h)} \mid (g,h) \in S\}$. Note that
\begin{equation}\label{eq.interm}
\lim_{n \recht \infty} \bigl\| (\id \ot P_S)(p_i \, \theta(u_{(g_n,h_n)}) \, X \, \theta(u_{(g_n,h_n)})^* p_j)\bigr\|_2 = 0
\end{equation}
for all $X \in M_t(\C) \ot M$ and all finite subsets $S \subset \Gamma_0 \times \Gamma$.

Fix $b \in \cU(B)$ and write $b_n = \si_{(g_n,h_n)}(b)$. Fix $\eps > 0$. Since $\theta(B) \prec_f A$, we can take a finite subset $S \subset \Gamma_0 \times \Gamma$ such that
$$\bigl\| p_i \theta(b_n) p_j - (\id \ot P_S)(p_i \theta(b_n) p_j)\bigr\|_2 < \eps \quad\text{for all $n$.}$$
Note that $p_i \theta(b_n) p_j = p_i \theta(u_{(g_n,h_n)}) \, \theta(b) \, \theta(u_{(g_n,h_n)})^* p_j$. So by \eqref{eq.interm}, we have that
$$\bigl\| (\id \ot P_S)(p_i \theta(b_n) p_j)\bigr\|_2 \recht 0 \; .$$
It follows that $\limsup_n \bigl\| p_i \theta(b_n) p_j \bigr\|_2 \leq \eps$, for all $\eps > 0$, so that $\lim_n \bigl\| p_i \theta(b_n) p_j \bigr\|_2 = 0$. Since
$$\bigl\| p_i \theta(b) p_j \bigr\|_2 = \bigl\| \theta(u_{(g_n,h_n)}) \, p_i \theta(b) p_j \, \theta(u_{(g_n,h_n)})^* \bigr\|_2 = \bigl\| p_i \theta(b_n) p_j\|_2 \; ,$$
it follows that $p_i \theta(b) p_j = 0$ for all $b \in \cU(B)$. So our claim is proven.

By this claim, we get that $\theta(B) \subseteq \bigoplus_{i=1}^n (M_{n_i}(\C) \ot M)$. To conclude the proof of the lemma, we show that $p_i \theta(B) \subseteq M_{n_i}(\C) \ot A$. Since $\pi_i(\Lambda_1 \times \Lambda_2)$ is relatively icc in $\Gamma_0 \times \Gamma$, we can take $(g_n,h_n) \in \Lambda_1 \times \Lambda_2$ such that $\pi_i(g_n,h_n) (a,b) \pi_i(g_n,h_n)^{-1} \recht \infty$ for all $(a,b) \in \Gamma_0 \times \Gamma$ with $(a,b) \neq (e,e)$. It follows that
$$\lim_{n \recht \infty} \bigl\| (\id \ot P_S)(p_i \, \theta(u_{(g_n,h_n)}) \, X \, \theta(u_{(g_n,h_n)})^* p_i)\bigr\|_2 = 0$$
for all $X \in M_t(\C) \ot (M \ominus A)$ and all finite subsets $S \subset \Gamma_0 \times \Gamma$. When $S \subset \Gamma_0 \times \Gamma$ is a finite subset with $(e,e) \in S$ and when $b \in \cU(B)$, we define $a = (\id \ot E_A)(\theta(b))$ and $X = \theta(b) - a$. We again write $b_n = \si_{(g_n,h_n)}(b)$. It follows that
$$(\id \ot P_S)(p_i \, \theta(b_n) \, p_i) = p_i \, \theta(u_{(g_n,h_n)}) \, a \, \theta(u_{(g_n,h_n)})^* \, p_i + (\id \ot P_S)(p_i \, \theta(u_{(g_n,h_n)}) \, X \, \theta(u_{(g_n,h_n)})^* p_i) \; .$$
The first term at the right hand side lies in $M_{n_i}(\C) \ot A$ and the second term tends to zero in $\|\,\cdot\,\|_2$. As in the previous paragraph, the left hand side lies uniformly close in $\|\,\cdot\,\|_2$ to $p_i \, \theta(b_n) \, p_i$. We have thus proven that
$$\lim_{n \recht +\infty} \bigl\| p_i \, \theta(b_n) \, p_i - p_i \, \theta(u_{(g_n,h_n)}) \, a \, \theta(u_{(g_n,h_n)})^* \, p_i \bigr\|_2 = 0 \; .$$
Conjugating with $\theta(u_{(g_n,h_n)})$, it follows that $p_i \theta(b) \pi_i = p_i a p_i \in M_{n_i}(\C) \ot A$. This concludes the proof of the lemma.
\end{proof}

\begin{lemma}\label{lem.step2}
Make the same assumptions as in Lemma \ref{lem.step1}. Assume further that $\Lambda_0 \subg \Lambda$ is relatively icc and that $\Ker \be$ is nontrivial. There then exist finite index subgroups $\Lambda_1 \subg \Lambda_0$ and $\Lambda_2 \subg \Lambda$ such that $\Lambda_1 \subg \Lambda_2$ and such that the restriction of $\theta$ to $B \rtimes (\Lambda_1 \times \Lambda_2)$ is unitarily conjugate to a finite direct sum of embeddings $\theta_i : B \rtimes (\Lambda_1 \times \Lambda_2) \recht M_{n_i}(\C) \ot (A \rtimes (\Gamma_0 \times \Gamma))$ satisfying one of the following conditions labeled \eqref{eq.form1} and \eqref{eq.form2}.
\begin{equation}\label{eq.form1}
\begin{split}
\theta_i(\pi_g(B_0)) \subseteq 1 \ot \pi_{\rho_i(g)}(A_0) &\quad\text{for all $g \in \Lambda$, and}\\
\theta_i(u_{(g,h)}) = \gamma_i(g,h) \ot u_{(\delta_i(g),\delta_i(h))} &\quad\text{for all $(g,h) \in \Lambda_1 \times \Lambda_2$,}
\end{split}
\end{equation}
where $\gamma_i : \Lambda_1 \times \Lambda_2 \recht \cU(\C^{n_i})$ is a unitary representation, $\delta_i : \Lambda_2 \recht \Gamma$ is a injective group homomorphism satisfying $\delta_i(\Lambda_1) \subseteq \Gamma_0$ and $\rho_i : \Lambda \recht \Gamma$ is an injective map satisfying $\rho_i(e) =e$ and $\rho_i(g k h) = \delta_i(g) \rho_i(k) \delta_i(h)$ for all $g \in \Lambda_1$, $k \in \Lambda$, $h \in \Lambda_2$.
\begin{equation}\label{eq.form2}
\begin{split}
\theta_i(\pi_g(B_0)) \subseteq 1 \ot \pi_{\rho_i(g)^{-1}}(A_0) &\quad\text{for all $g \in \Lambda$, and}\\
\theta_i(u_{(g,h)}) = \gamma_i(g,h) \ot u_{(\delta_i(h),\delta_i(g))} &\quad\text{for all $(g,h) \in \Lambda_1 \times \Lambda_2$,}
\end{split}
\end{equation}
where $\gamma_i : \Lambda_1 \times \Lambda_2 \recht \cU(\C^{n_i})$ is a unitary representation, $\delta_i : \Lambda_2 \recht \Gamma_0$ is a injective group homomorphism and $\rho_i : \Lambda \recht \Gamma$ is an injective map satisfying $\rho_i(e) =e$ and $\rho_i(g k h) = \delta_i(g) \rho_i(k) \delta_i(h)$ for all $g \in \Lambda_1$, $k \in \Lambda$, $h \in \Lambda_2$.
\end{lemma}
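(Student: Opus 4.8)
The plan is to feed the conclusion of Lemma~\ref{lem.step1} into a transfer-of-rigidity argument that forces the base algebra of the source construction to respect the tensor structure of the target. By Lemma~\ref{lem.step1}, after a unitary conjugacy and after restricting $\theta$ to $B \rtimes (\Lambda_1 \times \Lambda_2)$ for suitable finite index subgroups, $\theta$ is a finite direct sum of embeddings $\theta_i$ with $\theta_i(B) \subseteq M_{n_i}(\C) \ot A$ and $\theta_i(u_{(g,h)}) = \gamma_i(g,h) \ot u_{\pi_i(g,h)}$, where $\pi_i$ is injective and of one of the forms $(g,h) \mapsto (\eta_i(g),\delta_i(h))$ or $(g,h) \mapsto (\delta_i(h),\eta_i(g))$. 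Replacing $\Lambda_1$ by $\Lambda_1 \cap \Lambda_2$, which stays of finite index in $\Lambda_0$ because $[\Lambda:\Lambda_2] < \infty$, we may assume $\Lambda_1 \subg \Lambda_2$. It is enough to treat each summand separately, so fix $i$, drop the subscripts, and assume the first form for $\pi_i$; the second is handled verbatim after interchanging the roles of the two factors $\Gamma_0$ and $\Gamma$ of the target, and leads to~\eqref{eq.form2}. What remains is to produce an injective map $\rho : \Lambda \recht \Gamma$ with $\rho(e) = e$, a further unitary conjugacy that does not disturb the form of $\theta|_{L(\Lambda_1 \times \Lambda_2)}$ already achieved and after which $\theta(\pi_k(B_0)) \subseteq 1 \ot \pi_{\rho(k)}(A_0)$ for all $k \in \Lambda$, and to derive $\eta = \delta$ on $\Lambda_1$ together with the covariance relation.

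The core step is \emph{localization}: showing that each $C_k := \theta(\pi_k(B_0))$ can be conjugated into a single tensor leg of $A = (A_0,\tau)^\Gamma$. The relevant structure is that the $C_k$, $k \in \Lambda$, are mutually commuting nontrivial amenable subalgebras of $M_n(\C) \ot A$ with $\bigvee_k C_k = \theta(B)$, and that $\Ad \theta(u_{(g,h)})$ permutes them via $k \mapsto g k h^{-1}$, an action of $\Lambda_1 \times \Lambda_2$ with infinite orbits. This is where the hypothesis $\Ker \be \neq \{e\}$ is used in an essential way: picking $g_0 \in \Ker\be \setminus \{e\}$, the automorphism $\si_{(g_0,g_0)}$ of $B$ fixes $\pi_e(B_0)$ \emph{pointwise}, since $\si_{(g_0,g_0)}(\pi_e(b)) = \pi_e(\be_{g_0}(b)) = \pi_e(b)$; hence $C_e$ commutes with $\theta(u_{(g_0,g_0)})$. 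Because $\Gamma$ belongs to the family $\cC$, the generalized Bernoulli inclusion $A \subseteq M(\Gamma_0,\Gamma,\al)$ enjoys the rigidity exploited in \cite{Ioa10,KV15}: combining $\omega$-solidity and the relative amenability / weak compactness dichotomies — available because $\Gamma$ is in class $(\cS)$, is weakly amenable and has amenable centralizers, exactly the inputs used for Theorem~\ref{thm.standard-embedding} — with the extra commutation $[C_e,\theta(u_{(g_0,g_0)})] = 0$, one obtains $C_e \prec M_n(\C) \ot A_F$ for some finite $F \subseteq \Gamma$, where $A_F := \bigvee_{m \in F}\pi_m(A_0)$. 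A combinatorial refinement then finishes: using that $C_e$ commutes with \emph{all} the $\tau$-orthogonal tensor factors $C_k$, that these are permuted by the infinite group $\Lambda_1 \times \Lambda_2$, and (running the argument with several elements of $\Ker\be$) that the matrix coefficient along the surviving leg must be scalar, one gets $C_e \subseteq 1 \ot \pi_{\rho(e)}(A_0)$ after conjugating by a unitary of $M_n(\C) \ot A$. Conjugating by the $\theta(u_{(g,h)})$ propagates this to all of $\Lambda$ and defines $\rho$, and a final leg translation (conjugating $\delta$ and $\eta$ by one fixed group element) arranges $\rho(e) = e$.

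With $\theta(\pi_k(B_0)) \subseteq 1 \ot \pi_{\rho(k)}(A_0)$ in hand, apply $\Ad\theta(u_{(g,h)})$ for $(g,h) \in \Lambda_1 \times \Lambda_2$: the left side sends $C_k$ to $C_{gkh^{-1}} \subseteq 1 \ot \pi_{\rho(gkh^{-1})}(A_0)$, while $\Ad(\gamma(g,h) \ot u_{(\eta(g),\delta(h))})$ sends $1 \ot \pi_{\rho(k)}(A_0)$ onto $1 \ot \pi_{\eta(g)\rho(k)\delta(h)^{-1}}(\al_{\eta(g)}(A_0))$; since distinct legs are orthogonal, this gives $\rho(gkh^{-1}) = \eta(g)\,\rho(k)\,\delta(h)^{-1}$ for all $k \in \Lambda$, $g \in \Lambda_1$, $h \in \Lambda_2$. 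Taking $k = h = e$ gives $\rho|_{\Lambda_1} = \eta$; taking $k = g = e$ gives $\rho|_{\Lambda_2} = \delta$; since $\Lambda_1 \subg \Lambda_2$ this forces $\eta = \delta|_{\Lambda_1}$, so $\theta(u_{(g,h)}) = \gamma(g,h) \ot u_{(\delta(g),\delta(h))}$ and $\delta(\Lambda_1) = \eta(\Lambda_1) \subseteq \Gamma_0$, as required; replacing $h$ by $h^{-1}$ rewrites the relation as $\rho(gkh) = \delta(g)\rho(k)\delta(h)$. Injectivity of $\rho$ follows from this relation together with injectivity of $\delta$: on each of the finitely many double cosets $\Lambda_1 s \Lambda_2$ the assignment $gsh \mapsto \delta(g)\rho(s)\delta(h)$ is injective, because the stabiliser identity $\rho(s)^{-1}\delta(a)\rho(s) = \delta(s^{-1}as)$ for $a \in \Lambda_1 \cap s\Lambda_2 s^{-1}$ — obtained by applying $\Ad\theta(u_{(a,\,s^{-1}as)})$ to $C_s$ — lets one reconstruct $gsh$, and distinct double cosets are sent to distinct legs because the corresponding $C_s$ are $\tau$-orthogonal tensor factors of $\theta(B)$. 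This establishes \eqref{eq.form1}; reassembling the finitely many summands, and running the symmetric argument in the second case — which places $\theta_i(\pi_k(B_0))$ in $1 \ot \pi_{\rho_i(k)^{-1}}(A_0)$ — completes the proof.

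The hard part is the localization step of the second paragraph: transferring the rigidity of the group part, supplied by Lemma~\ref{lem.step1}, to the base algebra, i.e.\ proving that an embedding which is group-like on $L(\Lambda_1 \times \Lambda_2)$ is automatically Bernoulli-like on $B$. This is precisely where one must invoke the deformation/spectral-gap technology for generalized Bernoulli crossed products over groups in $\cC$, and it is the sole place where the hypothesis on $\Ker\be$ is used: without it the copy $\pi_e(B_0)$ could a priori be smeared over infinitely many tensor legs of the target and the conclusion would fail.
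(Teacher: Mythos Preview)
Your outline has the right architecture --- use Lemma~\ref{lem.step1}, then localize $\theta(\pi_e(B_0))$ to a single tensor leg, then read off $\eta=\delta|_{\Lambda_1}$ and $\rho$ from covariance --- but the crucial localization paragraph is a genuine gap, and the argument the paper actually runs is both different and much more elementary than what you sketch.

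First, you never establish that $\Ker\be$ is \emph{nonamenable}. This is the engine of the whole proof. Since $\Ker\be$ is a nontrivial normal subgroup of the icc group $\Lambda_0$, it is infinite; and since a finite index subgroup $\Lambda_1$ of $\Lambda_0$ embeds into $\Gamma\in\cC$, the group $\Lambda_0$ has no infinite amenable normal subgroup, so $\Ker\be$ is nonamenable. Set $\Lambda_3=\Lambda_1\cap\Ker\be$, still nonamenable. Now the localization is a one-line weak-mixing argument, with no further deformation/rigidity input: if $\eta$ and $\delta$ were not conjugate on $\Lambda_3$, the relative icc property of $\eta(\Lambda_3)$ in $\Gamma$ gives a sequence $h_n\in\Lambda_3$ with $\eta(h_n)\,k\,\delta(h_n)^{-1}\to\infty$ for all $k\in\Gamma$, so $\Ad\theta(u_{(h_n,h_n)})$ sends every element of $M_n(\C)\ot(A\ominus\C1)$ weakly to zero; but $u_{(h_n,h_n)}$ commutes with $\pi_e(B_0)$, forcing $\theta(\pi_e(B_0))\subseteq M_n(\C)\ot 1$, a contradiction. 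After conjugating $\delta$ so that $\eta=\delta$ on $\Lambda_3$ (hence on $\Lambda_1$ by a centralizer argument), the \emph{same} weak-mixing computation gives $\theta(\pi_e(B_0))\subseteq M_n(\C)\ot\pi_e(A_0)$ directly. Your appeal to ``$\omega$-solidity and relative amenability/weak compactness dichotomies'' from commutation with a single $\theta(u_{(g_0,g_0)})$ does not produce $C_e\prec M_n(\C)\ot A_F$; commutation with one unitary is far too weak for that, and the machinery you cite is not set up to yield such a conclusion.

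Second, even after landing in $M_n(\C)\ot\pi_{\rho(k)}(A_0)$ you still have to kill the matrix part, and ``the matrix coefficient along the surviving leg must be scalar'' is not an argument. The paper does this concretely: define $D_k\subset M_n(\C)$ as the slice of $\theta(\pi_k(B_0))$; the $D_k$ mutually commute because $\rho$ is injective; they are permuted (up to unitary conjugacy in $M_n(\C)$) by the infinite group $\Lambda_1\times\Lambda_2$, which forces each $D_k$ to be abelian; then pass to a finite index subgroup on which $\gamma$ lands in the commutant of $D=\bigvee_k D_k$ and cut by a minimal projection of $D$. Your ``combinatorial refinement'' needs to be exactly this.

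Finally, your injectivity argument for $\rho$ invokes ``finitely many double cosets $\Lambda_1 s\Lambda_2$'', but $\Lambda_1$ can have infinite index in $\Lambda$, so this is not available. The paper's argument is cleaner: from $\delta(h)\rho(g)=\rho(g)\delta(g^{-1}hg)$ for $h\in\Lambda_3$, if $\rho(g)=\rho(k)$ then $gk^{-1}$ centralizes $\Lambda_1$, hence $g=k$ by the relative icc hypothesis on $\Lambda_0\subg\Lambda$.
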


\begin{proof}
Since $\Ker \be$ is a nontrivial normal subgroup of the icc group $\Lambda_0$, we get that $\Ker \be$ is an infinite normal subgroup of $\Lambda_0$. We use the notation and conclusion of Lemma \ref{lem.step1}. Since the finite index subgroup $\Lambda_1$ of $\Lambda_0$ can be embedded into $\Gamma$ and since $\Gamma$ belongs to $\cC$, the group $\Lambda_0$ has no infinite amenable normal subgroups. It follows that $\Ker \be$ is nonamenable. Replacing $\Lambda_2$ by a smaller finite index subgroup, we may assume that $\Lambda_2$ is normal in $\Lambda$. Then replacing $\Lambda_1$ by $\Lambda_1 \cap \Lambda_2$, we may also assume that $\Lambda_1 \subg \Lambda_2$. Define $\Lambda_3 = \Lambda_1 \cap \Ker \be$. Since $\Lambda_3 \subg \Ker \be$ has finite index, also $\Lambda_3$ is nonamenable.

To prove the lemma, we consider each of the direct summands $\theta_i$ in Lemma \ref{lem.step1} separately and thus drop the index $i$. First assume that $\pi(g,h) = (\eta(g),\delta(h))$ for all $(g,h) \in \Lambda_1 \times \Lambda_2$. We claim that there exists a $k \in \Gamma$ such that $k \delta(h) k^{-1} = \eta(h)$ for all $h \in \Lambda_3$. Assume the contrary. Since $\eta(\Lambda_3)$ is relatively icc in $\Gamma$, it follows that $\{\eta(h) k \delta(h)^{-1} \mid h \in \Lambda_3\}$ is infinite for every $k \in \Gamma$. We can then take $h_n \in \Lambda_3$ such that $\eta(h_n) k \delta(h_n)^{-1} \recht \infty$ for all $k \in \Gamma$. Given the form of $\theta(u_{(g,h)})$, it follows that
$$\theta(u_{(h_n,h_n)}) X \theta(u_{(h_n,h_n)})^* \recht 0  \quad\text{weakly,}$$
for all $X \in M_{n}(\C) \ot (A \ominus \C 1)$. Since $u_{(h_n,h_n)}$ commutes with $\pi_e(B_0)$, it follows that $\theta(\pi_e(B_0)) \subseteq M_{n}(\C) \ot 1$. Conjugating with $u_{(e,g)}$, $g \in \Lambda_2$, it follows that $\theta(B_0^{\Lambda_2}) \subseteq M_{n}(\C) \ot 1$, which is absurd because $B_0^{\Lambda_2}$ is diffuse. So the claim is proven and after a further unitary conjugacy with $1 \ot u_{(e,k)}$, we may assume that $\eta(h) = \delta(h)$ for all $h \in \Lambda_3$. If now $g \in \Lambda_1$, since $\Lambda_3$ is a normal subgroup of $\Lambda_1$, we get that $\eta(g)\delta(g)^{-1}$ commutes with $\eta(h)$ for all $h \in \Lambda_3$. Since $\eta(\Lambda_3)$ is relatively icc in $\Gamma$, it follows that $\eta(g) = \delta(g)$ for all $g \in \Lambda_1$.

The same argument of the previous paragraph now also implies that $\theta(\pi_e(B_0)) \subseteq M_n(\C) \ot \pi_e(A_0)$ and hence $\theta(\pi_g(B_0)) \subseteq M_n(\C) \ot \pi_{\delta(g)}(A_0)$ for all $g \in \Lambda_2$.

Now let $g \in \Lambda$ be arbitrary, not necessarily belonging to $\Lambda_2$. We use that $\Lambda_2 \lhd \Lambda$ is normal, so that $g^{-1}\Lambda_2 g = \Lambda_2$. We have that $\pi_g(B_0)$ commutes with $u_{(h,g^{-1}hg)}$ for all $h \in \Lambda_3$. We claim that there exists a unique element $\rho(g) \in \Gamma$ such that $\delta(h) \, \rho(g) \, \delta(g^{-1} h^{-1} g) = \rho(g)$ for all $h \in \Lambda_3$. The existence of $\rho(g)$ follows because otherwise, the same reasoning as above leads to $\theta(\pi_g(B_0)) \subseteq M_n(\C) \ot 1$ and a contradiction. The uniqueness of $\rho(g)$ follows from the relative icc property of $\delta(\Lambda_3)$ in $\Gamma$. By uniqueness of $\rho(g)$ and because $\Lambda_3$ is normal in $\Lambda_1$, it follows that $\rho(h g k) = \delta(h) \rho(g) \delta(k)$ for all $h \in \Lambda_1$, $g \in \Lambda$ and $k \in \Lambda_2$. By construction, $\rho : \Lambda \recht \Gamma$ extends $\delta : \Lambda_2 \recht \Gamma$, we have $\rho(e) = e$ and $\theta(\pi_g(B_0)) \subseteq M_n(\C) \ot \pi_{\rho(g)}(A_0)$.

We claim that $\rho : \Lambda \recht \Gamma$ is injective. For every $g \in \Lambda$, we have that $\delta(h) \rho(g) = \rho(g) \delta(g^{-1}hg)$ for all $h \in \Lambda_1$. If $g,k \in \Lambda$ and $\rho(g) = \rho(k)$, it follows that $\delta(g^{-1} h g) = \delta(k^{-1} h k)$ for all $h \in \Lambda_1$. Since $\delta: \Lambda_1 \recht \Gamma$ is injective, we conclude that $gk^{-1}$ commutes with $\Lambda_1$. Since $\Lambda_0 \subg \Lambda$ is relatively icc, we get that $g = k$.

For every $k \in \Lambda$, define the von Neumann algebra $D_k \subseteq M_n(\C)$ by
$$D_k = \bigl\{ (\id \ot \om)(\theta(\pi_k(b))) \bigm| \om \in A_*, b \in B_0 \bigr\}\dpr \; .$$
Note that $D_{gkh^{-1}} = \gamma(g,h) D_k \gamma(g,h)^*$ for all $k \in \Lambda$ and $(g,h) \in \Lambda_1 \times \Lambda_2$.

The subalgebras $\theta(\pi_k(B_0))$, $k \in \Lambda$, commute among each other. Since $\theta(\pi_k(B_0)) \subseteq M_n(\C) \ot \pi_{\rho(k)}(A_0)$ and since the map $\rho : \Lambda \recht \Gamma$ is injective, the subalgebras $D_k \subseteq M_n(\C)$, $k \in \Lambda$, also commute among each other. For a given $k \in \Lambda$, we have that $D_{gk} = \gamma(g,e) D_k \gamma(g,e)^* \cong D_k$ for all $g \in \Lambda_1$. Since $M_n(\C)$ has no room for infinitely many commuting subalgebras that are all nonabelian, we conclude that all $D_k$, $k \in \Lambda$, are abelian. So it follows that $D = \bigl(\bigcup_{k \in \Lambda} D_k\bigr)\dpr$ is an abelian von Neumann subalgebra of $M_n(\C)$. Since $D_{gkh^{-1}} = \gamma(g,h) D_k \gamma(g,h)^*$, the unitaries $\gamma(g,h)$ normalize $D$. Since $D$ is finite dimensional and abelian, the subgroups
$$\{g \in \Lambda_1 \mid \gamma(g,e) \in D' \cap M_n(\C)\} \subg \Lambda_1 \quad\text{and}\quad \{h \in \Lambda_2 \mid \gamma(e,h) \in  D' \cap M_n(\C)\} \subg \Lambda_2$$
have finite index. Replacing $\Lambda_1$ and $\Lambda_2$ by these finite index subgroups and reducing the embedding $\theta$ by minimal projections in $D$, the conclusion of the lemma holds.

In the case where $\pi(g,h) = (\delta(h),\eta(g))$, we reason analogously.
\end{proof}

We now turn to the assumptions of Theorem \ref{thm.main-thm-embedding-bernoulli}. Before proving Theorem \ref{thm.main-thm-embedding-bernoulli}, we will actually provide a complete description of all possible embeddings $M(\Lambda,\be) \hookrightarrow M(\Gamma,\al)^t$, which is of course of independent interest.

To formulate the result, we first describe the canonical irreducible embeddings. Let $G \subg \Lambda \times \Lambda$ be a finite index subgroup, $\gamma : G \recht \cU(\C^n)$ an irreducible unitary representation and $\delta : \Lambda \recht \Gamma$ an injective group homomorphism. For every $k \in \Lambda$, let $\psi_k : (B_0,\tau) \recht (A_0,\tau)$ be a unital trace preserving $*$-homomorphism such that
\begin{equation}\label{eq.my-equivariance}
\psi_{gkh^{-1}} = \al_{\delta(g)} \circ \psi_k \circ \beta_g^{-1} \quad\text{for all $(g,h) \in G$ and $k \in \Lambda$.}
\end{equation}
Define $\theta_0 : B \rtimes G \recht M_n(\C) \ot (A \rtimes (\Gamma \times \Gamma))$ as the unique normal $*$-homomorphism satisfying
$$\theta_0(\pi_k(b)) = 1 \ot \pi_{\delta(k)}(\psi_k(b)) \quad , \quad \theta(u_{(g,h)}) = \gamma(g,h) \ot u_{(\delta(g),\delta(h))}$$
for all $(g,h) \in G$, $k \in \Lambda$, $b \in B_0$. Then, $\theta_0$ is an irreducible embedding and we define $\theta : M(\Lambda,\be) \hookrightarrow M(\Gamma,\al)^{mn}$ with $m=[\Lambda \times \Lambda: G]$ as the induction of $\theta_0$. Then also $\theta$ is irreducible.

Finally, denote by $\zeta$ the flip automorphism of $M(\Lambda,\be)$ given by $\zeta \circ \pi_k = \pi_{k^{-1}} \circ \be_k^{-1}$ and $\zeta(u_{(g,h)}) = u_{(h,g)}$ for all $g,h,k \in \Lambda$.

\begin{proposition}\label{prop.all-embeddings}
Under the same assumptions as in Theorem \ref{thm.main-thm-embedding-bernoulli}, each embedding $M(\Lambda,\be) \hookrightarrow M(\Gamma,\al)^t$ is a finite direct sum of irreducible embeddings and each irreducible embedding is unitarily conjugate to either $\theta$ or $\theta \circ \zeta$, with $\theta$ and $\zeta$ being as above.
\end{proposition}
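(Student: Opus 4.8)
Throughout, $\theta$ denotes the given embedding $M(\Lambda,\be)\hookrightarrow M(\Gamma,\al)^t$ and $\theta_{\mathrm{can}}$ denotes a canonical embedding as constructed just before the statement (an induction of some $\theta_0$). The plan is to decompose $\theta$ into irreducible embeddings and then, for an irreducible $\theta$, to apply Lemma~\ref{lem.step2} and upgrade its output from the finite index subalgebra $B\rtimes(\Lambda_1\times\Lambda_2)$ to all of $M(\Lambda,\be)$. First I would invoke Lemma~\ref{lem.step2} with $\Lambda_0=\Lambda$ and $\Gamma_0=\Gamma$ (legitimate since $M(\Lambda,\be)=M(\Lambda,\Lambda,\be)$, $M(\Gamma,\al)=M(\Gamma,\Gamma,\al)$ and $\Lambda$ is relatively icc in itself): after a unitary conjugacy there are finite index subgroups $\Lambda_1\subseteq\Lambda_2\subseteq\Lambda$, which after shrinking I may take normal in $\Lambda$, and a decomposition $\theta|_{B\rtimes(\Lambda_1\times\Lambda_2)}=\bigoplus_i\theta_i$ with each $\theta_i$ of the form \eqref{eq.form1} or \eqref{eq.form2}. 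Splitting each $\gamma_i$ into irreducible representations, I may assume every $\theta_i$ is irreducible; a routine computation with the generalized Bernoulli structure then shows $\theta(B\rtimes(\Lambda_1\times\Lambda_2))$ has finite dimensional relative commutant in $M(\Gamma,\al)^t$ (a direct sum of matrix algebras indexed by the unitary conjugacy classes of the $\theta_i$, with sizes their multiplicities). Hence $\theta(M(\Lambda,\be))'\cap M(\Gamma,\al)^t$ is finite dimensional, and cutting by its minimal projections exhibits $\theta$ as a finite direct sum of irreducible embeddings; this proves the first clause. So from now on assume $\theta$ irreducible.

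Conjugation by $\theta(u_s)$, $s\in\Lambda\times\Lambda$, permutes the minimal central projections $p_i$ of $\theta(B\rtimes(\Lambda_1\times\Lambda_2))$ (as $\Lambda_1\times\Lambda_2$ is normal in $\Lambda\times\Lambda$) and sends $\theta_i$ to a unitary conjugate of $\theta_i\circ\Ad u_s^{-1}$, which preserves the dichotomy \eqref{eq.form1}/\eqref{eq.form2}. By irreducibility every orbit sum of the $p_i$ lies in $\theta(M(\Lambda,\be))'\cap M(\Gamma,\al)^t=\C 1$, so the $p_i$ form a single orbit and all $\theta_i$ share one common form; precomposing with $\zeta$ if needed (which interchanges the two forms, as $\zeta(u_{(g,h)})=u_{(h,g)}$ and $\zeta\circ\pi_k=\pi_{k^{-1}}\circ\be_k^{-1}$), I may assume this common form is \eqref{eq.form1}. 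Next I upgrade the injective map $\rho_i:\Lambda\recht\Gamma$ to a group homomorphism. Put $\Lambda_3=\Lambda_1\cap\Ker\be$: an infinite normal subgroup of $\Lambda$ of finite index in the nonamenable group $\Ker\be$, hence nonamenable, hence relatively icc in $\Gamma$ with trivial centraliser there (since $\Gamma\in\cC$). The proof of Lemma~\ref{lem.step2} gives $\rho_i(g)^{-1}\delta_i(h)\rho_i(g)=\delta_i(g^{-1}hg)$ for $g\in\Lambda$, $h\in\Lambda_3$ (the right side being defined because $\Lambda_2$ is normal); using normality of $\Lambda_3$, for $g_1,g_2\in\Lambda$ and $h\in\Lambda_3$ both $\rho_i(g_1g_2)$ and $\rho_i(g_1)\rho_i(g_2)$ conjugate $\delta_i(h)$ to $\delta_i((g_1g_2)^{-1}h(g_1g_2))$, so $\rho_i(g_1g_2)\rho_i(g_2)^{-1}\rho_i(g_1)^{-1}$ centralises $\delta_i(\Lambda_3)$ and is trivial; thus $\rho_i$ is an injective homomorphism $\Lambda\recht\Gamma$ extending $\delta_i|_{\Lambda_2}$, again denoted $\delta_i$. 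From $\theta_i(u_{(g,h)})\theta_i(\pi_k(b))\theta_i(u_{(g,h)})^*=\theta_i(\pi_{gkh^{-1}}(\be_g(b)))$ on $\Lambda_1\times\Lambda_2$ I also read off unital trace preserving $*$-homomorphisms $\psi_{i,k}:(B_0,\tau)\recht(A_0,\tau)$ with $\theta_i(\pi_k(b))=1\ot\pi_{\delta_i(k)}(\psi_{i,k}(b))$ satisfying \eqref{eq.my-equivariance} over $\Lambda_1\times\Lambda_2$. Finally, fixing $i_0$ and putting $H=\{s\in\Lambda\times\Lambda\mid\theta(u_s)p_{i_0}\theta(u_s)^*=p_{i_0}\}$, a finite index subgroup containing $\Lambda_1\times\Lambda_2$, a standard matrix unit argument with the partial isometries $\theta(u_s)p_{i_0}$ ($s$ in a transversal) identifies $\theta$, up to unitary conjugacy, with the induction to $M(\Lambda,\be)$ of $\theta^{i_0}$, the restriction to $B\rtimes H$ of $p_{i_0}\theta(\,\cdot\,)p_{i_0}:B\rtimes H\recht p_{i_0}M(\Gamma,\al)^tp_{i_0}$.

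The crux is to show $\theta^{i_0}$ is a canonical embedding $\theta_0$ with $G=H$. Write $\delta=\delta_{i_0}$, $\psi=\psi_{i_0}$, $n=n_{i_0}$; we already know $\theta^{i_0}(\pi_k(b))=1\ot\pi_{\delta(k)}(\psi_k(b))$ for all $k\in\Lambda$ and $\theta^{i_0}(u_{(g,h)})=\gamma(g,h)\ot u_{(\delta(g),\delta(h))}$ for $(g,h)\in\Lambda_1\times\Lambda_2$. For $s=(g,h)\in H$ the unitary $\theta^{i_0}(u_s)$ normalises the base algebra $\cB=1\ot(\{\pi_{\delta(k)}(\psi_k(B_0))\mid k\in\Lambda\})\dpr$ (using $\be_g(B_0)=B_0$) and, since $\delta(\Lambda)$ is relatively icc in $\Gamma$, it is even ``coordinate preserving''. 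From the structure of normalisers of such generalized Bernoulli subalgebras — no nontrivial translation unitary commutes with $\cB$, and the normaliser of $\cB$ in $M_n(\C)\ot M(\Gamma,\al)$ is generated by $\cR:=\cB'\cap(M_n(\C)\ot M(\Gamma,\al))$ together with the translation unitaries $1\ot u_{(a,b)}$ respecting the coordinate set — one deduces $\theta^{i_0}(u_s)=T_s\,(1\ot u_{(\delta(g),\delta(h))})$ with $T_s\in\cU(\cR)$, which in particular forces \eqref{eq.my-equivariance} to hold over all of $H$. Then $s\mapsto T_s$ is a $1$-cocycle for the action of $H$ on $\cR$ by $\Ad(1\ot u_{(\delta(g),\delta(h))})$; it is valued in $\cU(M_n(\C)\ot 1)$ on $\Lambda_1\times\Lambda_2$, the action is weakly mixing relative to $M_n(\C)\ot 1$ (all orbits of $(g,h)\mapsto(\gamma\mapsto\delta(g)\gamma\delta(h)^{-1})$ on $\Gamma$ being infinite since $\delta(\Lambda_1),\delta(\Lambda_2)$ are infinite), and $H/(\Lambda_1\times\Lambda_2)$ is finite. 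A cocycle untwisting argument — the weak mixing being what rules out a torsion obstruction to straightening the cocycle — then yields $W\in\cU(\cR)$ such that conjugating $\theta^{i_0}$ by $W$ (which, $W$ commuting with $\cB$, fixes $\theta^{i_0}|_B$) brings every $T_s$, $s\in H$, into $\cU(M_n(\C)\ot 1)$. After this conjugacy $\theta^{i_0}(u_s)=\gamma(s)\ot u_{(\delta(g),\delta(h))}$ for $s\in H$, forcing $\gamma:H\recht\cU(\C^n)$ to be a representation extending $\gamma_{i_0}$ (irreducible since $\theta$ is) and $\psi$ to satisfy \eqref{eq.my-equivariance} over $H$; so $\theta^{i_0}$ is a canonical embedding with $G=H$ and $\theta\cong\theta_{\mathrm{can}}$. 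Undoing the possible precomposition with $\zeta$ (and using that induction commutes with precomposition by $\zeta$) we obtain $\theta\cong\theta_{\mathrm{can}}$ or $\theta\cong\theta_{\mathrm{can}}\circ\zeta$; with the first paragraph this proves the proposition, the required irreducibility of the induced embeddings $\theta_{\mathrm{can}}$ being part of the construction.

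The main obstacle is the third paragraph: Lemma~\ref{lem.step2} only describes $\theta$ on $B\rtimes(\Lambda_1\times\Lambda_2)$, whereas the stabiliser $H$ of $p_{i_0}$ can be strictly larger, so the canonical form of the $\theta^{i_0}(u_s)$ with $s\in H$ — and in particular the validity of \eqref{eq.my-equivariance} beyond $\Lambda_1\times\Lambda_2$ — does not come for free and must be extracted by hand, through the analysis of the normaliser of the generalized Bernoulli base together with the cocycle untwisting over the finite group $H/(\Lambda_1\times\Lambda_2)$, where the weak mixing of the action on the relative commutant is essential. The upgrade of $\delta$ to an honest group homomorphism on $\Lambda$ in the second paragraph is a more elementary, but indispensable, preliminary.
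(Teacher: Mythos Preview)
Your overall strategy is sound, and the upgrade of $\rho_i$ to a group homomorphism on all of $\Lambda$ is done correctly (and is the same as in the paper). The route you take from there, however, is considerably more involved than the paper's, and the third paragraph contains claims that are not justified as written.

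The paper bypasses your stabiliser/induction/normaliser/cocycle machinery entirely. Once $\delta:\Lambda\to\Gamma$ is a group homomorphism and the summands have been regrouped so that all $\delta_i$ are equal (and all of one form, after possibly precomposing with $\zeta$), the paper observes directly that for any $(a,b)\in\Lambda\times\Lambda$ the element $X=\theta(u_{(a,b)})(1\ot u_{(\delta(a),\delta(b))}^*)$ satisfies
\[
(\gamma(g,h)\ot u_{(\delta(g),\delta(h))})\,X \;=\; X\,(\gamma(a^{-1}ga,b^{-1}hb)\ot u_{(\delta(g),\delta(h))})
\]
for all $(g,h)\in\Lambda_1\times\Lambda_1$. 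Relative icc of $\delta(\Lambda_1)$ in $\Gamma$ together with weak mixing of the Bernoulli action then force $X\in M_n(\C)\ot 1$ in one step. This gives $\theta(u_{(a,b)})=\gamma(a,b)\ot u_{(\delta(a),\delta(b))}$ for \emph{all} $(a,b)\in\Lambda\times\Lambda$, not just a finite index subgroup, and the finite index subgroup $G$ with its irreducible representation then emerges from the little algebra $D\subset D_n(\C)$ tracking where $\theta(\pi_k(B_0))$ lands, rather than from a stabiliser of a summand projection.

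By contrast, your third paragraph appeals to an unproved description of the normaliser of $\cB$ in $M_n(\C)\ot M(\Gamma,\al)$ (note $\cB$ sits only in the coordinates $\delta(\Lambda)\subsetneq\Gamma$ and only hits $\psi_k(B_0)\subset A_0$ in each coordinate, so the relative commutant $\cR$ is a priori larger than $M_n(\C)\ot 1$ and may contain pieces of $A$ or even group elements; you have not computed it). You then invoke a cocycle untwisting over the finite quotient $H/(\Lambda_1\times\Lambda_2)$, claiming that ``weak mixing rules out a torsion obstruction''. What you need, concretely, is that the finite-dimensional $(M_n(\C)\ot 1)$-bimodule spanned by $\{\alpha_t(T_s):t\in\Lambda_1\times\Lambda_2\}$ is forced into $M_n(\C)\ot 1$; this \emph{does} follow from weak mixing of $\alpha|_{\Lambda_1\times\Lambda_2}$ on $\cR\ominus(M_n(\C)\ot 1)$, but you justify that weak mixing only for $A$, not for $\cR$. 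Making this precise essentially reproduces the paper's direct argument, so you gain nothing by the detour. I would replace your entire third paragraph by the direct intertwining computation above; the induction picture and the finite index subgroup $G$ then fall out at the end, as in the paper, rather than being imposed at the start.
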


\begin{proof}
Let $\theta : M(\Lambda,\be) \recht M(\Gamma,\al)^t$ be a unital normal $*$-homomorphism. We apply Lemma \ref{lem.step2}. Since $\Lambda_0 = \Lambda$, we may assume that $\Lambda_1 = \Lambda_2$ and that $\Lambda_1$ is a finite index normal subgroup of $\Lambda$. We claim that the injective maps $\rho : \Lambda \recht \Gamma$ appearing in \eqref{eq.form1} and \eqref{eq.form2} automatically are group homomorphisms. Since $\Lambda_1$ is a normal subgroup of $\Lambda$, we have that $\delta(g) \rho(k) = \rho(k) \delta(k^{-1}g k)$ for all $g \in \Lambda_1$ and $k \in \Lambda$. Applying this equality twice, we get that
$$\delta(g) \, \rho(k) \rho(h) = \rho(k) \rho(h) \, \delta((kh)^{-1} g kh))$$
for all $g \in \Lambda_1$ and $k,h \in \Lambda$. We also have that $\delta(g) \, \rho(kh) = \rho(kh) \, \delta((kh)^{-1} g kh))$. It follows that $\rho(k)\rho(h)\rho(kh)^{-1}$ commutes with $\delta(\Lambda_1)$. Since $\delta(\Lambda_1) \subg \Gamma$ is relatively icc, we find that $\rho(k) \rho(h) = \rho(kh)$ for all $k,h \in \Lambda$. So, $\rho: \Lambda \recht \Gamma$ is an injective group homomorphism.

We can thus reformulate the conclusion of Lemma \ref{lem.step2} in the following way. We have $t \in \N$ and we find a finite index normal subgroup $\Lambda_1 \lhd \Lambda$, an $n \in \N$ and for all $i \in \{1,\ldots,n\}$, an injective group homomorphism $\delta_i : \Lambda \recht \Gamma$, a unitary representation $\gamma_i : \Lambda_1 \times \Lambda_1 \recht \cU(\C^{n_i})$ and an embedding
$$\theta_i : B \rtimes (\Lambda_1 \times \Lambda_1) \recht M_{n_i}(\C) \ot (A \rtimes (\Gamma \times \Gamma))$$
satisfying
$$\theta_i(\pi_k(B_0)) \subseteq 1 \ot \pi_{\delta_i(k)}(A_0) \quad\text{and}\quad \theta_i(u_{(g,h)}) = \gamma_i(g,h) \ot u_{(\delta_i(g),\delta_i(h))}$$
for all $g,h \in \Lambda_1$, $k \in \Lambda$, $b \in B_0$, such that after a unitary conjugacy, the restriction of $\theta$ to $B \rtimes (\Lambda_1 \times \Lambda_1)$ is given by
$$B \rtimes (\Lambda_1 \times \Lambda_1) \recht M_d(\C) \ot (A \rtimes (\Gamma \times \Gamma)) : \theta|_{B \rtimes (\Lambda_1 \times \Lambda_1)} = \bigoplus_{i=1}^m \theta_i \oplus \bigoplus_{i=m+1}^n (\theta_i \circ \zeta) \; ,$$
with $t = d = \sum_{i=1}^n n_i$. After a further unitary conjugacy, we may assume that for all $1 \leq i,j \leq m$ and for all $m+1 \leq i,j \leq n$, either $\delta_i = \delta_j$ or $\delta_i$ and $\delta_j$ are not conjugate. Denote by $p_i \in M_d(\C) \ot 1$ the projection onto the $i$'th direct summand.

Let $1 \leq i < j \leq m$ such that $\delta_i$ and $\delta_j$ are not conjugate. Take $(a,b) \in \Lambda \times \Lambda$. The element
$$X = p_i \, \theta(u_{(a,b)}) \, p_j \, (1 \ot u_{(\delta_j(a),\delta_j(b))}^*)$$
satisfies
$$\bigl( \gamma_i(g,h) \ot u_{(\delta_i(g),\delta_i(h))} \bigr) \, X = X \, \bigl( \gamma_j(a^{-1}ga,b^{-1}hb) \ot u_{(\delta_j(g),\delta_j(h))} \bigr)$$
for all $(g,h) \in \Lambda_1 \times \Lambda_1$. Since $\delta_i$ and $\delta_j$ are not conjugate and since $\delta_i(\Lambda_1) \subg \Gamma$ is relatively icc, there exists a sequence $g_n \in \Lambda_1$ such that $\delta_i(g_n) k \delta_j(g_n)^{-1} \recht \infty$ for all $k \in \Gamma$. It follows that $X = 0$. Denoting the entire crossed product as $M = B \rtimes (\Lambda \times \Lambda)$, we conclude that $p_i \theta(M) p_j = \{0\}$. A similar reasoning holds for $m+1 \leq i < j \leq n$ with $\delta_i,\delta_j$ not conjugate, and also for all $1 \leq i \leq m$ and $m+1 \leq j \leq n$.

This means that the original embedding can be decomposed as a direct sum of embeddings and for each of these direct summands, in the description above, we have that all $\delta_i$ are equal and that either $m=n$, or $m=0$. For the rest of the proof, we may consider each of these direct summands separately. Composing the embedding with the flip $\zeta$ if necessary, we may thus assume that all $\delta_i$ are equal and that $m = n$. Write $\delta = \delta_i$. This means that the restriction of $\theta$ to $B \rtimes (\Lambda_1 \times \Lambda_1)$ satisfies
$$\theta(\pi_k(B_0)) \subseteq D_n(\C) \ot \pi_{\delta(k)}(A_0) \quad\text{and}\quad \theta(u_{(g,h)}) = \gamma(g,h) \ot u_{(\delta(g),\delta(h))}$$
for all $k \in \Lambda$, $g,h \in \Lambda_1$, where $D_n(\C) \subset M_n(\C)$ is the subalgebra of diagonal matrices and $\gamma : \Lambda_1 \times \Lambda_1 \recht \cU(\C^n)$ is a unitary representation.

The same argument as above then forces that for all $(a,b) \in \Lambda \times \Lambda$, the unitary $\theta(u_{(a,b)})$ is of the form $\gamma(a,b) \ot u_{(\delta(a),\delta(b))}$ for some $\gamma(a,b) \in \cU(\C^n)$.

Define $D \subseteq D_n(\C)$ as the smallest von Neumann subalgebra satisfying $\theta(\pi_k(B_0)) \subseteq D \ot \pi_{\delta(k)}(A_0)$ for all $k \in \Lambda$. It follows that $\gamma(a,b) D \gamma(a,b)^* = D$ for all $(a,b) \in \Lambda \times \Lambda$. Whenever $p \in D$ is a projection that commutes with $\gamma(\Lambda \times \Lambda)$, we have that $p \ot 1$ commutes with $\theta(M)$. So after a further decomposition into direct summands, we may assume that $\gamma(\Lambda \times \Lambda)' \cap D = \C 1$. Taking a minimal projection $p_0 \in D$ and defining the finite index subgroup $G \subg \Lambda \times \Lambda$ by
$$G = \{(a,b) \in \Lambda \times \Lambda \mid \gamma(a,b) p_0 \gamma(a,b)^* = p_0\} \; ,$$
this means that $\theta$ is induced from the embedding
$$\theta_0 : B \rtimes G \recht p_0 M_n(\C) p_0 \ot (A \rtimes (\Gamma \times \Gamma)) : \theta_0(x) = \theta(x) (p_0 \ot 1)\; .$$
Since $p_0$ is a minimal projection in $D$, we find that $\theta_0(\pi_k(b)) = p_0 \ot \pi_{\delta(k)}(\psi_k(b))$, where $\psi_k : (B_0,\tau) \recht (A_0,\tau)$ is a trace preserving unital $*$-homomorphism. So, $\theta_0$ is precisely of the form as described before the proposition. Write $\gamma_0 : G \recht \cU(p_0 M_n(\C) p_0) : \gamma_0(a,b) = \gamma(a,b)p_0$. To conclude the proof of the proposition, it suffices to decompose $\gamma_0$ as a direct sum of irreducible representations.
\end{proof}

We are finally ready to prove Theorem \ref{thm.main-thm-embedding-bernoulli}.

\begin{proof}[{Proof of Theorem \ref{thm.main-thm-embedding-bernoulli}}]
Assume that the first statement holds and let $\theta : M(\Lambda,\be) \hookrightarrow M(\Gamma,\al)^t$ be an embedding. By Proposition \ref{prop.all-embeddings}, $\theta$ is a finite direct sum of irreducible embeddings and each irreducible embedding has a concrete description given before Proposition \ref{prop.all-embeddings}. In particular, we find for each of these irreducible embeddings a finite index subgroup $G \subg \Lambda \times \Lambda$, an injective group homomorphism $\delta : \Lambda \recht \Gamma$ and a family $(\psi_k)_{k \in \Lambda}$ of trace preserving unital $*$-homomorphisms $\psi_k : (B_0,\tau) \recht (A_0,\tau)$ such that \eqref{eq.my-equivariance} holds. Note that $t$ equals a sum of multiples of $[\Lambda \times \Lambda : G]$.

Denote by $\Delta : \Lambda \recht \Lambda \times \Lambda$ the diagonal embedding. Defining the finite index subgroup $\Lambda_1 \subg \Lambda$ such that $\Delta(\Lambda_1) = G \cap \Delta(\Lambda)$, it follows from \eqref{eq.my-equivariance} that $\al_{\delta(g)} \circ \psi_e = \psi_e \circ \beta_g$ for all $g \in \Lambda_1$. So, the second statement of the theorem holds.

In view of the ``moreover'' statement in the theorem, consider the left action of $\Delta(\Lambda)$ on the coset space $(\Lambda \times \Lambda)/G$. Let $(g_j,h_j)G$ be representatives for the orbits. Define the finite index subgroups $\Lambda_j \subg \Lambda$ such that $\Delta(\Lambda_j)$ is the stabilizer of $(g_j,h_j)G$. Note that we could take $(g_1,h_1) = (e,e)$ and then $\Lambda_1$ equals the subgroup $\Lambda_1$ in the previous paragraph. By definition,
$$\Lambda_j = \bigl\{k \in \Lambda \bigm| (g_j^{-1} k g_j , h_j^{-1} k h_j) \in G \bigr\} \; .$$
It thus follows from \eqref{eq.my-equivariance} that
$$\al_{\delta(g_j^{-1} k g_j)} \circ \psi_{g_j^{-1} h_j} = \psi_{g_j^{-1} h_j} \circ \beta_{g_j^{-1} k g_j} \quad\text{for all $k \in \Lambda_j$.}$$
Defining $\psi_j = \al_{\delta(g_j)} \circ \psi_{g_j^{-1} h_j} \circ \beta_{g_j^{-1}}$, this means that $\al_{\delta(k)} \circ \psi_j = \psi_j \circ \beta_k$ for all $k \in \Lambda_j$. Since
$$[\Lambda \times \Lambda : G] = \bigl| (\Lambda \times \Lambda)/G \bigr| = \sum_j [\Lambda : \Lambda_j] \; ,$$
we have indeed written $t$ as a sum of indices $[\Lambda : \Lambda_1]$, where $\Lambda_1 \subg \Lambda$ is a finite index subgroup as in the second statement of the theorem.

Conversely, whenever the second statement of the theorem holds, we have a canonical embedding
$$(B_0,\tau)^\Lambda \rtimes (\Lambda_1 \times \Lambda) \hookrightarrow (A_0,\tau)^\Gamma \rtimes (\Gamma \times \Gamma) \; ,$$
which can be induced to an embedding $M(\Lambda,\be) \emb M(\Gamma,\al)^n$ with $n = [\Lambda : \Lambda_1]$.
\end{proof}

Combining Corollary \ref{cor.special-case} and Remark \ref{rem.virtually-isom}, it is easy to prove Corollary \ref{cor.family-with-a}.

\label{proof.corollary.B}
\begin{proof}[{Proof of Corollary \ref{cor.family-with-a}}]
1.\ It suffices to note that there exists a trace preserving unital embedding $(A_a,\tau_a) \emb (A_b,\tau_b)$ iff $a = b$.

2.\ Since an embedding $B_a \emb B_b$ must map the diffuse part of $B_a$ into the diffuse part of $B_b$, there exists a trace preserving unital embedding $(B_a,\tau_a) \emb (B_b,\tau_b)$ iff $a \leq b$.

3.\ Fix $a \in (0,1/2]$ and choose a projection $p_a \in R$ with $\tau(p_a) = a$. Choose isomorphisms $\theta_1 : R \to p_a R p_a$ and $\theta_2 : R \to (1-p_a)R(1-p_a)$. Then, $\theta(x) = \theta_1(x) + \theta_2(x)$ defines a trace preserving unital embedding $(R \oplus R,\tau_a) \emb (R,\tau)$. Therefore, $Q_a \emb M(R,\tau)$. The map $x \mapsto x \oplus x$ is a trace preserving unital embedding of $(R,\tau)$ into $(R \oplus R,\tau_a)$. Thus, $M(R,\tau) \emb Q_a$.

By Remark \ref{rem.virtually-isom}, we have $Q_a \scong Q_b$ iff $Q_a \cong Q_b$ iff $Q_a,Q_b$ are virtually isomorphic iff there exists a trace preserving isomorphism $(R \oplus R,\tau_a) \cong (R \oplus R,\tau_b)$. This is equivalent with $a = b$ since such a trace preserving isomorphism restricts to a trace preserving isomorphism between the two-dimensional centers and $a,b \in (0,1/2]$.
\end{proof}

As mentioned above, we are formulating Lemmas \ref{lem.step1} and \ref{lem.step2} in a more general context, with possibly proper subgroups $\Lambda_0 \subg \Lambda$ and $\Gamma_0 \subg \Gamma$. This will be crucial to prove Theorem \ref{thm.all-embed-trivial}, where the canonical flip automorphism $\zeta$ has to be avoided. To make the picture complete, we also describe when such asymmetric crossed products embed one into the other. Compared to the formulation of Theorem \ref{thm.main-thm-embedding-bernoulli}, the formulation of Theorem \ref{thm.embedding-bernoulli-distinct-left-right} may sound a bit cumbersome, but this is unavoidable. In Remark \ref{rem.finite-index-subtleness}, we discuss the finite index subtleties that may arise.

\begin{theorem}\label{thm.embedding-bernoulli-distinct-left-right}
Let $\Lambda_0 \subg \Lambda$ and $\Gamma_0 \subg \Gamma$ be nonamenable groups such that $\Lambda_0 \subg \Lambda$ is relatively icc and $\Gamma \in \cC$. Let $\Gamma \actson^\al (A_0,\tau)$ and $\Lambda \actson^\be (B_0,\tau)$ be trace preserving actions on amenable tracial von Neumann algebras such that $A_0 \neq \C 1 \neq B_0$ and such that $\Ker \be$ is nontrivial. Then the following statements are equivalent.
\begin{enumlist}
\item There exists a $t > 0$ and an embedding $M(\Lambda_0,\Lambda,\be) \hookrightarrow M(\Gamma_0,\Gamma,\al)^t$.
\item There exist finite index subgroups $\Lambda_1 \subg \Lambda_0$ and $\Lambda_2 \subg \Lambda$, an injective group homomorphism $\delta : \Lambda_2 \recht \Gamma$, an injective map $\rho : \Lambda \recht \Gamma$ and a unital trace preserving $*$-homomorphism $\psi : (B_0,\tau) \recht (A_0,\tau)$ such that
    \begin{equation}\label{eq.data}
    \Lambda_1 \subseteq \Lambda_2 \quad , \quad \delta(\Lambda_1) \subseteq \Gamma_0 \quad , \quad \rho(gkh) = \delta(g) \rho(k) \delta(h) \quad\text{and}\quad \psi \circ \be_g = \al_{\delta(g)} \circ \psi
    \end{equation}
    for all $g \in \Lambda_1$, $k \in \Lambda$, $h \in \Lambda_2$.
\end{enumlist}
Moreover, if an embedding as in 1 exists, then $t \in \N$.
\end{theorem}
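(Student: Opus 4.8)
The plan is to derive both implications from Lemma \ref{lem.step2}, together with a direct construction and the co-induction of a $*$-homomorphism along a finite index inclusion, exactly as in the construction of the induced embedding preceding Proposition \ref{prop.all-embeddings}. Write $A = (A_0,\tau)^\Gamma$ and $B = (B_0,\tau)^\Lambda$. I would first record that the standing hypotheses force all the icc assumptions of Lemmas \ref{lem.step1} and \ref{lem.step2}: since $\Gamma \in \cC$ and $\Gamma_0 \subg \Gamma$ is nonamenable, $\Gamma_0$ is relatively icc in $\Gamma$, hence icc; and relative icc-ness of $\Lambda_0 \subg \Lambda$ makes both $\Lambda_0$ and $\Lambda$ icc. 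Lemma \ref{lem.step1} also gives $t \in \N$ whenever an embedding as in statement 1 exists, which is the ``moreover'' clause.

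For $(1) \Rightarrow (2)$, take a unital normal $*$-homomorphism $\theta : M(\Lambda_0,\Lambda,\be) \recht M(\Gamma_0,\Gamma,\al)^t$, apply Lemma \ref{lem.step2}, and after a unitary conjugacy fix one nonzero direct summand $\theta_i$ of $\theta|_{B \rtimes (\Lambda_1 \times \Lambda_2)}$ (such a summand exists because $\theta$ is unital), suppressing the index. Its shape is \eqref{eq.form1} or \eqref{eq.form2}; in either case, from $\theta_i(\pi_k(B_0)) \subseteq 1 \ot \pi_{\rho(k)^{\pm 1}}(A_0)$ and injectivity of $\rho$ one reads off unital trace preserving $*$-homomorphisms $\psi_k : (B_0,\tau) \recht (A_0,\tau)$, $k \in \Lambda$. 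Applying $\theta_i$ to the identity $u_{(g,h)} \pi_k(b) u_{(g,h)}^* = \pi_{gkh^{-1}}(\be_g(b))$ for $g \in \Lambda_1$, $h \in \Lambda_2$, $k \in \Lambda$, and using $\delta(\Lambda_1) \subseteq \Gamma_0$ together with $\rho(gkh^{-1}) = \delta(g)\rho(k)\delta(h)^{-1}$, a short computation gives $\psi_{gkh^{-1}} \circ \be_g = \al_{\delta(g)} \circ \psi_k$ in the first case and $\psi_{gkh^{-1}} \circ \be_g = \al_{\delta(h)} \circ \psi_k$ in the second. Specializing $(k,g) = (e,e)$ expresses $\psi_h$, $h \in \Lambda_2$, in terms of $\psi_e$, and combining this with the specialization $(k,h) = (e,e)$ and the inclusion $\Lambda_1 \subseteq \Lambda_2$ yields $\psi_e \circ \be_g = \al_{\delta(g)} \circ \psi_e$ for all $g \in \Lambda_1$ in both cases. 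Then $\psi := \psi_e$, together with the $\delta$ and $\rho$ produced by Lemma \ref{lem.step2}, satisfies everything in \eqref{eq.data}.

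For $(2) \Rightarrow (1)$, given the data of statement 2 I would construct a trace preserving embedding $\theta_0 : B \rtimes (\Lambda_1 \times \Lambda_2) \recht A \rtimes (\Gamma_0 \times \Gamma) = M(\Gamma_0,\Gamma,\al)$ and induce it along the finite index inclusion $B \rtimes (\Lambda_1 \times \Lambda_2) \subseteq B \rtimes (\Lambda_0 \times \Lambda) = M(\Lambda_0,\Lambda,\be)$, obtaining an embedding into $M_t(\C) \ot M(\Gamma_0,\Gamma,\al) = M(\Gamma_0,\Gamma,\al)^t$ with $t = [\Lambda_0 : \Lambda_1]\,[\Lambda : \Lambda_2] \in \N$. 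To build $\theta_0$, consider the action $(g,h) \cdot k = gkh^{-1}$ of $\Lambda_1 \times \Lambda_2$ on $\Lambda$, fix orbit representatives $k_j$ with $k_j = e$ for the orbit $\Lambda_1 \Lambda_2 = \Lambda_2$, and for $k = g k_j h^{-1}$ set $\psi_k := \al_{\delta(g)} \circ \psi \circ \be_g^{-1}$, $\theta_0(\pi_k(b)) := \pi_{\rho(k)}(\psi_k(b))$ and $\theta_0(u_{(g,h)}) := u_{(\delta(g),\delta(h))}$. The crucial point is that $\psi_k$ is well defined: if $g k_j h^{-1} = g' k_j {h'}^{-1}$ then ${g'}^{-1} g \in \Lambda_1$, and the intertwining hypothesis $\psi \circ \be_s = \al_{\delta(s)} \circ \psi$, valid on $\Lambda_1$, is exactly what makes $\al_{\delta(g)} \circ \psi \circ \be_g^{-1}$ independent of the chosen decomposition. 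Injectivity of $\rho$ then makes $\theta_0$ a well-defined $*$-homomorphism on $B$ (the algebras $\pi_{\rho(k)}(A_0)$ pairwise commute), injectivity of $\delta|_{\Lambda_2}$ makes it trace preserving, and verifying the crossed product relations reduces to $\al_{\delta(g)} \circ \psi_k = \psi_{gkh^{-1}} \circ \be_g$, which follows from the formula for $\psi_k$ after writing $gkh^{-1} = (gg_0) k_j (hh_0)^{-1}$ when $k = g_0 k_j h_0^{-1}$; here $\delta(\Lambda_1) \subseteq \Gamma_0$ is what lets $u_{(\delta(g),\delta(h))}$ conjugate $\pi_{\rho(k)}(A_0)$ through $\al_{\delta(g)}$.

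The substantial analytic work is already contained in Lemmas \ref{lem.step1} and \ref{lem.step2}, so I expect no serious obstacle beyond bookkeeping; the two mildly delicate points are carrying both shapes \eqref{eq.form1} and \eqref{eq.form2} through the computation while distilling a single intertwining pair $(\delta,\psi)$ in the direction $(1) \Rightarrow (2)$, and the well-definedness of the family $(\psi_k)_{k \in \Lambda}$ in the converse, which goes through precisely because all potential ambiguity is governed by elements of $\Lambda_1$, where the intertwining hypothesis is available.
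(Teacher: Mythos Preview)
Your proposal is correct and follows the paper's approach: both directions hinge on Lemma \ref{lem.step2}, with $(1)\Rightarrow(2)$ extracting $\psi = \psi_e$ from a single direct summand and $(2)\Rightarrow(1)$ building an explicit embedding on $B \rtimes (\Lambda_1 \times \Lambda_2)$ and inducing. One simplification you missed in $(2)\Rightarrow(1)$: since the intertwining relation $\psi \circ \be_g = \al_{\delta(g)} \circ \psi$ holds for all $g \in \Lambda_1$, your family $\psi_k = \al_{\delta(g)} \circ \psi \circ \be_g^{-1}$ collapses to $\psi_k = \psi$ for every $k$, so the paper simply sets $\theta_0(\pi_k(b)) = \pi_{\rho(k)}(\psi(b))$ without orbit representatives or well-definedness checks.
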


\begin{proof}
1 $\Rightarrow$ 2. By Lemma \ref{lem.step2}, we find that $t \in \N$ and we find finite index subgroups $\Lambda_1 \subg \Lambda_0$ and $\Lambda_2 \subg \Lambda$ with $\Lambda_1 \subseteq \Lambda_2$, and an embedding $\theta$ of $B \rtimes (\Lambda_1 \times \Lambda_2)$ into $M_n(\C) \ot (A \rtimes (\Gamma_0 \times \Gamma))$ that is either of the form \eqref{eq.form1} or \eqref{eq.form2}. In both cases, we define $\psi : (B_0,\tau) \recht (A_0,\tau)$ such that $\theta(\pi_e(b)) = 1 \ot \pi_e(\psi(b))$ for all $b \in B_0$ and check that 2 holds.

2 $\Rightarrow$ 1. Assume that we are given the data of \eqref{eq.data}. There then is a unique normal unital $*$-homomorphism $\theta : B \rtimes (\Lambda_1 \times \Lambda_2) \recht A \rtimes (\Gamma_0 \times \Gamma)$ satisfying
$$\theta(\pi_k(b)) = \pi_{\rho(k)}(\psi(b)) \quad\text{and}\quad \theta(u_{(g,h)}) = u_{(\delta(g),\delta(h))} \quad\text{for all $k \in \Lambda,g \in \Lambda_1, h \in \Lambda_2$ and $b \in B_0$.}$$
This embedding can now be induced to an embedding of $M(\Lambda_0,\Lambda,\be)$ into $M(\Gamma_0,\Gamma,\al)^n$ with $n = [\Lambda_0:\Lambda_1] \, [\Lambda:\Lambda_2]$.
\end{proof}

\begin{remark}\label{rem.finite-index-subtleness}
In light of Theorem \ref{thm.main-thm-embedding-bernoulli}, it is tempting to try to formulate \eqref{eq.data} better, in terms of an injective group homomorphism $\delta : \Lambda \recht \Gamma$. This is however impossible, as the following example illustrates. Take $\Lambda_0 = \F_2$, freely generated by elements $a$ and $b$. Then put $\Lambda = \Lambda_0 * \Z/2\Z$, freely generated by $\Lambda_0$ and an element $c$ of order $2$. Let $(A_0,\tau)$ be any nontrivial tracial amenable von Neumann algebra, e.g.\ $A_0 = \C^2$ with $\tau(1,0) = 1/2$.

Then the left-right Bernoulli crossed product $M(\Lambda_0,\Lambda,A_0) = (A_0,\tau)^\Lambda \rtimes (\Lambda_0 \times \Lambda)$ embeds into an amplification of $M(\F_2,A_0) = (A_0,\tau)^{\F_2} \rtimes (\F_2 \times \F_2)$, although there is no injective group homomorphism $\Lambda \hookrightarrow \F_2$ because $\F_2$ is torsion free. The corresponding map $\eta : \Lambda \recht \F_2$ can be constructed as follows. Let $\F_2$ be freely generated by elements $u,v$. Define the index $2$ subgroup $\Lambda_2 \subg \Lambda$ by $\Lambda_2 = \langle \Lambda_0, c \Lambda_0 c \rangle$. Define the injective group homomorphism $\delta : \Lambda_2 \recht \F_2$ by
$$\delta(a) = u \;\; , \;\; \delta(b) = v u v^{-1} \;\;,\;\; \delta(cac) = v^2 u v^{-2} \;\; , \;\; \delta(cbc) = v^3 u v^{-3} \; .$$
Then define the injective map $\eta : \Lambda \recht \F_2$ by $\eta(g) = \delta(g)$  and $\eta(c g) = v^{-2} \delta(g)$ for all $g \in \Lambda_2$. One checks that $\eta(gkh) = \delta(g)\eta(k)\delta(h)$ for all $g \in \Lambda_0$, $k \in \Lambda$ and $h \in \Lambda_2$.
\end{remark}

\begin{remark}
The W$^*$-rigidity paradigms for embeddings of II$_1$ factors introduced in this article have natural analogues in measurable group theory, for countable probability measure preserving (pmp) equivalence relations.

Already as such, most of the results in this paper have an immediate orbit equivalence counterpart. Given countable pmp equivalence relations
$\cR$ on $(X,\mu)$ and $\cS$ on $(Y,\nu)$, we write $\cR \emb \cS$ if $\cR$ admits an extension that is isomorphic with a subequivalence relation of $\cS$ (cf.\ \cite[Definition 1.4.2]{Pop05}, \cite[Definition 1.6]{Fur06} and \cite[Definition 2.7]{AP15}). In terms of the associated Cartan inclusions $A = L^\infty(X) \subseteq L(\cR) = M$ and $B = L^\infty(Y) \subseteq L(\cS) = N$, this amounts to the existence of an embedding $\theta : M \recht N$ satisfying $\theta(A) \subseteq B$ and $\theta(\cN_M(A)) \subseteq \cN_N(B)$.
When $\cS$ is ergodic, we similarly define the stable embedding relation $\cR \semb \cS$.

The generic construction \eqref{eq.our-factors} has the following analogue on the level of equivalence relations. Given $\Gamma_0 \subg \Gamma$, a countable pmp equivalence relation $\cR_0$ on $(X_0,\mu_0)$ and a measure preserving action $\al : \Gamma_0 \recht \Aut(\cR_0)$, we define the countable pmp equivalence relation $\cR$ on $(X,\mu) = (X_0,\mu_0)^\Gamma$ by $(x,y) \in \cR$ iff there exists $g \in \Gamma_0$ and $h \in \Gamma$ such that $(x_{gkh},\al_g(y_k)) \in \cR_0$ for all $k \in \Gamma$. Then, $L(\cR) \cong M(\Gamma_0,\Gamma,\al)$, where $\Gamma_0 \actson^\al L(\cR_0)$ is the canonical trace preserving action.

We can thus apply Theorem \ref{thm.main-thm-embedding-bernoulli} and provide in this way orbit equivalence variants of the main results stated in the introduction.
\end{remark}

\section{One-sided fundamental groups: proof of Theorem \ref{thm.one-sided-fundamental}}

\begin{proof}[{Proof of Theorem \ref{thm.one-sided-fundamental}}]
We use the following properties of the II$_1$ factor $M$ that appears in the initial data. First, by Theorem \ref{thm.main-thm-embedding-bernoulli}, if $M \emb M^t$ for some $t > 0$, then $t \in \N$. Second, $M$ contains commuting nonamenable subfactors $Q_1 \subset M$ and $Q_2 \subset M$ given by the two subgroups $\Gamma \times \{e\}$ and $\{e\} \times \Gamma$, and the subfactor $Q \subset Q_1 \ovt Q_2$ given by the diagonal embedding of $\Gamma$ into $\Gamma \times \Gamma$, such that $M$ is generated by
$$M = \bigl( Q_1 \cup (Q_1' \cap M) \cup (Q' \cap M)\bigr)\dpr \; .$$

For any countably infinite subset $\cH \subset (0,+\infty)$, we write
$$P(\cH) = \bigast_{r \in \cH} M^r \; .$$
So by definition, $P = P(\cF^{-1})$. By \cite[Theorem 1.5]{DR99}, we have $P(\cH)^t \cong P(t \cH)$ for every $t > 0$. By construction, $P(\cH_1) \emb P(\cH_2)$ if $\cH_1 \subseteq \cH_2$. Whenever $t \in \cF$, we have $t \cF \subseteq \cF$ and thus $\cF^{-1} \subseteq t \cF^{-1}$. We conclude that $P \emb P^t$ so that $\cF \subseteq \cFs(P)$.

Conversely, assume that $t \in \cFs(P)$. Since $1 \in \cF$, we have that $M \emb P$ and thus, $M \emb P^t \cong P(t \cF^{-1})$. Fix such an embedding $\psi : M \recht P(t \cF^{-1})$. Combining \cite[Theorem 4.3]{HU15} with the control of relative commutants provided by \cite[Theorem 1.1]{IPP05}, we find a countable family $(p_i)_{i \in I}$ of nonzero projections $p_i \in P(t \cF^{-1}) \cap \psi(Q_1)'$ with $\sum_i p_i = 1$ and distinct $s_i \in \cF$ such that for every $i$, the subfactor $\psi(Q_1) p_i$ can be unitarily conjugated into the corner $q_i M^{t s_i^{-1}} q_i$ of the canonical copy of $M^{t s_i^{-1}}$ in the free product $P(t \cF^{-1})$. We have $\tau(p_i) = \tau(q_i)$ and we denote $r_i = \tau(q_i)$. Since $\sum_i p_i = 1$, we have that $\sum_i r_i = 1$.

If $a \in Q_1' \cap M$ and $i \neq j$, we must have that $p_i \psi(a) p_j = 0$, since otherwise we can create a nonzero conjugacy between isomorphic copies of $Q_1$ in different factors of the free product $P(t \cF^{-1})$. We conclude that $p_i$ commutes with $\psi(Q_2)$. Using again \cite[Theorem 1.1]{IPP05}, it follows that $\psi(Q_1 \ovt Q_2) p_i$ can be unitarily conjugated into the corner $q_i M^{t s_i^{-1}} q_i$. Then reasoning similarly with $\psi(Q)$, we find that $p_i \in \psi(M)' \cap P^t$ and that $\psi(M) p_i$ can be unitarily conjugated into the corner $q_i M^{t s_i^{-1}} q_i$. This means that $M \emb M^{t r_i s_i^{-1}}$ for every $i$. So, $t r_i s_i^{-1} = n_i \in \N$. It follows that $t r_i = n_i s_i \geq 1$ for every $i$. Since $\sum_i r_i = 1$, we conclude that $I$ is a finite set and that $t = \sum_i n_i s_i$ belongs to $\cF$.
\end{proof}

\section{No nontrivial self-embeddings: proof of Theorem \ref{thm.all-embed-trivial}}

Before proving Theorem \ref{thm.all-embed-trivial}, we build a more abstract context and prove a more general result that we will also use in the proof of Theorem \ref{thm.complete-intervals}.

\begin{notation}
When $\Lambda_1,\Lambda_2$ are subgroups of a group $\Gamma$, we write $\Lambda_1 \prec_\Gamma \Lambda_2$ if a finite index subgroup of $\Lambda_1$ can be conjugated into $\Lambda_2$, i.e.\ if there exists a $g \in \Gamma$ such that $\Lambda_1 \cap g \Lambda_2 g^{-1}$ has finite index in $\Lambda_1$.
\end{notation}

Fix a group $\Gamma$ in the class $\cC$ of Definition \ref{def.family-C}. Let $\Lambda_n < \Gamma_0 < \Gamma_1 \subg \Gamma$ be subgroups and make the following assumptions.

\begin{assumptions}\label{my-assum}
\begin{enumlist}
\item\label{one} We have that $\Gamma_0$ is a proper subgroup of $\Gamma_1$ and that $\Gamma_1$ is a normal subgroup of $\Gamma$. We have that all $\Lambda_n$ are nonamenable.
\item\label{two} The groups $\Gamma_0$ and $\Gamma_1$ have no nontrivial finite dimensional unitary representations.
\item\label{three} If $n \neq m$ and $g \in \Gamma$, the group $g \Lambda_n g^{-1} \cap \Lambda_m$ is amenable.
\item\label{four} If $g \in \Gamma \setminus \Lambda_n$, the group $g \Lambda_n g^{-1} \cap \Lambda_n$ is amenable.
\item\label{five} If $i \in \{0,1\}$ and $\delta : \Gamma_i \recht \Gamma$ is an injective group homomorphism such that $\delta(\Lambda_n) \prec_{\Gamma} \Lambda_n$ for all $n$, there exists a $g \in \Gamma$ such that $\delta(k) = g k g^{-1}$ for all $k \in \Gamma_i$.
\end{enumlist}
\end{assumptions}

\begin{example}\label{example-of-assumptions}
To prove Theorem \ref{thm.all-embed-trivial}, we will use $\Gamma = G \ast G$, where $G = A_\infty$ is the group of finite even permutations of $\N$. We take $\Gamma_1 = \Gamma$ and $\Gamma_0 = G_1 \ast G$, where $G_1$ is the subgroup of finite even permutations of $\N$ that fix $1 \in \N$.
We denote by $\cA \subset G_1$ the set of all elements of order $2$ and we denote by $\cB \subset G$ the set of all elements of order $3$. For all $(a,b) \in \cA \times \cB$, define the subgroup $\Lambda_{a,b} = \langle a \rangle \ast \langle b \rangle$ of $\Gamma_0 = G_1 \ast G$.
In Lemma \ref{lem.first-group-lemma}, we prove that (an enumeration of) the groups $\Lambda_{a,b}$ satisfy the assumptions in \ref{my-assum}.

To prove Theorem \ref{thm.complete-intervals}, we will use a similar family of groups $\Lambda_i < \Gamma_0 < \Gamma_1 \subg \Gamma$ that we introduce in Section \ref{sec.lattices-of-II1-factors}.

In all these cases, the assumptions \ref{three}, \ref{four} and \ref{five} in \ref{my-assum} follow from an analysis of reduced words in free product groups (see Lemma \ref{lem.playing-with-words}) combined with the Kurosh theorem.
\end{example}

Given the assumptions in \ref{my-assum}, we choose probability measures $\mu_n$ on the two point set $\{0,1\}$ such that $\mu_n(0)$ takes distinct values in $(0,1/2)$. We let $\Delta : \Gamma_0 \recht \Gamma_0 \times \Gamma_0$ be the diagonal embedding and consider the generalized Bernoulli action
$$\Gamma_0 \times \Gamma \actson (X,\mu) = \prod_{n \in \N} \bigl(\{0,1\},\mu_n\bigr)^{(\Gamma_0 \times \Gamma)/\Delta(\Lambda_n)} \; .$$
We denote by $M = L^\infty(X,\mu) \rtimes (\Gamma_0 \times \Gamma)$ the crossed product and consider the subfactor $M_1 = L^\infty(X,\mu) \rtimes (\Gamma_0 \times \Gamma_1)$. The following is the main technical result.

\begin{lemma}\label{lem.all-embedding-trivial-general}
Let $\Gamma$ be a countable group in the class $\cC$ of Definition \ref{def.family-C}. Let $\Lambda_n < \Gamma_0 < \Gamma_1 \subg \Gamma$ be subgroups satisfying the assumptions in \ref{my-assum}. Define $M_1$ and $M$ as above.

If $d > 0$ and $\theta : M_1 \recht M^d$ is an embedding, then $d \in \N$ and $\theta$ is unitarily conjugate to the trivial embedding $M_1 \recht M_n(\C) \ot M : a \mapsto 1 \ot a$.
\end{lemma}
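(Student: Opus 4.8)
The first move is to present $M$ as a left--right generalized Bernoulli crossed product in the sense of Section~\ref{sec.embeddings-bernoulli}. Set $A_0=\bigotimes_{n}\bigl(\{0,1\},\mu_n\bigr)^{\Gamma_0/\Lambda_n}$, an amenable (abelian) tracial von Neumann algebra, and let $\al$ be the Bernoulli action of $\Gamma_0$ on $A_0$ that permutes, on the $n$-th tensor factor, the $\Gamma_0/\Lambda_n$-coordinates. The $\Gamma_0\times\Gamma$-equivariant bijection $(\Gamma_0\times\Gamma)/\Delta(\Lambda_n)\cong\Gamma\times(\Gamma_0/\Lambda_n)$, $(g,h)\Delta(\Lambda_n)\mapsto(gh^{-1},g\Lambda_n)$, identifies $M$ with $M(\Gamma_0,\Gamma,\al)=(A_0)^{\Gamma}\rtimes(\Gamma_0\times\Gamma)$, identifies $L^\infty(X)$ with $A:=(A_0)^{\Gamma}$, and identifies $M_1$ with $A\rtimes(\Gamma_0\times\Gamma_1)$. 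Under this identification the elementary subalgebras $\pi_k(A_0^{(n)})$ ($k\in\Gamma$, $n\in\N$, with $A_0^{(n)}$ the $n$-th tensor factor of $A_0$) form the generalized Bernoulli block decomposition of $M$, the block $\pi_e(A_0^{(n)})$ having stabiliser $\Delta(\Lambda_n)$; since the measures $\mu_n$ are pairwise distinct, this decomposition is canonical. Note also that $\Gamma_0$ and $\Gamma_1$ are nonamenable subgroups of $\Gamma\in\cC$, hence belong to $\cC$ and are icc.

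Given $\theta:M_1\recht M^d$, the plan is to first run the deformation/rigidity localisation of Lemma~\ref{lem.step1}: its proof applies with only notational changes to the (possibly non-transitive) action underlying $M_1$, as it uses only that $\Gamma_0,\Gamma\in\cC$, the malleable deformation of the Bernoulli action, \cite{OP03,PV12,KV15} and Theorem~\ref{thm.standard-embedding}. This yields $d\in\N$, a unitary conjugacy after which $\theta=\bigoplus_i\theta_i$, finite-index subgroups $\Gamma_0^{\circ}\subg\Gamma_0$ and $\Gamma_1^{\circ}\subg\Gamma_1$, and for each $i$ a finite-dimensional unitary representation $\gamma_i$ of $\Gamma_0^{\circ}\times\Gamma_1^{\circ}$ together with an injective homomorphism $\pi_i:\Gamma_0^{\circ}\times\Gamma_1^{\circ}\recht\Gamma_0\times\Gamma$, either straight ($\pi_i(g,h)=(\eta_i(g),\delta_i(h))$) or flipped ($\pi_i(g,h)=(\delta_i(h),\eta_i(g))$), with $\theta_i(A)\subseteq M_{n_i}(\C)\ot A$ and $\theta_i(u_{(g,h)})=\gamma_i(g,h)\ot u_{\pi_i(g,h)}$.

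The crucial step is then to bring each $\theta_i$ into the normal form \eqref{eq.form1}/\eqref{eq.form2} and to pin down the homomorphism. Feeding $\theta_i$ into the argument of Lemma~\ref{lem.step2}, and extending the homomorphism from a finite-index subgroup to all of $\Gamma_1$ as in the proof of Proposition~\ref{prop.all-embeddings} (using the relative icc property of $\Gamma_0$ in $\Gamma$ and the normality $\Gamma_1\lhd\Gamma$), one obtains an injective homomorphism $\delta_i$ and the fact that $\theta_i$ carries each block $\pi_k(A_0^{(n)})$ into a single target block $\pi_{\delta_i(k)}(A_0^{(m_n)})$. Here Assumptions~\ref{three}--\ref{four} in~\ref{my-assum} are exactly what prevents two distinct blocks, or two distinct translates of one block, from being absorbed into a common target block, which is the point at which the reduced-word/Kurosh analysis behind the assumptions (see Example~\ref{example-of-assumptions}) is invoked. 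Because $\theta_i$ is trace preserving and the $\mu_n$ are pairwise distinct, $m_n=n$ for all $n$; combined with the stabiliser $\Delta(\Lambda_n)$ of $\pi_e(A_0^{(n)})$ and Assumption~\ref{four} in~\ref{my-assum}, this forces $\delta_i(\Lambda_n)\prec_{\Gamma}\Lambda_n$ for every $n$. Assumption~\ref{five} in~\ref{my-assum} then applies and gives $\delta_i(k)=g_ikg_i^{-1}$ for a fixed $g_i\in\Gamma$; in the flipped case this would force a finite-index subgroup of $g_i\Gamma_1 g_i^{-1}=\Gamma_1$ to lie inside the proper subgroup $\Gamma_0$, which is impossible (this is the manifestation of the absence of a flip for $M(\Gamma_0,\Gamma,\al)$ when $\Gamma_0$ is proper, and is why $\Gamma_0\subsetneq\Gamma_1$ is imposed in Assumption~\ref{one} in~\ref{my-assum}; cf.\ Remark~\ref{rem.finite-index-subtleness}). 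Hence only straight summands with $\delta_i$ inner occur.

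With $\delta_i$ globally inner, one absorbs the conjugation by $u_{g_i}$ and removes the passage to proper finite-index subgroups, so that $\delta_i=\id$ on $\Gamma_0\times\Gamma_1$ and $\gamma_i$ is a finite-dimensional representation of $\Gamma_0\times\Gamma_1$, hence trivial by Assumption~\ref{two} in~\ref{my-assum}. Then $\theta_i$ is the identity on $L(\Gamma_0\times\Gamma_1)$ and sends each block $\pi_k(A_0^{(n)})$ into itself, so a direct computation with the equivariance relations — invoking Assumption~\ref{four} in~\ref{my-assum} once more, in the form that $\Lambda_n$ acts ergodically on the non-identity $\Gamma_0/\Lambda_n$-coordinates, together with $\mu_n(0)\neq\mu_n(1)$ to rule out the coordinate flip — forces $\theta_i$ to be the identity on all of $M_1$. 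Therefore $\theta$ is unitarily conjugate to $M_1\recht M_d(\C)\ot M:a\mapsto 1\ot a$, as claimed. The routine ingredients are the localisation (imported from Lemma~\ref{lem.step1}) and the final equivariance bookkeeping; the main obstacle is the middle step — establishing the block-matching and, with it, $\delta_i(\Lambda_n)\prec_\Gamma\Lambda_n$ — which is the only place where the distinctness of the $\mu_n$ and the independence Assumptions~\ref{three}--\ref{four} in~\ref{my-assum} must be combined, and it is precisely what makes the endomorphism rigidity of Assumption~\ref{five} in~\ref{my-assum} applicable; a secondary but essential point is the exclusion of flipped summands via the properness $\Gamma_0\subsetneq\Gamma_1$.
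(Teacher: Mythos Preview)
Your overall architecture is right, but the middle step has a genuine gap: you invoke ``the argument of Lemma~\ref{lem.step2}'' to get block-to-block localisation, and that argument hinges on $\Ker\be$ being nontrivial. In the present setting the relevant action is $\al:\Gamma_0\actson A_0$, and its kernel is $\bigcap_{n,h}h\Lambda_n h^{-1}$; by Assumption~\ref{four} in~\ref{my-assum} each normal core of $\Lambda_n$ in the icc group $\Gamma_0$ is trivial, so $\Ker\al=\{e\}$. Thus Lemma~\ref{lem.step2} (and the extension trick from Proposition~\ref{prop.all-embeddings}, which rests on the same localisation) simply does not apply here. This is not a technicality: it is exactly the reason the paper does \emph{not} route the proof through Lemma~\ref{lem.step2}.

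The paper's replacement for that step is a direct block-stabiliser argument, and you should reorganise accordingly. First, use Assumption~\ref{two} \emph{immediately} after Lemma~\ref{lem.step1}: since $\Gamma_0,\Gamma_1$ have no nontrivial finite-dimensional unitary representations they have no proper finite-index subgroups, so $\Gamma_0^\circ=\Gamma_0$, $\Gamma_1^\circ=\Gamma_1$ and each $\gamma_i$ is trivial from the outset (this also removes the need for any extension argument). Second, restrict $\theta$ to $M_0=(A_0)^{\Gamma_1}\rtimes(\Gamma_0\times\Gamma_1)$ so that Lemma~\ref{lem.step1} applies verbatim. Third, for each $n$ use that the single two-point block $A_n(e,e)$ commutes with $u_{\Delta(\Lambda_n)}$: if $\pi_i(\Delta(\Lambda_n))\not\prec_{\Gamma_0\times\Gamma}\Delta(\Lambda_m)$ for every $m$, mixing of the generalised Bernoulli action forces $\theta_i(A_n(e,e))\subset M_{n_i}(\C)\ot 1$, hence (translating) a diffuse algebra lands in a matrix algebra, a contradiction. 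Assumptions~\ref{three},~\ref{four} then pin down a single target block $A_m(g,h)$; commutation with a translate $A_n(k,e)$ forces the $M_{n_i}(\C)$-slice to be abelian; cutting by a minimal projection and comparing traces gives $m=n$ and the identity map on the two-point fibre. This is what yields $\pi_i(\Delta(\Lambda_n))\prec\Delta(\Lambda_n)$ and hence $\delta_i(\Lambda_n)\prec_\Gamma\Lambda_n$, after which your use of Assumption~\ref{five} and the flip exclusion via $\Gamma_0\subsetneq\Gamma_1$ are correct. Finally, extend from $M_0$ to $M_1$ using normality of $\Gamma_1$ exactly as the paper does in its last paragraph.
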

\begin{proof}
Before starting the actual proof of the lemma, we deduce a few other group theoretic properties from the assumptions in \ref{my-assum}. For convenience, we continue the numbering of \ref{my-assum}.
\begin{enumlist}[start=6]
\item\label{six} If $\delta : \Gamma_0 \recht \Gamma_0$ is an injective group homomorphism and $\delta(\Lambda_n) \prec_{\Gamma_0} \Lambda_n$ for all $n$, then $\delta$ is conjugate to the identity. Indeed, by assumption \ref{five}, we find $g \in \Gamma$ such that $\delta(k) = g k g^{-1}$ for all $k \in \Gamma_0$. We have to prove that $g \in \Gamma_0$. Fix an $n \in \N$. Take $h \in \Gamma_0$ such that a finite index subgroup of $h \delta(\Lambda_n) h^{-1}$ is contained in $\Lambda_n$. So, $hg$ conjugates a finite index subgroup of $\Lambda_n$ into $\Lambda_n$. By assumption \ref{four}, we have that $hg \in \Lambda_n < \Gamma_0$. Thus $g \in \Gamma_0$.
\item\label{seven} There is no injective group homomorphism $\delta : \Gamma_1 \recht \Gamma_0$ such that $\delta(\Lambda_n) \prec_{\Gamma_0} \Lambda_n$ for all $n$. Indeed, by the previous point, the restriction of $\delta$ to $\Gamma_0$ would be an inner automorphism of $\Gamma_0$ and, in particular, surjective. Since $\Gamma_0 < \Gamma_1$ is a proper subgroup, this is in contradiction with the injectivity of $\delta : \Gamma_1 \recht \Gamma_0$.
\end{enumlist}

For each $n \in \N$ and $(g,h) \in \Gamma_0 \times \Gamma$, we denote by $A_n(g,h) \cong \C^2$ the algebra $L^\infty(\{0,1\},\mu_n) \subset L^\infty(X,\mu)$ viewed in coordinate $(g,h)\Delta(\Lambda_n) \in (\Gamma_0 \times \Gamma)/\Delta(\Lambda_n)$. Defining $(A_0,\tau)$ as the von Neumann algebra generated by $A_n(g,g)$ for all $n \in \N$ and $g \in \Gamma_0$, we have the natural action $\Gamma_0 \actson^\al (A_0,\tau)$ implemented by the diagonal embedding $\Delta : \Gamma_0 \recht \Gamma_0 \times \Gamma_0$. By definition, and using the notation \eqref{eq.our-factors}, we have that $M = M(\Gamma_0,\Gamma,\al)$. Writing $M_0 = M(\Gamma_0,\Gamma_1,\al)$, we canonically have $M_0 \subseteq M_1$.

Denote $A = L^\infty(X,\mu)$ and define $B \subseteq A$ as the von Neumann subalgebra generated by $A_n(g,h)$, $n \in \N$, $(g,h) \in \Gamma_0 \times \Gamma_1$. Note that $M_0 = B \rtimes (\Gamma_0 \times \Gamma_1)$, while $M_1 = A \rtimes (\Gamma_0 \times \Gamma_1)$.

Denote by $\theta_0$ the restriction of $\theta$ to $M_0$, so that $\theta_0 : M_0 \recht M^d$ is an embedding that fits into the setting of Lemma \ref{lem.step1}. We prove that $d \in \N$ and that after a unitary conjugacy $\theta_0(u_{(g,h)}) = u_{(g,h)}$ for all $(g,h) \in \Gamma_0 \times \Gamma_1$. Because $\Gamma_0$ and $\Gamma_1$ have no nontrivial finite dimensional unitary representations, it follows from Lemma \ref{lem.step1} that $d \in \N$ and that, after a unitary conjugacy, $\theta_0$ is a direct sum of finitely many embeddings $\theta_i : M_0 \recht M_{d_i}(\C) \ot M$ satisfying
$$\theta_i(B) \subset M_{d_i}(\C) \ot A \quad\text{and}\quad \theta_i(u_{(g,h)}) = 1 \ot u_{\pi_i(g,h)} \quad\text{for all $(g,h) \in \Gamma_0 \times \Gamma_1$,}$$
where $\pi_i : \Gamma_0 \times \Gamma_1 \recht \Gamma_0 \times \Gamma$ is an injective group homomorphism that is either of the form $\pi_i(g,h) = (\eta_i(g),\delta_i(h))$ or of the form $\pi_i(g,h) = (\delta_i(h),\eta_i(g))$.

We analyze each $\theta_i$ separately and thus drop the index $i$. Fix $n \in \N$. We prove that $\pi(\Delta(\Lambda_n)) \prec_{\Gamma_0 \times \Gamma} \Delta(\Lambda_n)$. We first prove that $\pi(\Delta(\Lambda_n)) \prec_{\Gamma_0 \times \Gamma} \Delta(\Lambda_m)$ for some $m \in \N$. Indeed, if this is not the case, since $A_n(e,e) \subset B$ commutes with $L(\Delta(\Lambda_n))$, it would follow that $\theta(A_n(e,e)) \subseteq M_d(\C) \ot 1$ and thus $\theta(A_n(g,h)) \subseteq M_d(\C) \ot 1$ for all $(g,h) \in \Gamma_0 \times \Gamma_1$, which is absurd.

Fix $m \in \N$ such that $\pi(\Delta(\Lambda_n)) \prec_{\Gamma_0 \times \Gamma} \Delta(\Lambda_m)$. We now use assumptions \ref{three} and \ref{four}, and the fact that every nontrivial element of $\Gamma$ has an amenable centralizer. Since $A_n(e,e)$ commutes with $L(\Delta(\Lambda_n))$, it follows that there exists $(g,h) \in \Gamma_0 \times \Gamma$ such that $\theta(A_n(e,e)) \subseteq M_d(\C) \ot A_m(g,h)$. Denote by $\pi_{m,(g,h)} : \C^2 \recht A_m(g,h)$ the canonical isomorphism. Define the von Neumann subalgebra $D \subseteq M_d(\C) \ot \C^2$ such that $\theta(A_n(e,e)) = (\id \ot \pi_{m,(g,h)})(D)$. Let $k \in \Gamma_0 \setminus \{e\}$ be any element. We have $\theta(A_n(k,e)) = (\id \ot \pi_{m,\pi(k,e)(g,h)})(D)$. Note that $\theta(A_n(k,e))$ commutes with $\theta(A_n(e,e))$. Also note that given the special form of $\pi$, we must have $\pi(k,e)(g,h)\Delta(\Lambda_m) \neq (g,h)\Delta(\Lambda_m)$. The commutation thus forces $D \subseteq D_0 \ot \C^2$, where $D_0 \subseteq M_d(\C)$ is an abelian von Neumann subalgebra.

Defining $C_n$ as the von Neumann algebra generated by $A_n(g,h)$, $(g,h) \in \Gamma_0 \times \Gamma_1$ and $N_n = C_n \rtimes (\Gamma_0 \times \Gamma_1)$, we have proven that $\theta(N_n) \subseteq D_0 \ot M$. Let $q \in D_0$ be any minimal projection. Since $N_n$ and $M$ are II$_1$ factors, the map $N_n \recht q \ot M : a \mapsto \theta(a) (q \ot 1)$ is (normalized) trace preserving. Therefore, $a \mapsto \theta(a)(q \ot 1)$ induces a trace preserving embedding of $A_n(e,e)$ into $A_m(g,h)$. When $n \neq m$, we have that $\mu_n(0)$ and $\mu_m(0)$ are distinct elements of $(0,1/2)$ and such an embedding does not exist. If $n = m$, because $\mu_n(0) < 1/2$, such an embedding must be the canonical ``identity'' map. We have thus proven that $n=m$ and that
$$\theta(\pi_{n,(e,e)}(a))(q \ot 1) = q \ot \pi_{n,(g,h)}(a)$$
for all $a \in \C^2$ and all minimal projections $q \in D_0$. Writing $1 \in D_0$ as a sum of minimal projections, we conclude that $\theta(\pi_{n,(e,e)}(a)) = 1 \ot \pi_{n,(g,h)}(a)$ for all $a \in \C^2$.

We now return to the full setting where we had decomposed $\theta_0$ as a direct sum of embeddings $\theta_i$. We have in particular shown that
$$\pi_i(\Delta(\Lambda_n)) \prec_{\Gamma_0 \times \Gamma} \Delta(\Lambda_n)$$
for all $i$ and all $n$. If $\pi_i(g,h) = (\delta_i(h),\eta_i(g))$, we get that $\delta_i : \Gamma_1 \recht \Gamma_0$ is an injective group homomorphism satisfying $\delta_i(\Lambda_n) \prec_{\Gamma_0} \Lambda_n$ for all $n$. By property \ref{seven}, this is impossible. Thus, $\pi_i(g,h) = (\eta_i(g),\delta_i(h))$. We find that $\eta_i : \Gamma_0 \recht \Gamma_0$ is an injective group homomorphism satisfying $\eta_i(\Lambda_n) \prec_{\Gamma_0} \Lambda_n$ for all $n$, while $\delta_i : \Gamma_1 \recht \Gamma$ is an injective group homomorphism satisfying $\delta_i(\Lambda_n) \prec_{\Gamma} \Lambda_n$ for all $n$. Using property \ref{six} and assumption \ref{five}, we get after a unitary conjugacy that $\pi_i(g,h) = (g,h)$ for all $(g,h) \in \Gamma_0 \times \Gamma_1$.

We finally return to the initial embedding $\theta : M_1 \recht M^d$. We have proven that $d \in \N$ and that, after a unitary conjugacy, $\theta(u_{(g,h)}) = 1 \ot u_{(g,h)}$ for all $(g,h) \in \Gamma_0 \times \Gamma_1$. Fix $n \in \N$ and $(a,b) \in \Gamma_0 \times \Gamma$. Since $\Gamma_1 \lhd \Gamma$ is normal, we have that $(a,b)\Delta(\Lambda_n)(a,b)^{-1}$ is a nonamenable subgroup of $\Gamma_0 \times \Gamma_1$. This subgroup acts trivially on $A_n(a,b)$. Analyzing commutants as above, it follows that $\theta(A_n(a,b)) \subseteq M_d(\C) \ot A_n(a,b)$. Making the same reasoning as above, we get that
$$\theta(\pi_{n,(a,b)}(x)) = 1 \ot \pi_{n,(a,b)}(x)$$
for all $x \in \C^2$. So we have shown that $\theta(x) = 1  \ot x$ for all $x \in M_1$.
\end{proof}

We start by proving the following well known result.

\begin{lemma}\label{lem.long-words}
Let $S \subg G \ast K$ be a subgroup of a free product group. If the word length function is bounded on $S$, then $S$ is conjugate to a subgroup of $G$ or $K$.
\end{lemma}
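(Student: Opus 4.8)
The plan is to exploit the action of $\Gamma := G \ast K$ on its Bass--Serre tree $T$, whose vertices are the cosets $\gamma G$ and $\gamma K$ with $\gamma \in \Gamma$, and in which $\gamma G$ is joined to $\gamma K$ by an edge for each $\gamma$. The left translation action of $\Gamma$ is simplicial, the stabiliser of $\gamma G$ (resp.\ $\gamma K$) equals $\gamma G \gamma^{-1}$ (resp.\ $\gamma K \gamma^{-1}$), and all edge stabilisers are trivial. First I would record that the orbit map based at $v_0 = eG$ satisfies $d_T(v_0, w v_0) \le 2\,|w|$, where $|w|$ is the word length (number of syllables): writing a reduced word as $w = s_1 \cdots s_n$ and telescoping along the vertices $s_1 \cdots s_j\, v_0$, one has $d_T(s_1\cdots s_{j-1} v_0,\, s_1 \cdots s_j v_0) = d_T(v_0, s_j v_0) \le 2$, because a syllable from $G$ fixes $v_0$ while a syllable from $K$ moves it to a vertex at distance $2$. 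Consequently, if $|\cdot|$ is bounded on $S$ by some $L$, then the orbit $S\,v_0$ is contained in the ball of radius $2L$ around $v_0$.

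Next I would invoke the classical fact that a group with a bounded orbit on a tree fixes a point of $T$ --- a vertex or the midpoint of an edge (e.g.\ the circumcentre of the bounded orbit is such a fixed point; equivalently, a bounded orbit precludes hyperbolic elements, so Serre's structure theory yields a fixed vertex or an inverted edge). The only mildly delicate step is to discard the inverted-edge possibility, and here one uses that the $\Gamma$-action preserves the bipartition of the vertex set of $T$ into $G$-type and $K$-type vertices: no element of $\Gamma$ can swap the two endpoints of an edge, so $S$ cannot fix an edge midpoint without fixing both of its endpoints. Hence $S$ fixes some vertex $\gamma G$ or $\gamma K$, which means $S \subseteq \gamma G \gamma^{-1}$ or $S \subseteq \gamma K \gamma^{-1}$; that is, $S$ is conjugate to a subgroup of $G$ or of $K$.

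A tree-free alternative would combine normal forms with the Kurosh subgroup theorem: if $S$ is conjugate into neither $G$ nor $K$, then its Kurosh decomposition has either a nontrivial free factor or at least two free factors, so $S$ contains an element conjugate to a cyclically reduced word $w$ with at least two syllables; then $w$ has infinite order and $|w^m| = m\,|w| \to \infty$, contradicting boundedness. I would keep the tree argument as the main proof, since it is short and self-contained apart from the standard bounded-orbit fixed-point statement, which is therefore the only ingredient one might wish to cite rather than reprove.
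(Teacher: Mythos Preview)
Your proof is correct, and it takes a genuinely different route from the paper's. The paper gives a completely elementary, self-contained argument with reduced words: first it shows that if some $s \in S$ is conjugate to a cyclically reduced word of length $\ge 2$ then $|s^n| \to \infty$; otherwise every element of $S$ is conjugate into $G$ or $K$, and after conjugating $S$ so that it meets $G$ (say) nontrivially, one checks by hand that any $r \in S \setminus G$ multiplied by a suitable $s \in S \cap G$ produces an element of the first type. No external results are invoked beyond the normal form in a free product.

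Your Bass--Serre argument is cleaner and more conceptual: the orbit bound $d_T(v_0, w v_0) \le 2|w|$ and the bipartite structure ruling out edge inversions are both immediate, and the circumcentre fixed-point lemma for bounded orbits in trees is standard. The trade-off is that you are importing that fixed-point statement rather than arguing from scratch, whereas the paper's proof needs nothing. Your Kurosh alternative is also valid (and in spirit closer to the paper's first case), though the assertion that a nontrivial Kurosh decomposition forces an element conjugate to a cyclically reduced word of length $\ge 2$ deserves a line of justification --- e.g.\ that two elliptic elements with distinct fixed vertices in $T$ have hyperbolic product. Either of your arguments would be perfectly acceptable here; the paper simply opts for the most hands-on one since the lemma is used only as a quick tool.
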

\begin{proof}
We may assume that $S \neq \{e\}$. We distinguish two cases. First assume that $S$ contains an element $s \in S$ that is conjugate to a cyclically reduced word of length at least two, i.e.\ $s = w g w^{-1}$. Then the word length of $s^n = w g^n w^{-1}$ tends to infinity, because $|g^n| = n \, |g|$.

If we are not in the first case, every element of $S$ is conjugate to an element of $G$ or $K$. Conjugating $S$, we may assume that $S$ contains a nontrivial element of $G$ or $K$. By symmetry, we may assume that $S \cap G \neq \{e\}$ and take $s \in S \cap G$, $s \neq e$. If $S \subg G$, the lemma is proven. Otherwise, we find $r \in S \setminus G$. Write $r = w g w^{-1}$ with $g \in G \cup K$, $g \neq e$, and with the concatenation $w g w^{-1}$ being a reduced word. If the first letter of $w$ belongs to $G$, write $w = w_0 w_1$ with $w_0 \in S$ being this first letter. Otherwise, take $w_0=e$ and $w_1 = w$. Consider the element $sr \in S$. Then, $sr = w_0 ((w_0^{-1} s w_0) w_1 g w_1^{-1}) w_0^{-1}$ and $(w_0^{-1} s w_0) w_1 g w_1^{-1}$ is a cyclically reduced word. We are thus back in the first case and the lemma is proven.
\end{proof}

\begin{lemma}\label{lem.playing-with-words}
Let $\Gamma = G \ast K$ be a free product group. Assume that $a_1 \in G \cup K$ is an element of order $2$ in either $G$ or $K$. Assume that $b_1 \in G \cup K$ is an element of order $3$ in either $G$ or $K$. Let $w \in G \ast K$ be such that $a_1$ and $w b_1 w^{-1}$ are free and denote by $\Lambda_1 \subg G \ast K$ the subgroup generated by these two elements.
\begin{enumlist}
\item Let $a \in G$ be of order $2$, $b \in K$ of order $3$ and write $\Lambda = \langle a \rangle \ast \langle b \rangle$.

If $z \in \Gamma \setminus \Lambda$, then $z \Lambda z^{-1} \cap \Lambda$ is finite.

If there exists a $z \in \Gamma$ such that $z \Lambda_1 z^{-1} \cap \Lambda$ is infinite, then $a_1 \in G$, $b_1 \in K$ and $w$ can be written as $w = u^{-1} w_0 v$ where $u \in G$, $v \in K$ and $w_0 \in \Lambda$ such that $u a_1 u^{-1} = a$, $v b_1 v^{-1} = b^{\pm 1}$ and such that $w_0 \in \langle a \rangle \ast \langle b \rangle$ is either equal to $e$ or starting with the letter $b^{\pm 1}$ and ending with the letter $a$.

\item Let $a \in G$ be of order $2$ and $b \in G$ of order $3$. Let $k \in K \setminus \{e\}$ and write $\Lambda = \langle a \rangle \ast k \langle b \rangle k^{-1}$.

If $z \in \Gamma \setminus \Lambda$, then $z \Lambda z^{-1} \cap \Lambda$ is finite.

If there exists a $z \in \Gamma$ such that $z \Lambda_1 z^{-1} \cap \Lambda$ is infinite, then $a_1,b_1 \in G$ and $w$ can be written as $w = u^{-1} w_1 k v$ where $u,v \in G$ and $w_1 \in \Lambda$ such that $u a_1 u^{-1} = a$, $v b_1 v^{-1} = b^{\pm 1}$ and such that $w_1 \in \langle a \rangle \ast \langle kbk^{-1} \rangle$ is either equal to $e$ or starting with the letter $k b^{\pm 1}k^{-1}$ and ending with the letter $a$.
\end{enumlist}
\end{lemma}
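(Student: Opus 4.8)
The plan is to argue entirely with reduced words in the free product $\Gamma = G \ast K$, using two classical facts: the cyclic reduction / normal form theorem, and the conjugacy theorem (two $\Gamma$-cyclically reduced words of syllable length $\geq 2$ are conjugate in $\Gamma$ iff one is a cyclic permutation of the other, and then the conjugating element lies in (an initial segment of the word)$\cdot C_\Gamma(h)$, where $C_\Gamma(h)$ is infinite cyclic, generated by the primitive root of $h$). The starting observation is that the subgroups $\Lambda$ in both statements sit ``geodesically'' in $\Gamma$: in (1), since $a \in G$ and $b^{\pm 1} \in K$, the $\Lambda$-normal form $a\,b^{\pm1}\,a\,b^{\pm1}\cdots$ of an element of $\Lambda = \langle a\rangle \ast \langle b\rangle$ is already $\Gamma$-reduced; in (2) the same holds with the syllable $b^{\pm1}$ replaced by the $\Gamma$-reduced length-$3$ word $k b^{\pm1} k^{-1}$. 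Consequently a cyclically $\Lambda$-reduced element of $\Lambda$ is $\Gamma$-cyclically reduced, its primitive root in $\Gamma$ and every initial segment of it lie in $\Lambda$, and an infinite subgroup of $\Lambda$ (which is virtually free, being $\cong \Z/2\Z \ast \Z/3\Z$) contains an element of infinite order.

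\textbf{The ``almost malnormality'' assertions} (first sentences of (1) and (2)). Suppose $z\Lambda z^{-1} \cap \Lambda$ is infinite and pick $h$ of infinite order in it. Conjugating $h$ inside $\Lambda$, we may assume $h$ is cyclically $\Lambda$-reduced, hence $\Gamma$-cyclically reduced with all syllables among $a, b^{\pm1}$ (resp.\ $a, kb^{\pm1}k^{-1}$). Write $h = z g z^{-1}$ with $g \in \Lambda$ and replace $g$ by a $\Gamma$-cyclically reduced conjugate $g_0 = s^{-1} g s$, $s \in \Lambda$. Then $h$ and $g_0$ are $\Gamma$-cyclically reduced and conjugate via $zs$, so by the conjugacy theorem $g_0$ is a cyclic permutation of $h$ and $zs$ lies in (an initial segment of $h$)$\cdot C_\Gamma(h)$; both factors lie in $\Lambda$ by the geodesic remark, and since $s \in \Lambda$ we conclude $z \in \Lambda$ — the contrapositive of the claim.

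\textbf{The structural conclusions} (second halves of (1) and (2)). Suppose $z\Lambda_1 z^{-1}\cap\Lambda$ is infinite, $\Lambda_1 = \langle a_1\rangle \ast \langle wb_1 w^{-1}\rangle$; pick $h$ of infinite order in it, $\Gamma$-cyclically reduced with syllables among $a, b^{\pm1}$ (resp.\ $a, kb^{\pm1}k^{-1}$), and write $h = z g z^{-1}$ with $g\in\Lambda_1$ of infinite order. The key step is to establish a $\Gamma$-normal form for elements of $\Lambda_1$: writing the reduced form of $w$ and reducing the junction words $w^{-1}a_1 w$, $a_1 w$, $w^{-1}a_1$, $w b_1^{\pm1}$, $b_1^{\pm1} w^{-1}$ (and using $a_1^2=e$, $b_1^3=e$ to account for possible extra collapsing), one shows that a cyclically $\Lambda_1$-reduced element $g_0$ is $\Gamma$-cyclically reduced, of the explicit shape obtained by concatenating the reduced forms of the $a_1$'s and of the blocks $wb_1^{\pm1}w^{-1}$. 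Then $h$ and a $\Gamma$-cyclically reduced conjugate of $g_0$ are $\Gamma$-cyclically reduced and conjugate, hence cyclic permutations of one another; matching syllables forces the reduced form of $g_0$ to alternate between the single $G$-syllable $a$ and $K$-syllables equal to $b^{\pm1}$ (resp.\ to alternate $a$ with the words $kb^{\pm1}k^{-1}$). This is possible only if $a_1 \in G$ is conjugate to $a$ by an element $u \in G$ and the block $wb_1^{\pm1}w^{-1}$ collapses to one $K$-syllable conjugate to $b^{\pm1}$ by an element $v \in K$ (resp.\ to $kb^{\pm1}k^{-1}$, which additionally forces $b_1 \in G$); splitting off from the reduced form of $w$ its maximal initial $G$-segment (which becomes $u^{-1}$) and maximal terminal $K$-segment (which becomes $v$, resp.\ the terminal piece $kv$), the remaining middle word $w_0$ (resp.\ $w_1$) is then forced to lie in $\Lambda$ and to be either trivial or to begin with the $b^{\pm1}$-letter (resp.\ $kb^{\pm1}k^{-1}$) and end with the $a$-letter — exactly the asserted factorisation.

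\textbf{Main obstacle.} The hard part is precisely this last combinatorial bookkeeping in the structural step: obtaining a clean $\Gamma$-normal form for $\Lambda_1$ by controlling all cancellations at the $w^{-1}a_1 w$ junctions (compounded by the order-$2$ and order-$3$ collapsing), and then extracting the precise factorisation $w = u^{-1}w_0 v$ (resp.\ $w = u^{-1}w_1 k v$) together with the start/end conditions on $w_0$, $w_1$ from the ``cyclic permutation'' comparison. The ingredients used — virtual freeness of $\Z/2\Z\ast\Z/3\Z$, the conjugacy theorem for free products, and the geodesic embedding of $\Lambda$ — are all standard.
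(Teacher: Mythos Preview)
Your approach is essentially the paper's --- reduce to $\Gamma$-cyclically reduced words and match syllables via the conjugacy theorem for free products --- and it works cleanly for setting~1. But there is a genuine gap in setting~2. After normalizing $w = u^{-1}w_0 v$ (this is the paper's tidier version of your ``junction reduction'') and forming the cyclically $\Gamma$-reduced word $x = a_2\,w_0 b_2^{\pm1} w_0^{-1}\,a_2\cdots$, you assert that syllable-matching with a $\Lambda$-word forces $a_2 = a$. In setting~2, however, the $\Gamma$-syllables appearing in cyclically reduced $\Lambda$-words are $a$, $k^{\pm1}$, $b^{\pm1}$, so the order-$2$ letter $a_2$ could a priori equal $k$ whenever $k$ itself has order~$2$ (and then $a_1 \in K$, not $G$). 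Ruling this out is the hardest part of the proof and is not a formality: one has to use the finer structure of $\Lambda$-words --- that every $b^{\pm1}$ is flanked by $k$, that the letter $k$ appears exactly twice as often as $b^{\pm1}$, and that between two consecutive occurrences of $a$ sits a full element of $\Lambda$ --- and compare this against the explicit shape of $x$ to reach a contradiction. Your sketch (``this is possible only if $a_1 \in G$ is conjugate to $a$\ldots which additionally forces $b_1 \in G$'') passes over this case entirely.

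Two smaller inaccuracies, both also specific to setting~2. First, not \emph{every} $\Gamma$-initial segment of a cyclically $\Lambda$-reduced word lies in $\Lambda$ (for instance $ak \notin \Lambda$); what is true, and what your malnormality argument actually needs, is that those initial segments $h_1$ for which the cyclic permutation $h_2 h_1$ again lies in $\Lambda$ are in $\Lambda$, and likewise for the primitive root. Second, the block $wb_1^{\pm1}w^{-1}$ does \emph{not} ``collapse to one $K$-syllable'' when $w_0 \neq e$: it becomes $u^{-1} w_0\, b_2^{\pm1}\, w_0^{-1} u$, of length $>1$; only the central letter $b_2 = vb_1v^{-1}$ is a single syllable.
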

\begin{proof}
Throughout the proof of this lemma, we use the following property: if two cyclically reduced words in $G \ast K$ are conjugate as elements of $G \ast K$, then they are cyclic permutations of each other. It follows in particular that in both settings 1 and 2, $z G z^{-1} \cap \Lambda$ and $z K z^{-1} \cap \Lambda$ are finite groups for all $z \in \Gamma$.

For the following reason, we have in both settings 1 and 2 that $z \in \Lambda$ whenever $z \Lambda z^{-1} \cap \Lambda$ is infinite. Indeed, if $z \Lambda z^{-1} \cap \Lambda$ is infinite, by Lemma \ref{lem.long-words}, we can take an arbitrarily long word $x \in \Lambda$ such that $z x z^{-1} \in \Lambda$. When the word length of $x$ is large enough, the middle letters of $x$ remain untouched when reducing $z x z^{-1}$. Since this reduction must belong to $\Lambda$, we align these middle letters with the canonical reduced words defining elements of $\Lambda$ and conclude that $z \in \Lambda$.

To prove the second part of statements 1 and 2, write $L = G$ or $L = K$ so that $a_1 \in L$. Denote by $L'$ the other group. Similarly denote $R = G$ or $R = K$ so that $b_1 \in R$ and $R'$ is the other group. Assume that an infinite subgroup of $\Lambda_1$ can be conjugated into $\Lambda$. The first paragraph of the proof now implies that $\Lambda_1 \not\subseteq G$ and $\Lambda_1 \not\subseteq K$. So if $L = R$, we must have $w \not\in L$.

Uniquely write $w = u^{-1} w_0 v$ with $u \in L$, $v \in R$ and with $w_0$ either equal to $e$ (which can only occur if $L \neq R$), or $w_0$ being a reduced word that starts with a letter in $L' \setminus \{e\}$ and ends with a letter from $R' \setminus \{e\}$. Put $a_2 = u a_1 u^{-1}$ and $b_2 = v b_1 v^{-1}$. So, an infinite subgroup of $\langle a_2 \rangle \ast \langle w_0 b_2 w_0^{-1} \rangle$ can be conjugated into $\Lambda$. By the second paragraph of the proof, we can take an element of the form
\begin{equation}\label{eq.long-word-x}
x = a_2 \, w_0 b_2^{\pm 1} w_0^{-1} \, a_2 \, \cdots \, w_0 b_2^{\pm 1} w_0^{-1}
\end{equation}
having arbitrarily large length such that $x$ can be conjugated into $\Lambda$.

In setting~1, with $\Lambda = \langle a \rangle \ast \langle b \rangle$ and $a \in G$, $b \in K$, it follows that all letters appearing in $x$ must be either $a$ or $b^{\pm 1}$. Since $a_2$, $a$ have order $2$, while $b_2$, $b$ have order $3$, we must have $a_2 = a$, $b_2 = b^{\pm 1}$ and $w_0 \in \Lambda$. In particular, $L = G$ and $R = K$. So all conclusions of 1 indeed hold.

In setting~2, with $\Lambda = \langle a \rangle \ast \langle k b k^{-1} \rangle$ and $a,b \in G$, $k \in K \setminus \{e\}$, we make the following two observations: any cyclically reduced word in $a$ and $kb^{\pm 1} k^{-1}$ only contains the letters $a$, $k^{\pm 1}$, $b^{\pm 1}$ and has the property that between any two occurrences of the letter $a$, there sits a word that belongs to $\Lambda$. We know that the word $x$ in \eqref{eq.long-word-x} can be cyclically permuted to such a cyclically reduced word in $a$ and $kb^{\pm 1} k^{-1}$. So, only the letters $a$, $k^{\pm 1}$, $b^{\pm 1}$ appear in the word $x$. Since $a_2$ has order $2$, while $b$ has order $3$, we must have that $a_2 \in \{a,k, k^{-1}\}$. Similarly, $b_2 \in \{b,b^{-1},k,k^{-1}\}$.

First assume that $a_2 = a$. By the previous paragraph, between any two occurrences of $a_2$ in $x$, there sits a word that belongs to $\Lambda$. It follows that $w_0 b_2 w_0^{-1} \in \Lambda$. The only elements in $\Lambda$ of order $3$ are conjugate (inside $\Lambda$) to $k b^{\pm 1} k^{-1}$. It follows that $b_2 = b^{\pm 1}$ and $w_0 = w_1 k$ with $w_1 \in \Lambda$. So all conclusions of 2 hold in this case.

Next assume that $a_2 \neq a$, which should lead to a contradiction. Then $a_2 \in \{k,k^{-1}\}$. In particular, $k$ has order $2$. Since $b_2 \in \{b,b^{-1},k,k^{-1}\}$ and $b_2$ has order $3$, it follows that $b_2 = b^{\pm 1}$.

So, $k$ has order $2$ and $a_2 = k$. In any cyclically reduced word in $a$ and $kb^{\pm 1} k$, the letter $k$ appears twice as much as the letters $b^{\pm 1}$, and every occurrence of $b^{\pm 1}$ is preceded and followed by $k$. Comparing with the description of $x$ in \eqref{eq.long-word-x}, we must have that $k$ appears in $w_0$ and that thus, $w_0$ ends with the letter $k$. Write $w_0 = w_1 k$. Since $a_2 = k$, we have that $L = K$, so that $w_0$ must start with a letter in $G \setminus \{e\}$. It follows that $w_1 \neq e$ and that $w_1$ starts and ends with one of the letters $a,b,b^{-1}$. We have written the word $x$ in \eqref{eq.long-word-x} as
$$x = (k \, w_1 \, kb^{\pm 1}k \, w_1^{-1})^n \quad\text{for some $n \in \N$.}$$
We know that this word can be cyclically permuted so as to become a cyclically reduced word in $a$ and $kb^{\pm 1} k$. Reading $x$ from left to right, the only possibilities are $w_1 \in b^{\pm 1} k \Lambda$ and $w_1 \in \Lambda$. In the first case, $x$ is a reduced word in $kb^{\pm 1} k$ and $a$ followed by the ``remainder'' $k b^{\mp 1}$. In the second case $kxk$ is a reduced word in $a$ and $kb^{\pm 1} k$ followed by the ``remainder'' $k$. In both cases, we reached the required contradiction.
\end{proof}

\begin{lemma}\label{lem.elem-permutation}
Let $G$ be the group of finite even permutations of $\N$. Denote by $G_1$ the subgroup of finite even permutations of $\N$ that fix $1 \in \N$. Denote by $\cA \subset G_1$ the elements of order $2$ and denote by $\cB \subset G_1$ the elements of order $3$.

Let $\delta$ be a group homomorphism from either $G_1$ or $G$ to $G$. If $\delta(a) = a$ for every $a \in \cA$, then $\delta$ is the identity homomorphism. If $\delta(b) = b^{\pm 1}$ for every $b \in \cB$,  then $\delta$ is the identity homomorphism.
\end{lemma}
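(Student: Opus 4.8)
The plan is to reduce the whole statement to the single assertion that $\delta$ restricts to the identity on $G_1$, and then, in the case where $\delta$ is defined on all of $G$, to bootstrap from $G_1$ to $G$ by a centralizer argument. Since $\cA$ and $\cB$ are subsets of $G_1$, both hypotheses only constrain $\delta$ on $G_1$, so the core task is to prove $\delta|_{G_1} = \id$; this already settles the lemma when $\delta$ is a homomorphism defined on $G_1$.

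For the hypothesis on $\cA$, I would note that $\cA$ contains every element $(ij)(kl)$ that is a product of two disjoint transpositions of $\N \setminus \{1\}$, and that these generate $G_1 \cong A_\infty$ (being a nonempty, conjugation-invariant subset of each simple group $A_n$, $n \geq 5$, and hence generating it); so $\delta|_{\cA} = \id$ forces $\delta|_{G_1} = \id$. For the hypothesis on $\cB$ the sign ambiguity must be eliminated, and the key device is the factorization $(pqr) = (pqs)(qrs)$, valid for any four distinct points $p,q,r,s \in \N \setminus \{1\}$. Setting $b = (pqr)$, $b_1 = (pqs)$, $b_2 = (qrs)$, a short computation gives $b_1 b_2 = (pqr)$, $b_1 b_2^{-1} = (pq)(rs)$, $b_1^{-1} b_2 = (ps)(qr)$, $b_1^{-1} b_2^{-1} = (psr)$, and among these four elements the only one lying in $\{b, b^{-1}\} = \{(pqr),(prq)\}$ is $b$ itself. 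Since $\delta(b) = \delta(b_1)\delta(b_2)$ is among those four elements (by the hypothesis applied to $b_1, b_2 \in \cB$) and simultaneously lies in $\{b, b^{-1}\}$ (by the hypothesis applied to $b \in \cB$), we get $\delta(b) = b$. Hence $\delta$ fixes every $3$-cycle of $G_1$, and these generate $G_1$, so again $\delta|_{G_1} = \id$.

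It remains to treat a homomorphism $\delta : G \to G$ with $\delta|_{G_1} = \id$. Fix $g \in G$ and put $H_g = G_1 \cap g^{-1} G_1 g$, the group of even finite permutations fixing both $1$ and $g^{-1}(1)$, i.e.\ a copy of $A_\infty$ acting on the cofinite set $\Omega_g = \N \setminus \{1, g^{-1}(1)\}$. For every $h \in H_g$ both $h$ and $g h g^{-1}$ lie in $G_1$, so $\delta(h) = h$ and $\delta(g h g^{-1}) = g h g^{-1}$; combining this with $\delta(g h g^{-1}) = \delta(g)\delta(h)\delta(g)^{-1}$ gives $\delta(g)\, h\, \delta(g)^{-1} = g h g^{-1}$, i.e.\ $g^{-1}\delta(g)$ centralizes $H_g$. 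A routine argument with $3$-cycles supported on $\Omega_g$ shows that any finite permutation of $\N$ centralizing $H_g$ must fix $\Omega_g$ pointwise (otherwise the support of some such $3$-cycle would be displaced), hence is either the identity or the transposition interchanging $1$ and $g^{-1}(1)$; the latter is odd while $g^{-1}\delta(g) \in G$ is even, so $g^{-1}\delta(g) = e$ and $\delta(g) = g$. Therefore $\delta = \id$.

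I expect the technical heart of the argument to be the $\cB$ case: exhibiting the factorization $b = b_1 b_2$ and checking that none of the three ``wrong'' products $b_1^{\pm 1} b_2^{\pm 1}$ coincides with $b$ or $b^{-1}$, which is precisely what forces the plus sign uniformly. The generation facts for alternating groups and the centralizer computation are routine.
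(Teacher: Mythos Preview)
Your proof is correct. The $\cA$ case matches the paper's argument. For the $\cB$ case you take a genuinely different route: the paper fixes $b_1=(234)$, assumes $\delta(b_1)=b_1^{-1}$, and derives a contradiction by comparing the conjugates $b_1 b_2 b_1^{-1}=(256)$ and $b_1^{-1} b_2 b_1=(356)$ for $b_2=(456)$; you instead factor an arbitrary $3$-cycle as $b=b_1b_2$ with $b_1,b_2$ overlapping $3$-cycles and check directly that only the $(+,+)$ sign choice lands back in $\{b,b^{-1}\}$. Both are equally elementary; your version is a touch more self-contained in that it dispatches all $3$-cycles at once rather than via a symmetry reduction. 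For the extension from $G_1$ to $G$, the paper simply asserts the conclusion in one sentence, whereas you supply the centralizer argument explicitly; your version is more informative here.
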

\begin{proof}
Denote by $\delta_0$ the restriction of $\delta$ to $G_1$. Since the elements of order $2$ generate $G_1$, if $\delta_0(a) = a$ for all $a \in \cA$, we have that $\delta_0(\si) = \si$ for all $\si \in G_1$.

Next assume that $\delta_0(b) = b^{\pm 1}$ for all $b \in \cB$. Denote by $\cB_0 \subset \cB$ the subset of permutations of the form $(ijk)$ with $2 \leq i < j < k$. We prove that $\delta_0(b) = b$ for all $b \in \cB_0$. Put $b_1 = (234)$. By symmetry, it suffices to prove that $\delta_0(b_1) = b_1$. Assume that this is not the case. Then, $\delta_0(b_1) = b_1^{-1}$. Write $b_2 = (456)$. We have $b_1 b_2 b_1^{-1} = (256)$, while $b_1^{-1} b_2 b_1 = (356)$. It is thus impossible to get $\delta_0(b_2) = b_2^{\pm 1}$. So $\delta_0(b) = b$ for all $b \in \cB_0$. Since $\cB_0$ generates the group $G_1$, we again have that $\delta_0(\si) = \si$ for all $\si \in G_1$.

Every homomorphism $\delta : G \recht G$ whose restriction to $G_1$ is the identity, must itself be the identity. This concludes the proof of the lemma.
\end{proof}

\begin{lemma}\label{lem.first-group-lemma}
When $\Lambda_{a,b} < \Gamma_0 < \Gamma_1 = \Gamma$ are defined as in Example \ref{example-of-assumptions}, then the assumptions in \ref{my-assum} all hold.
\end{lemma}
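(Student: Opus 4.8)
The plan is to verify the five items of Assumptions~\ref{my-assum} one by one, in the notation of Example~\ref{example-of-assumptions}; to tell apart the two copies of $A_\infty$ I write $\Gamma = G \ast K$, so that $G_1 \subg G$, $\cA \subset G_1$ consists of order-$2$ elements, $\cB \subset K$ of order-$3$ elements, and $\Lambda_{a,b} = \langle a\rangle \ast \langle b\rangle$ with $a \in G$, $b \in K$. Items~\ref{one} and~\ref{two} are immediate: $G_1$ is a proper subgroup of $G$ (no $3$-cycle moving the base point lies in $G_1$), so $\Gamma_0 = G_1 \ast K$ is a proper subgroup of $\Gamma_1 = \Gamma$, which is trivially normal in $\Gamma$; each $\Lambda_{a,b} \cong \Z/2\Z \ast \Z/3\Z$ contains a finite-index nonabelian free subgroup and so is nonamenable; and $A_\infty$, being infinite and simple, has no nontrivial finite-dimensional unitary representation (a faithful one would embed $A_n$ into a fixed $U(d)$ for all $n$), so any finite-dimensional unitary representation of $\Gamma_0$ or $\Gamma_1$ is trivial on each free factor and therefore trivial.

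For items~\ref{three} and~\ref{four} I invoke the first part of Lemma~\ref{lem.playing-with-words}, with its ``$G$'' and ``$K$'' being ours and $\Lambda = \Lambda_{a,b}$. Item~\ref{four} is exactly the first assertion there, and finite subgroups are amenable. For item~\ref{three}, note that the subgroup $\Lambda_{a,b}$ of $\Gamma$ satisfies $\Lambda_{a,b} \cap G = \{e,a\}$ and $\Lambda_{a,b} \cap K = \{e,b,b^{-1}\}$, so distinct groups $\Lambda_n \ne \Lambda_m$ among the $\Lambda_{a,b}$ correspond to $a \ne a'$ or $b' \notin \{b,b^{-1}\}$; the second assertion of Lemma~\ref{lem.playing-with-words}(1), applied with $\Lambda_1 = \Lambda_{a,b}$ (whence $w=e$) and $\Lambda = \Lambda_{a',b'}$, shows that an infinite $z\Lambda_{a,b}z^{-1} \cap \Lambda_{a',b'}$ would force $e = u^{-1}w_0 v$ with $u \in G$, $v \in K$, $w_0 \in \Lambda_{a',b'}$, $u a u^{-1} = a'$ and $v b v^{-1} = b'^{\pm 1}$; a short reduced-word analysis forces $u=v=e$, hence $a=a'$ and $b' \in \{b,b^{-1}\}$, a contradiction, so the intersection is finite and amenable.

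The substance is item~\ref{five}. Let $i \in \{0,1\}$ and let $\delta : \Gamma_i \recht \Gamma$ be injective with $\delta(\Lambda_{a,b}) \prec_\Gamma \Lambda_{a,b}$ for all $(a,b) \in \cA \times \cB$. For each $(a,b)$: since every torsion element of a free product is conjugate into a factor, $\delta(a) = x a_1 x^{-1}$ with $a_1$ of order $2$ in a factor, so $x^{-1}\delta(\Lambda_{a,b})x = \langle a_1, w b_1 w^{-1}\rangle$ has the shape required by Lemma~\ref{lem.playing-with-words}(1); since $\delta$ is injective and $\Lambda_{a,b}$ is infinite, $\prec_\Gamma$ yields an infinite intersection, and the lemma's second conclusion produces $g_{a,b} \in \Gamma$ with $g_{a,b}^{-1}\delta(a)g_{a,b} = a$ and $g_{a,b}^{-1}\delta(b)g_{a,b} = w_0\, b^{\epsilon_{a,b}}\, w_0^{-1}$, where $w_0 = w_0^{a,b} \in \Lambda_{a,b}$ is trivial or a reduced word beginning with $b^{\pm 1}$ and ending with $a$, and $\epsilon_{a,b} \in \{\pm 1\}$; in particular $\delta(a)$ is conjugate into $G$ and $\delta(b)$ into $K$. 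Since $\delta(G_1) = \langle\delta(\cA)\rangle$ is an infinite simple, non-free subgroup of $\Gamma$, the Kurosh subgroup theorem forces it to be conjugate into a single factor; as each $\delta(a)$ is conjugate into $G$ and a nontrivial element of one free factor is never conjugate into the other, $\delta(G_1)$ — and likewise $\delta(G)$ when $i=1$ — is conjugate into $G$. After conjugating $\delta$ by a single element we may assume $\delta(G_1) \subseteq G$ (resp.\ $\delta(G) \subseteq G$); then $\delta(a)$ and $a$ are conjugate elements of $G$, and since $C_\Gamma(a) = C_G(a)$ the relation $\delta(a) = g_{a,b} a g_{a,b}^{-1}$ forces $g_{a,b} \in G$.

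It remains to show that every $w_0^{a,b}$ is trivial and that the $g_{a,b}$ coincide. Because $\delta(a)$ does not depend on $b$, the coset $g_{a,b}C_G(a)$ depends only on $a$; because $\delta(b)$ does not depend on $a$ and centralizers of nontrivial elements of $K$ lie in $K$, the coset $(g_{a,b}w_0^{a,b})K$ depends only on $b$. Comparing, for $a \ne a'$, the reduced words $g_{a,b}w_0^{a,b}$ and $g_{a',b}w_0^{a',b}$ in $G \ast K$ — with $g_{a,b}, g_{a',b} \in G$ and each nontrivial $w_0$ starting with a $K$-syllable and ending with a $G$-syllable — one sees that their lying in the same left $K$-coset forces all $G$-syllables to cancel, which is impossible whenever one $w_0$ is nontrivial; hence all $w_0^{a,b} = e$. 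Then $g_{a,b}K = g_{a',b}K$ with $g_{a,b}, g_{a',b} \in G$ gives $g_{a,b} = g_{a',b}$, so $g_{a,b} = g_b$ depends only on $b$; and $g_b C_G(a) = g_{b'}C_G(a)$ for all $a \in \cA$ gives $g_b^{-1}g_{b'} \in \bigcap_{a\in\cA}C_G(a) = C_G(G_1) = \{e\}$, so $g_b = g$ is one fixed element of $G$. Conjugating $\delta$ once more by $g^{-1}$, we may assume $\delta(a) = a$ for all $a \in \cA$ and $\delta(b) = b^{\pm 1}$ for all $b \in \cB$; since $\cA$ generates $G_1$ we get $\delta|_{G_1} = \id$, and when $i=1$ Lemma~\ref{lem.elem-permutation} upgrades this to $\delta|_G = \id$ (using $\delta(G) \subseteq G$). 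Finally $\delta(K) = \langle\delta(\cB)\rangle \subseteq K$ and $\delta(b) \in \{b,b^{-1}\}$ for every order-$3$ element of $K$, so Lemma~\ref{lem.elem-permutation} gives $\delta|_K = \id$; thus $\delta = \id$ on $\Gamma_i = \langle G_1, K\rangle$ (resp.\ $\langle G, K\rangle$), i.e.\ the original $\delta$ was inner. The main obstacle is this last paragraph: the reduced-word bookkeeping that kills $w_0^{a,b}$ and collapses the $g_{a,b}$ to a single element, where one genuinely uses that $\Lambda_{a,b}$ straddles the two free factors of $\Gamma$ together with the precise normal form of $w_0^{a,b}$ delivered by Lemma~\ref{lem.playing-with-words}; everything else rests on standard facts about conjugacy, centralizers and normalizers in free products, the Kurosh theorem, and Lemmas~\ref{lem.playing-with-words} and~\ref{lem.elem-permutation}.
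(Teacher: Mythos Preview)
Your proof is correct and uses the same key ingredients (Kurosh's theorem, Lemma~\ref{lem.playing-with-words}, Lemma~\ref{lem.elem-permutation}) as the paper, but the verification of assumption~\ref{five} is organized differently. The paper applies Kurosh \emph{first}: after one global conjugation, $\delta$ restricts to a homomorphism $\delta_1$ on $G_1$ (into one free factor) and to $\Ad w \circ \delta_r$ on the second factor, for a \emph{single} element $w \in \Gamma$; it then applies Lemma~\ref{lem.playing-with-words} with this fixed $w$, so that the resulting $w_0$ (which must lie in $\Lambda_{a,b}$ and end in the letter~$a$) is forced to be trivial simply because the same $w$ must work for all $(a,b)$. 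You instead apply Lemma~\ref{lem.playing-with-words} \emph{first}, obtaining for each pair $(a,b)$ a local conjugator $g_{a,b}$ and word $w_0^{a,b}$, and only afterwards invoke Kurosh to place $\delta(G_1)$ (or $\delta(G)$) inside $G$; the coset and reduced-word bookkeeping in your final paragraph is then what collapses all this local data into a single global conjugator.

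Your route costs a genuinely longer reduced-word analysis (the step killing $w_0^{a,b}$ by comparing $g_{a,b}w_0^{a,b}$ and $g_{a',b}w_0^{a',b}$ in the same $K$-coset), but it buys something: you treat $i=0$ and $i=1$ uniformly and explicitly, whereas the paper's written argument only spells out the case $\delta:\Gamma_0 \to \Gamma$ and leaves the extension to $\delta:\Gamma_1 \to \Gamma$ to the reader.
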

\begin{proof}
Note that \ref{one} is immediate. For \ref{two}, it suffices to note that $A_\infty$ has no nontrivial finite dimensional unitary representations. This is well known and can be seen as follows: since $A_\infty$ contains infinitely many commuting copies of $A_5$, any finite dimensional representation $\pi$ has the property that $\pi(A_5)$ is abelian and hence trivial, which forces $\pi$ to be trivial on $A_\infty$.

Conditions \ref{three} and \ref{four} follow from the first statement of Lemma \ref{lem.playing-with-words}. To prove \ref{five}, we denote by $G_r$ the second copy of $G$ in the free product $\Gamma = G \ast G_r$. Let $\delta : G_1 \ast G_r \recht G \ast G_r$ be an injective group homomorphism such that $\delta(\Lambda_{a,b}) \prec_\Gamma \Lambda_{a,b}$ for all $(a,b) \in \cA \times \cB$. After replacing $\delta$ by $(\Ad y) \circ \delta$ for some $y \in \Gamma$, by the Kurosh theorem, we can take $L,R \in \{G,G_r\}$, injective group homomorphisms $\delta_1 : G_1 \recht L$ and $\delta_r : G_r \recht R$, and an element $w \in \Gamma$ such that $\delta(g) = \delta_1(g)$ for all $g \in G_1$ and $\delta(h) = w \delta_r(h) w^{-1}$ for all $h \in G_r$. Denote by $R' \in \{G,G_r\}$ the ``other'' group, so that $\{R,R'\} = \{G,G_r\}$. We may assume that either $w = e$ or that $w$ ends with a letter in $R' \setminus \{e\}$.

Given $(a,b) \in \cA \times \cB$, we view $a \in G_1$ and $b \in G_r$. Since $a$ and $b$ are free, also $\delta_1(a)$ and $w \delta_r(b) w^{-1}$ are free. Since $\delta(\Lambda_{a,b}) \prec_\Gamma \Lambda_{a,b}$, we get that a finite index subgroup of $\langle \delta_1(a) \rangle \ast w \langle \delta_r(b) \rangle w^{-1}$ can be conjugated into $\Lambda_{a,b}$.

By the first statement in Lemma \ref{lem.playing-with-words}, we first conclude that $L = G$, $R = G_r$. In particular, either $w = e$ or $w$ ends with a letter in $G \setminus \{e\}$. In the case where $w=e$, Lemma \ref{lem.playing-with-words} says that $\delta_1(a) = a$ and $\delta_r(b) = b^{\pm 1}$ for all $(a,b) \in \cA \times \cB$. By Lemma \ref{lem.elem-permutation}, both $\delta_1$ and $\delta_r$ are the identity map.

In the case where $w \neq e$, we write $w = u_0^{-1} w_0$ where either $w_0 = e$ or $w_0$ is a word starting with $b^{\pm 1}$ and ending with $a$. The latter option can never hold for all $(a,b) \in \cA \times \cB$. In the former case, after replacing $\delta$ by $(\Ad u_0) \circ \delta$, Lemma \ref{lem.playing-with-words} again says that $\delta(a) = a$ and $\delta(b) = b^{\pm 1}$ for all $(a,b) \in \cA \times \cB$. Again, $\delta$ is the identity homomorphism.

So also condition \ref{five} in \ref{my-assum} holds and the lemma is proven.
\end{proof}

\begin{proof}[{Proof of Theorem \ref{thm.all-embed-trivial}}]
The theorem immediately follows from Lemmas \ref{lem.first-group-lemma} and \ref{lem.all-embedding-trivial-general}.
\end{proof}

\section[Partially ordered sets of II$_1$ factors: proof of Thm.\ \ref{thm.family-with-groups} and Cor.\ \ref{cor.examples-orders}]{\boldmath Partially ordered sets of II$_1$ factors: proof of Theorem \ref{thm.family-with-groups} and Corollary \ref{cor.examples-orders}}

Note that the augmentation functor $\Gamma \mapsto H_\Gamma$ is functorial in the following precise sense. To any injective group homomorphism $\Gamma \recht \Lambda$ corresponds a canonical injective group homomorphism $H_\Gamma \recht H_\Lambda$ and thus, a canonical embedding of II$_1$ factors $L(H_\Gamma) \emb L(H_\Lambda)$. Theorem \ref{thm.family-with-groups}, which we now prove, says that if $L(H_\Gamma) \semb L(H_\Lambda)$, then $\Gamma \emb \Lambda$, but does not say that any embedding of $L(H_\Gamma)$ into $L(H_\Lambda)$ is of such a canonical form.

The proof of Theorem \ref{thm.family-with-groups} follows rather directly from Lemmas \ref{lem.step1} and \ref{lem.step2}. We however have to do a few tweaks in the case where the groups $\Gamma$ and $\Lambda$ are uncountable.

\begin{proof}[{Proof of Theorem \ref{thm.family-with-groups}}]
For every infinite group $\Gamma$, we denote by $B_\Gamma = \bigl(\Z/2\Z\bigr)^{((G_\Gamma \times G_\Gamma)/N_\Gamma)}$ the abelian normal subgroup of $H_\Gamma$ such that $H_\Gamma = B_\Gamma \rtimes (G_\Gamma \times G_\Gamma)$. We also write $A_\Gamma = L(B_\Gamma)$.

Note that whenever $\Gamma_0 \subg \Gamma_1$ is a subgroup, we canonically have $H_{\Gamma_0} \subg H_{\Gamma_1}$. Through this inclusion, $A_{\Gamma_0} \subg A_{\Gamma_1}$.

Also note that whenever $\Gamma$ is a countable infinite group, the II$_1$ factor $L(H_\Gamma)$ is of the form studied in Section \ref{sec.embeddings-bernoulli}. Defining
\begin{equation}\label{eq.reinterpret}
D_\Gamma = \bigl(\Z/2\Z\bigr)^{((\Z \ast \Gamma)/\Gamma)} \quad\text{and}\quad G_\Gamma \actson^\al L(D_\Gamma) \quad\text{through $\pi_\Gamma : G_\Gamma \recht \Z \ast \Gamma$,}
\end{equation}
we canonically have $L(H_\Gamma) = M(G_\Gamma,\al)$.

Throughout the proof, we say that a group homomorphism $\pi : G_1 \times G_1 \recht G_2 \times G_2$ is \emph{of product form} if $\pi$ is either of the form $\pi(g,h) = (\eta(g),\delta(h))$ or the form $\pi(g,h) = (\delta(h),\eta(g))$ for group homomorphisms $\delta,\eta : G_1 \recht G_2$.

Let $\Gamma$ and $\Lambda$ be arbitrary infinite groups. Take $d > 0$ and an embedding $\theta : L(H_\Gamma) \recht L(H_\Lambda)^d$. We have to prove that $\Gamma$ is isomorphic with a subgroup of $\Lambda$. Fix a countable infinite subgroup $\Gamma_0 \subg \Gamma$. Since $H_{\Gamma_0}$ is countable, we can take a countable subgroup $\Lambda_0 \subg \Lambda$ such that $\theta(L(H_{\Gamma_0})) \subseteq L(H_{\Lambda_0})^d$. We denote by $\theta_0$ the restriction of $\theta$ to $L(H_{\Gamma_0})$.

As explained above, the embedding $\theta_0 : L(H_{\Gamma_0})) \recht L(H_{\Lambda_0})^d$ fits into the context of Section \ref{sec.embeddings-bernoulli}. By Lemma \ref{lem.step1}, $d \in \N$ and we can unitarily conjugate the original embedding and replace $\Gamma_0$ by a finite index subgroup such that $\theta_0$ becomes the direct sum of embeddings $\theta_i : L(H_{\Gamma_0})) \recht M_{d_i}(\C) \ot L(H_{\Lambda})$ satisfying
\begin{equation}\label{eq.onGamma0}
\theta_i(A_{\Gamma_0}) \subset M_{d_i}(\C) \ot A_{\Lambda} \quad\text{and}\quad \theta_i(u_{(g,h)}) \in \cU(\C^{d_i}) \ot u_{\pi_i(g,h)} \;\;\text{for all $g,h \in G_{\Gamma_0}$,}
\end{equation}
where the $\pi_i : G_{\Gamma_0} \times G_{\Gamma_0} \recht G_{\Lambda} \times G_{\Lambda}$ are injective group homomorphisms of product form. After a further unitary conjugacy and regrouping direct summands, we may assume that for $i \neq j$, the group homomorphisms $\pi_i$ and $\pi_j$ are non conjugate, as homomorphisms from $G_{\Gamma_0} \times G_{\Gamma_0}$ to $G_{\Lambda} \times G_{\Lambda}$. Denote by $p_i \in M_d(\C)$ the projections corresponding to the $i$'th direct summand.

Whenever $\Gamma_1 \subg \Gamma$ and $L_1 \subg G_{\Gamma_1}$ are subgroups and $p \in M_d(\C)$ is a nonzero projection, we say that $\theta$ is \emph{standard} on $(\Gamma_1,L_1,p)$ if $p$ can be written as the sum of nonzero projections $q_j \in M_d(\C)$ such that for all $j$, the projection $q_j \ot 1$ commutes with $\theta(A_{\Gamma_1} \rtimes (L_1 \times L_1))$ and
$$(q_j \ot 1) \theta(A_{\Gamma_1}) \subset q_j M_d(\C) q_j \ot A_{\Lambda} \quad\text{and}\quad (q_j \ot 1)\theta(u_{(g,h)})  \in \cU(p \C^d) \ot u_{\zeta_j(g,h)}$$
for all $g,h \in L_1$ and injective group homomorphisms $\zeta_j : L_1 \times L_1 \recht G_\Lambda \times G_\Lambda$ of product form.

By construction, $\theta$ is standard on $(\Gamma_0,G_{\Gamma_0},1)$. We prove that automatically, without a further unitary conjugacy, $\theta$ is standard on $(\Gamma,L,1)$ for some finite index normal subgroup $L \subg G_\Gamma$.

Let $\Gamma_1 \subg \Gamma$ be an arbitrary countable subgroup with $\Gamma_0 \subg \Gamma_1$. We claim that there exists a finite index subgroup $L_1 \subg G_{\Gamma_1}$ with $[G_{\Gamma_1} : L_1] \leq \exp(d)$ such that
$\theta$ is standard on $(\Gamma_1,L_1,p_i)$ for every $i$.

Applying as above Lemma \ref{lem.step1} to the restriction of $\theta$ to $L(H_{\Gamma_1})$, we find integers $n_j \in \N$ with $\sum_j n_j = d$ and we find a unitary $V \in M_d(\C) \ot L(H_\Lambda)$ such that the restriction of $(\Ad V) \circ \theta$ to $L(H_{\Gamma_1})$ satisfies
\begin{equation}\label{eq.onGamma1}
V\theta(A_{\Gamma_1})V^* \subset \bigoplus_j \bigl( M_{n_j}(\C) \ot A_{\Lambda} \bigr) \quad\text{and}\quad V\theta(u_{(g,h)})V^* \in \bigoplus_j \bigl( \cU(\C^{n_j}) \ot u_{\zeta_j(g,h)}\bigr)
\end{equation}
for all $g,h$ in a finite index subgroup $L_1 \subg G_{\Gamma_1}$, where $\zeta_j : L_1 \times L_1 \recht H_\Lambda \times H_\Lambda$ are injective group homomorphisms of product form and $[G_{\Gamma_1} : L_1] \leq \exp(d)$. Denote by $q_j \in M_d(\C)$ the projection corresponding to the $j$'th direct summand in the above decomposition.

Define $L_0 = G_{\Gamma_0} \cap L_1$. Since $G_{\Gamma_0} \subg G_{\Gamma_1}$, we have that $L_0 \subg G_{\Gamma_0}$ has finite index.

Whenever $(q_j \ot 1)V (p_i \ot 1)$ is nonzero, comparing \eqref{eq.onGamma0} and \eqref{eq.onGamma1}, there must exist $x,y \in G_\Lambda$ such that
$$\{\zeta_j(g,h) \, (x,y) \, \pi_i(g,h)^{-1} \mid g,h \in L_0 \}$$
is a finite set. By the relative icc property of $\pi_i(L_0 \times L_0)$ in $G_\Lambda \times G_\Lambda$, this set must be the singleton $\{(x,y)\}$. Since moreover the action of $\pi_i(L_0 \times L_0)$ on $A_\Lambda$ is weakly mixing, we get that $(q_j \ot 1) V (p_i \ot 1) \in q_j M_d(\C) p_i \ot u_{(x,y)}$. In particular, the restrictions of $\pi_i$ and $\zeta_j$ to $L_0 \times L_0$ are conjugate.

Fix an index $j$. We claim that there is precisely one $i$ such that $(q_j \ot 1)V (p_i \ot 1)$ is nonzero. If $i \neq i'$ and if both $(q_j \ot 1)V (p_i \ot 1)$ and $(q_j \ot 1)V (p_{i'} \ot 1)$ are nonzero, we find that $\pi_i$ and $\pi_{i'}$ are conjugate on the finite index subgroup $L_0 \times L_0$ of $G_{\Gamma_0} \times G_{\Gamma_0}$. By the relative icc property, $\pi_i$ and $\pi_{i'}$ are conjugate on $G_{\Gamma_0} \times G_{\Gamma_0}$, contradicting the choices made above. So, the claim is proven and we can partition the indices $j$ into subsets $J_i$ such that $V^*(q_j \ot 1)V = p_{i,j} \ot 1$ for all $j \in J_i$, with $\sum_{j \in J_i} p_{i,j} = p_i$. Also,
$$(q_j \ot 1) V (p_i \ot 1) = V_{i,j} \ot u_{(x_{i,j},y_{i,j})} \; ,$$
where $V_{i,j}$ is a partial isometry with left support $q_j$ and right support $p_{i,j}$ whenever $j \in J_i$. It now follows from \eqref{eq.onGamma1} that $\theta$ is standard on $(\Gamma_1,L_1,p_i)$ for all $i$, as claimed above.

Since this holds for all countable subgroups $\Gamma_1 \subg \Gamma$ with $\Gamma_0 \subg \Gamma_1$ and since the index of $L_1$ in $G_{\Gamma_1}$ stays bounded by $\exp(d)$, it follows that $\theta$ is standard on $(\Gamma,L,1)$ for a finite index subgroup $L \subg G_\Gamma$, that we may next assume to be normal.

Since $\theta$ is standard on $(\Gamma,L,1)$, we have precisely proven that the conclusion of Lemma \ref{lem.step1} holds for the (potentially nonseparable) embedding $\theta : L(H_\Gamma) \recht L(H_\Lambda)^d$. Next, also the conclusion of Lemma \ref{lem.step2} holds, because the proof of Lemma \ref{lem.step2} was entirely self contained and only exploited the control of relative commutants using the relative icc property, which works equally well in an uncountable setting. The same holds for the first paragraphs of the proof of Proposition \ref{prop.all-embeddings}. Using the notation of \eqref{eq.reinterpret}, there thus exists a trace preserving embedding $\psi : L(D_\Gamma) \recht L(D_\Lambda)$ and an injective group homomorphism $\delta : G_\Gamma \recht G_\Lambda$ such that $\al_{\delta(g)} \circ \psi = \psi \circ \al_g$ for all $g \in L$.

For every $x \in \Z \ast \Gamma$, denote by $E_{x\Gamma} \cong L(\Z/2\Z)$ the natural subalgebra of $L(D_\Gamma)$. We similarly define $E_{y\Lambda} \subset L(D_\Lambda)$ for all $y \in \Z \ast \Lambda$.

If $K \subg G_\Lambda$ and $K \not\prec N_\Lambda$, the action of $K$ on $L(D_\Lambda)$ is ergodic. Since the elements of $E_{e\Lambda}$ are fixed by $\al_g$ for al $g \in N_\Gamma$, it follows that $\delta(L \cap N_\Gamma) \prec N_\Lambda$. Conjugating $\delta$ and replacing $L$ by a smaller finite index normal subgroup of $G_\Gamma$, we may assume that $\delta(L \cap N_\Gamma) \subg N_\Lambda$. Let $y \in G_\Lambda \setminus N_\Lambda$. Note that $\Lambda \subg \Z \ast \Lambda$ is malnormal, so that $N_\Lambda \cap y N_\Lambda y^{-1} = \Ker \pi_\Lambda$. Similarly, $N_\Gamma \cap x N_\Gamma x^{-1} = \Ker \pi_\Gamma$ for all $x \in G_\Gamma \setminus N_\Gamma$.

We claim that $\delta(L \cap N_\Gamma) \not\prec N_\Lambda \cap y N_\Lambda y^{-1} = \Ker \pi_\Lambda$. Otherwise, because $\Ker \pi_\Lambda$ is a normal subgroup of $G_\Lambda$, we find a finite index subgroup $K \subg L \cap N_\Gamma$ with $\delta(K) \subg \Ker \pi_\Lambda$. The equivariance of $\psi$ then implies that $\al_g = \id$ for all $g \in K$, which is absurd. By the claim, the subalgebra of $\delta(L \cap N_\Gamma)$-invariant elements in $D_\Lambda$ equals $E_{e\Lambda}$. So, $\psi(E_{e\Gamma}) \subseteq E_{e\Lambda}$. Since both are two-dimensional and $\psi$ is a trace preserving embedding, we have $\psi(E_{e\Gamma}) = E_{e\Lambda}$

Fix $a \in \Z \ast \Gamma \setminus \Gamma$. Choosing $x \in G_\Gamma$ with $\pi_\Gamma(x) = a$ and making a similar reasoning for the fixed points under $L \cap x N_\Gamma x^{-1}$, we find $b \in \Z \ast \Lambda$ such that $\psi(E_{a\Gamma}) = E_{b\Lambda}$. Since $\psi$ is injective, we must have that $b \in \Z \ast \Lambda \setminus \Lambda$. Take $y \in G_\Lambda$ with $\pi_\Lambda(y) = b$.

Since $\delta(L \cap N_\Gamma) \subg N_\Lambda$, we find that $\pi_\Lambda(\delta(L \cap N_\Gamma)) \subg \Gamma$ is a finite index normal subgroup of $\pi_\Lambda(\delta(N_\Gamma))$. Since $\Gamma \subg \Z \ast \Lambda$ is malnormal, it follows that $\pi_\Lambda(\delta(N_\Gamma)) \subg \Lambda$. Thus, $\delta(N_\Gamma) \subg N_\Lambda$.

Since $L \cap x N_\Gamma x^{-1}$ fixes $E_{a \Gamma}$ pointwise, we have that $\delta(L \cap x N_\Gamma x^{-1})$ fixes $E_{b \Lambda}$ pointwise. Therefore, $\delta(L \cap x N_\Gamma x^{-1}) \subg y N_\Lambda y^{-1}$. Reasoning as above, it follows that $\delta(x N_\Gamma x^{-1}) \subg y N_\Lambda y^{-1}$. Since $\delta$ is injective, it follows that
$$\delta(\Ker \pi_\Gamma) = \delta(N_\Gamma \cap x N_\Gamma x^{-1}) = \delta(N_\Gamma) \cap \delta(x N_\Gamma x^{-1}) \subg N_\Lambda \cap y N_\Lambda y^{-1} = \Ker \pi_\Lambda \; .$$
Conversely, $L \cap \delta^{-1}(N_\Lambda)$ must fix $E_{e\Gamma}$ pointwise, so that $L \cap \delta^{-1}(N_\Lambda) \subg N_\Gamma$. Since $L \cap \delta^{-1}(N_\Lambda)$ is a normal finite index subgroup of $\delta^{-1}(N_\Lambda)$, malnormality again implies that $\delta^{-1}(N_\Lambda) \subg N_\Gamma$. Similarly, $\delta^{-1}(y N_\Lambda y^{-1}) \subg x N_\Gamma x^{-1}$. Taking intersections, we conclude that $\delta^{-1}(\Ker \pi_\Lambda) \subg \Ker \pi_\Gamma$. Altogether, we have proven that $g \in G_\Gamma$ satisfies $\delta(g) \in \Ker \pi_\Lambda$ if and only if $g \in \Ker \pi_\Gamma$.

There is thus a unique, well defined, injective group homomorphism $\deltabar : \Z \ast \Gamma \recht \Z \ast \Lambda$ such that $\pi_\Lambda \circ \delta = \deltabar \circ \pi_\Gamma$. Since $\delta(N_\Gamma) \subg N_\Lambda$, we have $\deltabar(\Gamma) \subg \Lambda$. So, $\Gamma$ is isomorphic with a subgroup of $\Lambda$.

Finally note that if the embedding $\theta : L(H_\Gamma) \recht L(H_\Lambda)^d$ at the start of the proof has an image that is of finite index in $L(H_\Lambda)^d$,
then $\delta$ must be surjective and it follows that $\Gamma \cong \Lambda$. This ends the proof of the theorem.
\end{proof}

\begin{proof}[{Proof of Corollary \ref{cor.examples-orders}}]
Let $(I,\leq)$ be a partially ordered set with density character $\kappa$. Take a sup-dense subset $I_0 \subseteq I$ of cardinality at most $\kappa$. Then the map $\vphi : I \recht 2^{I_0} : \vphi(i) = \{j \in I_0 \mid j \leq i\}$ is injective and has the property that $\vphi(i) \subseteq \vphi(j)$ if and only if $i \leq j$. It thus suffices to realize the partially ordered set $(2^{I_0},\subseteq)$ in the class of II$_1$ factors with density character at most $\kappa$ and relation $\semb$ or $\emb$.

By Theorem \ref{thm.family-with-groups}, it suffices to realize $(2^{I_0},\subseteq)$ in the class of infinite groups with cardinality at most $\kappa$ and with relation ``is isomorphic with a subgroup of'' that we denote as $\emb$. Formally add to $I_0$ two extra elements $a,b$ and put $I_1 = I_0 \sqcup \{a,b\}$. Assume that $(\Lambda_i)_{i \in I_1}$ is a family of nontrivial groups of cardinality at most $\kappa$ having the following properties: $\Lambda_i \not\cong \Z$, $\Lambda_i$ is freely indecomposable and if $i \neq j$, then $\Lambda_i \not\prec \Lambda_j$. Whenever $J \subseteq I_0$, define $\Gamma_J$ as the free product of the groups $\Lambda_i$, $i \in I_0 \cup \{a,b\}$. For every $J \subseteq I_0$, the group $\Gamma_J$ is infinite (since it contains $\Lambda_a \ast \Lambda_b$), of cardinality at most $\kappa$, and by the Kurosh theorem, $\Gamma_J \emb \Gamma_{J'}$ if and only if $J \subseteq J'$.

When $I_0$ is countable, it is very easy to give an explicit family of such mutually non embeddable groups $(\Lambda_n)_{n \in \N}$. It suffices to enumerate the prime numbers $(p_n)_{n \in \N}$ and take $\Lambda_n = \Z/p_n \Z$.

When $\kappa$ is any cardinal number, the argument is obviously less explicit. By \cite{FK77}, we can choose a family $(K_i)_{i \in I_1}$ of infinite fields of cardinality at most $\kappa$ with the property that for $i \neq j$, $K_i$ is not isomorphic to a subfield of $K_j$. Then the groups $\Lambda_i = K_i \rtimes K_i^*$ are freely indecomposable (even amenable) and for $i \neq j$, we have that $\Lambda_i \not\emb \Lambda_j$.

Corollary \ref{cor.family-with-a} already provides the concrete chain of separable II$_1$ factors $(M_t)_{t \in \R}$ of order type $(\R,\leq)$, both in $(\IIone,\emb)$ or $(\IIone,\mathord{\semb})$.

We concretely realize as follows a chain of separable II$_1$ factors of order type $\om_1$, the first uncountable ordinal. Fix a prime number $p$. For every countable ordinal $\lambda < \om_1$, recall from \cite[Theorem 10]{Ric71} the standard construction of a countable abelian $p$-group $B_\lambda$ of length $\lambda$ as the abelian group with generators $a(i_1, \ldots, i_n)$ indexed by strictly increasing sequences $0 \leq i_1 < \cdots i_n < \lambda$ of any finite length $n \in \N$ subject to the relations
$$p \, a(i) = 0 \quad\text{and}\quad p \, a(i_1,i_2,\ldots,i_n) = a(i_2,\ldots,i_n)$$
for all $0 \leq i < \lambda$ and $n \geq 2$, $0 \leq i_1 < \cdots i_n < \lambda$. For all $\mu,\lambda < \om_1$, we have that $B_\lambda \emb B_\mu$ iff $\lambda \leq \mu$. By Theorem \ref{thm.family-with-groups}, the II$_1$ factors $M_\lambda = M(\Z \times \Lambda_\lambda)$ form a chain of separable II$_1$ factors $(M_\lambda)_{\lambda < \omega_1}$ of order type $\om_1$, both in $(\IIone,\emb)$ and $(\IIone,\mathord{\semb})$.
\end{proof}

\section[Complete intervals of II$_1$ factors: proof of Theorem \ref{thm.complete-intervals}]{\boldmath Complete intervals of II$_1$ factors: proof of Theorem \ref{thm.complete-intervals}}\label{sec.lattices-of-II1-factors}

We prove Theorem \ref{thm.complete-intervals} as a consequence of a general construction of II$_1$ factors $M_1 \subseteq M$ where we have a complete control over \emph{all} II$_1$ factors $N$ such that $M_1 \semb N$ and $N \semb M$.

The construction is a variant of the construction in Theorem \ref{thm.all-embed-trivial}. So we again denote by $G$ the group of finite even permutations of $\N$ and let $G_1 \subg G$ be the subgroup of permutations that fix $1 \in \N$. Denote by $\cA \subset G_1$ the elements of order $2$ and denote by $\cB \subset G_1$ the elements of order $3$.

We then introduce an extra variable in the construction: let $K$ be any countable group such that $\Gamma = G \ast K$ belongs to the family $\cC$ of Definition \ref{def.family-C}. In our concrete applications, $K$ will be a free product of amenable groups, so that indeed $\Gamma \in \cC$.

Define the normal subgroup $\Gamma_1 \lhd \Gamma$ as the kernel of the natural homomorphism $\pi : \Gamma \recht K$. Put $\Gamma_0 = \Gamma_1 \cap (G_1 \ast K)$. For all $a \in \cA$, $b \in \cB$ and $k \in K \setminus \{e\}$, consider the subgroup $\Lambda_{a,b,k} = \langle a \rangle \ast k \langle b \rangle k^{-1}$ of $\Gamma_0$. Choose distinct probability measures $\mu_{a,b,k}$ on $\{0,1\}$ with $\mu_{a,b,k}(0) \in (0,1/2)$. Denote by $\Delta : \Gamma_0 \recht \Gamma_0 \times \Gamma_0$ the diagonal embedding. Consider the generalized Bernoulli action
\begin{equation}\label{eq.my-new-action}
\Gamma_0 \times \Gamma \actson (X,\mu) = \prod_{a \in \cA, b \in \cB, k \in K \setminus \{e\}} (\{0,1\},\mu_{a,b,k})^{(\Gamma_0 \times \Gamma)/\Delta(\Lambda_{a,b,k})} \; .
\end{equation}
Write $A = L^\infty(X,\mu)$ and define the II$_1$ factors $M = A \rtimes (\Gamma_0 \times \Gamma)$ and $M_1 = A \rtimes (\Gamma_0 \times \Gamma_1)$. To describe all II$_1$ factors $N$ such that $M_1 \semb N$ and $N \semb M$, we need the following notation.

\begin{notation}\label{not.H2fin}
Given a countable group $L$, we denote by $H^2_\fin(L,\T)$ the space of scalar $2$-cocycles $\om : L \times L \recht \T$ (up to coboundaries) that arise as the so-called obstruction $2$-cocycle of a \emph{finite dimensional} projective unitary representation $\pi : L \recht \cU(\C^n) : \pi(g) \pi(h) = \om(g,h) \pi(gh)$.

We always implicitly assume that $\om$ is normalized, meaning that $\om(g,e) = \om(e,g) = 1$ and $\pi(e) = 1$.
\end{notation}

\begin{theorem}\label{thm.complete-intervals-construction}
Let $\Gamma_0,\Gamma$ and $K$ be groups as above. Consider the action $\Gamma_0 \times \Gamma \actson (X,\mu)$ defined by \eqref{eq.my-new-action} and define $M_1 \subset M$ as above. Whenever $L \subg K$ is a subgroup, define $\Gamma_L = \pi^{-1}(L)$. When $\om \in H^2_\fin(L,\T)$, define $\omtil \in H^2_\fin(\Gamma_0 \times \Gamma_L)$ by $\omtil = 1 \times (\om \circ \pi)$. Put $M(L,\om) = A \rtimes_{\omtil} (\Gamma_0 \times \Gamma_L)$.
\begin{enumlist}
\item A II$_1$ factor $N$ satisfies $M_1 \semb N$ and $N \semb M$ if and only if $N$ is stably isomorphic with $M(L,\om)$ for a subgroup $L \subg K$ and an $\om \in H^2_\fin(L,\T)$.

\item Let $L_i \subg K$ and $\om_i \in H^2_\fin(L_i,\T)$ for $i = 1,2$. We have $M(L_1,\om_1) \semb M(L_2,\om_2)$ if and only if there exists a $g \in K$ such that $L_1 \cap g L_2 g^{-1} \subg L_1$ has finite index.

\item Let $t > 0$, $L_i \subg K$ and $\om_i \in H^2_\fin(L_i,\T)$ for $i = 1,2$. We have $M(L_1,\om_1) \cong M(L_2,\om_2)^t$ if and only if $t=1$ and there exists a $g \in K$ such that $g^{-1} L_1 g = L_2$ and $\om_1 = \om_2 \circ \Ad g$ in $H^2_\fin(L_1,\T)$ (i.e.\ equal up to a coboundary).

\item Let $L_i \subg K$ and $\om_i \in H^2_\fin(L_i,\T)$ for $i = 1,2$. The II$_1$ factors $M(L_1,\om_1)$ and $M(L_2,\om_2)$ are virtually isomorphic if and only if there exists a $g \in K$ such that $g^{-1} L_1 g \cap L_2$ has finite index in both $g^{-1} L_1 g$ and $L_2$.
\end{enumlist}
\end{theorem}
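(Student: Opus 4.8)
The plan is to deduce the statement from the three preceding points together with four elementary observations. First, by Remark~\ref{rem.virtually-isom}, virtual isomorphism is an equivalence relation and $P$ is virtually isomorphic to $Q$ precisely when $P$ embeds with finite index into some amplification $Q^t$ (equivalently, there is a bifinite $P$-$Q$-bimodule). Second, for $g\in K$ the element $(e,g)\in\Gamma_0\times\Gamma$ acts on $A$, and the automorphism $\sigma_{(e,g)}$ of $A$ intertwines the restricted action of $\Gamma_0\times\Gamma_L$, precomposed with $(h,k)\mapsto(h,g^{-1}kg)$, with the restricted action of $\Gamma_0\times\Gamma_{g^{-1}Lg}$; chasing the canonical unitaries one obtains a trace preserving isomorphism $M(L,\om)\cong M(g^{-1}Lg,\om\circ\Ad g)$, where $\om\circ\Ad g$ again lies in $H^2_\fin(g^{-1}Lg,\T)$. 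Third, whenever $L'\subg L$ has finite index, $\Gamma_{L'}\subg\Gamma_L$ has finite index, so $M(L',\om|_{L'})\subseteq M(L,\om)$ is a finite index subfactor, hence virtually isomorphic to $M(L,\om)$. Fourth, twisting by a finite-dimensional obstruction cocycle does not change the virtual isomorphism class: $M(L,\om)$ is virtually isomorphic to $M(\Gamma_0,\Gamma_L,\al)=M(L,1)$. For this last point I would write down, for a projective representation $\rho\colon L\recht\cU(n)$ realizing $\om$, the $M(L,\om)$-$M(\Gamma_0,\Gamma_L,\al)$-bimodule on $\C^n\ot L^2(M(\Gamma_0,\Gamma_L,\al))$ whose right action is the standard one and whose left action sends $a\in A$ to $1\ot a$ and the canonical unitary of $M(L,\om)$ attached to $(g,h)$ to $\rho(\pi(h))\ot u_{(g,h)}$; one checks this respects the $\omtil$-relations, has right dimension $n$, and, because $\C^n\ot\ell^2(\Gamma_L)$ with the diagonal of the $\rho\circ\pi$-projective representation and the left regular representation is a multiple of the $\omtil|_{\Gamma_L}$-twisted regular representation, has left dimension $n$ as well, so it is bifinite.

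With these in hand, the implication ``if there is $g\in K$ with $L_0:=g^{-1}L_1g\cap L_2$ of finite index in both $g^{-1}L_1g$ and $L_2$, then $M(L_1,\om_1)$ and $M(L_2,\om_2)$ are virtually isomorphic'' is immediate: by the second observation $M(L_1,\om_1)\cong M(g^{-1}L_1g,\om_1\circ\Ad g)$, by the third this is virtually isomorphic to $M(L_0,(\om_1\circ\Ad g)|_{L_0})$, by the third again $M(L_2,\om_2)$ is virtually isomorphic to $M(L_0,\om_2|_{L_0})$, and by the fourth both of these are virtually isomorphic to $M(L_0,1)$ (restrictions of finite-dimensional obstruction cocycles are again such); transitivity finishes it.

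For the converse, assume $M(L_1,\om_1)$ and $M(L_2,\om_2)$ are virtually isomorphic. By the fourth observation and transitivity we may assume $\om_1=\om_2=1$, so there is a finite index embedding $\theta\colon M(\Gamma_0,\Gamma_{L_1},\al)\recht M(\Gamma_0,\Gamma_{L_2},\al)^t$. I would then rerun the analysis underlying Theorem~\ref{thm.embedding-bernoulli-distinct-left-right} and the description behind points (1)--(3): using Lemmas~\ref{lem.step1} and~\ref{lem.step2} together with the group-theoretic input of Lemma~\ref{lem.playing-with-words} and the Kurosh theorem (the analogue of Assumptions~\ref{my-assum} for the family $\Lambda_{a,b,k}$ in the present $K$-dependent setting), one finds, after a unitary conjugacy and passing to a direct summand, a finite index subgroup $\Lambda_2\subg\Gamma_{L_1}$, an injective group homomorphism $\delta\colon\Lambda_2\recht\Gamma_{L_2}$ with $\delta\bigl(\Lambda_2\cap\Delta(\Lambda_{a,b,k})\bigr)\prec_{\Gamma_{L_2}}\Delta(\Lambda_{a,b,k})$ for all $(a,b,k)$, and a trace preserving embedding $\psi$ of the base algebra into itself, out of which $\theta$ is reconstructed (no flip occurs, which is exactly why $\Gamma_0$ is taken to be a proper subgroup). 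The step I expect to be the main obstacle is to squeeze finite index out of this picture: one must show that if $\theta$ has finite index then $\psi$ is onto and $\delta(\Lambda_2)$ has finite index in $\Gamma_{L_2}$ — the point being that a ``missing'' part of $\psi$ or a positive-codimension image of $\delta$ would force the orthogonal complement of $\theta(M(\Gamma_0,\Gamma_{L_1},\al))$ inside $L^2(M(\Gamma_0,\Gamma_{L_2},\al)^t)$ to have infinite left dimension. Once this is known, the rigidity already present in the analysis (the analogue of property~\ref{six} and of assumption~\ref{five}, via Lemma~\ref{lem.playing-with-words}) forces $\delta$ to agree on a finite index subgroup with $\Ad x$ for a single $x\in\Gamma$; then $x\Lambda_2x^{-1}=\delta(\Lambda_2)$ has finite index in $\Gamma_{L_2}$ while $\Lambda_2$ has finite index in $\Gamma_{L_1}$, so $x\Gamma_{L_1}x^{-1}\cap\Gamma_{L_2}$ has finite index in both. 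Finally, applying $\pi\colon\Gamma\recht K$ and using $\Gamma_{L_i}=\pi^{-1}(L_i)$ and $\Ker\pi\subseteq\Gamma_{L_i}$, this descends to the assertion that $\pi(x)L_1\pi(x)^{-1}\cap L_2$ has finite index in both $\pi(x)L_1\pi(x)^{-1}$ and $L_2$; so $g:=\pi(x)^{-1}\in K$ is the element we want, the symmetry of the condition under $g\mapsto g^{-1}$ matching the symmetry of virtual isomorphism.
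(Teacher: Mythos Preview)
Your ``if'' direction is correct and essentially identical to the paper's. Your ``only if'' direction, however, takes a different and substantially harder route than the paper, and the obstacle you yourself flag is real.

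The paper does not re-analyze an embedding $M(L_1,1)\recht M(L_2,1)^t$ via Lemmas~\ref{lem.step1}--\ref{lem.step2}. Instead it simply reuses the bimodule decomposition already obtained in the proof of part~2. There one showed, using Lemma~\ref{lem.all-embedding-trivial-general} and a tensoring trick with the canonical $N_2$-$M$-bimodule $\cK$, that as an $M_1$-$M_1$-bimodule any $\bim{N_1}{\cH}{N_2}$ with finite right dimension splits as $\cH=\bigoplus_{g\in I}\cH^g$ with $I=L_1IL_2\subset K$, each $\cH^g$ a $d_g$-multiple of the automorphism bimodule $\cL^g$, and $\dim_{-N_2}(\cH)=\sum_{gL_2\in I/L_2}d_g$. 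For part~4 one now has \emph{both} dimensions finite, so by symmetry $\dim_{N_1-}(\cH)=\sum_{L_1g\in L_1\backslash I}d_g<\infty$ as well; hence both $I/L_2$ and $L_1\backslash I$ are finite, and any $g\in I$ has $g^{-1}L_1g\cap L_2$ of finite index in both $g^{-1}L_1g$ and $L_2$. This is a two-line argument once part~2 is in hand.

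Your proposed route has two difficulties. First, Lemmas~\ref{lem.step1}--\ref{lem.step2} are stated for targets of the form $(A_0)^{\Gamma}\rtimes(\Gamma_0\times\Gamma)$ with the twisted left--right shift, and $M(L_2,1)=A\rtimes(\Gamma_0\times\Gamma_{L_2})$ is \emph{not} of that form: the index set $(\Gamma_0\times\Gamma)/\Delta(\Lambda_{a,b,k})$ is not a single $\Gamma_0\times\Gamma_{L_2}$-orbit when $L_2\neq K$, so you would need to redo those lemmas for a more general generalized-Bernoulli target. Second, even granting that, the step you call the ``main obstacle'' --- deducing surjectivity of $\psi$ and finite index of $\delta(\Lambda_2)$ from finite index of $\theta$ --- is not a formality; you would essentially have to recover the bimodule decomposition anyway to control the left dimension. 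The paper avoids both issues by always passing through the ambient factor $M$ (where Lemma~\ref{lem.all-embedding-trivial-general} gives complete rigidity of $M_1\hookrightarrow M^d$) and working with bimodules rather than with a single embedding.
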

\begin{proof}
Note that $\Gamma$ naturally is the semidirect product $\Gamma = \Gamma_1 \rtimes K$. In this way, we obtain a canonical outer action $\al$ of $K$ on $M_1$ such that $M = M_1 \rtimes K$. Under this identification, we have that $M(L,\om) = M_1 \rtimes_\om L$ whenever $L \subg K$ and $\om \in H^2_\fin(L,\T)$.

Also note that by Lemma \ref{lem.another-group-lemma}, all hypotheses of Lemma \ref{lem.all-embedding-trivial-general} are satisfied, so that all embeddings of $M_1$ into $M^t$ are trivial.

1. This is now a consequence of Lemma \ref{lem.intermediate-subfactors} below.

2. Assume that $t > 0$ and $M(L_1,\om_1) \emb M(L_2,\om_2)^t$. Write $N_i = M(L_i,\om_i)$. We encode the embedding $N_1 \emb N_2^t$ as a bimodule $\bim{N_1}{\cH}{N_2}$ with $\dim_{-N_2}(\cH) = t < \infty$. Take a $d$-dimensional projective representation $\zeta : L_2 \recht \cU(\C^d)$ with associated $2$-cocycle $\om_2$. Denote by $\rho : N_2 \recht M_d(\C) \ot M$ the corresponding canonical embedding given by $\rho(a) = 1 \ot a$ for all $a \in M_1$ and $\rho(u_g) = \zeta(g) \ot u_g$ for all $g \in L_2$. Define $\bim{N_2}{\cK}{M}$ as the bimodule corresponding to $\rho$.

Since $\dim_{-M}(\cK) = d$, the $N_1$-$M$-bimodule $\cH \ot_{N_2} \cK$ has finite right dimension $t d$. Restricting this bimodule to an $M_1$-$M$-bimodule, Lemma \ref{lem.all-embedding-trivial-general} says that it must be a direct sum of the trivial inclusion bimodule $\bim{M_1}{L^2(M)}{M}$. We conclude that the $M_1$-$N_2$-bimodule $\cH \ot_{N_2} \cK \ot_M \overline{\cK}$ is isomorphic with a multiple of $\bim{M_1}{\overline{\cK}}{N_2}$.

Note that $\cK \ot_M \overline{\cK} = M_d(\C) \ot L^2(M)$ so that $\rho$ provides an embedding of the trivial $N_2$-$N_2$-bimodule into $\cK \ot_M \overline{\cK} = M_d(\C) \ot L^2(M)$. We conclude that $\bim{M_1}{\cH}{N_2}$ is contained in a multiple of $\bim{M_1}{\overline{\cK}}{N_2}$. Restricting both to $M_1$-$M_1$-bimodules, it follows that $\bim{M_1}{\cH}{M_1}$ is contained in a multiple of $\bim{M_1}{\overline{\cK}}{M_1}$.

For every $g \in K$, define $\bim{M_1}{\cL^g}{M_1}$ as the irreducible bimodule encoding the automorphism $\al_g \in \Aut(M_1)$, i.e.\ $\cL^g = L^2(M_1)$ and $a \cdot \xi \cdot b = a\xi \al_g(b)$. Then, $\bim{M_1}{\overline{\cK}}{M_1}$ is a direct sum of bimodules of the form $\bim{M_1}{\cL^g}{M_1}$. We have thus shown that $\bim{M_1}{\cH}{M_1}$ is isomorphic with a direct sum of bimodules of the form $\bim{M_1}{\cL^g}{M_1}$. So we uniquely find a subset $I \subset K$ and a decomposition $\cH = \bigoplus_{g \in I} \cH^g$ of $\cH$ into $M_1$-$M_1$-subbimodules such that $\bim{M_1}{\cH^g}{M_1}$ is isomorphic with a direct sum of $d_g \in \N \cup \{+\infty\}$ copies of $\cL^g$.

We now consider the left action by $u_s$, $s \in L_1$. Since $u_s$ implements the automorphism $\al_s$ on $M_1$, we must have $u_s \cdot \cH^g = \cH^{sg}$ for every $g \in I$. Similarly, $\cH^g \cdot u_r = \cH^{gr}$ for all $r \in L_2$. Since $d_g = \dim_{-M_1}(\cH^g) = \dim_{M_1-}(\cH^g)$, it follows that $d_{sgr} = d_g$ for all $g \in I$, $s \in L_1$ and $r \in L_2$. In particular, $I = L_1 I L_2$. It also follows that
$$t = \dim_{-N_2}(\cH) = \sum_{gL_2 \in I/L_2} d_g \; .$$
In particular, $t \in \N$, $d_g < +\infty$ for all $g \in I$ and $|I/L_2| < \infty$. Fix any $g \in I$. Since $I/L_2$ is a finite set on which $L_1$ acts, the stabilizer of $g L_2$ has finite index in $L_1$. This stabilizer equals $L_1 \cap g L_2 g^{-1}$.

To prove the converse statement, write for brevity $M(L)$ instead of $M(L,1)$ with respect to the trivial $2$-cocycle $1 \in H^2_\fin(L,\T)$. When $L = L_1 \cap g L_2 g^{-1}$ has finite index in $L_1$, note that $M(L_1) \semb M(L)$ and, via $\Ad u_g^*$, also $M(L) \emb M(L_2)$. We also have $M(L_1,\om_1) \semb M(L_1)$ and $M(L_2) \semb M(L_2,\om_2)$. Thus, $M(L_1,\om_1) \semb M(L_2,\om_2)$.

3. Assume that $t > 0$ and $M(L_1,\om_1) \cong M(L_2,\om_2)^t$. Again write $N_i = M(L_i,\om_i)$ and encode the isomorphism $N_1 \cong N_2^t$ as a bimodule $\bim{N_1}{\cH}{N_2}$ with $\dim_{-N_2}(\cH) = t$ and $\dim_{N_1-}(\cH) = 1/t$. Make the decomposition $\cH = \bigoplus_{g \in I} \cH^g$ as in the proof of 2. Since
$$t = \dim_{-N_2}(\cH) = \sum_{gL_2 \in I/L_2} d_g \quad\text{and}\quad 1/t = \dim_{N_1-}(\cH) = \sum_{L_1 g \in L_1 \backslash I} d_g \; ,$$
it follows that $t = 1$, $d_g = 1$ for all $g \in I$ and both $I/L_2$ and $L_1 \backslash I$ are singletons. So, we find $g_0 \in I$ such that $I = g_0 L_2 = L_1 g_0$. Thus, $g_0^{-1} L_1 g_0 = L_2$.

Since $d_g = 1$ for all $g \in I$, we can identify $\cH = L^2(M_1) \ot \ell^2(I)$ such that the right action by $u_r$, $r \in L_2$, is given by
$$(a \ot \delta_{g_0 g}) \cdot u_r = \om_2(g,r) \, (a \ot \delta_{g_0 gr}) \quad\text{for all $a \in M_1$, $g,r \in L_2$.}$$
Define $\gamma : L_1 \recht \T$ such that the left action by $u_s$, $s \in L_1$, satisfies
$$u_s \cdot (a \ot \delta_{g_0}) = \gamma(s) \, (\al_s(a) \ot \delta_{s g_0}) \quad\text{for all $a \in M_1$, $s \in L_1$.}$$
Since $u_s u_{s'} = \om_1(s,s') u_{ss'}$ for all $s,s' \in L_1$, we get that
$$u_s \cdot (a \ot \delta_{s' g_0}) = \overline{\gamma(s')} \, \om_1(s,s') \, \gamma(ss') \, (\al_s(a) \ot \delta_{ss'g_0}) \quad\text{for all $a \in M_1$, $s \in L_1$.}$$
Since the left and right actions commute, we also get that
$$u_s \cdot (a \ot \delta_{s' g_0}) = \gamma(s) \, \om_2(g_0^{-1}sg_0,g_0^{-1}s' g_0) \, (\al_s(a) \ot \delta_{ss'g_0}) \quad\text{for all $a \in M_1$, $s \in L_1$.}$$
Comparing both, it follows that $\om_1$ is cohomologous to $\om_2 \circ \Ad g_0^{-1}$.

The converse statement is trivial.

4. Assume that $N_1 = M(L_1,\om_1)$ and $N_2 = M(L_2,\om_2)$ are virtually isomorphic, through a bimodule $\bim{N_1}{\cH}{N_2}$ with $\dim_{-N_2}(\cH) < +\infty$ and $\dim_{N_1-}(\cH) < +\infty$. Making the decomposition $\cH = \bigoplus_{g \in I} \cH^g$ as in the proof of 2, we get that $L_1 \backslash I$ and $I / L_2$ are finite sets. Taking any $g \in I$, we find that $g^{-1} L_1 g \cap L_2$ has finite index in both $g^{-1} L_1 g$ and $L_2$.

Conversely, first note that $M(L_i,\om_i)$ and $M(L_i,1)$ are virtually isomorphic. If $g^{-1} L_1 g \cap L_2$ has finite index in both $g^{-1} L_1 g$ and $L_2$, it follows immediately that $M(L_1,1)$ and $M(L_2,1)$ are virtually isomorphic.
\end{proof}

We can now prove Theorem \ref{thm.complete-intervals} as stated in the introduction. After that, we formulate a seemingly weaker result in Proposition \ref{prop.complete-interval-concrete-for-completion}, but with a much more concrete construction of the family of II$_1$ factors.

\begin{proof}[{Proof of Theorem \ref{thm.complete-intervals}}]
Applying Theorem \ref{thm.realizing-lattices} to the countable group $\Lambda = A_\infty$ of all finite even permutations of $\N$, we find a group $K$ with subgroup $L \subg K$ such that the following properties hold.
\begin{enumlist}
\item The intermediate subgroup lattice $\{L_1 \mid L \subg L_1 \subg K\}$ is isomorphic with $(I,\leq)$.
\item Every intermediate subgroup $L \subg L_1 \subg K$ is freely generated by isomorphic copies of $A_\infty$.
\item For every $g \in K$, we have that $g \in L \vee gLg^{-1}$.
\end{enumlist}
Since $K$ is a free product of amenable groups, Theorem \ref{thm.complete-intervals-construction} applies. By property~2, the intermediate subgroups $L \subg L_1 \subg K$ have no nontrivial finite dimensional unitary representations. In particular, these groups do not have proper finite index subgroups. In Theorem \ref{thm.complete-intervals-construction}, the embedding relations $\emb$ and $\semb$ then amount to the relation $g L_1 g^{-1} \subg L_2$ for some $g \in K$. If this relation holds, property~3 and the fact that $L \subg L_1$ and $L \subg L_2$ imply that
$$g \in L \vee g L g^{-1} \subg L \vee g L_1 g^{-1} \subg L \vee L_2 = L_2 \; ,$$
so that $L_1 \subg L_2$. This concludes the proof.
\end{proof}

\begin{proposition}\label{prop.complete-interval-concrete-for-completion}
Let $(I,\leq)$ be any countable partially ordered set and denote by $(\Ibar,\leq)$ its completion given by all downward closed subsets of $(I,\leq)$. There is a concrete construction of II$_1$ factors $(M_i)_{i \in \Ibar}$ with separable predual satisfying the conclusions of Theorem \ref{thm.complete-intervals}.
\end{proposition}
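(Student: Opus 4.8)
The plan is to deduce the proposition from Theorem~\ref{thm.complete-intervals-construction}, by producing the group-theoretic input of that theorem in a way that depends explicitly and functorially on the countable poset $(I,\leq)$. Concretely, it suffices to construct a countable group $K = K(I)$ that is a free product of amenable groups, together with a subgroup $L = L(I) \subg K$, such that: (1)~$L$ corresponds to $\varnothing$ and the lattice of intermediate subgroups $\{L_1 \mid L \subg L_1 \subg K\}$ is order isomorphic, via an explicit map $\overline J \mapsto L_{\overline J}$, to $(\Ibar,\leq)$; (2)~every intermediate subgroup is a free product of copies of $A_\infty$, hence has no proper finite index subgroup and no nontrivial finite dimensional unitary representation, so that $H^2_\fin(L_1,\T) = \{1\}$; (3)~$g \in L \vee g L g^{-1}$ for every $g \in K$. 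Given such $(K,L)$, one feeds $K$ in as the group $K$ of the construction \eqref{eq.my-new-action} preceding Theorem~\ref{thm.complete-intervals-construction} and sets $M_{\overline J} = M(L_{\overline J},1)$ for $\overline J \in \Ibar$. If $N$ is a II$_1$ factor with $M_i \semb N$ and $N \semb M_j$ for some $i,j\in\Ibar$, then a fortiori $M(\{e\},1) \semb N \semb M(K,1)$, so by Theorem~\ref{thm.complete-intervals-construction}(1) we have $N \scong M(L',\om')$ for some $L' \subg K$; using (2) (so that the relations $\prec_K$ coming from $M_i \semb N$ and $N \semb M_j$ upgrade to conjugacy-inclusions) together with (3) (to turn those conjugacy-inclusions between subgroups containing a conjugate of $L$ into honest inclusions), exactly as in the proof of Theorem~\ref{thm.complete-intervals}, one concludes that $L'$ is conjugate to an intermediate subgroup $L_{\overline J}$ with $i \le \overline J \le j$ and that $\om'$ is trivial by (2), hence $N \scong M_{\overline J}$; parts~(3) and~(4) of Theorem~\ref{thm.complete-intervals-construction} give the corresponding refinements for $\scong$ and $\cong$. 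As $K(I)$ is countable, all the $M_{\overline J}$ have separable predual.

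The construction of $(K,L)$ is an explicit instance of the one behind Theorem~\ref{thm.realizing-lattices}. The completion $(\Ibar,\leq)$ is an algebraic lattice whose compact elements are precisely the finitely generated down-sets $\downarrow F = \{k \in I \mid k \le i \text{ for some } i \in F\}$ with $F \subg I$ finite; under union these form a countable join-semilattice in which the principal down-sets $\downarrow\{i\}$, $i \in I$, are the join-irreducible generators, with $\downarrow\{i\} \le \downarrow\{j\}$ if and only if $i \le j$. Because this join-semilattice, together with all join-covering relations among its elements, is visible directly from the Hasse diagram of $I$, the construction of \cite{Rep04} can be carried out for it \emph{without} the transfinite recursion required for a general separable algebraic lattice: one takes $K = K(I)$ to be a free product of copies of $A_\infty$ indexed by a set built explicitly from the covering data of $I$ (one generator $A_\infty^{(i)}$ for each $i \in I$, together with auxiliary factors attached to the order relations) and $L = L(I) \subg K$ a subgroup generated by explicit elements and conjugates of the $A_\infty^{(i)}$ that encode the order, arranged so that the intermediate subgroup attached to a down-set $\overline J$ is $L_{\overline J} = \langle A_\infty^{(i)} \mid i \in \overline J \rangle$ (up to the identification built into the construction). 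An order-embedding $I \hookrightarrow I'$ then induces in the obvious way an embedding $K(I) \hookrightarrow K(I')$ carrying $L(I)$ into $L(I')$ and $L_{\overline J}$ into $L_{\overline J'}$ whenever $\overline J$ maps into $\overline J'$, which is the functoriality asserted in the statement; and each $K(I)$ is a free product of copies of the locally finite, hence amenable, group $A_\infty$, so that Theorem~\ref{thm.complete-intervals-construction} applies.

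The main work, and the main obstacle, is to verify property~(1): that the intermediate subgroups of $L \subg K$ are \emph{exactly} the $L_{\overline J}$ for $\overline J \in \Ibar$. That each $L_{\overline J}$ is intermediate, that distinct down-sets give distinct subgroups, and that $\overline J \subg \overline J' \iff L_{\overline J} \subg L_{\overline J'}$, all follow routinely from the Kurosh normal form in the free product. The delicate point is the converse: an arbitrary subgroup $L_1$ with $L \subg L_1 \subg K$ must be shown to coincide with some $L_{\overline J}$ — a priori $L_1$ could carry proper subgroups of the $A_\infty^{(i)}$ as free factors, conjugated copies of them, or extra free-group parts, and it is precisely the choice of $L$ (and of the auxiliary factors) that forbids this. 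I expect this to be established by a reduced-word analysis in the free product, exploiting that $A_\infty$ is simple and has no proper finite index subgroup — so that any ``partial'' copy of an $A_\infty^{(i)}$ surviving in $L_1$ is forced to be the whole factor — together with the Kurosh subgroup theorem, in the same spirit as \cite{Rep04}. Properties~(2) and~(3) are then read off the same normal form, and with (1)--(3) in hand the proposition follows from the reduction described above.
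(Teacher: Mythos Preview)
Your reduction to Theorem~\ref{thm.complete-intervals-construction} via a pair $(K,L)$ with properties (1)--(3) is correct, and so is the bookkeeping with $H^2_\fin$ and finite index. The gap is that you never actually produce $(K,L)$. You invoke the construction of \cite{Rep04}/Theorem~\ref{thm.realizing-lattices} and assert that for the countable join-semilattice of finitely generated down-sets it can be carried out ``explicitly, without transfinite recursion'', with $L_{\overline J} = \langle A_\infty^{(i)} \mid i \in \overline J\rangle$. But that construction is an iteration over the elements of a growing \emph{group}, not over the elements of the poset (see Lemmas~\ref{lem.step1-valuation} and~\ref{lem.step2-valuation}): even when $I_0$ is countable one has to build an $\omega$-tower $K_0 \subset K_1 \subset \cdots$, each step adjoining two new free factors for every element of the previous group and redefining the valuation. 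This is precisely the ``much less explicit inductive procedure'' the paper warns about before Theorem~\ref{thm.complete-intervals}; it is neither concrete nor functorial in $I$ (it depends on a choice of enumeration of group elements at each stage). You also leave the crucial property~(1)---that \emph{every} intermediate subgroup is some $L_{\overline J}$---as ``I expect this to be established by a reduced-word analysis''; for the Repnitski\v{\i} group this follows only from the valuation machinery, not from Kurosh and simplicity of $A_\infty$ alone.

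The paper takes a completely different route that avoids free products and the inductive valuation construction altogether. It builds $K$ as a semidirect product $V \rtimes K_0$, where $V = (\cK^2)^{(I)}$ for a countable locally finite field $\cK$ and $K_0 \subg \GL(V)$ is generated by explicit block upper-triangular copies of $\SL_4(\cK)$ attached to pairs $a<b$ in $I$. Here $K$ is locally finite (hence amenable), $L = K_0$, and the intermediate subgroups are shown to be exactly $V(J) \rtimes K_0$ for $J$ downward closed, by a short linear-algebra argument using that $\{Av-v : A \in \SL_2(\cK)\}$ spans $\cK^2$. No analogue of property~(3) is needed: since each $V(J)$ is normal in $K$, the conjugacy relation $gL(J_1)g^{-1} \subg L(J_2)$ immediately forces $V(J_1) \subg V(J_2)$. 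Everything is written down in closed form from $(I,\leq)$, which is what ``concrete'' means here.
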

\begin{proof}
Choose any countably infinite locally finite field $\cK$. Define the $\cK$-vector space $V = (\cK^2)^{(I)}$ as the direct sum of copies of $\cK^2$ indexed by $I$. Note that $V$ is countable. For every $a \in I$, we denote by $V_a \subseteq V$ the natural direct summand in position $a \in I$ and we denote by $V'_a$ the direct sum of all $V_b$ with $b \neq a$, so that $V = V_a \oplus V'_a$. Whenever $v \in \cK^2$ and $a \in I$, we have the natural vector $v_a \in V_a$.

For every $a \in I$ and $A \in \SL_2(\cK)$, define $\pi_a(A) \in \GL(V)$ by $\pi_a(A)(v_a) = (A(v))_a$ for all $v \in \cK^2$ and $\pi_a(A)(w) = w$ for all $w \in V'_a$. Define the group $T \subg \SL_4(\cK)$ by
$$T = \bigl\{ \bigl(\begin{smallmatrix} A & X \\ 0 & B\end{smallmatrix}\bigr) \bigm| A,B \in \SL_2(\cK), X \in M_2(\cK) \bigr\} \; .$$
Whenever $a,b \in I$ and $a \neq b$, denote by $\pi_{a,b} : T \recht \GL(V)$ the natural representation acting in coordinates $a$ and $b$, given by
$$\pi_{a,b}\bigl(\begin{smallmatrix} A & X \\ 0 & B\end{smallmatrix}\bigr) = S \quad\text{where}\quad S(u) = u \;\;\text{if $u \in V'_a \cap V'_b$,}\;\; S(v_a + w_b) = (A(v) + X(w))_a + (B(w))_b \; .$$
Note that when $X = 0$, then $S = \pi_a(A) \, \pi_b(B)$. We define the countable subgroup $K_0 \subg \GL(V)$ as the subgroup generated by all $\pi_{a,b}(T)$ with $a,b \in I$ and $a < b$. Note that $\pi_a(\SL_2(\cK)) \subg K_0$ for all $a \in I$.
We view $V$ as a countable abelian group and $K_0 \actson V$ acting by automorphisms. Put $K = V \rtimes K_0$.

For every subset $J \subseteq I$, denote by $V(J) \subseteq V$ the direct sum of all $V_a$ with $a \in J$. We claim that the intermediate subgroups $K_0 \subg L \subg K$ are precisely given by $V(J) \rtimes K_0$ where $J \subseteq I$ is a downward closed set. To prove this claim, fix such an intermediate subgroup $L$. Then, $L = V_0 \rtimes K_0$ where $V_0 \subg V$ is an additive subgroup that is globally invariant under the action of $K_0$.

Note that if $v \in \cK^2$ is any nonzero vector, then the additive subgroup of $\cK^2$ generated by $\{A(v) - v \mid A \in \SL_2(\cK)\}$ equals $\cK^2$. Indeed, let $w \in \cK^2$ be an arbitrary vector and take one of the two standard basis vectors $e \in \cK^2$ such that $w+e \neq 0$. Then choose $A,B \in \SL_2(\cK)$ such that $A(e) = w+e$ and $B(v) = e$. We get that
$$(AB(v) - v) - (B(v) - v) = AB(v) - B(v) = (w+e) - e = w \; .$$

Denote by $p_a : V \recht \cK^2$ the natural projection map, so that $v = \sum_{a \in I} (p_a(v))_a$ for all $v \in V$. If $v \in V_0$ and $a \in I$, we have
$$V_0 \ni \pi_a(A)(v) - v = (A(p_a(v)) - p_a(v))_a$$
for all $A \in \SL_2(\cK)$. The observation in the previous paragraph thus implies that $V_a \subset V_0$ whenever $p_a(V_0) \neq \{0\}$. Defining $J \subseteq I$ as the set of all $a \in I$ such that $p_a(V_0) \neq \{0\}$, we get that $V_0 = V(J)$.

When $a \leq b$, we have $\pi_{a,b}(T)(V_b) = V_a$. It follows that $J$ is downward closed. So the claim is proven.

Note that $K$ is a locally finite group. In particular, $K$ is amenable. Since $\SL_2(\cK)$ has no nontrivial finite dimensional unitary representations, the same holds for the group $T$. Then, every finite dimensional unitary representation of $K_0$ has to be the identity on each copy $\pi_{a,b}(T)$ of $T$, and hence on $K_0$. It then follows that also the intermediate subgroups $V_0 \rtimes K_0$ have no nontrivial finite dimensional unitary representations.

For every downward closed set $J \subseteq I$, write $L(J) = V(J) \rtimes K_0$. We finally prove that $g L(J_1) g^{-1} \subg L(J_2)$ for some $g \in K$ if and only if $J_1 \subseteq J_2$. One implication being trivial, assume that $g L(J_1) g^{-1} \subg L(J_2)$. Since $V(J_1) = g V(J_1) g^{-1} \subg g L(J_1) g^{-1} \subg L(J_2)$, we get that $V(J_1) \subg V(J_2)$ and thus, $J_1 \subseteq J_2$.

We can apply Theorem \ref{thm.complete-intervals-construction} and the proof is complete.
\end{proof}

\begin{lemma}\label{lem.another-group-lemma}
The groups $\Lambda_{a,b,k} < \Gamma_0 < \Gamma_1 < \Gamma$ introduced in Theorem \ref{thm.complete-intervals-construction} satisfy all the assumptions in \ref{my-assum}.
\end{lemma}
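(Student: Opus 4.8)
The plan is to verify, one at a time, the five conditions of Assumptions~\ref{my-assum} for the chain $\Lambda_{a,b,k} < \Gamma_0 < \Gamma_1 \subg \Gamma$, proceeding just as in the proof of Lemma~\ref{lem.first-group-lemma}, but now invoking the \emph{second} statement of Lemma~\ref{lem.playing-with-words} (tailored to subgroups $\langle a\rangle\ast k\langle b\rangle k^{-1}$ with $a,b$ in one free factor and $k$ in the other) and carrying the extra parameter $k\in K\setminus\{e\}$. The structural input is that $\Gamma_1=\Ker(\pi\colon G\ast K\recht K)$ is, by Bass--Serre/Kurosh theory, the free product $\bigast_{k\in K}kGk^{-1}$, and that $\Gamma_0=\Gamma_1\cap(G_1\ast K)=\Ker(G_1\ast K\recht K)=\bigast_{k\in K}kG_1k^{-1}$; in particular each is a free product of copies of $A_\infty$, and $\Lambda_{a,b,k}=\langle a\rangle\ast k\langle b\rangle k^{-1}$ is contained in the subproduct $\langle G_1,kG_1k^{-1}\rangle\cong A_\infty\ast A_\infty$ spanned by the $e$-th and $k$-th factors. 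As in Example~\ref{example-of-assumptions}, I replace the family $(\Lambda_{a,b,k})$ by an enumeration $(\Lambda_n)$ of the \emph{distinct} subgroups it contains (note $\Lambda_{a,b,k}=\Lambda_{a,b^{-1},k}$). Condition~\ref{one} is immediate: $\Gamma_1\lhd\Gamma$ since it is a kernel, $\Gamma_0\subsetneq\Gamma_1$ because any $\sigma\in G\setminus G_1$ lies in $\Gamma_1$ but not in $G_1\ast K$, and each $\Lambda_n\cong\Z/2\Z\ast\Z/3\Z$ contains $\F_2$, hence is nonamenable. Condition~\ref{two} follows from the fact (recalled in Lemma~\ref{lem.first-group-lemma}) that $A_\infty$ has no nontrivial finite dimensional unitary representation, together with the observation that a finite dimensional unitary representation of a free product is trivial as soon as it is trivial on every free factor; so neither $\Gamma_0$ nor $\Gamma_1$ has such a representation.

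Conditions~\ref{three} and \ref{four} are statements about reduced words in $G\ast K$ and follow from the second part of Lemma~\ref{lem.playing-with-words} applied with $\Lambda=\Lambda_{a',b',k'}$. Its first assertion gives \ref{four} verbatim: if $g\notin\Lambda_{a,b,k}$ then $g\Lambda_{a,b,k}g^{-1}\cap\Lambda_{a,b,k}$ is finite, hence amenable. For \ref{three}, suppose $\Lambda_{a,b,k}$ and $\Lambda_{a',b',k'}$ are distinct subgroups and $g\Lambda_{a,b,k}g^{-1}\cap\Lambda_{a',b',k'}$ is infinite for some $g$; the second assertion of Lemma~\ref{lem.playing-with-words}(2), with $a_1=a$, $b_1=b$, $w=k$, writes $k=u^{-1}w_1k'v$ with $u,v\in G$ and $w_1\in\langle a'\rangle\ast\langle k'b'(k')^{-1}\rangle$. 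Since the left side is a single $K$-letter while the reduced length of $u^{-1}w_1k'v$ is at least one plus the length of $w_1$ (positive if $w_1\neq e$), we must have $w_1=e$ and $u=v=e$, whence $k=k'$, $a=a'$ and $b'=b^{\pm1}$, so $\Lambda_{a,b,k}=\Lambda_{a',b',k'}$ — a contradiction. Hence the intersection is finite, giving \ref{three}.

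The substance of the proof, and the step I expect to be the main obstacle, is condition~\ref{five}. Let $\delta\colon\Gamma_i\recht\Gamma$ ($i\in\{0,1\}$) be injective with $\delta(\Lambda_n)\prec_\Gamma\Lambda_n$ for all $n$; since $\prec_\Gamma$ is transitive and unaffected by conjugating its first argument, I may freely replace $\delta$ by $(\Ad y)\circ\delta$. Consider first $i=0$. For each $k\in K\setminus\{e\}$, apply the Kurosh subgroup theorem to the restriction of $\delta$ to $\langle G_1,kG_1k^{-1}\rangle\cong A_\infty\ast A_\infty$: since $A_\infty$ is simple, hence freely indecomposable, and $\delta$ is injective, $\delta$ sends $G_1$ and $kG_1k^{-1}$ into $\Gamma$-conjugates of $G$ or of $K$. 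For each $(a,b)\in\cA\times\cB$, the group $\delta(\Lambda_{a,b,k})$ has a finite-index (thus infinite) subgroup conjugable into $\Lambda_{a,b,k}=\langle a\rangle\ast k\langle b\rangle k^{-1}$, so the second part of Lemma~\ref{lem.playing-with-words} applies: it forces both images to land in $G$ (not $K$), exhibits $\delta(a)$ as a $G$-conjugate of $a$, and constrains the conjugating word attached to the $k$-th factor. Since these constraints must hold for all $(a,b)$ with one and the same word attached to the $k$-th factor, the ``nontrivial middle word'' branch is impossible (that word would have to begin with $kb^{\pm1}k^{-1}$ for every order-$3$ element $b$), and the trivial branch, combined with the triviality of the centralizer of $G_1$ in $G$, shows that a single element of $G$ conjugates $\delta$ so that $\delta(a)=a$ for all $a\in\cA$; Lemma~\ref{lem.elem-permutation} then gives $\delta|_{G_1}=\id$, and feeding this back pins the remaining sign and yields $\delta(kbk^{-1})=kbk^{-1}$ for all $b\in\cB$, hence $\delta|_{kG_1k^{-1}}=\id$. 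As the $\Lambda_{a,b,k}$ generate $\Gamma_0$, we conclude $\delta=\id_{\Gamma_0}$, i.e.\ the original $\delta$ is inner.

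For $i=1$, restrict $\delta$ to $\Gamma_0$, apply the case just proved to get $\delta|_{\Gamma_0}$ inner, and conjugate so that $\delta|_{\Gamma_0}=\id$. Then for each free factor $kGk^{-1}$ of $\Gamma_1=\bigast_{k}kGk^{-1}$, $\delta$ fixes the nontrivial subgroup $kG_1k^{-1}$; since $\delta(kGk^{-1})\cong A_\infty$ is conjugate (by Kurosh) into $G$ or $K$ and its conjugate must meet $kGk^{-1}$ nontrivially, it is in fact contained in $kGk^{-1}$, and Lemma~\ref{lem.elem-permutation} (a homomorphism $G\recht G$ restricting to the identity on $G_1$ is the identity) forces $\delta|_{kGk^{-1}}=\id$; hence $\delta=\id_{\Gamma_1}$. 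The delicate bookkeeping — running the Kurosh analysis for these free products and simultaneously using Lemma~\ref{lem.playing-with-words} over all triples $(a,b,k)$ to eliminate both the maps into $K$ and the non-inner word branches — is exactly the part that needs care; everything else runs parallel to the proof of Lemma~\ref{lem.first-group-lemma}.
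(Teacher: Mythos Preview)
Your proof is correct and follows essentially the same approach as the paper's: both verify conditions \ref{one}--\ref{four} via the free-product descriptions $\Gamma_1=\bigast_{k\in K}kGk^{-1}$, $\Gamma_0=\bigast_{k\in K}kG_1k^{-1}$ together with Lemma~\ref{lem.playing-with-words}(2), and handle condition~\ref{five} by combining the Kurosh theorem with Lemma~\ref{lem.playing-with-words}(2) and Lemma~\ref{lem.elem-permutation}. You are a bit more explicit than the paper in deriving condition~\ref{three} (the paper simply asserts it follows from Lemma~\ref{lem.playing-with-words}) and in invoking the trivial centralizer of $G_1$ in $G$ to show all the conjugators $u_k$ coincide---a step the paper leaves implicit.
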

\begin{proof}
Condition \ref{one} is obvious. We can view $\Gamma_1$ as the free product of the subgroups $k G k^{-1}$ with $k \in K$. Similarly, $\Gamma_0$ is the free product of the subgroups $k G_1 k^{-1}$ with $k \in K$. Therefore, $\Gamma_0$ and $\Gamma_1$ have no nontrivial finite dimensional unitary representations, confirming \ref{two}.

The second statement of Lemma \ref{lem.playing-with-words} implies that conditions \ref{three} and \ref{four} hold.

To prove \ref{five}, let $\delta : \Gamma_0 \recht \Gamma$ be an injective group homomorphism such that $\delta(\Lambda_{a,b,k}) \prec_\Gamma \Lambda_{a,b,k}$ for all $a \in \cA$, $b \in \cB$, $k \in K \setminus \{e\}$. After composing $\delta$ with an inner automorphism of $\Gamma$ and using the Kurosh theorem, we find for all $k \in K$, $T_k \in \{G,K\}$, injective group homomorphisms $\delta_k : G_1 \recht T_k$ and elements $w_k \in \Gamma$ such that
$$\delta(g) = \delta_e(g) \;\;\text{for all}\;\; g \in G \quad\text{and}\quad \delta(k g k^{-1}) = w_k \delta_k(g) w_k^{-1} \;\;\text{for all $k \in K \setminus \{e\}$ and $g \in G_1$.}$$
Denoting by $T_k' \in \{G,K\}$ the ``other'' group so that $\{T_k,T'_k\} = \{G,K\}$, we may furthermore assume that for every $k \in K \setminus \{e\}$, either $w_k = e$, or $w_k$ ends with a letter from $T'_k \setminus \{e\}$.

So, for every $a \in \cA$, $b \in \cB$ and $k \in K \setminus \{e\}$, we get that $\delta(\Lambda_{a,b,k})$ equals the free product $\langle \delta_e(a) \rangle \ast w_k \langle \delta_k(b) \rangle w_k^{-1}$. Since $\delta(\Lambda_{a,b,k}) \prec_\Gamma \Lambda_{a,b,k}$, the second statement of Lemma \ref{lem.playing-with-words} implies that $T_e = T_k = G$ and that, because $w_k$ ends with a letter from $K \setminus \{e\}$, we can write $w_k$ as the reduced word $w_k = u_k^{-1} v_k k$, with $u_k \in G$ and $v_k \in \Lambda_{a,b,k}$. Given $k$, we must thus have $v_k \in \Lambda_{a,b,k}$ for all $a \in \cA$, $b \in \cB$. It follows that $v_k = e$. By Lemma \ref{lem.playing-with-words}, we also have that $u_k \delta_e(a) u_k^{-1} = a$ and $\delta_k(b) = b^{\pm 1}$.

For a fixed $k \in K \setminus \{e\}$, we thus get that $\delta_k(b) = b^{\pm 1}$ for all $b \in \cB$. By Lemma \ref{lem.elem-permutation}, $\delta_k$ is the identity homomorphism from $G_1$ to $G$. It also follows that, $\delta_e(g) = u_k^{-1} g u_k$ for all $g \in G$ and $k \in K \setminus \{e\}$. This forces all $u_k$ to be equal to a single $u \in G$. We have now proven that $(\Ad u) \circ \delta$ is the identity homomorphism from $\Gamma_0$ to $\Gamma$.

When $\delta : \Gamma_1 \recht \Gamma$ is an injective group homomorphism such that $\delta(\Lambda_{a,b,k}) \prec_\Gamma \Lambda_{a,b,k}$ for all $a \in \cA$, $b \in \cB$, $k \in K \setminus \{e\}$, it follows from the previous paragraph that after a conjugacy, $\delta$ is the identity on $\Gamma_0$. In particular, $\delta(k G_1 k^{-1}) = k G_1 k^{-1}$ for all $k \in K$. This forces $\delta(k G k^{-1}) \subg k G k^{-1}$. A homomorphism $G \recht G$ that is the identity on $G_1$ must be the identity on $G$. We conclude that $\delta$ is the identity on $k G k^{-1}$ for all $k \in K$. So, $\delta$ is the identity and also condition \ref{five} holds.
\end{proof}

\begin{lemma}\label{lem.intermediate-subfactors}
Let $K$ be a countable group, $P$ a II$_1$ factor and $K \actson P$ an outer action. Let $N$ be a II$_1$ factor. Then the following statements are equivalent.
\begin{enumlist}
\item There exists a $d \in \N$ such that $N$ is stably isomorphic with an intermediate subfactor of $1 \ot P \subseteq M_d(\C) \ot (P \rtimes K)$.
\item There exists a subgroup $L \subg K$ and $\om \in H^2_\fin(L,\T)$ (see Notation \ref{not.H2fin}) such that $N$ is stably isomorphic with $P \rtimes_\om L$.
\end{enumlist}
\end{lemma}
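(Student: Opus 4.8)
The plan is to prove both implications by a careful bimodule/crossed-product analysis of intermediate subfactors of $1 \ot P \subseteq M_d(\C) \ot (P \rtimes K)$, using the outerness of the action $K \actson P$. Throughout I write $\tilde M = P \rtimes K$ and identify $P$ with $1 \ot P \subseteq M_d(\C) \ot \tilde M$.

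First I would prove (2) $\Rightarrow$ (1), which is the easy direction. Given $L \subg K$ and $\om \in H^2_\fin(L,\T)$, pick a finite dimensional projective representation $\gamma : L \recht \cU(\C^n)$ with $\gamma(g)\gamma(h) = \om(g,h)\gamma(gh)$. Define $\theta : P \rtimes_\om L \recht M_n(\C) \ot \tilde M$ by $\theta(a) = 1 \ot a$ for $a \in P$ and $\theta(u_g) = \gamma(g) \ot v_g$ for $g \in L$, where $v_g$ denotes the canonical unitaries in $\tilde M = P \rtimes K$. One checks that $\theta$ is a well-defined unital normal $*$-homomorphism since the $\om$-twisting of the $u_g$ exactly matches the $\gamma(g)$-twisting, and that the image of $\theta$ is an intermediate subfactor of $1 \ot P \subseteq M_n(\C) \ot \tilde M$. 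Since $N$ is assumed stably isomorphic with $P \rtimes_\om L$, it follows that $N$ is stably isomorphic with this intermediate subfactor, proving (1) with $d = n$ (up to enlarging $d$ by a further ampliation, which is harmless).

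The substantial direction is (1) $\Rightarrow$ (2). So suppose $P \subseteq Q \subseteq M_d(\C) \ot \tilde M$ with $Q$ a factor and $N \scong Q$. The key tool is the bimodule decomposition of $L^2(\tilde M)$ over $P$: since $K \actson P$ is outer, the $P$-$P$-bimodule $L^2(\tilde M) = \bigoplus_{g \in K} L^2(P) v_g$ decomposes as $\bigoplus_{g \in K} \bim{P}{\cL^g}{P}$, where $\cL^g$ is the irreducible bimodule encoding the automorphism $\al_g^{-1}$ (or $\al_g$, depending on conventions), and these bimodules are pairwise non-isomorphic precisely because the action is outer. Consequently, over $M_d(\C) \ot \tilde M$ restricted to $P$ we get $M_d(\C) \ot L^2(\tilde M) \cong \bigoplus_{g \in K} (\cL^g)^{\oplus d}$. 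Now $L^2(Q) \subseteq M_d(\C) \ot L^2(\tilde M)$ is a $P$-$P$-subbimodule, so it must be of the form $\bigoplus_{g \in S} (\cL^g)^{\oplus m_g}$ for some subset $S \subseteq K$ and multiplicities $m_g \in \{1,\ldots,d\}$. Because $Q$ is an algebra containing $P$ with $L^2(Q) \cdot L^2(Q) \subseteq L^2(Q)$ and $L^2(Q)^* = L^2(Q)$, and because $\cL^g \ot_P \cL^h \cong \cL^{gh}$, the set $S$ must be a subgroup $L \subg K$; call it $L$. Moreover $e \in S$ forces $m_e \geq 1$, and since $P \subseteq Q$ is an inclusion of factors with $P' \cap Q$ finite dimensional (as $Q \subseteq M_d(\C) \ot \tilde M$ and $P' \cap (M_d(\C) \ot \tilde M) = M_d(\C) \ot 1$), in fact $m_e = \dim(P' \cap Q)$ is just recording the type-I summand.

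The remaining point, which I expect to be the main obstacle, is to identify the algebra structure of $Q$ as a twisted crossed product $P \rtimes_\om L$ with $\om$ in the class $H^2_\fin(L,\T)$. I would argue as follows: for each $g \in L$, the subbimodule $(\cL^g)^{\oplus m_g} \subseteq L^2(Q)$ consists of elements of the form $T \cdot v_g$ with $T \in M_d(\C) \ot P$ satisfying the appropriate intertwining relation; choosing a partial isometry basis one obtains unitaries $w_g \in M_d(\C) \ot \tilde M$ of the form $w_g = W_g \ot v_g$ with $W_g \in M_{m_g}(\C) \subseteq M_d(\C)$ (after cutting by the appropriate projections), such that $Q$ is generated by $P$ and the $w_g$. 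The relations $w_g w_h \in \C^* w_{gh} \cdot P$ forced by factoriality and the finite multiplicities then yield $W_g W_h = \om(g,h) W_{gh}$ for a scalar $2$-cocycle $\om : L \times L \recht \T$, and since all $W_g$ live in the finite dimensional algebra $M_d(\C)$, the representation $g \mapsto W_g$ is a finite dimensional projective representation of $L$, so $\om \in H^2_\fin(L,\T)$ by definition. The map $P \rtimes_\om L \recht Q$ sending the generating unitaries to the $w_g$ is then a surjective normal $*$-homomorphism between factors, hence an isomorphism onto a corner, giving $Q \scong P \rtimes_\om L$ and therefore $N \scong P \rtimes_\om L$. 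The delicate bookkeeping here is keeping track of the cut-down projections $p_g$ in $M_d(\C)$ supporting each $w_g$ and checking that, after a unitary conjugacy making $p_e$ minimal and the $W_g$ genuinely unitary on a fixed $\C^n$, the cocycle $\om$ is well-defined up to a coboundary independent of these choices; this is exactly the normalization built into Notation \ref{not.H2fin}.
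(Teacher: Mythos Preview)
Your overall strategy matches the paper's: decompose the intermediate subfactor $Q$ along the Fourier/bimodule decomposition of $P \rtimes K$ over $P$, extract for each $g \in K$ a finite-dimensional subspace $V_g \subseteq M_d(\C)$ (your multiplicity space, $m_g = \dim V_g$), observe that $L = \{g : V_g \neq 0\}$ is a subgroup, and then read off a projective representation. The direction (2) $\Rightarrow$ (1) is handled exactly as in the paper.

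The gap is in what you call the ``delicate bookkeeping'', which is in fact the heart of the argument and not a normalization issue. Two points. First, the reduction you need is not a unitary conjugacy but a \emph{compression}: the space $V_e = B$ is a unital $*$-subalgebra of $M_d(\C)$ (indeed $B \ot 1 = P' \cap Q$), and one passes from $Q$ to the stably isomorphic $(p \ot 1) Q (p \ot 1)$ for a \emph{minimal} projection $p \in B$. Only after this compression does one have $B = \C 1$. Second, you have not explained why, after this reduction, each $V_g$ is spanned by a single unitary. The mechanism is that $V_g V_h^* \subseteq V_{gh^{-1}}$ (from the algebra structure of $Q$), so in particular $V_g V_g^* \subseteq V_e = \C 1$ and $V_g^* V_g \subseteq \C 1$; hence any nonzero element of $V_g$ is a scalar multiple of a unitary $\pi(g) \in \cU(\C^d)$ and $\dim V_g = 1$. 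Then $V_g V_h \subseteq V_{gh}$ forces $\pi(g)\pi(h) = \om(g,h)\pi(gh)$ for a scalar $\om$, and since $\pi$ takes values in $\cU(\C^d)$ one has $\om \in H^2_\fin(L,\T)$. Your formulation with ``$W_g \in M_{m_g}(\C)$'' and ``$w_g w_h \in \C^* w_{gh} \cdot P$ forced by factoriality'' does not go through before the compression, precisely because the $V_g$ need not be one-dimensional when $B \neq \C 1$.
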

\begin{proof}
2 $\Rightarrow$ 1. If $L \subg K$ is a subgroup and $\om \in H^2_\fin(L,\T)$, choose a finite dimensional projective representation $\pi : L \recht \cU(\C^d)$ such that $\pi(g) \pi(h) = \om(g,h) \pi(gh)$ for all $g,h \in L$. Define the embedding
$$\theta : P \rtimes_\om L \recht M_d(\C) \ot (P \rtimes K) : \theta(a u_g) = \pi(g) \ot a u_g \quad\text{for all $a \in P$, $g \in L$.}$$
Then, $\theta(P \rtimes_\om L)$ is an intermediate subfactor for $1 \ot P \subseteq M_d(\C) \ot (P \rtimes K)$.

1 $\Rightarrow$ 2. Assume that $1 \ot P \subseteq M \subseteq M_d(\C) \ot (P \rtimes K)$ is an intermediate subfactor. We prove that $M$ is stably isomorphic with $P \rtimes_\om L$ for some subgroup $L \subg K$ and $\om \in H^2_\fin(L,\T)$.

Write $N = M_d(\C) \ot (P \rtimes K)$ and $D = M_d(\C) \ot P$. We can view $N$ as the crossed product $N = D \rtimes K$. Denote by $E_D : N \recht D$ the unique trace preserving conditional expectation. For every $g \in K$, define
$$\phi_g : N \recht D u_g : \phi_g(x) = E_D(xu_g^*) u_g \; .$$
We claim that for every $g \in K$, there exists a vector subspace $V_g \subset M_d(\C)$ such that
\begin{equation}\label{eq.my-inclusion-Vg}
\phi_g(M) \subseteq V_g \ot P u_g \subseteq M \; .
\end{equation}
To prove this claim, define
$$V_g = \lspan \bigl\{ (\id \ot \tau)\bigl(E_D(x u_g^*) (1 \ot y^*)\bigr) \bigm| x \in M, y \in P \bigr\} \; .$$
By definition, $\phi_g(M) \subseteq V_g \ot P u_g$. We have to prove that $V_g \ot P u_g \subseteq M$.

Fix $x \in M$, $y \in P$ and write $a = (\id \ot \tau)\bigl(E_D(x u_g^*) (1 \ot y^*)\bigr)$. Then,
$$a = (\id \ot \tau)\bigl(E_D(x (1 \ot \al_{g^{-1}}(y^*)) u_g^*)\bigr) = (\id \ot \tau)(x (1 \ot \al_{g^{-1}}(y^*))u_g^*)\;.$$
Since $P \subseteq P \rtimes K$ is irreducible, it follows that $a \ot u_g$ belongs to the $\|\,\cdot\,\|_2$-closure of the convex hull of the elements
$$(1 \ot b) x (1 \ot \al_{g^{-1}}(y^*b^*)) \;\;\text{where $b \in \cU(P)$.}$$
Since $1 \ot P \subseteq M$, all these elements belong to $M$ and we conclude that $V_g \ot u_g \subseteq M$. Then also $V_g \ot P u_g \subseteq M$. So \eqref{eq.my-inclusion-Vg} is proven.

Since $\phi_g(x) = x$ for all $x \in M_d(\C) \ot P u_g$, it follows from \eqref{eq.my-inclusion-Vg} that
\begin{equation}\label{eq.better-descr-Vg}
\phi_g(M) = V_g \ot P u_g = M \cap (M_d(\C) \ot P u_g) \; .
\end{equation}
Write $B = V_e$. By \eqref{eq.better-descr-Vg}, $B \subseteq M_d(\C)$ is a unital $*$-subalgebra. It also follows from \eqref{eq.better-descr-Vg} that $V_g V_g^* \subseteq B$, $V_g^* V_g \subseteq B$ and $B V_g B = V_g$.

Let $p \in B$ be a minimal projection. Since $p \ot 1 \in M$, we can replace $M$ by the stably isomorphic $(p \ot 1)M(p \ot 1)$ and we may thus assume that $B = \C 1$. Define $L = \{g \in K \mid V_g \neq \{0\}\}$. It follows that for every $g \in L$, $V_g = \C \pi(g)$ where $\pi(g) \in \cU(\C^d)$. Since $V_g V_h \subseteq V_{gh}$, it follows that $\pi$ is a projective representation. Denote by $\om \in H^2_\fin(L,\om)$ the associated $2$-cocycle. We have proven that $M$ is generated by the elements $\pi(g) \ot P u_g$, $g \in L$. This precisely means that $M \cong P \rtimes_\om L$.
\end{proof}

\section[II$_1$ factors with prescribed semiring of self-embeddings]{\boldmath II$_1$ factors with prescribed semiring of self-embeddings}\label{sec.self-embed}

Given a II$_1$ factor $M$, we consider all possible stable self-embeddings $M \semb M$ and identify two embeddings when they are unitarily conjugate. Since we can take direct sums and compositions of embeddings, we obtain the semiring $\Embs(M)$ that we call the embeddings semiring of $M$. Note that $\Embs(M)$ can be identified with the space of isomorphism classes of $M$-$M$-bimodules $\bim{M}{H}{M}$ having finite right dimension.

The invariant $\Embs(M)$ is extremely rich, encoding at the same time the outer automorphism group of $M$, the fundamental group of $M$ and the fusion ring of finite index $M$-$M$-bimodules.
In this section, we deduce from Theorems \ref{thm.all-embed-trivial} and \ref{thm.complete-intervals-construction} several concrete computations of $\Embs(M)$. As a consequence, we can prove the following result. Recall that a semigroup $\cF$ is called left cancellative if $gh = gk$ implies $h=k$.

\begin{theorem}\label{thm.prescribed-embeddings-semiring}
Let $\cS$ be one of the following semigroups.
\begin{enumlist}
\item A countable, unital, left cancellative semigroup.
\item The semigroup $\Emb(\cG)$ of self-embeddings of a countable structure (like a countable group, or a countable field).
\item The semigroup $\cI(N) = \{v \in N \mid v^* v = 1\}$ of isometries in a von Neumann algebra $N$ with separable predual.
\end{enumlist}
There then exists a II$_1$ factor $M$ with separable predual such that $\Embs(M) \cong \N[\cS]$, the semiring of formal sums of elements in $\cS$.
\end{theorem}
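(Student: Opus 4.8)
The strategy is to construct the desired II$_1$ factor $M$ inside the framework already developed for Theorem~\ref{thm.complete-intervals-construction}, so that $\Embs(M)$ can be read off from group-theoretic data. The key observation is that in the setting of that theorem, the intermediate subfactors $N$ between $M_1$ and $M$ are governed by intermediate subgroups $L\subg K$ together with a class in $H^2_\fin(L,\T)$, and the $M$-$M$-bimodules of finite right dimension correspond, via Lemma~\ref{lem.intermediate-subfactors} and the analysis in the proof of Theorem~\ref{thm.complete-intervals-construction}(2)--(4), to $\Z_{\geq 0}$-linear combinations of the ``translation bimodules'' $\bim{M_1}{\cL^g}{M_1}$ attached to elements $g\in K$ acting by the outer action $\al$. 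Concretely, taking $L=K$ so that $M$ itself plays the role of $M_1\rtimes K$, the finite-index $M$-$M$-bimodules are direct sums of the irreducible bimodules $\cH^g$ encoding $\al_g$, the fusion rule is $\cH^g\ot_M\cH^h\cong\cH^{gh}$, and $\cH^g\cong\cH^h$ iff $g=h$ (using that $\al$ is outer and that the $\Lambda_{a,b,k}$-rigidity forces no extra isomorphisms). Hence $\Embs(M)\cong\N[K]$ as a semiring. So the whole problem reduces to: \emph{given a semigroup $\cS$ as in the statement, realize $\cS$ as (the multiplicative semigroup underlying) a suitable subobject of a group, or more precisely arrange the bimodule category of $M$ to have fusion semiring $\N[\cS]$ rather than $\N[K]$.}

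To get genuine \emph{semigroups} rather than groups, I would not take $M=M_1\rtimes K$ but rather work with the inclusion $M_1\subseteq M$ and the semiring $\Embs$ of the \emph{bigger} factor obtained by a further crossed product, or — more cleanly — replace the group action by a suitable action of the semigroup on the hyperfinite-type base that is still rigid enough. The cleanest route: first handle case~(1). Given a countable unital left-cancellative semigroup $\cS$, embed $\cS$ into its group of left fractions only when $\cS$ is Ore; in general one instead uses that left cancellativity gives an injective action $\cS\actson\cS$ by left translations, hence an injective unital semigroup homomorphism $\cS\to\Inj(\ell^\infty(\cS))$, and via a Bernoulli-type construction one builds an outer action of $\cS$ by endomorphisms on a II$_1$ factor $M_1$ of the rigid type from Theorem~\ref{thm.complete-intervals-construction}. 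Taking the crossed product $M=M_1\rtimes_\cS\cS$ (a Toeplitz-type extension), the finite-index $M$-$M$-bimodules become exactly the $\N$-combinations of the bimodules $\bim{M_1}{\cL^s}{M_1}$, $s\in\cS$, with fusion $\cL^s\ot\cL^t=\cL^{st}$; left cancellativity ensures these remain irreducible and pairwise non-isomorphic, and the rigidity of $M_1$ (no nontrivial finite-dimensional unitary representations of the intermediate groups, weak mixing, relative icc) prevents any other finite-index bimodules from appearing. This yields $\Embs(M)\cong\N[\cS]$. Cases~(2) and~(3) then follow: $\Emb(\cG)$ for a countable structure is automatically a countable unital left-cancellative semigroup (the identity embedding is a unit, and if $\phi\circ\psi=\phi\circ\psi'$ then injectivity of $\phi$ forces $\psi=\psi'$), and $\cI(N)$ for $N$ with separable predual is a separable unital left-cancellative semigroup, which one first replaces by a countable dense unital left-cancellative subsemigroup (taking care that the self-embeddings semiring only sees a countable amount of data); so both reduce to~(1).

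The main technical point — and the place where the proof really has content rather than bookkeeping — is verifying that the crossed product by a \emph{semigroup of endomorphisms} (rather than a group of automorphisms) still satisfies the conclusions of Lemma~\ref{lem.all-embedding-trivial-general}, i.e.\ that every finite-index $M_1$-$M_1$-subbimodule of a finite-index $M$-$M$-bimodule is a multiple of the trivial one. In the group case this was Lemma~\ref{lem.all-embedding-trivial-general}, whose proof used relative commutant control via the relative icc property of the $\Lambda_n$'s and the absence of finite-dimensional representations; these arguments are insensitive to whether the ambient crossed product is by a group or by an injective semigroup, provided one phrases everything in terms of the bimodule $\bigoplus_{s\in\cS}\cL^s$ and notes that $\cL^s$ is a finite-index bimodule precisely because each $\al_s$ is a finite-index endomorphism (indeed an automorphism onto a subfactor of finite index, or even index one after the appropriate normalization). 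I expect the bulk of the write-up to be (i) setting up the semigroup Bernoulli action and its outerness/rigidity properties so that Lemmas~\ref{lem.step1}, \ref{lem.step2} and \ref{lem.all-embedding-trivial-general} apply verbatim to $M_1\subseteq M$, and (ii) translating the resulting classification of intermediate subfactors and finite-index bimodules into the semiring statement $\Embs(M)\cong\N[\cS]$, exactly paralleling parts (1)--(4) of the proof of Theorem~\ref{thm.complete-intervals-construction}. The reductions of (2) and (3) to (1) are then short.
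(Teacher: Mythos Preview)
Your reduction of cases (2) and (3) to case (1) is incorrect, and this is fatal to the whole plan. The semigroup $\Emb(\cG)$ of self-embeddings of a countable structure is in general \emph{uncountable}: take for instance $\cG = (\Q,\leq)$, whose self-embeddings form an uncountable Polish semigroup. Likewise $\cI(N)$ is uncountable for any infinite-dimensional $N$. Your suggestion to pass to a countable dense subsemigroup does not help: if you build $M$ so that $\Embs(M) \cong \N[\cS_0]$ for a dense $\cS_0 \subset \cS$, you have \emph{not} realized $\N[\cS]$, and there is no continuity argument available --- $\Embs(M)$ is a discrete invariant, not a topological one. So cases (2) and (3) genuinely require a mechanism that can produce uncountable semigroups, which no crossed-product-by-a-countable-object construction can do.

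There is also a confusion in your warm-up computation. With $L = K$ in Theorem~\ref{thm.complete-intervals-construction}, the II$_1$ factor $M = M_1 \rtimes K$ has $\Embs(M) \cong \N$, not $\N[K]$: see Proposition~\ref{prop.already-this}, where the relevant semigroup is $L \backslash \{g \in K : gLg^{-1} \subg L\}$, which collapses to a point when $L = K$. The bimodules $\cL^g$ you write down are $M_1$-$M_1$-bimodules, not $M$-$M$-bimodules, and they become trivial when induced up to $M$. So even for case (1), the proposed architecture does not do what you claim; one would instead need to choose $L \subg K$ carefully so that the quotient semigroup is $\cS$, which is essentially the remark after Proposition~\ref{prop.already-this} and only works when $\cS$ embeds in a group with the right properties.

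The paper's route is entirely different. It does not use semigroup crossed products at all. Instead (Proposition~\ref{prop.the-factor-semiring}) it modifies the construction of Theorem~\ref{thm.all-embed-trivial} by inserting an extra tensor factor in the base algebra carrying an action of a closed group $\cG \subg \Aut(A_0,\tau)$, and shows that $\Embs(M) \cong \N[\End(A_0,\tau,\cG)]$ --- the key being that every self-embedding of $M$ is forced to be a direct sum of embeddings each determined by a single $\psi \in \End(A_0,\tau,\cG)$. This handles uncountable $\cS$ automatically, since $\End(A_0,\tau,\cG)$ is a Polish semigroup. The remaining work (Propositions~\ref{prop.this-is-all}, \ref{prop.all-closed-perm}, \ref{prop.bicentralizer-gaussian-von-neumann}) is then purely ergodic-theoretic: for each of the three classes of $\cS$, exhibit an amenable $(A_0,\tau)$ and a closed $\cG$ acting ergodically such that $\End(A_0,\tau,\cG) = \cS$ (the ``bicentralizer property''). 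This is where the real content lies --- e.g.\ Lemma~\ref{lem.factor-product} characterizes factor maps of infinite product spaces commuting with all diagonal automorphisms, and Proposition~\ref{prop.bicentralizer-gaussian-von-neumann} uses the Gaussian functor and the von Neumann bicommutant theorem to handle $\cI(N)$.
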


Point~1 of Theorem \ref{thm.prescribed-embeddings-semiring} was already stated in \cite{Dep13}. There is however a gap in the proof of \cite[Lemma 6.2]{Dep13}. In Remark \ref{rem.gap}, we provide a more detailed discussion on the relation between our results and \cite{Dep13}.

For point~2, recall that a countable structure $\cG$ consists of a countable set $I$ together with a countable family of subsets $R_k \subseteq I^{n_k}$, $n_k \in \N$. Given such a countable structure, its semigroup of self-embeddings, denoted $\Emb(\cG)$, is defined as the set of all injective maps $\vphi : I \recht I$ with the property that for every $k$, we have
$$(\vphi(i_1),\ldots,\vphi(i_{n_k})) \in R_k \quad\text{if and only if}\quad (i_1,\ldots,i_{n_k}) \in R_k \; .$$
Note that this semigroup can be smaller than the semigroup of monomorphisms, where only the implication $\Leftarrow$ would be required.

For a countable group $\Gamma$, we get the semigroup of injective group homomorphisms $\Gamma \recht \Gamma$. For a countable field $K$, we get the semigroup of injective field homomorphisms $K \recht K$. For a countable graph $(V,E)$, we get the semigroup of graph embeddings, i.e.\ injective maps $\vphi : V \recht V$ such that $x,y$ are joined by an edge if and only if $\vphi(x),\vphi(y)$ are joined by an edge.

Of course, Theorem \ref{thm.prescribed-embeddings-semiring} provides new classes of Polish groups that can be realized as the outer automorphism group $\Out(M)$ of a full II$_1$ factor $M$ and actually is the first systematic and explicit realization result for non locally compact outer automorphism groups. For completeness, we explicitly state this as a corollary.

Our method provides in particular a new method to realize countable groups, as well as compact groups, as the outer automorphism group of a II$_1$ factor with separable predual. This was first proven in \cite{IPP05} for compact abelian groups, in \cite[Theorem 7.12]{PV06} for finitely presented countable groups, in \cite[Theorem 2.13]{Vae07} for arbitrary countable groups and in \cite{FV07} for arbitrary compact groups. The construction in the proof of Theorem \ref{thm.prescribed-embeddings-semiring} is substantially simpler and deals with all these groups at once.

\begin{corollary}\label{cor.outer-automorphism-groups}
Each of the following groups arises as the outer automorphism group of a full II$_1$ factor with separable predual.
\begin{enumlist}
\item Permutation groups, i.e.\ closed subgroups of the group of all permutations of a countable set equipped with the topology of pointwise convergence.
\item The unitary groups $\cU(N)$ of von Neumann algebras with separable predual.
\item Second countable compact groups.
\item Second countable locally compact groups that are totally disconnected.
\item Direct sums of the groups in 1, 2, 3, 4.
\end{enumlist}
\end{corollary}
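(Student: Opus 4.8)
The strategy is to read off $\Out(M)$ from the semiring $\Embs(M)$ produced by Theorem~\ref{thm.prescribed-embeddings-semiring}. Recall that $\Embs(M)$ is the semiring of isomorphism classes of $M$-$M$-bimodules of finite right dimension, with $\oplus$ and $\ot_M$, and that its group of units is the group of bimodules invertible in the tensor category; since the factors built in Theorem~\ref{thm.prescribed-embeddings-semiring} are full and have trivial fundamental group (as is part of the construction), such bimodules are exactly the $L^2(M)_\al$ with $\al \in \Aut(M)$, so the group of units of $\Embs(M)$, equipped with the Polish topology coming from pointwise $\|\cdot\|_2$-convergence of automorphisms, is precisely $\Out(M)$. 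On the other side, for any of the semigroups $\cS$ allowed in Theorem~\ref{thm.prescribed-embeddings-semiring} — a countable unital left cancellative semigroup, $\Emb(\cG)$ for a countable structure $\cG$, or $\cI(N)$ for a von Neumann algebra $N$ with separable predual — the group of units of the semiring $\N[\cS]$ equals the group $\cS^\times$ of invertible elements of $\cS$: the augmentation $\N[\cS] \recht \N$ is a semiring homomorphism onto $(\N,+,\cdot)$, forcing a unit of $\N[\cS]$ to be a single $s \in \cS$ with coefficient $1$, and then $s \in \cS^\times$. Combining, Theorem~\ref{thm.prescribed-embeddings-semiring} produces for every admissible $\cS$ a full II$_1$ factor $M$ with separable predual together with an isomorphism of topological groups $\Out(M) \cong \cS^\times$.

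It therefore suffices to present each of the groups in the corollary as some $\cS^\times$. Point~(2) is immediate: with $\cS = \cI(N)$ we have $\cS^\times = \cU(N)$, and taking $N = \cB(\cH)$ gives the unitary group of a separable Hilbert space. For a countable group $G$ one may simply take $\cS = G$ in the first item of Theorem~\ref{thm.prescribed-embeddings-semiring}, so that $\cS^\times = G$. For point~(1), given a closed subgroup $G$ of the permutation group of a countable set $I$, endow $I$ with the countable relational structure $\cG$ whose $n$-ary relations are the $G$-orbits on $I^n$; then $\Aut(\cG) = \overline G = G$ and $\Emb(\cG)^\times = \Aut(\cG) = G$, while on $\Aut(\cG)$ the topology of pointwise convergence is the given permutation topology, so the second item of Theorem~\ref{thm.prescribed-embeddings-semiring} applies. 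Point~(4) is a special case of point~(1): by van Dantzig's theorem a second countable totally disconnected locally compact group $G$ admits a countable basis $(K_i)$ of identity neighbourhoods by compact open subgroups with $\bigcap_i K_i = \{e\}$, and its left translation action on the countable set $\bigsqcup_i G/K_i$ is faithful and realizes $G$ as a \emph{closed} subgroup of the full permutation group of that set (a locally compact subgroup of a Polish group is closed).

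For point~(3) we invoke \cite{FV07}, where every second countable compact group was already realized as $\Out(M)$ of a full II$_1$ factor with separable predual; it is included only to make the list exhaustive — such groups are in general not closed subgroups of a permutation group, so point~(1) does not cover them — and the same conclusion can alternatively be extracted from the construction underlying Theorem~\ref{thm.prescribed-embeddings-semiring} fed with the appropriate compact ``symmetry semigroup''. Finally, for point~(5) one uses that a countable direct sum $\bigoplus_\lambda \cS_\lambda$ of admissible semigroups is again admissible and satisfies $\bigl(\bigoplus_\lambda \cS_\lambda\bigr)^\times = \bigoplus_\lambda \cS_\lambda^\times$, so the associated factor $M$ has $\Out(M) \cong \bigoplus_\lambda \Out(M_\lambda)$.

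The routine group theory above is not where the difficulty lies. The real point — handled inside the proof of Theorem~\ref{thm.prescribed-embeddings-semiring}, which I would invoke — is the \emph{topological} identification $\Out(M) \cong \cS^\times$: one must verify that the Polish topology on $\Out(M)$, transported through the semiring isomorphism $\Embs(M) \cong \N[\cS]$, coincides with the intrinsic topology of $\cS^\times$ (pointwise convergence on $\cG$, or the strong topology on $\cU(N)$). Concretely this says that two automorphisms of $M$ coming from elements of $\cS^\times$ are close in $\Aut(M)$ exactly when the corresponding symmetries of the input data are close, and it is here that the explicit nature of the construction must be used; once it is in place, fullness of $M$ makes $\Out(M)$ a Polish group and the isomorphism a homeomorphism. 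A secondary subtlety, relevant to point~(5) when the $G_\lambda$ arise from different items of Theorem~\ref{thm.prescribed-embeddings-semiring}, is that the construction should be phrased uniformly enough to accept a mixed direct sum of the three kinds of semigroups rather than only the three itemized special cases.
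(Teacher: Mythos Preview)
Your approach via the units of $\N[\cS]$ is correct for points~1, 2 and~4, but it is one level too high in the paper's hierarchy and this is exactly why points~3 and~5 give you trouble. Theorem~\ref{thm.prescribed-embeddings-semiring} is itself deduced from Proposition~\ref{prop.the-factor-semiring}, whose statement is the more flexible one: given any closed $\cG \subg \Aut(A_0,\tau)$, it produces a full II$_1$ factor $M$ with $\Out(M) \cong \Aut(A_0,\tau,\cG)$ as Polish groups. The paper proves the corollary directly at that level. One shows that each target group $G$ can be realized as some $\cG \subg \Aut(A_0,\tau)$ with the bicentralizer property ($C(\cG)$ ergodic and $C(C(\cG)) = \cG$), so that $\Aut(A_0,\tau,\cG) = \cG \cong G$. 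Points~1, 2, 4 are covered by Propositions~\ref{prop.all-closed-perm} and~\ref{prop.bicentralizer-gaussian-von-neumann}; point~3 is handled by the left translation action $K \actson L^\infty(K,\text{Haar})$, whose centralizer is the right translation action and whose double centralizer is therefore $K$ again; and point~5 follows because the bicentralizer property is stable under taking tensor products $(A_1,\tau_1)\ovt(A_2,\tau_2)$ with the product action $\cG_1 \times \cG_2$.

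Your detours for~3 and~5 (citing \cite{FV07}, or asserting that a mixed product of the three types of semigroups in Theorem~\ref{thm.prescribed-embeddings-semiring} is again one of those types) are precisely the places where the black-box use of Theorem~\ref{thm.prescribed-embeddings-semiring} breaks down: a general second countable compact group need not be $\Emb(\cG)^\times$ for a countable structure $\cG$ nor $\cU(N)$, and a product $\Emb(\cG)\times\cI(N)$ is not on the list. You yourself flag these as ``subtleties'', but they are not resolved in your argument. Dropping to Proposition~\ref{prop.the-factor-semiring} removes them cleanly and also dispatches the topological identification you worry about, since that proposition already gives the isomorphism as Polish groups.
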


To make the connection with Theorem \ref{thm.prescribed-embeddings-semiring}, recall that the class of permutation groups in point~1 coincides with the class of automorphism groups of countable structures.

Before embarking on the proof of Theorem \ref{thm.prescribed-embeddings-semiring}, we first note that we can easily describe the embeddings semiring for the II$_1$ factors appearing in Theorem \ref{thm.complete-intervals-construction}. Using the same notation, assume that $L \subg K$ is any subgroup without nontrivial finite dimensional unitary representations. Consider the II$_1$ factor $N = L^\infty(X,\mu) \rtimes (\Gamma_0 \times \Gamma_L)$ as in Theorem \ref{thm.complete-intervals-construction}. The following result then follows immediately.

\begin{proposition}\label{prop.already-this}
Define $\cF_1 = \{g \in K \mid gLg^{-1} \subg L\}$ and denote by $\cF = L \backslash \cF_1$ the (well defined) quotient semigroup. There is a canonical isomorphism $\Embs(N) \cong \N[\cF]$.
\end{proposition}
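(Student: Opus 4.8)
The statement of Proposition~\ref{prop.already-this} is essentially a corollary of Theorem~\ref{thm.complete-intervals-construction}, together with the description of intermediate subfactors in Lemma~\ref{lem.intermediate-subfactors}, so the plan is to unwind the definition of $\Embs(N)$ and match it up with the data appearing there. Recall that $\Embs(N)$ is the set of unitary conjugacy classes of finite right-dimension $N$-$N$-bimodules $\bim{N}{H}{N}$, with addition given by direct sum and multiplication by Connes tensor product over $N$; equivalently, the set of (stable) embeddings $N \semb N$ up to unitary conjugacy, with composition and direct sum. A single finite-index bimodule $\bim{N}{H}{N}$ corresponds to an embedding $\theta : N \recht M_d(\C) \ot N$ for some $d \in \N$, which is the same thing as realizing $N$ as an intermediate subfactor $1 \ot N \subseteq \theta(N) \subseteq M_d(\C) \ot N$ together with the extra data of the embedding onto that intermediate subfactor. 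The first step is therefore to recall that $N = M(L,1) = M_1 \rtimes_{\id} L$ in the notation of Theorem~\ref{thm.complete-intervals-construction}, and, since $L$ has no nontrivial finite-dimensional unitary representations, that $M_1 \subseteq N$ is precisely the inclusion $1 \ot P \subseteq M_d(\C) \ot (P \rtimes K)$ with $P = M_1$, $K$ replaced by $L$ — but here we do \emph{not} want to use $K$; rather, the relevant ambient crossed product for self-embeddings of $N$ will itself be $N$.

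The key technical input is Lemma~\ref{lem.all-embedding-trivial-general} (via Lemma~\ref{lem.intermediate-subfactors} applied inside $N$ itself): any embedding $N \semb N$ restricts on $M_1$ to a trivial embedding $M_1 \recht M_d(\C) \ot M_1$, because $N$ is an intermediate factor $M_1 \subseteq N \subseteq M$ and $M_1 \semb M$ forces all embeddings of $M_1$ to be trivial. Concretely, I would argue as follows. Given an embedding $\theta : N \recht M_d(\C) \ot N \subseteq M_d(\C) \ot M$, its restriction $\theta|_{M_1}$ is an embedding of $M_1$ into $M_d(\C) \ot M$, hence by Lemma~\ref{lem.all-embedding-trivial-general} is unitarily conjugate to $a \mapsto 1 \ot a$; after that unitary conjugacy $\theta(M_1) = 1 \ot M_1$. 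Now run the argument of the $1 \Rightarrow 2$ direction of Lemma~\ref{lem.intermediate-subfactors}, with the ambient crossed product being $N = M_1 \rtimes_{\id} L$ (so "$K$" there is $L$, which has no nontrivial finite-dimensional unitary representations): the intermediate factor $1 \ot M_1 \subseteq \theta(N) \subseteq M_d(\C) \ot N$ is, after compressing by a minimal projection of the relevant $*$-subalgebra $B \subseteq M_d(\C)$, of the form $M_1 \rtimes_\om L_1$ for a subgroup $L_1 \subg L$ and $\om \in H^2_\fin(L_1,\T)$; but $L_1 \subg L$ also has no nontrivial finite-dimensional unitary representations, so $\om$ is trivial and the compressed factor is $M_1 \rtimes L_1$. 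Since $\theta$ is an \emph{embedding} of $N = M_1 \rtimes L$ (and not merely a compression of it), unwinding the argument of part~2 of Theorem~\ref{thm.complete-intervals-construction} shows that the corresponding bimodule $\bim{M_1}{H}{M_1}$ is a finite direct sum of the bimodules $\bim{M_1}{\cL^g}{M_1}$ encoding $\al_g$, $g \in K$, and that the subset $I \subseteq K$ on which it is supported satisfies $I = L I$, $IL = I$, with $g^{-1}Lg \supseteq$ (finite index in) $L$ forced for each $g \in I$; since $L$ has no proper finite-index subgroups this gives $g L g^{-1} \subseteq L$, i.e.\ $g \in \cF_1$. This produces the assignment sending the bimodule to the finite subset $I \subseteq \cF_1$, well defined modulo the left $L$-action, hence to an element of $\N[\cF]$ where $\cF = L \backslash \cF_1$.

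The remaining steps are to check that this assignment is a semiring isomorphism. Injectivity and surjectivity onto $\N[\cF]$ follow because the bimodules $\bim{M_1}{\cL^g}{M_1}$ for $g$ in distinct cosets $Lg \subseteq \cF_1$ are pairwise non-isomorphic as $M_1$-$M_1$-bimodules (they encode genuinely distinct automorphisms of $M_1$ modulo inner ones — this uses outerness, which holds since the $\al_g$ restrict outer automorphisms coming from the free product structure), and each such bimodule does extend to a genuine $N$-$N$-bimodule of finite right dimension, namely the one induced from $u_g$; conversely every embedding must, by the argument above, be a direct sum of such. Additivity is immediate from $H \oplus H' \leftrightarrow I \sqcup I'$ (disjoint union with multiplicity). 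Multiplicativity $\Embs(N) \ni [H][H'] \mapsto \cF$-product requires identifying $\cL^g \ot_{M_1} \cL^h \cong \cL^{gh}$, which holds because $\al$ is an action, and then checking the support of $H \ot_N H'$ is $I I' \subseteq \cF_1$ modulo $L$; the only subtlety is that the Connes tensor product over $N$ (not over $M_1$) must be used, but since on each cyclic summand the bimodule is induced from $N$, the tensor product over $N$ of the induced bimodules is again induced, from the product of the defining group elements, so the bookkeeping reduces to multiplication in $\cF_1$ and descends to $\cF = L \backslash \cF_1$. I expect the main obstacle — really the only point requiring care — to be keeping the amplification/compression bookkeeping straight: one must pass from an arbitrary finite-index $N$-$N$-bimodule to an embedding $\theta : N \recht M_d(\C) \ot N$, apply triviality on $M_1$, compress by a minimal projection of $B \subseteq M_d(\C)$ to reduce to the case $B = \C 1$, and then verify that the original (uncompressed) bimodule is recovered as a direct sum indexed precisely by the resulting finite subset of $\cF_1$, with the multiplicities matching. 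Everything else is a routine transcription of the arguments already carried out in the proofs of Lemmas~\ref{lem.all-embedding-trivial-general} and~\ref{lem.intermediate-subfactors} and Theorem~\ref{thm.complete-intervals-construction}, which is exactly why the proposition "follows immediately."
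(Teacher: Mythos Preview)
Your approach is correct and matches the paper's: both appeal directly to the proof of part~2 of Theorem~\ref{thm.complete-intervals-construction}, which already describes every finite-right-dimension $N$-$N$-bimodule as a direct sum of embeddings induced by elements $g \in \cF_1$, with two such embeddings unitarily conjugate iff $Lg_1 = Lg_2$. Your intermediate detour through Lemma~\ref{lem.intermediate-subfactors} applied inside $N = M_1 \rtimes L$ is unnecessary --- it only sees subgroups of $L$, not the full $\cF_1 \subset K$ --- and the real content lies entirely in the bimodule decomposition $\cH = \bigoplus_{g \in I} \cH^g$ from Theorem~\ref{thm.complete-intervals-construction} that you correctly invoke afterward.
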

\begin{proof}
In the proof of Theorem \ref{thm.complete-intervals-construction}, we described all $N$-$N$-bimodules $\bim{N}{H}{N}$ with finite right dimension as direct sums of embeddings induced by $g \in \cF_1$. Two such embeddings, induced by $g_1,g_2 \in \cF_1$, are unitarily conjugate if and only if $L g_1 = L g_2$.
\end{proof}

\begin{remark}
Already Proposition \ref{prop.already-this} can be used to realize several semirings $\N[\cS]$ as the embeddings semiring of a II$_1$ factor. Indeed, assume that $\cS$ is a countable unital semigroup that can be embedded into a group $T$ that is either amenable or a free product of amenable groups. Let $G$ be the group of finite even permutations of $\N$. In the construction of Proposition \ref{prop.already-this}, we can then take $K = G \ast T$ with the subgroup $L \subg K$ defined as the free product of the conjugates $h G h^{-1}$, $h \in \cS$. Note that $G \ast K = G \ast G \ast T$ belongs to the family $\cC$ of Definition \ref{def.family-C}. Since $G$ has no nontrivial finite dimensional unitary representations, also $L$ does not have them. So we can apply Proposition \ref{prop.already-this}. Define $\cF_1 = \{g \in K \mid gLg^{-1} \subset L\}$. Then, $\cF_1 = L \cS$ and $L\backslash \cF_1 = \cS$. We conclude that $\Embs(N) \cong \N[\cS]$.
\end{remark}

To prove Theorem \ref{thm.prescribed-embeddings-semiring}, we need more flexibility and proceed in two steps. Assume that $(A_0,\tau)$ is an amenable tracial von Neumann algebra and let $\cG \subg \Aut(A_0,\tau)$ be any closed subgroup of the Polish group of trace preserving automorphisms. We then define the semigroup $\End(A_0,\tau,\cG)$ of all unital normal trace preserving $*$-homomorphisms $\psi : A_0 \recht A_0$ that commute with $\cG$, i.e.\ $\theta \circ \psi = \psi \circ \theta$ for all $\theta \in \cG$. We similarly denote by $\Aut(A_0,\tau,\cG)$ the subgroup of invertible elements, i.e.\ the trace preserving $*$-automorphisms $\psi \in \Aut(A_0,\tau)$ that commute with $\cG$.

As a first step, we prove in Proposition \ref{prop.the-factor-semiring} that an appropriate modification of the construction in Theorem \ref{thm.all-embed-trivial} allows to realize $\N[\End(A_0,\tau,\cG)]$ as the embeddings semiring of a full II$_1$ factor $M$, with $\Aut(A_0,\tau,\cG)$ corresponding to its outer automorphism group $\Out(M)$. Such a result was actually already proven in \cite[Theorem 8.5]{Dep10} for $\Out(M)$, and in \cite[Theorem 4.1]{Dep13} for $\Embs(M)$. For completeness, we nevertheless include our construction here, since it is simpler and a direct consequence of the results in Section \ref{sec.embeddings-bernoulli}.

As a second and new step, we provide in Propositions \ref{prop.this-is-all}, \ref{prop.all-closed-perm} and \ref{prop.bicentralizer-gaussian-von-neumann} several results on which semigroups arise as $\End(A_0,\tau,\cG)$, leading to the proof of Theorem \ref{thm.prescribed-embeddings-semiring} and Corollary \ref{cor.outer-automorphism-groups}.

\begin{proposition}\label{prop.the-factor-semiring}
Let $(A_0,\tau)$ be an amenable tracial von Neumann algebra with separable predual. Let $\cG \subg \Aut(A_0,\tau)$ be a closed subgroup and write $\cS = \End(A_0,\tau,\cG)$. Then, the semiring $\N[\cS]$ arises as the embeddings semiring $\Embs(M)$ of a full II$_1$ factor $M$ with separable predual. We have $\Out(M) \cong \Aut(A_0,\tau,\cG)$ as Polish groups.
\end{proposition}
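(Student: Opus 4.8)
The construction will be a variant of the generalized Bernoulli crossed product used for Theorem~\ref{thm.all-embed-trivial}, with the base algebra $(A_0,\tau)$ and its symmetry group $\cG$ plugged into the ``decorations'' of the Bernoulli pieces. Concretely, I would take $\Gamma = G \ast G$ with $G = A_\infty$ (as in Example~\ref{example-of-assumptions}), keep $\Gamma_0 = G_1 \ast G$ and the family of free-product subgroups $\Lambda_{a,b}$, but now replace the two-point base spaces $(\{0,1\},\mu_{a,b})$ with copies of $(A_0,\tau)$, using \emph{distinct} (generic) choices along the index $(a,b)\in\cA\times\cB$ so that there is no nontrivial trace-preserving embedding between different fibres, and so that the only trace-preserving embeddings of a fibre into itself are the elements of $\cS = \End(A_0,\tau,\cG)$ (this is where $\cG$ enters: the fibres carry the $\cG$-action, and an embedding must intertwine it). Then $M = (\text{tensor product over the orbit of the $\Lambda_{a,b}$'s})\rtimes(\Gamma_0\times\Gamma)$, exactly as in Lemma~\ref{lem.all-embedding-trivial-general} but with $\C^2$ replaced by $A_0$ and with the diagonal $\Gamma_0$-action on the fibres twisted by $\cG$. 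Since $\Gamma\in\cC$, the machinery of Sections~\ref{sec.embedding-group-vNalg}--\ref{sec.embeddings-bernoulli} applies verbatim.

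\textbf{Key steps.} First I would run the argument of Lemma~\ref{lem.all-embedding-trivial-general} for an arbitrary embedding $\theta: M \recht M^d$. The group-theoretic input (Lemmas~\ref{lem.playing-with-words}, \ref{lem.elem-permutation}, \ref{lem.first-group-lemma}, i.e.\ Assumptions~\ref{my-assum}) is unchanged, so Lemma~\ref{lem.step1}/\ref{lem.step2} still force, after a unitary conjugacy, $d\in\N$ and $\theta(u_{(g,h)}) = 1\ot u_{(g,h)}$ on $\Gamma_0\times\Gamma_1$ with the Bernoulli algebra mapped into itself. The difference from Theorem~\ref{thm.all-embed-trivial} is that, at the fibre level, matching fibre $(a,b)$ with its image fibre no longer forces the ``identity'' map: instead one gets an embedding of $(A_0,\tau)$ into $(A_0,\tau)$ that, by $\cG$-equivariance (which survives because the diagonal $\Gamma_0$-action carries the $\cG$-twist and $\Lambda_n$ is nonamenable hence relatively icc), lies in $\cS$. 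Assembling over all fibres and over the direct summands $\theta_i$ of $\theta_0$, I would conclude that each irreducible piece of $\theta$ is the ``decorated identity'' on the group part combined with one element $\psi_i\in\cS$ on the base, and that $\theta$ is a finite direct sum of inductions of such pieces — but here, because $\Gamma_1 = \Gamma$ would be replaced by a normal subgroup with $\Gamma_0 \subsetneq \Gamma$ as in the Theorem~\ref{thm.all-embed-trivial} setup, the flip $\zeta$ is excluded, so no orientation-reversal appears. This gives a surjective semiring homomorphism $\N[\cS]\recht\Embs(M)$.

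\textbf{Injectivity and the outer automorphism group.} Next I would prove that the map $\N[\cS]\recht\Embs(M)$ is injective: distinct formal sums of elements of $\cS$ give non-unitarily-conjugate bimodules. This uses that two induced embeddings built from $\psi,\psi'\in\cS$ are unitarily conjugate iff $\psi$ and $\psi'$ agree (up to the rigidity of the group part, which admits no nontrivial conjugations here) — essentially the same relative-icc and weak-mixing bookkeeping as in Proposition~\ref{prop.all-embeddings}, plus the observation that a nontrivial inner conjugation of a fibre would have to come from $\cG$, and we have quotiented the construction so that the $\cG$-part is already absorbed. Restricting to invertible bimodules of right dimension $1$ gives $\Out(M)\cong\Aut(A_0,\tau,\cG)$, and the Polish-group isomorphism is continuous both ways because the bijection is implemented by an explicit formula in terms of $\psi$. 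Finally, fullness of $M$: since $\Gamma\in\cC$ is nonamenable icc with the required properties, $M$ is a nonamenable crossed product of a Bernoulli-type action of a product of icc groups, and fullness follows from the usual spectral-gap arguments for such actions (or can be quoted from the Bernoulli literature cited in the introduction).

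\textbf{Main obstacle.} The delicate point is the \emph{fibre-matching step}: showing that after the group part has been rigidified, the residual data on each two-(or now $A_0$-)point fibre is exactly one morphism in $\cS = \End(A_0,\tau,\cG)$ and nothing more — i.e.\ that $\cG$-equivariance is genuinely forced and that there is no extra freedom hiding in the finite-dimensional ``decoration'' algebra $D_0$. In Theorem~\ref{thm.all-embed-trivial} this was easy because $\C^2$ has no nontrivial trace-preserving self-embeddings at all; here one must track, through the commuting-squares argument with the algebras $A_n(g,h)$, that the minimal-projection reduction still yields a genuine $*$-homomorphism $A_0\recht A_0$ commuting with $\cG$, using that $\cG$ acts on the fibres by the \emph{same} automorphisms in every coordinate of a given orbit and that the distinctness of the $\mu_{a,b}$-analogues rules out cross-fibre identifications. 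I expect this to be the technical heart of the proof, and the rest to be a faithful transcription of Sections~\ref{sec.embeddings-bernoulli}--\ref{sec.lattices-of-II1-factors}.
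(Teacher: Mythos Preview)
Your framework is right --- a Bernoulli-type crossed product over $\Gamma_0\times\Gamma$ with $\Gamma\in\cC$, then run Lemmas~\ref{lem.step1}--\ref{lem.step2} and the group-theoretic rigidity of Lemma~\ref{lem.first-group-lemma} --- but the specific construction you propose has a genuine gap. You cannot make ``distinct generic choices'' of $(A_0,\tau)$: the algebra is fixed, so all fibres are isomorphic, and there is no analogue of the inequality $\mu_{a,b}(0)\neq\mu_{a',b'}(0)$ that in Lemma~\ref{lem.all-embedding-trivial-general} rules out cross-fibre matchings and forces $n=m$. More seriously, you give no mechanism for why $\cG$-equivariance is forced. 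An embedding of $M$ must respect the action of $\Gamma_0\times\Gamma$, but $\cG$ is an abstract closed subgroup of $\Aut(A_0,\tau)$, not part of the crossed-product group; saying the fibres ``carry the $\cG$-action'' or that the $\Gamma_0$-action is ``twisted by $\cG$'' is not a mechanism unless you specify a homomorphism from some subgroup of $\Gamma_0$ into $\cG$ with \emph{dense} image, and the stabilizers $\Lambda_{a,b}\cong\Z/2\Z\ast\Z/3\Z$ need not admit such a homomorphism for an arbitrary Polish $\cG$.

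The paper's construction separates the two roles. It keeps the original two-point Bernoulli action $\Gamma_0\times\Gamma\actson(X,\mu)$ from Theorem~\ref{thm.all-embed-trivial} completely intact --- this is what rigidifies the group part and forces $\pi=\id$ --- and then \emph{tensors on} a second, independent coinduced factor $(A_0,\tau)^{(\Gamma_0\times\Gamma)/\Delta(\Lambda_0)}$, where $\Lambda_0\cong\F_\infty$ is a \emph{new} subgroup, chosen malnormal inside a copy of $\Z/5\Z\ast\Z/5\Z$ in $\Gamma_0$ so that (by the order incompatibility, as in Lemma~\ref{lem.playing-with-words}) it has trivial intersection with every conjugate of every $\Lambda_{a,b}$ and cannot receive $\delta(\Lambda_{a,b})$ under any injective $\delta:\Gamma_0\to\Gamma$. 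The $\cG$-equivariance then comes for free: one picks a homomorphism $\al:\Lambda_0\cong\F_\infty\to\cG$ with dense image (this is why $\F_\infty$ is used) and infinite kernel, and lets $\Lambda_0$ act on $A_0$ through $\al$. Once $\pi=\id$ is established from the $(X,\mu)$ part, the image of the $e$-fibre copy of $A_0$ must land in the fixed points of $\Delta(\Ker\al)$, which by the malnormality/disjointness properties is exactly that same copy of $A_0$; the resulting $\psi:A_0\to A_0$ commutes with $\al(\Lambda_0)$, hence with all of $\cG$ by density. So the ``obstacle'' you correctly flagged is resolved not by fiddling with the $\Lambda_{a,b}$-fibres but by adding a separate $\F_\infty$-indexed piece carrying $A_0$.
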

\begin{proof}
Let $\Gamma_0 \times \Gamma \actson (X,\mu)$ be the group action defined in Theorem \ref{thm.all-embed-trivial}. In that context, $\Gamma_0 = G_1 \ast G \subg G \ast G = \Gamma$. Let $c \in G_1$ and $d \in G$ be elements of order $5$. View $c$ in the first free product factor and view $d$ in de second free product factor. Define $\Lambda_1 \subg G_1$ as $\Lambda_1 = \langle c \rangle \ast \langle d \rangle \cong \Z/5\Z \ast \Z / 5\Z$.
Choose $\F_\infty \cong \Lambda_0 \subg \Lambda_1$ such that $\Lambda_0 \subg \Lambda_1$ is malnormal: $g \Lambda_0 g^{-1} \cap \Lambda_0 = \{e\}$ for all $g \in \Lambda_1 \setminus \Lambda_0$. Denote by $\Delta : \Gamma_0 \recht \Gamma_0 \times \Gamma_0$ the diagonal embedding.

We prove the following properties of $\Lambda_0 \subg \Gamma$.
\begin{enumlist}
\item For all $g \in \Gamma$ and $(a,b) \in \cA \times \cB$, we have that $g \Lambda_0 g^{-1} \cap \Lambda_{a,b} = \{e\}$.
\item For all $g \in \Gamma \setminus \Lambda_0$, we have that $g \Lambda_0 g^{-1} \cap \Lambda_0 = \{e\}$.
\item If $\delta : \Gamma_0 \recht \Gamma$ is an injective group homomorphism and $(a,b) \in \cA \times \cB$, we have that $\delta(\Lambda_{a,b}) \not\prec_{\Gamma} \Lambda_0$.
\end{enumlist}
To prove 1, note that as in Lemma \ref{lem.playing-with-words}, we have that $g \Lambda_1 g^{-1} \cap \Lambda_{a,b} = \{e\}$ for  all $(a,b) \in \cA \times \cB$ and $g \in \Gamma$.
Similarly, $g \Lambda_1 g^{-1} \cap \Lambda_1$ is finite for every $g \in \Gamma \setminus \Lambda_1$. Since $\Lambda_0 \subg \Lambda_1$ and $\Lambda_0$ is torsion free, we get that $g \Lambda_1 g^{-1} \cap \Lambda_1 = \{e\}$ for every $g \in \Gamma \setminus \Lambda_1$. Since $\Lambda_0 \subg \Lambda_1$ is malnormal, statement~2 follows. To prove 3, let $\delta : \Gamma_0 \recht \Gamma$ be an injective group homomorphism and $(a,b) \in \cA \times \cB$. By the Kurosh theorem, we can compose $\delta$ with an inner automorphism of $\Gamma$ such that $\delta(G_1 \ast e)$ is a subgroup of either $G_1 \ast e$ or $e \ast G$, and such that $\delta(e \ast G)$ can be conjugated into either $G_1 \ast e$ or $e \ast G$. So, we may assume that $\delta(a) = a_1$ and $\delta(b) = w b_1 w^{-1}$ where $a_1,b_1$ are elements of order $2$ and $3$ in either $G_1 \ast e$ or $e \ast G$, and $w \in \Gamma$. As in Lemma \ref{lem.playing-with-words}, because of incompatible orders, no infinite subgroup of $\langle a_1 \rangle \ast w \langle b_1 \rangle w^{-1}$ can be conjugated into $\Lambda_1 = \langle c \rangle \ast \langle d \rangle$. A fortiori, statement~3 holds.

Choose a group homomorphism $\al : \Lambda_0 \cong \F_\infty \recht \cG$ such that $\al(\Lambda_0) \subg \cG$ is dense and such that $\Ker \al$ is infinite. We can view $\al$ as an action $\Lambda_0 \actson^\al (A_0,\tau)$. We get the coinduced action
$$\Gamma_0 \times \Gamma \actson (A_1,\tau) = (A_0,\tau_0)^{(\Gamma_0 \times \Gamma)/\Delta(\Lambda_0)} \; .$$
Using the diagonal action, we define
$$M = (A_1 \ovt L^\infty(X,\mu)) \rtimes (\Gamma_0 \times \Gamma) \; .$$
We prove that the embeddings semiring $\Embs(M)$ is given by $\N[\End(A_0,\tau,\cG)]$. To prove this statement, we have to repeat the proof of Lemma \ref{lem.all-embedding-trivial-general} in this slightly broader context. Write $A = A_1 \ovt L^\infty(X,\mu)$.

Let $d > 0$ and $\theta : M \recht M^d$ a normal unital $*$-homomorphism. As in the proof of Lemma \ref{lem.all-embedding-trivial-general}, the crossed product II$_1$ factor $M$ fits into the framework of Section \ref{sec.embeddings-bernoulli}. Thus, by Lemma \ref{lem.step1}, we find that $d \in \N$ and that $\theta$ is a direct sum of embeddings of a special form. It suffices to analyze each of these direct summands separately and may thus assume that
$$\theta : M \recht M_d(\C) \ot M \quad\text{with}\quad \theta(A) \subseteq M_d(\C) \ot A \quad\text{and}\quad \theta(u_{(g,h)}) = 1 \ot u_{\pi(g,h)} \; ,$$
where $\pi : \Gamma_0 \times \Gamma \recht \Gamma_0 \times \Gamma$ is an injective group homomorphism that is either of the form $\pi(g,h) = (\eta(g),\delta(h))$ or $\pi(g,h) = (\delta(h),\eta(g))$.

Fix $(a,b) \in \cA \times \cB$. By property~3 above, we have $\pi(\Delta(\Lambda_{a,b})) \not\prec_{\Gamma_0 \times \Gamma} \Lambda_0$. As in the proof of Lemma \ref{lem.all-embedding-trivial-general}, we then find that $\pi(\Delta(\Lambda_{a,b})) \prec_{\Gamma_0 \times \Gamma} \Lambda_{a,b}$ for all $(a,b) \in \cA \times \cB$. By Lemma \ref{lem.first-group-lemma}, after a conjugacy, we may assume that $\pi(g,h) = (g,h)$ for all $(g,h) \in \Gamma_0 \times \Gamma$. As in Lemma \ref{lem.all-embedding-trivial-general}, it then also follows that $\theta(1 \ot b) = 1 \ot (1 \ot b)$ for all $b \in L^\infty(X,\mu)$.

Identify $A_0$ with its copy in $A_1$ sitting in the coordinate position $(e,e)\Delta(\Lambda_0)$. Properties 1 and 2 above imply that $A \cap L(\Delta(\Ker \al))' = A_0 \ot 1$. Therefore, $\theta(a \ot 1) = \psi(a) \ot 1$ where $\psi : A_0 \recht M_d(\C) \ot A_0$. Let $g \in \Gamma \setminus \{e\}$. Since $\theta(A_0 \ot 1)$ commutes with $\theta(u_{(e,g)}(A_0 \ot 1)u_{(e,g)}^*)$, we find that $\psi(A_0) \subseteq D \ot A_0$ where $D \subset M_d(\C)$ is an abelian von Neumann subalgebra. It follows that $\theta(M) \subseteq D \ot M$. We can thus further decompose $\theta$ as a direct sum of embeddings $\theta : M \recht M$ such that
\begin{equation}\label{eq.canonical}
\begin{split}
& \theta(a \ot 1) = \psi(a) \ot 1 \;\;\text{for all $a \in A_0$,}\;\; \theta(1 \ot b) = 1 \ot b \;\;\text{for all $b \in L^\infty(X)$,}\\
& \theta(u_{(g,h)}) = u_{(g,h)} \;\;\text{for all $(g,h) \in \Gamma_0 \times \Gamma$,}
\end{split}
\end{equation}
where $\psi : A_0 \recht A_0$ is a trace preserving inclusion. We find that $\psi \in \End(A_0,\tau,\cG)$. Conversely, whenever $\psi \in \End(A_0,\tau,\cG)$, there is a unique embedding $\theta : M \recht M$ satisfying \eqref{eq.canonical}. Since the relative commutant of $L(\Gamma_0 \times \Gamma)$ in $M$ is trivial, distinct elements of $\End(A_0,\tau,\cG)$ give rise to embeddings that are not unitarily conjugate. We also have that the relative commutant of $L(\Gamma_0 \times \Gamma)$ in the ultrapower $M^\omega$ is trivial, so that $M$ is full. This concludes the proof of the proposition.
\end{proof}

The following lemma is the key technical result to realize concrete semigroups as $\End(A_0,\tau,\cG)$. It was already stated as \cite[Step 1 of Lemma 6.2]{Dep13}, but the proof there has a gap that we repair here. We refer to Remark \ref{rem.gap} for a more detailed discussion.

\begin{lemma}\label{lem.factor-product}
Let $(X_0,\mu_0)$ be a standard nonatomic probability space and put $(X,\mu) = (X_0,\mu_0)^\N$. For every measure preserving automorphism $\Delta \in \Aut(X_0,\mu_0)$, consider the diagonal automorphism
$$\be_\Delta \in \Aut(X,\mu) : (\be_\Delta(x))_n = \Delta(x_n) \; .$$
Let $\psi : (X,\mu) \recht (X,\mu)$ be a nonsingular factor map satisfying $\psi \circ \be_\Delta = \be_\Delta \circ \psi$ for all $\Delta  \in \Aut(X_0,\mu_0)$. Then $\psi$ is measure preserving and there exists an injective map $\sigma : \N \recht \N$ such that $(\psi(x))_n = x_{\si(n)}$ for all $n \in \N$ and a.e.\ $x \in X$.
\end{lemma}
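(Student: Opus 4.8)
The plan is to exploit the enormous symmetry group $\Aut(X_0,\mu_0)$, acting diagonally on $(X,\mu)=(X_0,\mu_0)^{\N}$, to pin down the structure of $\psi$. First I would pass to the von Neumann algebra picture: write $A_0=L^\infty(X_0,\mu_0)$, $A=A_0^{\ovt\N}=L^\infty(X,\mu)$, and let $\cG=\{\be_\Delta\mid \Delta\in\Aut(X_0,\mu_0)\}\subseteq\Aut(A,\tau)$ be the diagonal copy of $\Aut(X_0,\mu_0)$, where $\tau$ is the canonical trace. A nonsingular factor map $\psi$ induces a unital (possibly non-trace-preserving) normal $*$-homomorphism $\psi^*:A\recht A$; the commutation $\psi\circ\be_\Delta=\be_\Delta\circ\psi$ becomes $\psi^*\circ\be_\Delta=\be_\Delta\circ\psi^*$ for all $\Delta$. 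The key point is that the fixed-point algebra $A^{\cG}$ of the diagonal action is exactly $\C 1$: since $\Aut(X_0,\mu_0)$ acts ergodically on any finite power $X_0^k$ (indeed the diagonal action on $X$ is ergodic, being conjugate to an IDPFT-type action with no invariant sets), a $\cG$-invariant element of $A$ must be constant. Moreover, for each $n$, the relative fixed-point algebra $A^{\cG}$ computed ``in the $n$-th coordinate'' behaves well: if one lets $\cG$ act only through the automorphisms fixing coordinate $n$... — actually the cleaner route is to use that the commutant computation will force $\psi^*$ to send each coordinate copy $A_0^{(n)}\subseteq A$ (the $n$-th tensor factor) into a single coordinate copy $A_0^{(\sigma(n))}$.

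Concretely, the main steps are: (1) Show $\psi^*$ is trace preserving. Because $\tau\circ\psi^*$ is a $\cG$-invariant normal state on $A$ (invariance follows from the commutation relation and the fact that $\tau$ is $\cG$-invariant), and since the only $\cG$-invariant normal state is $\tau$ itself — here one uses uniqueness of the invariant state for the ergodic action, or more elementarily that $A^{\cG}=\C1$ together with a mean over a suitable averaging net of $\cG$ — we get $\tau\circ\psi^*=\tau$, i.e.\ $\psi$ is measure preserving. (2) For a fixed $n$, consider the subgroup $\cG_n\subseteq\cG$ of diagonal automorphisms whose underlying $\Delta$... — no: the diagonal action doesn't give such subgroups. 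Instead I would argue as follows. Fix $n$ and consider $\psi^*(A_0^{(n)})$. Since $A_0^{(n)}$ commutes with $A_0^{(m)}$ for all $m\ne n$, and since $\psi^*$ is trace-preserving hence injective, $\psi^*(A_0^{(n)})$ is a diffuse abelian subalgebra commuting with $\psi^*(A_0^{(m)})$ for all $m\ne n$. The subalgebras $\psi^*(A_0^{(m)})$, $m\in\N$, are mutually commuting, diffuse, and generate $A$ (because $\psi^*$ has image generated by them and... actually the image need not be all of $A$ — but I only need that they are mutually commuting and each diffuse). (3) Now bring in $\cG$. For $\Delta\in\Aut(X_0,\mu_0)$, $\be_\Delta$ acts on $A$ by $\be_\Delta|_{A_0^{(m)}}=\Delta$ in each coordinate, and $\psi^*\circ\be_\Delta=\be_\Delta\circ\psi^*$ means $\be_\Delta$ globally preserves $\psi^*(A_0^{(n)})$ for each $n$ and the restriction of $\be_\Delta$ to $\psi^*(A_0^{(n)})$ is intertwined with its action on $A_0^{(n)}$. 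The crucial structural input is a ``coordinate rigidity'' statement: a diffuse abelian subalgebra $B\subseteq A=A_0^{\ovt\N}$ that is globally invariant under all $\be_\Delta$ and on which the $\be_\Delta$-action is conjugate to the standard action $\Aut(X_0,\mu_0)\actson A_0$ must be a single coordinate copy $A_0^{(\sigma(n))}$ (possibly twisted by an automorphism commuting with $\Aut(X_0,\mu_0)$, but since $\Aut(X_0,\mu_0)$ acts on $A_0$ with trivial centralizer in $\Aut(A_0,\tau)$ — because the standard action is, e.g., maximally mixing and its centralizer is trivial — that twist is trivial). Proving this coordinate-rigidity claim is where the real work lies, and I expect it to be the main obstacle.

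For that claim I would argue by a maximality/exhaustion argument: let $p\in A$ be a projection; since $A_0$ has a rich orbit structure under $\Aut(X_0,\mu_0)$, decompose $A$ as $A_0^{(n)}\ovt A_0^{(\ne n)}$ and use Fourier analysis / the characters of $A_0$ under the $\Aut(X_0,\mu_0)$-action. The action of $\Aut(X_0,\mu_0)$ on $L^2(A_0)\ominus\C1$ is irreducible (it is the Koopman representation on $L^2_0(X_0)$, which for the full automorphism group is irreducible), while on $L^2(A)\ominus\C1$ it decomposes as a direct sum over nonempty finite subsets $S\subseteq\N$ of tensor-power pieces $\bigotimes_{m\in S}(L^2_0(X_0))$. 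A $\be_\Delta$-invariant subspace on which $\Aut(X_0,\mu_0)$ acts as it does on $L^2_0(X_0)$ — i.e.\ the multiplicity-one copy of the basic representation — must lie in $\bigoplus_n L^2_0(X_0)^{(n)}$; combining with the subalgebra (multiplicativity) constraint and the mutual commutation of the $\psi^*(A_0^{(m)})$ forces $\psi^*(A_0^{(n)})=A_0^{(\sigma(n))}$ for a well-defined $\sigma:\N\recht\N$, injective because the $\psi^*(A_0^{(n)})$ are distinct. (4) Finally, once $\psi^*(A_0^{(n)})=A_0^{(\sigma(n))}$ with the $\cG$-action matching, the restriction $\psi^*|_{A_0^{(n)}}:A_0^{(n)}\recht A_0^{(\sigma(n))}$ is a trace-preserving $*$-isomorphism commuting with $\Aut(X_0,\mu_0)$; by triviality of the centralizer of the standard action in $\Aut(A_0,\tau)$, it is the ``identity'' identification, so $(\psi(x))_n=x_{\sigma(n)}$, as claimed.

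The main obstacle, as indicated, is the coordinate-rigidity step (3): showing that a $\cG$-invariant diffuse abelian subalgebra carrying the basic representation of $\Aut(X_0,\mu_0)$ must be a single coordinate copy. I expect to handle it via the representation-theoretic decomposition of $L^2(A)$ under $\Aut(X_0,\mu_0)$ combined with the fact that $\psi^*(A_0^{(n)})$ is not just a subspace but a $\ast$-subalgebra commuting with the other $\psi^*(A_0^{(m)})$ — it is the multiplicativity that rules out ``spreading across several coordinates.'' The gap in \cite[Lemma 6.2]{Dep13} referred to in Remark \ref{rem.gap} is presumably precisely at this juncture, where one must be careful that the intertwining forces a genuine coordinate map rather than merely a $\cG$-equivariant embedding into the whole product.
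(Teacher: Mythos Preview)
Your plan is correct and leads to a valid proof, but it takes a genuinely different route from the paper's argument. The paper does not invoke the representation theory of $\Aut(X_0,\mu_0)$ at all. After establishing that $\psi$ is measure preserving (via the same ergodicity argument you give), it proceeds in two hands-on steps. First (``Claim 1''), for any projection $p\in A_0$ it shows that $\psi$ preserves the subalgebra $(\C p+\C(1-p))^{\ovt\N}$, by realizing this subalgebra as the fixed-point algebra of a diagonal action that is mixing on $A_0 p$ and on $A_0(1-p)$ separately. Second (``Claim 2''), by approximating $A_0$ with finite-dimensional subalgebras and applying Claim 1 to their minimal projections, it shows that for each coordinate $m$ the $m$-th slice $(\id\ot\om)\psi(\pi_n(a))$ lies in $\C1+\C a$. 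Only then does multiplicativity enter, via a Haar unitary $u$ and an $\ell^2=\ell^4$ argument on the expansion of $\psi(\pi_n(u))$ in the basis $\{\pi_\cF(u):\cF\subset\N\text{ finite}\}$, forcing $\psi(\pi_n(u))=\pi_{\sigma(n)}(u)$ for a single $\sigma(n)$.

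Your approach replaces Claims 1 and 2 by two representation-theoretic facts: (i) the Koopman representation of $\Aut(X_0,\mu_0)$ on $L^2_0(X_0)$ is irreducible, and (ii) this representation does not occur as a subrepresentation of $L^2_0(X_0)^{\otimes k}$ for any $k\geq 2$. You assert (ii) without comment (the phrase ``must lie in $\bigoplus_n L^2_0(X_0)^{(n)}$''), but it does require proof; fortunately the same stabilizer method that gives (i) works: any intertwiner $T:L^2_0\to L^2_0^{\otimes k}$ must send $1_E-\tau(1_E)$ into $\C(1_E-\tau(1_E))^{\otimes k}$, and then linearity applied to a disjoint decomposition $E=E_1\sqcup E_2$ produces cross terms that force $T=0$. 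With (i) and (ii) in hand, your squaring argument (an element of $\C1+\bigoplus_m L^2_0^{(m)}$ whose square also lies there must be supported in a single coordinate) is cleaner than the paper's Haar-unitary computation, and your final step---that an element of $\Aut(A_0,\tau)$ commuting with all of $\Aut(A_0,\tau)$ is the identity---is immediate. What your approach buys is conceptual transparency and a potentially portable template; what the paper's approach buys is a fully self-contained proof that avoids any appeal to the unitary representation theory of the non-locally-compact Polish group $\Aut(X_0,\mu_0)$.
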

\begin{proof}
Choosing a mixing probability measure preserving action $\Lambda \actson (X_0,\mu_0)$, its diagonal product $\Lambda \actson (X,\mu)$ is ergodic and measure preserving. It follows that $\psi_*(\mu)$ is $\Lambda$-invariant and hence equal to $\mu$. So, $\psi$ is measure preserving.

For notational convenience, we write $A_0 = L^\infty(X_0,\mu_0)$ and view $(A,\tau)$ as the infinite tensor product $(A_0,\tau)^\N$. We denote by $\pi_n : A_0 \recht A$ the embedding in the $n$'th tensor factor. Whenever $p \in A_0$ is a projection, write $A_0(p) = \C p + \C (1-p)$. We view $\psi$ as a trace preserving unital $*$-homomorphism $\psi : A \recht A$ that commutes with all $\be_\Delta$.

{\bf Claim 1.} For every projection $p \in A_0$, we have that $\psi\bigl((A_0(p))^\N\bigr) \subseteq (A_0(p))^\N$.

When $p$ equals $0$ or $1$, there is nothing to prove. So assume that $0 < p < 1$. Choose a trace preserving action $\Lambda \actson (A_0,\tau)$ such that $p$ is invariant and such that the actions $\Lambda \actson A_0 p$ and $\Lambda \actson A_0(1-p)$ are mixing. This can be done by taking the disjoint union of two Bernoulli shifts. Then $(A_0(p))^\N$ is precisely the fixed point algebra under the diagonal action $\Lambda \actson A$. Since $\psi$ commutes with this diagonal action, claim~1 follows.

{\bf Claim 2.} For every $n,m \in \N$ and every $a \in A_0$, we have that
$$\psi(\pi_n(a)) \in \pi_m(\C 1 + \C a) \, A_0^{\N \setminus \{m\}} \; .$$
Write $A_1 = A_0^{\N \setminus \{m\}}$ and view $A$ as the tensor product $A = \pi_m(A_0) \ovt A_1$. Let $\om$ be an arbitrary normal state on $A_1$. Define the unital normal positive map
$$\theta : A_0 \recht A_0 : \theta(a) = \pi_m^{-1}((\id \ot \om)\psi(\pi_n(a))) \; .$$
It suffices to prove that $\theta(a) \in \C 1 + \C a$ for all $a \in A_0$.

Choose an increasing sequence of finite dimensional unital $*$-subalgebras $B_k \subset A_0$ such that $\bigcup_k B_k$ is weakly dense in $A_0$. Fix $k$ and denote by $p_1,\ldots,p_N$ the minimal projections of $B_k$. From claim~1, we know that $\theta(p_i) \in \C p_i + \C (1-p_i)$ for all $i$. We can thus write
$$\theta(p_i) = \al_i \, p_i + \be_i \, (1-p_i) \quad\text{with $\al_i,\be_i \in [0,1]$.}$$
Define $\eta_k \in (B_k)^*_+$ by $\eta_k(p_i) = \be_i$. Define $b_k \in B_k$ by $b_k = \sum_i (\al_i - \be_i) p_i$. Note that $-1 \leq b_k \leq 1$. Also,
$$\theta(a) = b_k a + \eta_k(a) 1 \quad\text{for all $a \in B_k$.}$$
Since $\theta$ is unital, we have in particular that $\eta_k(1) 1 = 1 - b_k$. Therefore, $\eta_k(1) \leq 2$ and $b_k = (1-\eta_k(1)) \, 1$. Since $\eta_k$ is positive, $\|\eta_k\| \leq 2$. We conclude that
\begin{equation}\label{eq.theta-on-Bk}
\theta(a) = \eta_k(a) 1 + (1-\eta_k(1)) a \quad\text{for all $a \in B_k$.}
\end{equation}
Let $a \in A_0$ be an arbitrary element. By the Kaplansky density theorem, we can choose $k_n \recht \infty$ and $a_n \in B_{k_n}$ such that $\|a_n\| \leq \|a\|$ for all $n$ and $a_n \recht a$ strongly. After passage to a subsequence, we may assume that the bounded sequences $\eta_{k_n}(a_n)$ and $\eta_{k_n}(1)$ are convergent, say to $\al,\be \in \R$. It follows from \eqref{eq.theta-on-Bk} that $\theta(a_n) \recht \al 1 + \be a$ strongly. On the other hand, $\theta(a_n) \recht \theta(a)$ strongly, by normality of $\theta$. So, claim~2 is proven.

It follows from claim~2 that for all $n \in \N$, $\cF \subset \N$ finite and $a \in A_0$
\begin{equation}\label{eq.better-claim}
\psi(\pi_n(a)) \in (\C 1 + \C a)^{\otimes \cF} \, A_0^{\N \setminus \cF} \; .
\end{equation}

Fix $n \in \N$. We prove that there exists an $m \in \N$ such that $\psi(\pi_n(a)) = \pi_m(a)$ for all $a \in A_0$. Fix a generating Haar unitary $u \in (A_0,\tau)$.

For every finite subset $\cF \subset \N$ and every unitary $v \in A_0$, write
$$\pi_\cF(v) = \prod_{m \in \cF} \pi_m(v) \; .$$
We use the convention that $\pi_\emptyset(v) = 1$. For every unitary $v \in A_0$ with $\tau(v) = 0$, we denote by $H(v) \subset L^2(A,\tau)$ the $\|\,\cdot\,\|_2$-closed linear span of $\{\pi_\cF(v) \mid \cF \subset \N \;\text{finite}\}$. Note that the vectors $\pi_\cF(v)$, $\cF \subset \N$ finite, form an orthonormal basis of $H(v)$. For every finite subset $\cF \subset \N$, denote by $E_\cF$ the unique trace preserving conditional expectation of $A$ onto $A_0^\cF$. By \eqref{eq.better-claim}, we have that $E_\cF(\psi(\pi_n(v))) \in H(v)$ for every $v \in \cU(A_0)$ with $\tau(v)=0$, and every finite $\cF \subset \N$. Letting $\cF \recht \N$, we have $E_\cF(\psi(\pi_n(v))) \recht \psi(\pi_n(v))$ in $\|\,\cdot\,\|_2$, and we conclude that $\psi(\pi_n(v)) \in H(v)$ for every unitary $v \in A_0$ with $\tau(v) = 0$.

Applying this to our fixed Haar unitary $u \in A_0$, we can uniquely write
$$\psi(\pi_n(u)) = \sum_{\cF \subset \N \, \text{finite}} \al_\cF \, \pi_\cF(u) \; ,$$
with convergence in $\|\,\cdot\,\|_2$. We now use that $\psi(\pi_n(u^2)) = \psi(\pi_n(u))^2$. When $\cF \neq \cG$ are different finite subsets of $\N$, we have $\pi_\cF(u) \, \pi_\cG(u) \perp H(u^2)$. Since $\psi(\pi_n(u^2)) \in H(u^2)$, it follows that
$$\psi(\pi_n(u^2)) = P_{H(u^2)}\bigl(\psi(\pi_n(u^2))\bigr) = P_{H(u^2)}\bigl(\psi(\pi_n(u)) \, \psi(\pi_n(u))\bigr) = \sum_{\cF \subset \N \; \text{finite}} \al_\cF^2 \, \pi_\cF(u^2) \; .$$
Since $\psi$ is trace preserving, both $\psi(\pi_n(u))$ and $\psi(\pi_n(u^2))$ have $\|\,\cdot\,\|_2$ equal to $1$. We conclude that
$$\sum_{\cF \subset \N \;\text{finite}} |\al_\cF|^2 = 1 = \sum_{\cF \subset \N \;\text{finite}} |\al_\cF|^4 \; .$$
So there is precisely one finite subset $\cF \subset \N$ with $|\al_\cF|=1$ and $\al_\cG = 0$ for all $\cG \neq \cF$.

If $\cF = \emptyset$, we have $\psi(\pi_n(u)) \in \C 1$ and thus $\psi(\pi_n(A_0)) \subseteq \C 1$, which is absurd because $\psi(\pi_n(A_0))$ is a diffuse von Neumann algebra. If $\cF$ has two or more elements, write $a = u + u^2$. Then, $\psi(\pi_n(a)) = \al_\cF \pi_\cF(u) + \al_\cF^2 \pi_\cF(u^2)$. Take $m \in \cF$. Since $\cF \setminus \{m\}$ is nonempty, it follows that the linear span of $(\id \ot \om)\psi(\pi_n(a))$, $\om \in (A_0^{\N \setminus \{m\}})_*$, contains both $\pi_m(u)$ and $\pi_m(u^2)$. By claim~2, this linear span should be contained in $\C 1 + \C \pi_m(a)$. We reached a contradiction and conclude that $\cF = \{m\}$. Write $\al = \al_\cF$. It then follows from claim~2 that $\al \pi_m(u) + \al^2 \pi_m(u^2)$ belongs to $\C1 + \C \pi_m(u+u^2)$. This implies that $\al = 1$. So, $\psi(\pi_n(u)) = \pi_m(u)$. It follows that $\psi(\pi_n(a)) = \pi_m(a)$ for all $a \in A_0$.

Obviously, this $m \in \N$ is unique. We denote $m = \si(n)$. Since $\psi$ is an injective homomorphism, $\si : \N \recht \N$ must be an injective map. We have proven that $\psi(\pi_n(a)) = \pi_{\si(n)}(a)$ for all $n \in \N$ and $a \in A_0$. This concludes the proof of the lemma.
\end{proof}

\begin{definition}\label{def.bicentralizer}
Let $(A_0,\tau)$ be an amenable tracial von Neumann algebra with separable predual. Let $\End(A_0,\tau)$ be the semigroup of unital trace preserving $*$-homomorphisms $A_0 \recht A_0$. We say that a unital subsemigroup $\cS \subseteq \End(A_0,\tau)$ has the \emph{bicentralizer property} if the following two properties hold.
\begin{enumlist}
\item The (automorphic) centralizer $C(\cS) = \{\be \in \Aut(A_0,\tau) \mid \al \circ \be = \be \circ \al \;\;\text{for all $\al \in \cS$}\}$ acts ergodically on $(A_0,\tau)$.
\item The bicentralizer equals $\cS$~: if $\psi \in \End(A_0,\tau)$ and $\psi \circ \be = \be \circ \psi$ for all $\be \in C(\cS)$, then $\psi \in \cS$.
\end{enumlist}
\end{definition}

Clearly, a subsemigroup with the bicentralizer property must be closed in the usual Polish topology on $\End(A_0,\tau)$. Also, if $\cS \subseteq \End(A_0,\tau)$ has the bicentralizer property, then automatically $\cS \cap \Aut(A_0,\tau)$ equals the group $\cS_\inv$ of invertible elements of $\cS$ and with the notation of Definition \ref{def.bicentralizer}, $C(C(\cS_\inv)) = \cS_\inv$. Finally note that if $\cS \subseteq \End(A_0,\tau)$ has the bicentralizer property and $\psi : A_0 \recht A_0$ is any unital, normal, injective $*$-homomorphism that commutes with $C(\cS)$, the ergodicity of $C(\cS) \actson (A_0,\tau)$ implies that $\psi$ is automatically trace preserving and thus, $\psi \in \cS$.

When $A_0 = L^\infty(X,\mu)$ is abelian, we identify $\End(A_0,\tau)$ with the semigroup $\Factor(X,\mu)$ of measure preserving factor maps $(X,\mu) \recht (X,\mu)$, even though this identification is an anti-isomorphism.

It is easy to realize any unital, left cancellative semigroup $\cS$ as the semigroup of self-embeddings of a countable structure. Therefore, point~1 of Theorem \ref{thm.prescribed-embeddings-semiring} is a special case of point~2. But in this special case, we can give the following more straightforward construction, which has an independent interest (see Remark \ref{rem.double-centralizer-Bernoulli}).

\begin{proposition}\label{prop.this-is-all}
Let $\cS$ be a countable, unital, left cancellative semigroup. Let $(X_0,\mu_0)$ be a standard nonatomic probability space. Define $(X,\mu) = (X_0,\mu_0)^\cS$. For every $g \in \cS$, define the measure preserving factor map $\psi_g : (X,\mu) \recht (X,\mu) : (\psi_g(x))_h = x_{gh}$ for all $g,h \in \cS$ and a.e.\ $x \in X$.

Then, the semigroup $\{\psi_g \mid g \in \cS\} \subg \Factor(X,\mu)$ has the bicentralizer property (in the sense of Definition \ref{def.bicentralizer}).
\end{proposition}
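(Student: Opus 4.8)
The plan is to verify the two requirements in Definition \ref{def.bicentralizer} for the semigroup $\cF := \{\psi_g \mid g \in \cS\}$, working throughout in the space picture, so that $C(\cF) = \{\be \in \Aut(X,\mu) \mid \be\psi_g = \psi_g\be \text{ for all } g \in \cS\}$. For $\Delta \in \Aut(X_0,\mu_0)$ let $\be_\Delta \in \Aut(X,\mu)$ be the diagonal automorphism $(\be_\Delta x)_h = \Delta(x_h)$. Using associativity one checks immediately that $\be_\Delta \psi_g = \psi_g \be_\Delta$ for every $g$, so $\{\be_\Delta \mid \Delta \in \Aut(X_0,\mu_0)\} \subseteq C(\cF)$.

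For requirement~(1) I would show this subgroup already acts ergodically. It suffices to fix one mixing p.m.p.\ action $\Z \actson (X_0,\mu_0)$ (e.g.\ a Bernoulli shift) and check ergodicity of its diagonal action on $(X,\mu)$. Decompose $L^2(X) \ominus \C 1 = \bigoplus_{\emptyset \neq \cF_0 \subseteq \cS \text{ finite}} \bigotimes_{h \in \cF_0}\bigl(L^2(X_0) \ominus \C 1\bigr)$; the diagonal $\Z$-action preserves each summand and acts there by a tensor power of the (mixing) Koopman representation on $L^2(X_0)\ominus\C1$, hence by a mixing representation, which has no nonzero invariant vectors. So $C(\cF) \supseteq \{\be_\Delta\}$ acts ergodically.

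For requirement~(2), let $\psi \in \Factor(X,\mu)$ commute with every element of $C(\cF)$. In particular $\psi$ commutes with all $\be_\Delta$, and Lemma \ref{lem.factor-product}, whose proof works verbatim with $\N$ replaced by the countable set $\cS$, gives an injective $\sigma : \cS \recht \cS$ with $(\psi(x))_h = x_{\sigma(h)}$ a.e.; write $\psi = \psi_\sigma$, noting $\psi_g = \psi_{L_g}$ where $L_g(h) = gh$, so the goal is to force $\sigma = L_{g_0}$ with $g_0 := \sigma(1)$. Here one exploits that $C(\cF)$ contains much more than the $\be_\Delta$'s. Model $(X_0,\mu_0) \cong (\{0,1\},\nu) \times (Y_0,\nu_0)$ with $\nu$ the uniform measure and $(Y_0,\nu_0)$ nonatomic standard, and write $x_h = (\eps_h,\tilde x_h)$. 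For any measurable $c : Y_0^{\cS\setminus\{1\}} \recht \{0,1\}$ define $\be_c \in \Aut(X,\mu)$ by
$$(\be_c x)_h = \bigl(\eps_h \oplus c\bigl((\tilde x_{hh'})_{h' \in \cS\setminus\{1\}}\bigr),\ \tilde x_h\bigr), \qquad h \in \cS.$$
Then $\be_c$ is a measure preserving involution (translation, over the $Y_0$-coordinates, by an element of the compact group $\{0,1\}^\cS$), and associativity $(gh)h' = g(hh')$ gives $\be_c\psi_g = \psi_g\be_c$, so $\be_c \in C(\cF)$. Imposing $\psi_\sigma\be_c = \be_c\psi_\sigma$ and comparing $h$'th coordinates, the $Y_0$-parts match automatically while the $\{0,1\}$-parts match iff $c\bigl((\tilde x_{\sigma(h)h'})_{h'\neq 1}\bigr) = c\bigl((\tilde x_{\sigma(hh')})_{h'\neq 1}\bigr)$ a.e.; since this holds for every $c$ (choose $c$ to test a single coordinate) the two injective reindexings of $\cS\setminus\{1\}$ into $\cS$ coincide, i.e.\ $\sigma(hh') = \sigma(h)\,h'$ for all $h \in \cS$, $h' \in \cS$. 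Taking $h = 1$ yields $\sigma = L_{g_0}$, hence $\psi = \psi_{g_0} \in \cF$. (If $\cS = \{1\}$ there is nothing beyond ergodicity of $\Aut(X_0,\mu_0)$ and triviality of its bicentralizer in $\End(X_0,\mu_0)$, again Lemma \ref{lem.factor-product}; if $\cS$ is finite it is automatically a group, where even the right translations $\theta_{R_k}$ belong to $C(\cF)$ and suffice.)

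The main obstacle is the construction and verification of the automorphisms $\be_c$: the ``obvious'' elements of $C(\cF)$ — diagonal automorphisms and right translations by right‑invertible elements of $\cS$ — are by themselves insufficient to pin down $\sigma$ once $\cS$ is not a group, so one genuinely needs these forward‑cocycle automorphisms (and must check they are honest measure preserving automorphisms commuting with all $\psi_g$), together with the routine adaptation of Lemma \ref{lem.factor-product} to the index set $\cS$.
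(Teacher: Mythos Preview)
Your proof is correct and follows the same overall strategy as the paper: first use the diagonal automorphisms $\be_\Delta$ to get ergodicity and to invoke Lemma \ref{lem.factor-product}, reducing to $\psi=\psi_\sigma$ for some injective $\sigma:\cS\to\cS$; then build extra elements of $C(\cF)$ that modify coordinate $h$ in a way that depends on the right-translated coordinates $\{hk:k\in\cS\}$, forcing $\sigma(hk)=\sigma(h)k$.

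The implementation of this second step differs. The paper (following \cite{Dep13}) fixes a single $k\in\cS$, a free $\Delta\in\Aut(X_0,\mu_0)$ and a set $\cU\subset X_0$ with $\Delta(\cU)=\cU$, and takes $(\be(x))_h=\Delta(x_h)$ if $x_{hk}\in\cU$ and $x_h$ otherwise; one coordinate of conditioning suffices, and the freeness of $\Delta$ immediately yields $\sigma(hk)=\sigma(h)k$. Your construction instead decomposes $X_0\cong\{0,1\}\times Y_0$ and flips the $\{0,1\}$-part at $h$ according to an arbitrary measurable function of \emph{all} the $Y_0$-parts at $\{hh':h'\neq 1\}$. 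This is a correct and pleasant variant---the measure-preservation is transparent (fiberwise translation in $\{0,1\}^\cS$), and testing with $c$ depending on a single coordinate gives the same relation $\sigma(hh')=\sigma(h)h'$---but it is more elaborate than necessary: the paper's one-coordinate conditioning avoids the product splitting of $X_0$ and the quantification over all $c$.
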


Note that we need the left cancellation property to ensure that $\psi_g$ is a well defined factor map.

\begin{remark}\label{rem.double-centralizer-Bernoulli}
It follows in particular from Proposition \ref{prop.this-is-all} that the bicentralizer of the Bernoulli action $\Gamma \subg \Aut((X_0,\mu_0)^\Gamma)$ equals $\Gamma$ whenever $(X_0,\mu_0)$ is a standard \emph{nonatomic} probability space and $\Gamma$ is any countable group. In \cite{Rud77}, this bicentralizer property is proven for the Bernoulli action of $\Gamma = \Z$ and an arbitrary, \emph{possibly atomic}, standard base space $(X_0,\mu_0)$. We do not know whether this property holds for arbitrary countable groups $\Gamma$.
\end{remark}

\begin{proof}[{Proof of Proposition \ref{prop.this-is-all}}]
Fix a measure preserving factor map $\psi : (X,\mu) \recht (X,\mu)$ that commutes with $C(\cS)$. For every measure preserving automorphism $\Delta \in \Aut(X_0,\mu_0)$, consider the diagonal automorphism
$$\be_\Delta \in \Aut(X,\mu) : (\be_\Delta(x))_n = \Delta(x_n) \; .$$
Note that $\be_\Delta \in C(\cS)$. Since we can choose a mixing $\Delta$, it follows that $C(\cS)$ acts ergodically on $(X,\mu)$. By Lemma \ref{lem.factor-product}, we can take an injective map $\sigma : \cS \recht \cS$ such that $\psi(x)_h = x_{\sigma(h)}$ for all $h \in \cS$ and a.e.\ $x \in X$. Denote by $e \in \cS$ the unit element. It remains to prove that $\si(h) = \si(e) h$ for all $h \in \cS$, since it then follows that $\psi = \psi_g$ with $g = \si(e)$.

To prove this, we copy the proof of \cite[Step 2 in Lemma 6.2]{Dep13}. Fix a Borel set $\cU \subset X_0$ with $0 < \mu_0(\cU) < 1$. Fix a measure preserving automorphism $\Delta \in \Aut(X_0,\mu_0)$ that is free, i.e.\ $\Delta(x) \neq x$ for all $x \in X_0$, and that satisfies $\Delta(\cU) = \cU$. Fix $k \in \cS$. Define the measure preserving automorphism $\be \in \Aut(X,\mu)$ by
$$\be : X \recht X : (\be(x))_h = \begin{cases} \Delta(x_h) &\;\;\text{if $x_{hk} \in \cU$,}\\ x_h &\;\;\text{if $x_{hk} \not\in \cU$.}\end{cases}$$
We have $\be \in C(\cS)$. Therefore, $\psi$ commutes with $\be$. We have
$$(\be(\psi(x)))_h = \begin{cases} \Delta(x_{\si(h)}) &\;\;\text{if $x_{\si(hk)} \in \cU$,}\\ x_{\si(h)} &\;\;\text{if $x_{\si(hk)} \not\in \cU$,}\end{cases}\quad\text{and}\quad
(\psi(\be(x)))_h = \begin{cases} \Delta(x_{\si(h)}) &\;\;\text{if $x_{\si(h)k} \in \cU$,}\\ x_{\si(h)} &\;\;\text{if $x_{\si(h)k} \not\in \cU$.}\end{cases}$$
If for some $h \in \cS$, we have $\si(hk) \neq \si(h) k$, the set $\cV = \{x \in X \mid x_{\si(hk)} \in \cU , x_{\si(h)k} \not\in \cU\}$ has positive measure. By the freeness of $\Delta$, we have $\be(\psi(x)) \neq \psi(\be(x))$ for all $x \in \cV$. So, we conclude that $\si(hk) = \si(h) k$ for all $h,k \in \cS$. Writing $g = \si(e)$, this means that $\si(k) = g k$ for all $k \in \cS$.
\end{proof}

\begin{remark}\label{rem.gap}
We use the notation of Proposition \ref{prop.the-factor-semiring}. As mentioned above, in \cite[Theorem 4.1]{Dep13}, it was already proven that $\N[\End(A_0,\tau_0,\cG)]$ arises as the embeddings semiring $\Embs(M)$ of a II$_1$ factor $M$, whenever
$\cG$ is an ergodic group of measure preserving transformations of an abelian $(A_0,\tau)$. This is very similar to our result in Proposition \ref{prop.the-factor-semiring}, although we do not require $A_0$ to be abelian or the action $\cG \actson A_0$ to be ergodic.

Next, the statement of \cite[Lemma 6.2]{Dep13} is identical to the statement of our Proposition \ref{prop.this-is-all}. The proof of \cite[Lemma 6.2]{Dep13} consists of two steps. In step~1, it is implicitly used that $(\{0,1\},\mu_0)^\N$ is atomic when $\mu_0$ is a nontrivial probability measure on $\{0,1\}$, which is of course false. We repair this step~1 here in Lemma \ref{lem.factor-product}. Step~2 of the proof of \cite[Lemma 6.2]{Dep13} is correct and is repeated here in our proof of Proposition \ref{prop.this-is-all}.
\end{remark}

Next assume that $\cG$ is a countable structure, given by a countable set $I$ and a countable family of subsets $R_k \subseteq I^{n_k}$, $k \geq 1$. Let $J$ be the disjoint union of $I$ and the sets $I^{n_k}$. Let $\Emb(\cG)$ act on $J$ by defining $g \cdot (i_1,\ldots,i_{n_k}) = (g \cdot i_1,\ldots,g \cdot i_{n_k})$ for all $g \in \Emb(\cG)$ and $(i_1,\ldots,i_{n_k}) \in I^{n_k}$. We identify in this way $\Emb(\cG)$ with a closed subsemigroup of $\Inj(J)$, the injective maps from $J$ to $J$.

\begin{proposition}\label{prop.all-closed-perm}
Let $\cG$ be a countable structure and write $\cS = \Emb(\cG)$. Define $\cS \subseteq \Inj(J)$ as above. Let $(X_0,\mu_0)$ be a standard nonatomic probability space. Define $(X,\mu) = (X_0,\mu_0)^J$. For every $g \in \cS$, define the measure preserving factor map $\psi_g : (X,\mu) \recht (X,\mu) : (\psi_g(x))_j = x_{g\cdot j}$ for all $g \in \cS$, $j \in J$ and a.e.\ $x \in X$.

Then, the semigroup $\{\psi_g \mid g \in \cS\} \subg \Factor(X,\mu)$ has the bicentralizer property (in the sense of Definition \ref{def.bicentralizer}).
\end{proposition}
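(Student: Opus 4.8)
The plan is to mimic the proof of Proposition \ref{prop.this-is-all}, feeding the larger index set $J$ into Lemma \ref{lem.factor-product} and then using the combinatorics of $\cG$ to constrain the coordinate map that comes out. Write $A_0 = L^\infty(X_0,\mu_0)$ and view $(A,\tau) = L^\infty(X,\mu)$ as the tensor product $(A_0,\tau)^J$, identifying $\End(A,\tau)$ with $\Factor(X,\mu)$. For $\Delta \in \Aut(X_0,\mu_0)$, let $\be_\Delta \in \Aut(X,\mu)$ be the diagonal automorphism $(\be_\Delta(x))_j = \Delta(x_j)$. Since $(\psi_g(x))_j = x_{g\cdot j}$, each $\psi_g$ commutes with every $\be_\Delta$, so $\{\be_\Delta \mid \Delta \in \Aut(X_0,\mu_0)\} \subseteq C(\cS)$; choosing a mixing $\Delta$, the $\Z$-action it generates on $(X_0,\mu_0)^J$ is ergodic, whence $C(\cS)$ acts ergodically. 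As $\cS$ is visibly contained in the bicentralizer, it remains to show that any $\psi$ commuting with $C(\cS)$ is of the form $\psi_g$ with $g \in \cS$.

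So let $\psi$ commute with $C(\cS)$. By Lemma \ref{lem.factor-product}, applied with the countable index set $J$ in place of $\N$, there is an injective map $\sigma : J \recht J$ with $(\psi(x))_j = x_{\sigma(j)}$ for a.e.\ $x$. First I would show that $\sigma$ respects the partition $J = I \sqcup \bigsqcup_k I^{n_k}$ and acts coordinatewise on tuples. Fix $k$, an index $l \in \{1,\dots,n_k\}$, a Borel set $\cU \subset X_0$ with $0 < \mu_0(\cU) < 1$ and a free $\Delta \in \Aut(X_0,\mu_0)$, and let $\be$ be the automorphism of $X$ that applies $\Delta$ in the coordinate labelled $(i_1,\dots,i_{n_k})$ of the $k$-th copy of $I^{n_k}$ whenever $x_{i_l} \in \cU$, and is the identity in every other coordinate. (This is an automorphism, since the coordinate it modifies is never the one the condition refers to.) Because the coordinate projection $(i_1,\dots,i_{n_k}) \mapsto i_l$ is equivariant for the diagonal action of $\Emb(\cG)$ on $J$, the automorphism $\be$ commutes with every $\psi_g$, so $\be \in C(\cS)$. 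Writing out $\be\psi = \psi\be$ coordinate by coordinate and using independence of distinct coordinates then forces $\sigma(I) \subseteq I$ and $\sigma(i_1,\dots,i_{n_k}) = (\sigma(i_1),\dots,\sigma(i_{n_k}))$ inside the same $k$-th copy. Setting $\vphi = \sigma|_I : I \recht I$, an injective map, and letting $g'$ denote the diagonal action of $\vphi$ on $J$, we have shown $\psi = \psi_{g'}$.

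The remaining, and genuinely decisive, point is that $\vphi$ preserves every relation $R_k$ biconditionally, i.e.\ $\vphi \in \Emb(\cG)$ and hence $g' \in \cS$. Here the relations finally enter. With $\cU$ and free $\Delta$ as above, let $\be_+$ (resp.\ $\be_-$) be the automorphism of $X$ applying $\Delta$ in the coordinate $(i_1,\dots,i_{n_k})$ of the $k$-th copy precisely when $(i_1,\dots,i_{n_k}) \in R_k$ (resp.\ $\notin R_k$) and $x_{i_1} \in \cU$, and the identity elsewhere. Since every $g \in \Emb(\cG)$ preserves $R_k$ \emph{and} its complement in $I^{n_k}$, both $\be_\pm$ commute with all $\psi_g$ and so lie in $C(\cS)$. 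If $\vphi$ mapped some tuple of $R_k$ outside $R_k$, then at that tuple $\be_+ \psi$ would modify a positive-measure set of coordinates where $\psi \be_+$ does not, contradicting $\be_+ \in C(\cS)$; the reverse inclusion is excluded the same way using $\be_-$. Thus $\vphi \in \Emb(\cG)$, $\psi = \psi_{g'}$ with $g' \in \cS$, and the bicentralizer equals $\cS$.

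The only real work is the bookkeeping in the last two paragraphs: checking that these localized maps $\be$, $\be_\pm$ are genuine measure-preserving automorphisms, that they centralize $\cS$, and carrying out the coordinatewise comparison $\be\psi = \psi\be$ carefully; this is also the only place where it matters that $\cS = \Emb(\cG)$ rather than the full monoid of monomorphisms. When $\cG$ carries no relations one has $J = I$, $\Emb(\cG) = \Inj(I)$, and only the second paragraph is needed.
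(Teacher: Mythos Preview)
Your proof is correct and follows the same overall template as the paper (ergodicity via diagonal automorphisms, Lemma~\ref{lem.factor-product} to reduce to a coordinate permutation $\sigma$, then well-chosen elements of $C(\cS)$ to constrain $\sigma$), but your choice of centralizing automorphisms is genuinely different. The paper first uses the block-diagonal automorphisms $\beta_{\Delta,k}$ (apply $\Delta$ on $J_k$ only) to decouple $\psi$ into factor maps $\psi_k$ on each $(X_0)^{J_k}$, applies Lemma~\ref{lem.factor-product} blockwise, and then specializes to $X_0 = \T$ so as to use the multiplicative automorphisms $\gamma_{l,k}$ to show $\sigma_k$ acts coordinatewise via $\sigma_0$; finally the unconditional $\eta_{\Delta,k}$ (apply $\Delta$ exactly on coordinates in $R_k$) forces $\sigma_0 \in \Emb(\cG)$. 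You instead apply Lemma~\ref{lem.factor-product} once to all of $J$ and then run the conditional-Bernoulli trick from the proof of Proposition~\ref{prop.this-is-all} (apply $\Delta$ at coordinate $j \in J_k$ depending on the value at the linked coordinate $j_l \in I$) to simultaneously force $\sigma(J_k) \subseteq J_k$ and $(\sigma(j))_l = \sigma(j_l)$. Your route has the advantage of not needing to identify $X_0$ with $\T$; the paper's route separates concerns more cleanly and its relation-preserving step is slightly simpler --- the extra conditioning on $x_{i_1} \in \cU$ in your $\beta_\pm$ is harmless but unnecessary, since the unconditional $\eta_{\Delta,k}$ already lies in $C(\cS)$ and does the job.
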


\begin{proof}
For every $k \in \N$, write $J_k = I^{n_k}$ and write $J_0 = I$. We then view $J$ as the disjoint union of the subsets $J_k$, $k \geq 0$. For every $\Delta \in \Aut(X_0,\mu_0)$ and every $k \geq 0$, consider $\beta_{\Delta,k} \in \Aut(X,\mu)$ defined by
$$\bigl(\beta_{\Delta,k}(x)\bigr)_j = \begin{cases} x_j &\;\;\text{if $j \in J \setminus J_k$,}\\
\Delta(x_j) &\;\;\text{if $j \in J_k$.}\end{cases}$$
By construction, $\beta_{\Delta,k} \in C(\cS)$ for all $\Delta \in \Aut(X_0,\mu_0)$ and $k \geq 0$. Using a mixing $\Delta$ and varying $k$, it already follows that $C(\cS)$ acts ergodically.

Let $\psi : (X,\mu) \recht (X,\mu)$ be a measure preserving factor map that commutes with $C(\cS)$. For every $k \geq 0$, denote by $\pi_k : (X,\mu) \recht (X_0,\mu_0)^{J_k}$ the natural factor map. Then $\pi_k \circ \psi$ is invariant under $\beta_{\Delta,m}$ for all $\Delta$ and $m \neq k$. By ergodicity, it follows that $\pi_k \circ \psi = \psi_k \circ \pi_k$ where $\psi_k : (X_0,\mu_0)^{J_k} \recht (X_0,\mu_0)^{J_k}$ is a measure preserving factor map that commutes with all $\beta_{\Delta,k}$. It then follows from Lemma \ref{lem.factor-product} that there exist injective maps $\si_k : J_k \recht J_k$ such that
\begin{equation}\label{eq.nice-form}
(\psi(x))_j = x_{\si_k(j)} \quad\text{for all $k \in \N$, $j \in J_k$ and a.e.\ $x \in X$.}
\end{equation}

We write $\si = \si_0$, which is an injective map from $I$ to $I$. We will prove that $\si_k(i_1,\ldots,i_{n_k}) = (\si(i_1),\ldots,\si(i_{n_k}))$ for all $k \geq 1$ and $(i_1,\ldots,i_{n_k}) \in J_k$. To prove this statement, we may assume that $X_0 = \T$ and that $\mu_0$ is the Lebesgue measure on $\T$. Fix $k \geq 1$ and fix $1 \leq l \leq n_k$. For every $i \in J_k = I^{n_k}$, denote by $i_l \in I$ its $l$'th coordinate and recall that $I = J_0$. We will prove that $(\si_k(i))_l = \si(i_l)$ for all $i \in J_k$.

Define $\gamma_{l,k} \in \Aut(X,\mu)$ by
$$(\gamma_{l,k}(x))_i = \begin{cases} x_i &\;\;\text{if $i \in J \setminus J_k$,}\\
x_{i_l} x_i &\;\;\text{if $i \in J_k$, with $l$'th coordinate $i_l \in I = J_0$.}\end{cases}$$
By construction, $\gamma_{l,k} \in C(\cS)$. So, $\psi$ commutes with $\gamma_{l,k}$. Using \eqref{eq.nice-form}, it follows that
$$x_{\si(i_l)} \, x_{\si_k(i)} = x_{(\si_k(i))_l} \, x_{\si_k(i)} \quad\text{for a.e.\ $x \in X$.}$$
Hence, $\si(i_l) = (\si_k(i))_l$. This holds for all $k \geq 1$, $1 \leq l \leq n_k$ and $i \in J_k$. We have thus proven that $\si_k(i_1,\ldots,i_{n_k}) = (\si(i_1),\ldots,\si(i_{n_k}))$.

Recall that the countable structure $\cG$ is defined by the sets $R_k \subseteq I^{n_k} = J_k$. For every $\Delta \in \Aut(X_0,\mu_0)$ and every $k \geq 1$, define $\eta_{\Delta,k} \in \Aut(X,\mu)$ by
$$(\eta_{\Delta,k}(x))_i = \begin{cases} x_i &\;\;\text{if $i \in J \setminus J_k$ or $i \in J_k \setminus R_k$,}\\
\Delta(x_i) &\;\;\text{if $i \in R_k$.}\end{cases}$$
By construction, $\eta_{\Delta,k} \in C(\cS)$. Expressing that $\psi$ commutes with $\eta_{\Delta,k}$ for all $\Delta$ and $k$ says that $(\si(i_1),\ldots,\si(i_{n_k})) \in R_k$ if and only if $(i_1,\ldots,i_{n_k}) \in R_k$. We have thus proven that $\si \in \Emb(\cG)$, meaning that $\psi = \psi_g$ for some $g \in \cS$.
\end{proof}

For the formulation of the next result, we make use of Gaussian probability spaces. To every real Hilbert space $H_\R$ is associated a canonical standard probability space $(X,\mu)$ and a generating family of random variables $X \recht \R : x \mapsto \langle x , \xi \rangle$ for every $\xi \in H_\R$ with a Gaussian distribution with mean zero and variance $\|\xi\|^2$, such that $\xi \mapsto \langle \, \cdot \, , \xi \rangle$ is real linear.

We thus have the Gaussian Hilbert space $H_\R \subset L^2_\R(X,\mu)$. The construction is functorial, in the sense that to any real linear isometric $v : H_\R \recht H_\R$ corresponds an essentially unique measure preserving factor map $\psi_v : (X,\mu) \recht (X,\mu)$ such that $\langle \psi_v(x),\xi \rangle = \langle x, v \xi \rangle$ for all $\xi \in H_\R$ and a.e.\ $x \in X$.

The von Neumann bicommutant theorem can then be translated as follows to a bicentralizer property.

\begin{proposition}\label{prop.bicentralizer-gaussian-von-neumann}
Let $N \subseteq B(H)$ be a von Neumann algebra acting on a separable Hilbert space $H$. Assume that $p H$ has infinite dimension for every nonzero projection $p \in N$. Define $\cI(N) = \{v \in N \mid v^* v = 1\}$, the semigroup of isometries in $N$.

View $H$ as a real Hilbert space by forgetting multiplication by $i \in \C$ and using the scalar product $\Re \langle \, \cdot \, , \, \cdot \, \rangle$. Then the semigroup $\{\psi_v \mid v \in \cI(N)\} \subseteq \Factor(X,\mu)$ has the bicentralizer property in the sense of Definition \ref{def.bicentralizer}.
\end{proposition}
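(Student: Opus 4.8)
The plan is to translate the von Neumann bicommutant theorem into the language of Gaussian factor maps, exactly as the name of the proposition suggests. First I would set up the dictionary. Recall that for a Gaussian probability space $(X,\mu)$ built from a real Hilbert space $H_\R$, the Koopman unitary on $L^2(X,\mu)$ decomposes as the symmetric Fock space $\bigoplus_{k \geq 0} H_\R^{\odot k}$, and an orthogonal transformation $v$ of $H_\R$ gives the factor map $\psi_v$ whose Koopman operator is the second quantization $\Gamma(v) = \bigoplus_k v^{\odot k}$. The key point is that this is functorial and \emph{faithful}: $\psi_v$ determines $v$ up to sign on each irreducible piece, and in fact $v$ can be recovered from the action of the Koopman operator on the first Wiener chaos $H_\R$. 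Two factor maps $\psi_{v_1},\psi_{v_2}$ commute iff $\Gamma(v_1)\Gamma(v_2) = \Gamma(v_2)\Gamma(v_1)$ iff $v_1 v_2 = v_2 v_1$ as operators on $H_\R$. A measure preserving factor map $\psi$ that need not come from a priori from such a $v$ will still have a Koopman operator preserving $L^2$, but the content of Lemma~\ref{lem.factor-product}-type reasoning (or the classical fact that endomorphisms of a Gaussian system are themselves Gaussian, via the hypercontractivity/chaos-grading argument) is that every measure preserving factor map of a Gaussian space commuting with a rich enough group of Gaussian automorphisms is itself Gaussian, i.e. of the form $\psi_v$ for some real-linear isometry $v$.

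Second I would identify the centralizer. Writing $\cS = \{\psi_v \mid v \in \cI(N)\}$, where $\cI(N)$ is the semigroup of isometries of $N \subseteq B(H)$ viewed as real-linear isometries of $H_\R$, the automorphic centralizer $C(\cS) \subseteq \Aut(X,\mu)$ consists of measure preserving automorphisms commuting with all $\psi_v$, $v \in \cI(N)$. The claim is that $C(\cS) = \{\psi_w \mid w \in \cU(N')_{\text{orth}}\}$, i.e. comes precisely from the orthogonal group of the commutant $N'$ acting on $H_\R$. For the inclusion $\supseteq$, if $w \in N'$ is unitary then $w$ commutes with every $v \in N$, hence $\psi_w$ commutes with every $\psi_v$. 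For the inclusion $\subseteq$, any $\beta \in C(\cS)$ is a measure preserving automorphism; since the Gaussian automorphisms coming from $N'$ (or even from all of $O(H_\R)$) already act ergodically — indeed, $p H$ infinite dimensional for every nonzero $p \in N$ guarantees there is no finite-dimensional invariant subspace, so $N$ contains isometries generating a rich enough dynamics; more directly, one can use arbitrary mixing Gaussian automorphisms $\psi_u$ with $u$ not necessarily in $N'$ to get ergodicity of the full Gaussian automorphism group, then intersect — actually the cleanest route is: $C(\cS)$ contains enough mixing transformations (take any $v \in \cI(N)$ with $v v^* < 1$, or use that $N$ being a von Neumann algebra with the infinite multiplicity hypothesis contains an orthogonal transformation $w$ of $H_\R$ whose powers are weakly mixing), so $C(\cS)$ acts ergodically, which is item~1 of Definition~\ref{def.bicentralizer}; and then $\beta \in C(\cS)$ must be Gaussian by the endomorphism-is-Gaussian argument, say $\beta = \psi_w$ with $w \in O(H_\R)$, and commutation $\psi_w \psi_v = \psi_v \psi_w$ for all $v \in \cI(N)$ forces $w v = v w$ for all $v \in \cI(N)$; since the isometries of $N$ linearly span $N$ (any element of $N$ is a combination of unitaries, and the infinite multiplicity lets us dilate unitaries to isometries), we get $w \in N'$.

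Third I would compute the bicentralizer and close the loop. We have $C(\cS) = \{\psi_w \mid w \in \cU(N')_\R\}$. Now take $\psi \in \End(A_0,\tau)$ (with $A_0 = L^\infty(X,\mu)$) commuting with all of $C(\cS)$. Ergodicity of $C(\cS)$ forces $\psi$ to be measure preserving (automatically, as noted in the remark after Definition~\ref{def.bicentralizer}), and by the Gaussian rigidity argument again $\psi = \psi_v$ for some real-linear isometry $v$ of $H_\R$. Commutation $\psi_v \psi_w = \psi_w \psi_v$ for all $w \in \cU(N')_\R$ gives $v w = w v$ for all $w \in \cU(N')$, hence $v \in (N')' = N$ by the bicommutant theorem (applied in $B(H_\R)$, but $N$ is a complex von Neumann algebra; here one uses that $v$ commuting with all \emph{complex} unitaries of $N'$, which includes multiplication by $i$ composed with unitaries, forces $v$ to be complex-linear and to lie in $N'' = N$ — this is where one must be slightly careful that $i \cdot \mathrm{id} \in N'$, which holds since $N$ is a complex von Neumann algebra so $i \in \cZ(N) \subseteq N'$). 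Since $v$ is an isometry in $N$, $v \in \cI(N)$ and $\psi = \psi_v \in \cS$. This establishes item~2, so $\cS$ has the bicentralizer property.

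The main obstacle I anticipate is the rigidity statement ``every measure preserving factor map commuting with sufficiently many Gaussian automorphisms is itself Gaussian.'' One must either invoke a known structure theorem for endomorphisms of Gaussian dynamical systems (the chaos decomposition plus the fact that an algebra endomorphism preserving $L^2$-norms and commuting with the dilation flow must preserve the grading and act on the first chaos by an isometry, forcing Gaussianity via Wick/hypercontractivity), or replay a Lemma~\ref{lem.factor-product}-style bare-hands argument in the Gaussian setting. A secondary subtlety is the real-versus-complex bookkeeping: $\cI(N)$ is defined via the complex structure but acts on $H$ as real-linear maps, and one must track where complex-linearity is forced (namely, commutation with $i \in N$ and $i \in N'$) so that the real bicommutant computation actually recovers the complex algebra $N$ rather than a larger real algebra; getting the infinite-multiplicity hypothesis $p H$ infinite-dimensional to do exactly the two jobs it is needed for (ergodicity of $C(\cS)$, and spanning of $N$ by its isometries) is the place to be most careful.
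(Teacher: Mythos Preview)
Your overall architecture matches the paper's: use the Fock decomposition $L^2_\R(X,\mu) = \R 1 \oplus \bigoplus_{k\ge 1} H_k$, show the Koopman isometry $\theta$ of $\psi$ preserves the first chaos $H_1 = H_\R$, extract a real-linear isometry $v$, then observe that commutation with $\cU(N')$ (in particular with multiplication by $i$) forces $v$ to be complex-linear and, by von Neumann bicommutant, $v \in (N')' = N$. Two points, however, need repair.

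The step you correctly flag as the obstacle---showing that $\psi$ is Gaussian---is left as an invocation of a ``dilation flow'' or an unspecified structure theorem. The Ornstein--Uhlenbeck--type contraction $T$ acting as $t^k$ on $H_k$ is not a factor map and there is no a priori reason $\theta$ should commute with it. The paper's device is to \emph{produce} $T$ as a weak limit of honest Gaussian automorphisms that already lie in $C(\cS)$: the hypothesis that $pH$ is infinite-dimensional for every nonzero $p\in N$ forces $P=N'$ to have no matrix-algebra direct summand, hence $P$ contains a diffuse abelian subalgebra and one can choose $u_n\in\cU(P)$ with $u_n\to \tfrac12\cdot 1$ weakly. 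Then $\Gamma(u_n)\to T=\bigoplus_k(\tfrac12)^k$ weakly, and since $\theta$ commutes with each $\Gamma(u_n)$ it commutes with $T$; the $\tfrac12$-eigenspace of $T$ is exactly $H_\R$, giving $\theta(H_\R)\subseteq H_\R$. This is the concrete mechanism your sketch is missing.

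Your plan to compute $C(\cS)$ exactly is both unnecessary and problematic. The paper never identifies $C(\cS)$; it only uses the easy inclusion $\{\psi_u : u\in\cU(N')\}\subseteq C(\cS)$ and shows directly that anything commuting with this smaller set already lies in $\cS$. Your proposed reverse inclusion would require running the same rigidity argument with $\cI(N)$ in place of $\cU(N')$, but nothing rules out $N$ itself being a matrix algebra (e.g.\ $N=B(K)$ with $\dim K<\infty$ acting with infinite multiplicity), so the diffuse-abelian trick is unavailable on that side. Relatedly, your ergodicity discussion conflates $N$ and $N'$: the transformations witnessing ergodicity of $C(\cS)$ must lie in $C(\cS)$, hence come from $\cU(N')$, not from $\cI(N)$. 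The paper obtains ergodicity from the fact that $\cU(N')$ has no nonzero finite-dimensional invariant subspace in $H$ (any such would be $pH$ for a nonzero $p\in N$).
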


Of course, the only situation where $p H$ is finite dimensional arises when $N$ admits a type~I factor $B(K)$ as direct summand represented on a finite direct sum of $K$. We can always realize $p H$ to be infinite dimensional by choosing a representation of $N$ with infinite multiplicity.

\begin{proof}
Write $\cS = \{\psi_v \mid v \in \cI(N)\}$. Denote $P = N' \cap B(H)$. By construction, $\psi_u \in C(\cS)$ for every $u \in \cU(P)$. By our assumption, the representation of $\cU(P)$ on $H$ has no nonzero finite dimensional invariant subspace, because they would be of the form $p H$ for a nonzero projection $p \in N$. Thus, $C(\cS)$ acts ergodically on $(X,\mu)$.

Take $\psi \in \Factor(X,\mu)$ such that $\psi$ commutes with all $\psi_u$, $u \in \cU(P)$. We prove that $\psi \in \cS$. Define the real linear isometry
$$\theta : L^2_\R(X,\mu) \recht L^2_\R(X,\mu) : \theta(F) = F \circ \psi \; .$$
We write $H_\R$ instead of $H$ whenever we want to stress that we view $H$ as a real Hilbert space. Then, $\cU(H) \subset \cO(H_\R)$ and this is a proper inclusion. For every $u \in \cO(H_\R)$, we denote by $\theta_u$ the orthogonal transformation of $L^2_\R(X,\mu)$ given by $\theta_u(F) = F \circ \psi_u$.

The real Hilbert space $L^2_\R(X,\mu)$ has a canonical direct sum decomposition into $\R 1$ and $H_k \subset L^2_\R(X,\mu)$, $k \geq 1$, with $H_1 = H_\R$ and $H_k$ being the $k$-fold symmetric real tensor product of $H_\R$. For every $u \in \cO(H_\R)$, the subspaces $H_k$ are invariant subspaces of the orthogonal transformation $\theta_u$ and the restriction of $\theta_u$ to $H_k$ is given by the $k$-fold tensor product $u \ot \cdots \ot u$. So whenever $u_n \in \cO(H_\R)$ is a sequence that converges weakly to $t 1$ with $0 < t < 1$, it follows that $\theta_{u_n}(F) \recht t^k F$ weakly for every $F \in H_k$ and $k \geq 1$.

By our assumption, the von Neumann algebra $P$ does not admit a matrix algebra as direct summand. Therefore, $P$ contains a diffuse abelian von Neumann subalgebra. In particular, we can choose $u_n \in \cU(P)$ such that $u_n \recht t 1$ weakly, with $t = 1/2$. By the discussion in the previous paragraph, $\theta_{u_n}(F) \recht t^k F$ weakly for every $F \in H_k$ and $k \geq 1$. Define the real linear operator $T$ on $L^2_\R(X,\mu)$ by $T(1) = 1$ and $T(F) =t^k F$ for every $F \in H_k$ and $k \geq 1$. Since $\theta_{u_n} \recht T$ weakly and since $\theta$ commutes with all $\theta_{u_n}$, we find that $\theta$ commutes with $T$. Since $H_\R = H_1$ is the spectral subspace of $T$ with eigenvalue $t$, we conclude that $\theta(H_\R) \subseteq H_\R$. We thus find a real linear isometry $v : H_\R \recht H_\R$ such that the restriction of $\theta$ to $H_\R$ equals $v$.

Since $\theta$ commutes with $\theta_u$ for all $u \in \cU(P)$, it follows that $v$ commutes with all $u \in \cU(P) \subset \cO(H_\R)$. In particular, $v$ commutes with the multiplication by $i \in \C$, so that $v : H \recht H$ is actually complex linear and $v \in P'$. By the bicommutant theorem, $v \in \cI(N)$.

By construction, $\langle \psi(x),\xi\rangle = \langle \psi_v(x),\xi\rangle$ for a.e.\ $x \in X$. So, $\psi = \psi_v \in \cS$.
\end{proof}

We have now gathered enough material to prove Theorem \ref{thm.prescribed-embeddings-semiring} and Corollary \ref{cor.outer-automorphism-groups}.

\begin{proof}[{Proof of Theorem \ref{thm.prescribed-embeddings-semiring}}]
The theorem is an immediate consequence of Proposition \ref{prop.the-factor-semiring} and Proposition \ref{prop.this-is-all} (for point~1), Proposition \ref{prop.all-closed-perm} (for point~2) and Proposition \ref{prop.bicentralizer-gaussian-von-neumann} (for point~3, by taking an infinite multiplicity representation of $N$).
\end{proof}

\begin{proof}[{Proof of Corollary \ref{cor.outer-automorphism-groups}}]
We first recall that the closed subgroups of the group of all permutations of a countable set can be characterized in two ways. These are precisely the automorphism groups of countable structures. And they are precisely the Polish groups for which the neutral element admits a neighborhood basis consisting of open subgroups. In particular, second countable, locally compact, totally disconnected groups belong to this class, so that point~4 is a special case of point~1.

By Propositions \ref{prop.all-closed-perm} and \ref{prop.bicentralizer-gaussian-von-neumann} and the remarks in the previous paragraph, each of the groups in points~1, 2 and 4 can be realized as closed subgroups $\cG \subg \Aut(X,\mu)$ with $C(\cG)$ acting ergodically and $\cG = C(C(\cG))$. If $K$ is a second countable, compact group, using the left translation action $K \actson (K,\text{\rm Haar})$, the same holds for the groups in point~3.

We finally make the following easy observation: if $\cG_i \subg \Aut(A_i,\tau_i)$ are closed subgroups such that $C(\cG_i)$ acts ergodically and $\cG_i = C(C(\cG_i))$, then $\cG_1 \times \cG_2 \subg \Aut(A_1 \ovt A_2,\tau_1 \ot \tau_2)$ has the same property. Indeed, if $\psi \in \Aut(A_1 \ovt A_2,\tau_1 \ot \tau_2)$ commutes with $C(\cG_1) \times C(\cG_2)$, the ergodicity of $C(\cG_i)$ implies that $\psi(A_1 \ot 1) = A_1 \ot 1$ and $\psi(1 \ot A_2) = 1 \ot A_2$. Thus, $\psi = \psi_1 \ot \psi_2$ with $\psi_i \in C(C(\cG_i)) = \cG_i$.

Therefore, we can make direct products of the groups in points~1, 2, 3 and 4, so that the proof of the corollary is complete.
\end{proof}

\section{Realizing algebraic lattices as intermediate subgroup lattices}

Let $(I,\leq)$ be a partially ordered set. We say that a subset $J \subseteq I$ admits a supremum if the set $\{b \in I \mid a \leq b \;\;\text{for all $a \in J$}\}$ is nonempty and admits a least element, which is then unique. We similarly define the notion of infimum. We call $(I,\leq)$ a lattice of for all $a,b \in I$, the set $\{a,b\}$ admits a supremum, denoted $a \vee b$, and an infimum, denoted $a \wedge b$.

A lattice $(I,\leq)$ is called complete if every subset admits an infimum and a supremum. Note that $(I,\leq)$ then admits a least element, denoted as $0 \in I$, and a greatest element.
Let $(I,\leq)$ be a complete lattice. An element $a \in I$ is called compact if the following property holds: whenever $J \subseteq I$ is a nonempty subset such that $a \leq \sup J$, there exists a finite subset $J_0 \subseteq J$ such that $a \leq \sup J_0$. An algebraic lattice is a complete lattice in which every element can be written as the supremum of a set of compact elements.

Whenever $K$ is a group and $L \subg K$, the set of intermediate subgroups $\{L_1 \mid L \subg L_1 \subg K\}$ partially ordered by inclusion, is a complete lattice. It is actually algebraic: the compact elements are precisely the intermediate subgroups that can be generated by $L$ and a finite subset $\cF \subseteq K$.

Conversely, it was proven in \cite{Tum86} that every algebraic lattice arises in this way as an intermediate subgroup lattice of $L \subg K$. To prove Theorem \ref{thm.complete-intervals}, we need a certain control over $L$, $K$ and the intermediate subgroups. We want $K$ to be a free product of amenable groups and we want that the intermediate subgroups have no nontrivial finite dimensional unitary representations. We also want a control over conjugacy between intermediate subgroups. All this can be realized at once by adapting the proof of \cite{Rep04}. We thus prove the following result.

\begin{theorem}\label{thm.realizing-lattices}
Let $(I,\leq)$ be an algebraic lattice and denote by $I_0 \subseteq I$ the set of compact elements. Let $\kappa$ be the cardinal number $\max\{|I_0|,|\N|\}$. Let $\Lambda$ be any nontrivial countable group. Define $K = \Lambda^{\ast \kappa}$ as the free product of $\kappa$ copies of $\Lambda$. There exists a subgroup $L \subg K$ with the following properties.
\begin{enumlist}
\item The intermediate subgroup lattice $\{L_1 \mid L \subg L_1 \subg K\}$ is isomorphic with $(I,\leq)$.
\item Every intermediate subgroup $L \subg L_1 \subg K$ is freely generated by conjugates of the copies of $\Lambda$ in $K$.
\item For every $g \in K$, we have that $g \in L \vee gLg^{-1}$.
\end{enumlist}
\end{theorem}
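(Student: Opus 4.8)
The plan is to build $L\subg K=\Lambda^{\ast\kappa}$ by adapting Repnikov's construction in \cite{Rep04}, which already realizes an arbitrary algebraic lattice as an intermediate subgroup lattice. First I would fix the algebraic lattice $(I,\leq)$ with its set $I_0$ of compact elements, and fix an enumeration of $I_0$ (together with a transfinite enumeration of a set of ``relations'', i.e.\ instances of the join operation $c\vee d$ with $c,d\in I_0$, and chains witnessing that every compact element decomposes as a finite join of join-irreducibles). In Repnikov's and in Tůma's \cite{Tum86} approach, one picks for each compact element $c\in I_0$ a free generator-block, and then the subgroup $L$ is defined so that the intermediate subgroup ``generated by $L$ and the block for $c$'' sits at height $c$; the join relations in $(I,\leq)$ are forced by carefully choosing which conjugates of the free factors $\Lambda$ one throws into $L$. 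Since $K$ is a free product of $\kappa$ copies of $\Lambda$ and $\kappa=\max\{|I_0|,|\N|\}$, there is enough room: index the free factors of $K$ by $I_0\times\N$ (say), which gives, for each compact element, countably many ``fresh'' copies of $\Lambda$ to play with, and this is what the proof of \cite{Rep04} requires.

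The key steps, in order, would be: (1) set up the indexing of free factors of $K$ by a countable set attached to each compact element, and define $L$ as the subgroup generated by a specified family of conjugates $g_\alpha\, \Lambda_{(c,n)}\, g_\alpha^{-1}$, exactly mimicking \cite{Rep04}; (2) verify property~1, that the intermediate subgroup lattice of $L\subg K$ is isomorphic to $(I,\leq)$ — this is the heart of \cite{Rep04} and I would largely cite it, checking only that the construction goes through verbatim since $K$ is still a free product; (3) verify property~2, that every intermediate subgroup $L\subg L_1\subg K$ is itself a free product of conjugates of copies of $\Lambda$ — this follows from the Kurosh subgroup theorem applied to the free product $K=\Lambda^{\ast\kappa}$, provided the construction is arranged so that the relevant free factors meet $L_1$ either trivially or in a conjugate of a full factor (one engineers the conjugating elements $g_\alpha$ so that the Kurosh decomposition of any intermediate subgroup has only full-factor terms; in particular one never lands on a proper subgroup of $\Lambda$, so no finite-dimensional unitary representation obstruction survives, because $\Lambda$ itself may be taken with no such representations, e.g.\ $\Lambda=A_\infty$ in the application); (4) prove property~3, $g\in L\vee gLg^{-1}$ for all $g\in K$. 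For step~4 I would argue as follows: $L\vee gLg^{-1}$ is an intermediate subgroup; if $g\notin L\vee gLg^{-1}=:L_1$, then writing $h=g$ we get $hLh^{-1}\subg L_1$ with $h\notin L_1$, and by the Kurosh normal form of $L_1$ (from step~3, it is a free product of conjugates of the $\Lambda_{(c,n)}$) together with the way $L$ was built (every $\Lambda_{(c,n)}$-conjugate inside $L$ is ``rigidly placed''), conjugation by an element outside $L_1$ would move some free factor conjugate of $L$ off the factors of $L_1$, contradicting $hLh^{-1}\subg L_1$. Concretely, this is the same kind of normal-form-in-free-products argument as in Lemma~\ref{lem.long-words} and Lemma~\ref{lem.playing-with-words}: reduced words of length tending to infinity force the conjugator into the subgroup.

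The main obstacle I anticipate is step~4 — establishing the ``self-join absorbs the conjugator'' property $g\in L\vee gLg^{-1}$ — together with making property~2 compatible with it. Repnikov's original construction in \cite{Rep04} is tuned to realize the lattice, and does not a priori deliver these two extra features; the adaptation requires choosing the conjugating elements $g_\alpha$ defining $L$ inside $K$ with additional ``malnormality-type'' genericity so that no intermediate subgroup can be conjugated into another except when the lattice forces it, and so that any conjugate $gLg^{-1}\subg L_1$ with $g\notin L_1$ is impossible. I would handle this by interleaving, into Repnikov's inductive enumeration, further ``small cancellation'' requirements on the $g_\alpha$, and then checking by a free-product normal-form computation (Kurosh theorem plus cyclically reduced words, exactly in the style of Lemma~\ref{lem.playing-with-words}) that these requirements are simultaneously satisfiable. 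Everything else — completeness of the lattice of intermediate subgroups, algebraicity, the identification of compact elements with ``finitely generated over $L$'' — is standard and already recorded in the paragraph preceding Theorem~\ref{thm.complete-intervals} and in \cite{Tum86,Rep04}, so I would cite rather than reprove it.
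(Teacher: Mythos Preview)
Your outline has the right high-level intention (adapt \cite{Rep04}) but misses the actual mechanism that makes the adaptation work, and your plan for property~3 does not go through as written.

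The paper (following \cite{Rep04}) does \emph{not} build $L$ by choosing conjugators $g_\alpha$ with genericity conditions and then arguing Kurosh/normal-form after the fact. The engine is the notion of a \emph{valuation} $\delta:G\to I_0$ into the join-semilattice of compact elements: one starts with the tautological valuation on $G_0=\Lambda^{\ast I_0}$ (the value of a reduced word is the join of the indices of its letters), and then one proves a one-step extension lemma (Lemma~\ref{lem.step1-valuation}): given $(G,\delta)$ and a single element $g_0\in G$, adjoin two fresh copies $K_1,K_2$ of $\Lambda$ and construct a valuation $\delta_1$ on $G\ast K_1\ast K_2$ extending $\delta$ so that every $h$ with $\delta(h)\leq\delta(g_0)$ lies in $\cG_{\delta_1}\vee g_0\cG_{\delta_1}g_0^{-1}$, and so that every level set $\cG_{\delta_1}(b)$ is a free product of conjugates of $\Lambda$ and of old level sets. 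Iterating this transfinitely over all elements of the current group, then over $\N$ (Lemma~\ref{lem.step2-valuation}), yields $(K,\delta)$ with $L=K_\delta$; property~3 is then trivial (take $h=g$, use $\delta(g)\leq\delta(g)$), property~2 is bookkeeping through the iteration, and property~1 comes from identifying intermediate subgroups with preimages $K(J)=\{g:\delta(g)\in J\}$ of ideals $J\subseteq I_0$.

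The gap in your proposal is step~(4). Your argument ``if $g\notin L_1=L\vee gLg^{-1}$ then $gLg^{-1}\subseteq L_1$ would move some free factor off $L_1$'' is circular: $gLg^{-1}\subseteq L_1$ holds by definition of $L_1$, so there is nothing to contradict, and malnormality/small-cancellation genericity of the $g_\alpha$ gives you no handle on an arbitrary $g\in K$. Property~3 is not a rigidity statement to be extracted from normal forms; it is an \emph{absorption} statement that has to be forced element by element during the construction. That is exactly what the valuation/one-step-extension lemma does: for each $g_0$ one spends two new free factors of $\Lambda$ to manufacture a path from any $h$ with $\delta(h)\leq\delta(g_0)$ into $L\vee g_0 L g_0^{-1}$. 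Without this (or an equivalent device), your inductive scheme has no mechanism to guarantee~3, and the Kurosh argument you sketch for~2 also becomes delicate, since the paper gets~2 directly from the explicit form of the level sets in the extension lemma rather than from a blanket application of Kurosh.
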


We prove Theorem \ref{thm.realizing-lattices} by straightforwardly adapting the proof of \cite{Rep04}. For completeness, we include the details here. In this paper, we only use Theorem \ref{thm.realizing-lattices} in the case where $I_0$ is countable. But again for completeness, we prove the general version here. The only place where this level of generality influences the proof is in Lemma \ref{lem.step2-valuation}, where we need a transfinite induction rather than a standard inductive argument.

As in \cite{Rep04}, we use a dual point of view. Fix an algebraic lattice $(I,\leq)$ and denote by $I_0 \subseteq I$ the set of compact elements. Note that the least element $0$ belongs to $I_0$ and that $a \vee b \in I_0$ when $a,b \in I_0$. In technical terms, $(I_0,\leq)$ is a join semilattice with $0$. A subset $J \subseteq I_0$ is called an ideal if for all $a,b \in J$, we have $a \vee b \in J$ and $x \in J$ whenever $x \in I_0$ and $x \leq a$. The map sending $a \in I$ to the ideal $\{x \in I_0 \mid x \leq a\}$ is an isomorphism between $(I,\leq)$ and the lattice of ideals in $(I_0,\leq)$.

Let $G$ be a group and let $(I_0,\leq)$ be a join semilattice with $0$. As in \cite{Rep04}, a map $\delta : G \recht I_0$ is called a valuation if $\delta(e) = 0$, $\delta(g) = \delta(g^{-1})$ and $\delta(gh) \leq \delta(g) \vee \delta(h)$ for all $g,h \in G$. We denote $G_\delta = \{g \in G \mid \delta(g) = 0\}$. More generally, for every $a \in I_0$, we define the subgroup $G_\delta(a) = \{g \in G \mid \delta(g) \leq a\}$. So, $G_\delta = G_\delta(0)$.

\begin{lemma}\label{lem.step1-valuation}
Let $G$ be a group and let $(I_0,\leq)$ be a join semilattice with $0$. Let $\delta : G \recht I_0$ be a valuation. Fix $g_0 \in G$ and write $a = \delta(g_0)$. Let $K_1,K_2$ be any nontrivial groups and define $\cG = G \ast K_1 \ast K_2$.

There exists a valuation $\delta_1 : \cG \recht I_0$ with the following properties.
\begin{enumlist}
\item For every $h \in G$, we have $\delta_1(h) = \delta(h)$.
\item For every $h \in G$ with $\delta(h) \leq a$, we have that $h \in \cG_{\delta_1} \vee g_0 \cG_{\delta_1} g_0^{-1}$.
\item For every $b \in I_0$, the subgroup $\cG_{\delta_1}(b) \subg \cG$ is a freely generated by conjugates of $G_\delta(a)$, $G_\delta(b)$, $K_1$ and $K_2$ (which need not all occur).
\end{enumlist}
\end{lemma}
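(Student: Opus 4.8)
The plan is to construct $\delta_1$ explicitly by writing elements of $\cG = G \ast K_1 \ast K_2$ in reduced form and assigning a value that accounts both for the values of the $G$-syllables under $\delta$ and for a ``marking'' device built from $K_1$ and $K_2$ that will force the group-generation property in~(2). First I would fix a nontrivial element $k_1 \in K_1$ and use it to encode $g_0$: the idea is to arrange that $g_0$ itself lies in $\cG_{\delta_1}$ after conjugating suitably, or rather that $k_1$ and $k_1 g_0$ both become ``small'' while their product relationship pins down $g_0$ inside $\cG_{\delta_1} \vee g_0 \cG_{\delta_1} g_0^{-1}$. Concretely, for a reduced word $w = s_1 s_2 \cdots s_n$ with syllables alternately in $G$, $K_1$, $K_2$, I would set
$$\delta_1(w) = \bigvee_{i \,:\, s_i \in G} \delta(s_i) \;\vee\; \Bigl(\bigvee_{i \,:\, s_i \in K_1 \cup K_2,\ s_i \text{ ``unmatched''}} a\Bigr),$$
where the notion of an ``unmatched'' $K_1$- or $K_2$-syllable is defined by a bracket-cancellation rule relative to the chosen marker(s), so that cyclically reduced words consisting entirely of matched markers and $G$-syllables of value $\le b$ get value $\le b$. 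The verification that this is a valuation — $\delta_1(e)=0$, symmetry, and subadditivity $\delta_1(wv) \le \delta_1(w) \vee \delta_1(v)$ — is a combinatorial check on what happens to syllables and markers when one reduces the concatenation $wv$; the only thing that can raise the value is a $G$-syllable (already controlled) or a marker that was matched in $w$ or in $v$ but becomes unmatched after cancellation, and one shows any such newly-unmatched marker had a partner that contributed $a$ to $\delta_1(w)$ or $\delta_1(v)$.

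Property~(1) should be immediate from the definition, since a word lying in $G$ is a single $G$-syllable. For property~(2), given $h \in G$ with $\delta(h) \le a$, I would exhibit it as a product of elements of $\cG_{\delta_1}$ and of $g_0 \cG_{\delta_1} g_0^{-1}$: the natural candidate is something like $h = (h g_0^{-1}) \cdot g_0$ combined with inserting the marker $k_1$, e.g. writing $h = (k_1^{-1})(k_1 h g_0^{-1} k_1^{-1})(k_1 g_0)$ or a similar bracketing, arranged so that each factor is a word whose markers are all matched and whose $G$-syllables have value $\le a$, hence each factor lies in $\cG_{\delta_1}$ or in the conjugate. Here I expect the precise bookkeeping of which factor belongs to which subgroup, and getting $g_0$-conjugation to appear, to require care; this is where the specific matching rule must be tuned.

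For property~(3), I would identify $\cG_{\delta_1}(b)$ as the subgroup generated by all $G$-elements of value $\le b$ together with all ``matched marker blocks'' (elements of $K_1$, $K_2$, and their conjugates by words built from matched markers and small $G$-syllables), and then apply the Kurosh subgroup theorem / Bass–Serre theory to the free product decomposition of $\cG$: a subgroup of a free product is itself a free product of a free group with conjugates of subgroups of the free factors, and one shows that in our case the free-group part is trivial and the factor-conjugates are exactly conjugates of $G_\delta(a)$, $G_\delta(b)$, $K_1$, $K_2$, because the defining syllable constraints are ``local'' and closed under the Reidemeister–Schreier rewriting. The main obstacle, I anticipate, is setting up the marking/matching rule on reduced words so that all three properties hold simultaneously — in particular so that subadditivity of $\delta_1$ is genuinely preserved under cancellation of markers (property~(2) wants markers to be ``erasable'', while the valuation axiom wants erasing a marker never to increase value), and so that the subgroup structure in~(3) comes out as a clean free product with no stray free generators. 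Getting this rule exactly right, rather than the subsequent (essentially routine) Kurosh-type computation, is the crux, and it is precisely the point at which the construction of \cite{Rep04} must be adapted.
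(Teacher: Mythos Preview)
Your proposal is not a proof: the central object --- the ``matching rule'' that decides which $K_1,K_2$-syllables count as erasable --- is never defined, and you yourself say that getting this rule right is the crux. Until that rule is written down, none of the three verifications can be carried out. In particular, your candidate factorization $h = (k_1^{-1})(k_1 h g_0^{-1} k_1^{-1})(k_1 g_0)$ is not obviously useful: the middle factor has $G$-syllable $h g_0^{-1}$, whose valuation you do not control (it need not be $\le a$ even when $\delta(h)\le a$), and you have not explained why any of these factors would lie in $\cG_{\delta_1}$ or its $g_0$-conjugate under any matching scheme.

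The paper's approach avoids a general syllable-matching formalism entirely. It fixes nontrivial $k_i\in K_i$, sets $h_1=g_0 k_1 g_0^{-1}$, and \emph{declares} two specific conjugates to lie in the zero level: $k_2\,(k_1 G_\delta(a) k_1^{-1})\,k_2^{-1}$ and $h_1 K_2 h_1^{-1}$. Writing $S=G\ast K_1$ with the trivial extension $\delta'=\delta\ast 0$, one then introduces three letter-types $L_1=\{s\in S: s\neq e,\ \delta'(s)\neq a\}$, $L_2=k_2(k_1 G_\delta(a) k_1^{-1}\setminus\{e\})k_2^{-1}$, $L_3=h_1(K_2\setminus\{e\})h_1^{-1}$, proves by a direct reduced-word comparison in $S\ast K_2$ that alternating words in these letters are unique, and defines $\delta_1$ piecewise: on such words, take the join of $\delta'$ over the $L_1$-letters; on all other elements, use the ``high'' extension $\delta''$ that automatically has value $\ge a$. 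Subadditivity then follows from the single identity $a\vee\delta_1(x)=\delta''(x)$ for $x\neq e$, with no cancellation bookkeeping. Property~(2) is obtained by a short chain: $h_1 K_2 h_1^{-1}\subset\cG_{\delta_1}$ gives $k_2\in h_1^{-1}\cG_{\delta_1}h_1\subset g_0\cG_{\delta_1}g_0^{-1}\vee\cG_{\delta_1}$, hence $k_1 G_\delta(a) k_1^{-1}\subset k_2^{-1}\cG_{\delta_1}k_2\subset\cG_{\delta_1}\vee g_0\cG_{\delta_1}g_0^{-1}$, and finally $h\in G_\delta(a)$. Property~(3) is read off directly from the uniqueness of the $L_1L_2L_3$-normal form --- no Kurosh argument is needed.
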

\begin{proof}
Note that for any group $K$, we can extend $\delta$ to the valuation $\delta \ast 0$ on $G \ast K$, which is defined as follows. Whenever $x = u_0 v_1 u_1 \cdots v_n u_n$ is a reduced word in $u_i \in G$, $v_i \in K$, we define $(\delta \ast 0)(x) = \delta(u_0) \vee \delta(u_1) \vee \cdots \vee \delta(u_n)$. It is easy to check that $\delta \ast 0$ is indeed a valuation. The level groups of $\delta \ast 0$ are precisely $G_\delta(b) \ast K$.

If $g_0 = e$, we take $\delta_1 = \delta \ast 0 \ast 0$. For the rest of the proof, assume that $g_0 \neq e$. Pick nontrivial elements $k_i \in K_i \setminus \{e\}$. Write $S = G \ast K_1$ and define the valuation $\delta' = \delta \ast 0$ on $S$. Define the subgroup $G_1 \subg S$ by $G_1 = k_1 G_\delta(a) k_1^{-1}$.

We view $\cG$ as the free product $\cG = S \ast K_2$. Define $h_1 \in S$ by $h_1 = g_0 k_1 g_0^{-1}$. Whenever $b \in I_0$ satisfies $a \not\leq b$, define the subgroup $\cG(b) \subg \cG$ by
$$\cG(b) = S_{\delta'}(b) \vee k_2 G_1 k_2^{-1} \vee h_1 K_2 h_1^{-1} \; .$$
We will construct the valuation $\delta_1 : \cG \recht I_0$ such that $\cG_{\delta_1}(b) = \cG(b)$ whenever $a \not\leq b$ and $\cG_{\delta_1}(b) = S_{\delta'}(b) \ast K_2$ if $a \leq b$. We will define $\delta_1$ by considering words in three types of ``letters'':
$$L_1 = \{s \in S \mid s \neq e, \delta'(s) \neq a\} \quad , \quad L_2 = k_2 (G_1 \setminus \{e\}) k_2^{-1} \quad\text{and}\quad L_3 = h_1 (K_2 \setminus \{e\}) h_1^{-1} \; .$$
We claim that if two alternating words $v_1 \cdots v_n$ and $w_1 \cdots w_m$ in the letters $L_1$, $L_2$, $L_3$ define the same element $x \in \cG$, then $n=m$ and $v_i = w_i$ for all $i$. By induction, it suffices to prove that $v_1 = w_1$. View $x$ as an element of the free product $S \ast K_2$. The only reductions that happen in $v_1 \cdots v_n$ and $w_1 \cdots w_m$ arise when a letter $s \in L_1$ is preceded and/or followed by a letter in $L_3$, leading to one of the following elements in $S$: $s$, $s h_1$, $h_1^{-1} s$ or $h_1^{-1} s h_1$. Since $\delta'(s) \neq a$ and $\delta'(h_1) = a$, the elements $s h_1$ and $h_1^{-1} s$ differ from $e$. Since $s \neq e$, also the elements $s$ and $h_1^{-1} s h_1$ differ from $e$. So no further reductions happen.

Note here, as a side remark for later use, that this already means that if $a \not\leq b$, then $\cG(b)$ is the free product of the three groups $S_{\delta'}(b)$, $k_2 G_1 k_2^{-1}$ and $h_1 K_2 h_1^{-1}$.

If $v_1 \neq w_1$, up to exchanging the $v$'s and $w$'s if needed, we must have $v_1 \in L_1$, $v_2 \in L_3$, $w_1 \in L_1$, $w_2 \in L_2$. The reduced word defining $x \in S \ast K_2$ then starts as
$$(v_1 h_1) \, (K_2 \setminus \{e\}) \, (h_1^{-1}s) \, \cdots = w_1 \, k_2 \, (G_1 \setminus \{e\}) \, k_2^{-1} \, \cdots$$
where depending on the subsequent $v_i$, $i \geq 3$, we have $h_1^{-1} s \in \{h_1^{-1}, h_1^{-1} L_1 , h_1^{-1} L_1 h_1\}$. This set does not intersect $G_1 \setminus \{e\}$, because $h_1 \not\in G_1$ and because all elements in $h_1 (G_1 \setminus \{e\}) \cup h_1 (G_1 \setminus \{e\}) h_1^{-1}$ have $\delta'$-valuation equal to $a$, which is the case because $h_1$ and $G_1$ are free inside $S$. So, the claim is proven.

Define the valuation $\delta\dpr : \cG \recht I_0$ by $\delta\dpr(e)=0$ and
$$\delta\dpr(s_0 z_1 s_1 \cdots z_n s_n) = a \vee \delta'(s_0) \vee \cdots \vee \delta'(s_n)$$
when $s_0 z_1 s_1 \cdots z_n s_n$ is a reduced word with $s_0,s_n \in S$, $s_i \in S \setminus \{e\}$ for all $1 \leq i \leq n-1$ and $z_j \in K_2 \setminus \{e\}$ for all $j$. It is easy to check that $\delta\dpr$ is indeed a valuation.

Then define the map $\delta_1 : \cG \recht I_0$ as follows. If $x \in \cG$ can be written as an alternating word with letters from $L_1$, $L_2$ and $L_3$, and if $s_1,\cdots,s_n$ are the letters from $L_1$ that occur in this word, put
$$\delta_1(x) = \delta'(s_1) \vee \cdots \vee \delta'(s_n) \; .$$
If $x$ cannot be written in such a way, then put $\delta_1(x) = \delta\dpr(x)$. By the claim above, $\delta_1$ is well defined.

We prove that $\delta_1$ is indeed a valuation. Since $\delta\dpr(v) \leq a$ for all $x \in L_2 \cup L_3$, we have for all $x \in \cG \setminus \{e\}$ that $a \vee \delta_1(x) = \delta\dpr(x)$. In particular, $\delta_1(x) \leq \delta\dpr(x)$ for all $x \in \cG$.

Fix $x,x' \in \cG \setminus \{e\}$. We have to prove that $\delta_1(xx') \leq \delta_1(x) \vee \delta_1(x')$. If at least one of the $x,x'$ cannot be written as an alternating word with letters from $L_1$, $L_2$ and $L_3$, we have that $\delta_1(x) \vee \delta_1(x') \geq a$ and thus,
$$\delta_1(x) \vee \delta_1(x') = a \vee \delta_1(x) \vee \delta_1(x') = \delta\dpr(x) \vee \delta\dpr(x') \geq \delta\dpr(xx') \geq \delta_1(xx') \; .$$
If both $x,x'$ can be written as an alternating word with letters from $L_1$, $L_2$ and $L_3$, we denote the letters from $L_1$ as $s_1,\ldots,s_n$ and $s'_1,\ldots,s'_m$, respectively. Put
$$b = \delta'(s_1) \vee \cdots \vee \delta'(s_n) \vee \delta'(s'_1) \vee \cdots \vee \delta'(s'_m) = \delta_1(x) \vee \delta_1(x') \; .$$
If $a \not\leq b$, we have that $x,x' \in \cG(b)$ and thus, $xx' \in \cG(b)$. This implies that $\delta_1(xx') \leq b = \delta_1(x) \vee \delta_1(x')$. If $a \leq b$, we get that
$$\delta_1(x) \vee \delta_1(x') = b = a \vee b = a \vee \delta_1(x) \vee \delta_1(x') = \delta\dpr(x) \vee \delta\dpr(x') \geq \delta\dpr(xx') \geq \delta_1(xx') \; .$$
So, $\delta_1$ is a valuation.

By construction, if $a \not\leq b$, we have $\cG_{\delta_1}(b) = \cG(b)$. If $a \leq b$, we have $\cG_{\delta_1}(b) = S_{\delta'}(b) \ast K_2 = G_{\delta}(b) \ast K_1 \ast K_2$.

So, $\delta_1$ satisfies properties 1 and 3 of the lemma. If $h \in G$ and $\delta(h) \leq a$, we have that $h \in G_{\delta}(a)$. So, $h \in k_1^{-1} G_1 k_1 \subg G_1 \vee \cG_{\delta_1}$ because $\delta_1(k_1) = \delta'(k_1) = 0$. By construction, $h_1 K_2 h_1^{-1} \subg \cG_{\delta_1}$, so that $k_2 \in h_1^{-1} \cG_{\delta_1} h_1$. Since $k_1 \in \cG_{\delta_1}$, we have $h_1 \in g_0 \cG_{\delta_1} g_0^{-1}$. We conclude that $k_2 \in \cG_{\delta_1} \vee g_0 \cG_{\delta_1} g_0^{-1}$. But also by construction, $k_2 G_1 k_2^{-1} \subg \cG_{\delta_1}$. Thus, $G_1 \subg k_2^{-1} \cG_{\delta_1} k_2 \subg \cG_{\delta_1} \vee g_0 \cG_{\delta_1} g_0^{-1}$. We proved above that $h \in G_1 \vee \cG_{\delta_1}$. We thus conclude that $h \in \cG_{\delta_1} \vee g_0 \cG_{\delta_1} g_0^{-1}$ and the lemma is proven.
\end{proof}

\begin{lemma}\label{lem.step2-valuation}
Let $G$ be a group and let $(I_0,\leq)$ be a join semilattice with $0$. Let $\delta : G \recht I_0$ be a valuation. Let $\kappa$ be the cardinal number $\max\{|G|,|\N|\}$. Let $\Lambda$ be any nontrivial group. Define $K = G \ast \Lambda^{\ast \kappa}$. There exists a valuation $\delta_1 : K \recht I_0$ with the following properties.
\begin{enumlist}
\item For every $h \in G$, we have $\delta_1(h) = \delta(h)$.
\item If $h,g \in K$ and $\delta_1(h) \leq \delta_1(g)$, then $h \in K_{\delta_1} \vee g K_{\delta_1} g^{-1}$.
\item For every $b \in I_0$, the subgroup $K_{\delta_1}(b)$ is freely generated by conjugates of copies of $\Lambda \subg K$ and subgroups of the form $G_\delta(c) \subg G$, $c \in I_0$.
\end{enumlist}
\end{lemma}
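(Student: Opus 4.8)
The plan is to prove Lemma \ref{lem.step2-valuation} by a transfinite induction that iteratively applies Lemma \ref{lem.step1-valuation}, one step for each element $g \in K$, so as to gradually ensure property~2. First I would enumerate $K$ as $K = \{g_\alpha \mid \alpha < \kappa\}$ (possible since $|K| = \kappa$), and set up a transfinite chain of groups $G = G_{-1} \subseteq G_0 \subseteq G_1 \subseteq \cdots \subseteq G_\alpha \subseteq \cdots$ together with valuations $\delta_\alpha : G_\alpha \recht I_0$ that restrict correctly at earlier stages. At successor stage $\alpha$, given $\delta_\alpha$ on $G_\alpha$ and looking at the element $g_\alpha$ (now viewed inside $G_\alpha$, which already contains $K$ via a fixed embedding $K \emb G_\alpha$ that I will arrange to be compatible along the chain), I would apply Lemma \ref{lem.step1-valuation} with $G := G_\alpha$, $g_0 := g_\alpha$, and $K_1 := K_2 := \Lambda$, obtaining $G_{\alpha+1} = G_\alpha \ast \Lambda \ast \Lambda$ and a valuation $\delta_{\alpha+1}$ extending $\delta_\alpha$ such that every $h \in G_\alpha$ with $\delta_\alpha(h) \leq \delta_\alpha(g_\alpha)$ satisfies $h \in (G_{\alpha+1})_{\delta_{\alpha+1}} \vee g_\alpha (G_{\alpha+1})_{\delta_{\alpha+1}} g_\alpha^{-1}$. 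At limit stages $\lambda$, I would take $G_\lambda = \bigcup_{\alpha < \lambda} G_\alpha$ with the union valuation $\delta_\lambda = \bigcup_{\alpha < \lambda} \delta_\alpha$; one checks this is again a valuation since the valuation inequalities only involve finitely many elements at a time. Finally set $K' = G_\kappa = \bigcup_{\alpha < \kappa} G_\alpha$ and $\delta_1 = \delta_\kappa$.

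The second part of the plan is to identify $K'$ with $K = G \ast \Lambda^{\ast\kappa}$. Each successor step adjoins exactly two free copies of $\Lambda$, and there are $\kappa$ steps, so $G_\kappa = G \ast \Lambda^{\ast(\kappa \cdot 2)} = G \ast \Lambda^{\ast\kappa}$ as $\kappa$ is infinite; I would fix such an identification once and for all, compatible with the chain, so that the ``old'' copy of $K$ sitting inside every $G_\alpha$ is literally $K$. This requires a small bookkeeping argument: I need the generating free copies of $\Lambda$ introduced before stage $\alpha$ to remain free generators inside $G_\alpha$, which is exactly what Lemma \ref{lem.step1-valuation}(3) guarantees together with the Kurosh subgroup theorem. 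Property~1 of the lemma is then immediate from Lemma \ref{lem.step1-valuation}(1) applied repeatedly, since $\delta_1|_{G} = \delta$. Property~3 follows from Lemma \ref{lem.step1-valuation}(3): at each stage the level groups $(G_\alpha)_{\delta_\alpha}(b)$ are free products of conjugates of $\Lambda$ and of groups of the form $G_\delta(c)$, and passing to the union (a directed union of free products) preserves this, again using Kurosh to see that the increasing union of these free product decompositions is itself a free product decomposition of the required form.

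It remains to verify property~2 for arbitrary $h, g \in K$ with $\delta_1(h) \leq \delta_1(g)$. Write $\delta_1(g) = b$. Since $\delta_1$ is the union of the $\delta_\alpha$, there is a stage $\alpha$ with $g = g_\alpha$ (we enumerated all of $K$), and at that stage $\delta_\alpha(g_\alpha) = \delta_1(g_\alpha)= b$ and $\delta_\alpha(h) = \delta_1(h) \leq b$. The successor step at $\alpha$ then gives $h \in (G_{\alpha+1})_{\delta_{\alpha+1}} \vee g (G_{\alpha+1})_{\delta_{\alpha+1}} g^{-1} \subseteq K_{\delta_1} \vee g K_{\delta_1} g^{-1}$, using that $(G_{\alpha+1})_{\delta_{\alpha+1}} \subseteq (G_\kappa)_{\delta_\kappa} = K_{\delta_1}$ (level groups can only grow along the chain, since the valuations are compatible). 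This closes property~2. The one point needing care is the \emph{order} of the enumeration: we must make sure that when we process $g_\alpha$, the element $h$ we eventually want to handle is already present, i.e.\ $h \in G_\alpha$; since every element of $K$ lies in every $G_\alpha$ (the copy of $K$ is fixed and present from the start) this is automatic, and it is precisely why I want to keep a single fixed copy of $K$ inside the whole chain rather than letting the ambient group drift.

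The main obstacle I expect is the bookkeeping around the identification $G_\kappa \cong K$ and the compatibility of all the structures along the transfinite chain: one must be careful that the free generating sets, the valuations, and the distinguished copy of $K$ all cohere at limit stages, and that Lemma \ref{lem.step1-valuation} can genuinely be applied with $G := G_\alpha$ (which is no longer the original $G$ but a large free product) — this is fine because Lemma \ref{lem.step1-valuation} is stated for an arbitrary group $G$ with an arbitrary valuation, and at each stage we feed it $G_\alpha$ with $\delta_\alpha$. A secondary, purely set-theoretic subtlety, flagged already in the excerpt, is that when $|I_0|$ (equivalently $\kappa$) is uncountable we genuinely need transfinite recursion and the limit-stage argument, rather than an ordinary $\omega$-indexed induction; but since valuation axioms and freeness of generators are finitary/local conditions, they pass to directed unions without trouble.
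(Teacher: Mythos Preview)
There is a genuine gap, and it sits exactly at the sentence you flag as ``automatic'': the claim that ``every element of $K$ lies in every $G_\alpha$ (the copy of $K$ is fixed and present from the start)''. You begin with $G_{-1}=G$ and at each successor step adjoin two fresh free copies of $\Lambda$, so $G_{\alpha}\cong G\ast\Lambda^{\ast 2\alpha}$ and the full group $K=G\ast\Lambda^{\ast\kappa}$ only materializes as the direct limit $G_\kappa$. There is no compatible embedding $K\hookrightarrow G_\alpha$ for $\alpha<\kappa$: that would force $G_\kappa\subseteq G_\alpha$, contradicting that the chain is strictly increasing. Consequently you cannot enumerate $K$ in advance and feed $g_\alpha$ into Lemma~\ref{lem.step1-valuation} at stage $\alpha$, since that lemma requires $g_0\in G_\alpha$. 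Worse, even when $g=g_\alpha$ does lie in $G_\alpha$, property~2 of Lemma~\ref{lem.step1-valuation} only guarantees $h\in (G_{\alpha+1})_{\delta_{\alpha+1}}\vee g(G_{\alpha+1})_{\delta_{\alpha+1}}g^{-1}$ for those $h$ that already belong to $G_\alpha$; an $h$ entering at some later stage $\beta>\alpha$ is simply not covered. Your verification of property~2 therefore breaks precisely where it invokes the false inclusion.

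The paper fixes this with a two-level induction instead of a single pass. The inner loop is a transfinite induction over a well-ordering of the \emph{current} group (first $G$, later $K_n$): one applies Lemma~\ref{lem.step1-valuation} once for every element $g_\mu$ of that fixed group, producing an extension $K_1$ (later $K_{n+1}$) in which property~2 holds for all pairs $h,g$ from the old group. Since new elements have been created, one iterates: an outer $\omega$-induction builds $G\subset K_1\subset K_2\subset\cdots$, each step securing property~2 for the previous stage. In the direct limit every pair $h,g$ already lives in some $K_n$, and property~2 follows. Your single-pass scheme can be salvaged only by a genuine dovetailing bookkeeping (process each element at some stage \emph{after} it appears, and ensure every element of the limit is eventually processed after every other element it might be paired with), which in effect reproduces the paper's two levels; as written, it does not go through.
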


\begin{proof}
Put $K_1 = G \ast \Lambda^{\ast \kappa}$. We first prove that there exists a valuation $\delta_1 : K_1 \recht I_0$ such that 1 and 3 hold, and such that for all $h,g \in G$ with $\delta(h) \leq \delta(g)$, we have $h \in (K_1)_{\delta_1} \vee g (K_1)_{\delta_1} g^{-1}$.

To prove this statement, let $\kappa_0$ be the smallest ordinal with $|\kappa_0| = |G|$ and let $(g_\mu)_{\mu < \kappa_0}$ be a well ordering of $G$. We define the inductive system of extensions $(G_\mu,\delta_\mu)$ by transfinite induction. Put $(G_0,\delta_0) = (G,\delta)$. For a successor ordinal $\mu+1$, we define $(G_{\mu+1},\delta_{\mu+1})$ by applying Lemma \ref{lem.step1-valuation} to $(G_\mu,\delta_\mu)$, the element $g_\mu \in G \subg G_\mu$ and the nontrivial groups $K_1 = K_2 = \Lambda$. In particular, $G_{\mu+1} = G_\mu \ast \Lambda \ast \Lambda$ and whenever $h \in G$ with $\delta(h) \leq \delta(g_\mu)$, we have that
\begin{equation}\label{eq.needed-conj}
h \in (G_{\mu+1})_{\delta_{\mu+1}} \vee g_\mu  (G_{\mu+1})_{\delta_{\mu+1}}  g_\mu^{-1} \; .
\end{equation}
When $\mu$ is a limit ordinal, we define $(G_\mu,\delta_\mu)$ as the direct limit of the inductive system $(G_\lambda,\delta_\lambda)_{\lambda < \mu}$.

We put $(K_1,\delta_1) = (G_{\kappa_0},\delta_{\kappa_0})$. If $h,g \in G$ and $\delta(h) \leq \delta(g)$, we have that $g = g_\mu$ for some $\mu < \kappa_0$. By \eqref{eq.needed-conj}, we have that $h \in (K_1)_{\delta_1} \vee g (K_1)_{\delta_1} g^{-1}$.

We now inductively apply the previous construction, defining an inductive system of extensions $(K_n,\delta_n)$ such that properties 1 and 3 hold, and such that for all $h,g \in K_n$ with $\delta(h) \leq \delta(g)$, we have that $h \in (K_{n+1})_{\delta_{n+1}} \vee g (K_{n+1})_{\delta_{n+1}} g^{-1}$. Defining $(K,\delta)$ as the direct limit of $(K_n,\delta_n)$, the lemma is proven.
\end{proof}

We are now ready to prove Theorem \ref{thm.realizing-lattices}.

\begin{proof}[{Proof of  Theorem \ref{thm.realizing-lattices}}]
Define $G_0 = \Lambda^{\ast I_0}$ as the free product of copies of $\Lambda$ indexed by $I_0$. Denote by $\Lambda_a \subg G_0$ the copy of $\Lambda$ in position $a \in I_0$. Every nontrivial element $g \in G_0 \setminus \{e\}$ can be uniquely written as a reduced word with letters from $\Lambda_a \setminus \{e\}$, $a \in \cF$, where $\cF \subset I_0$ is the finite subset of those types of letters that occur in the reduced expression of $g$. We define $\delta_0(g) = \sup \cF$. It is easy to check that $\delta_0 : G_0 \recht I_0$ is a well defined valuation.

By construction, $\delta_0(h) \leq a$ if and only if $h$ belongs to the free product of $\Lambda_b$, $b \leq a$. We also note that $\delta_0 : G_0 \recht I_0$ is surjective, because $\delta_0(g) = a$ whenever $g \in \Lambda_a \setminus \{e\}$. We finally claim that for all $a,b \in I_0$, there exist $g,h \in G_0$ with $\delta_0(g) = a$, $\delta_0(h) = b$ and $\delta_0(gh) = a \vee b$. When $a=b$ and $a \neq 0$, we pick $g \in \Lambda_a \setminus \{e\}$ and $k \in \Lambda_0 \setminus \{e\}$. Defining $h = k g$, the required properties hold. When $a=b=0$, we take $g=h=e$. When $a \neq b$, we take $g \in \Lambda_a \setminus \{e\}$ and $h \in \Lambda_b \setminus \{e\}$ any nontrivial elements.

We now apply Lemma \ref{lem.step2-valuation} to $(G_0,\delta_0)$. Since $G_0 = \Lambda^{\ast I_0}$ and $\Lambda$ is countable, we have that $|G_0| = \kappa$. We thus find an extension of $(G_0,\delta_0)$ to $(K,\delta)$ such that for all $h,g \in K$ with $\delta(h) \leq \delta(g)$, we have $h \in K_\delta \vee g K_\delta g^{-1}$, and such that the subgroups $K_\delta(b)$ are freely generated by conjugates of $\Lambda$. Define the subgroup $L \subg K$ by $L = K_\delta$.

The original algebraic lattice $(I,\leq)$ is identified with the set of ideals in $(I_0,\leq)$, partially ordered by inclusion. For every ideal $J \subseteq I_0$, define the intermediate subgroup $L \subg K(J) \subg K$ by
$$K(J) = \{g \in K \mid \delta(g) \in J\} \; .$$
Since $\delta_0 : G_0 \recht I_0$ is surjective, the extension $\delta : K \recht I_0$ is surjective. The map $J \mapsto K(J)$ is thus injective and $K(J_1) \subg K(J_2)$ if and only if $J_1 \subseteq J_2$.

We prove that every intermediate subgroup $L \subg S \subg K$ is of the form $S = K(J)$ for an ideal $J \subseteq I_0$. Fix $L \subg S \subg K$. Define $J = \delta(S)$. We have to prove that $J$ is an ideal and that $S = K(J)$.

Let $a \in J$ and let $b \in I_0$ with $b \leq a$. We have to prove that $b \in J$. Take $g \in S$ with $\delta(g) = a$. Since $\delta$ is surjective, take any $h \in G$ with $\delta(h) = b$. Since $\delta(h) \leq \delta(g)$, we have that $h \in L \vee g L g^{-1}$. Since $L \subg S$, we get that $h \in S$. So, $b \in J$. Next, take $a,b \in J$. We have to prove that $a \vee b \in J$. Take $g,h \in S$ such that $\delta(g) = a$ and $\delta(h) = b$. By the properties of $\delta_0 : G_0 \recht I_0$, we can take $g_0,h_0 \in G_0$ such that $\delta(g_0) = a$, $\delta(h_0) = b$ and $\delta(g_0 h_0) = a \vee b$. Since $\delta(g_0) \leq \delta(g)$ and $g \in S$, it follows as in the previous point that $g_0 \in S$. Similarly, $h_0 \in S$. Since $\delta(g_0 h_0) = a \vee b$, we get that $a \vee b \in J$. So, $J \subseteq I_0$ is an ideal.

By definition, $S \subg K(J)$. Take $h \in K(J)$. We have to prove that $h \in S$. Since $\delta(h) \in J$, we can take $g \in S$ with $\delta(g) = \delta(h)$. In particular, $\delta(h) \leq \delta(g)$, so that $h \in L \vee g L g^{-1}$. Because $L \subg S$, we get that $h \in S$.

Statements 1 and 2 of the theorem are now proven. Statement 3 follows because $\delta(g) \leq \delta(g)$.
\end{proof}

\end{document}